\def\lbr{\llbracket}
\def\rbr{\rrbracket}
\newcommand{\comp}{\mathop{{\rm comp}}\nolimits}
\newcommand{\bd}{\mathbf{d}}
\renewcommand{\phi}{\varphi} 
\newcommand{\sH}{{\sf H}}
\begin{document}

\title{Ground state representations of topological groups} 
\author{Karl-Hermann Neeb, Francesco G. Russo} 

\maketitle

\begin{abstract} 
Let $\alpha \: \R \to \Aut(G)$ define a continuous $\R$-action on 
the topological group~$G$.
A unitary representation $(\pi^\flat,\cH)$ of the extended group 
$G^\flat := G \rtimes_\alpha \R$ is called a ground 
state representation if the unitary one-parameter group 
$\pi^\flat(e,t) = e^{itH}$ has a non-negative generator $H \geq 0$ and 
the subspace $\cH^0 := \ker H$ of ground states 
generates $\cH$ under $G$. 
In this paper we introduce the class of strict ground state 
representations, where $(\pi^\flat,\cH)$ and 
the representation of the subgroup $G^0 := \Fix(\alpha)$ 
on $\cH^0$ have the same commutant. The advantage of this 
concept is that it permits us to classify strict ground 
state representations in terms of the corresponding 
representations of $G^0$. This is particularly effective 
if the occurring representations of $G^0$ can be characterized 
intrinsically in terms of concrete positivity conditions. 

To find such conditions, it is natural to restrict to 
infinite dimensional Lie groups such as 
(1) Heisenberg groups (which exhibit examples of 
non-strict ground state representations); (2) Finite dimensional 
groups, where highest weight representations provide natural examples; 
(3) Compact groups, for which our approach provides a new perspective 
on the classification of unitary representations; 
(4) Direct limits of compact groups, as a class of examples for which 
strict ground state representations can be used to classify 
large classes of unitary representations.
\\
{\em Keywords:} positive energy representation, ground state, 
holomorphic induction, Heisenberg group, compact group
\\
{\em MSC2010:} Primary: 22E45, 22E66. Secondary: 43A75, 43A65  
\end{abstract}

\tableofcontents 
\section{Introduction}
\label{sec:1} 

Let $G$ be a (Hausdorff) topological group and 
$\alpha \: \R \to \Aut(G)$ be a homomorphism defining a continuous 
action of $\R$ on $G$ and let 
\[ G^0 :=\Fix(\alpha) := \{ g \in G \: (\forall t \in \R)\ 
\alpha_t(g) = g \}\] 
be the closed subgroup of fixed points. The 
semidirect product $G^\flat := G \rtimes_\alpha \R$ is a topological group 
with respect to the product topology, 
and a unitary representation $(\pi^\flat, \cH)$ of $G^\flat$ on a 
complex Hilbert space $\cH$ always has the form 
\begin{equation}
  \label{eq:pisharp}
\pi^\flat(g,t) = \pi(g) U_t,
\end{equation}
where $(\pi, \cH)$ is a unitary representation of $G$ and $(U_t)_{t \in \R}$ 
is a unitary one-parameter group on $\cH$ satisfying the covariance condition 
\begin{equation}
  \label{eq:commrel}
 U_t \pi(g) U_{-t} = \pi(\alpha_t(g)) \quad \mbox{ for } \quad 
t \in \R, g \in G.
\end{equation}
Writing $U_t= e^{itH}$ with a selfadjoint operator $H$ 
(Stone's Theorem, \cite[Thm.~13.38]{Ru73}), we call $\pi^\flat$, resp., the pair $(\pi,U)$ 
a {\it positive energy representation of $(G,\alpha)$}  if $H \geq 0$. 
If, in addition, for the {\it minimal energy space} $\cH^0 := \ker H$, 
the subset $\pi(G)\cH^0$ spans a dense subspace of $\cH$, 
we call $(\pi, \cH)$ a {\it ground state representation}.
\begin{footnote}{In \cite{JN21} the term ``ground state representation'' 
is used in a slightly more general context 
where the minimal eigenvalue of $H$ is not necessarily~$0$, but this is 
only a matter of terminology.}  
\end{footnote}
One expects that that ground state representations 
are determined by the representation $(\pi^0, \cH^0)$ of $G^0$ 
on the ``minimal energy space''~$\cH^0$, but this is 
in general not the case (Example~\ref{ex:4.7}). To make up for this defect,
we introduce the concept of a {\it strict ground state representation} 
(Definition~\ref{def:strict}), where 
we require, in addition, that the (injective) restriction map 
from the commutant $\pi(G)'$ of $\pi(G)$ to the commutant 
$\pi^0(G^0)'$ (a morphism of von Neumann algebras) is surjective. 
This class of representations of $G$ is completely determined by 
the corresponding representations of $G^0$ and 
we want to use it to classify strict ground 
state representations in terms of the corresponding 
representations of $G^0$. This is particularly effective 
if the occurring representations of $G^0$ can be characterized 
intrinsically in terms of concrete positivity conditions. 

To find such conditions, it is natural to restrict to the 
class of, possibly infinite dimensional, Lie groups. For these
groups the method of holomorphic induction which has been 
developed in \cite{Ne13} for Banach--Lie groups and 
extended in \cite[App.~C]{Ne14b} to certain Fr\'echet--Lie groups, 
can be used to construct strict ground state representations. 

We discuss four classes of groups: 
\begin{itemize}
\item[\rm(1)] Heisenberg groups $\Heis(V,\sigma)$, i.e., the canonical 
central extension of the additive groups of a 
symplectic vector space $(V,\sigma)$ by the circle group~$\T$. These groups 
provide in particular examples of non-strict ground state representations. 
As these group arise naturally in Quantum Field Theory 
in the context of the canonical commutation relations, 
we were able to use old results of M.~Weinless \cite{We69} 
to derive a structure theorem for ground state representations 
under some extra conditions 
on the $\R$-action. In this context our results complement 
the theory of semibounded representations 
of these group, developed in \cite{NZ13} and \cite{Ze14}. 
\item[\rm(2)] Finite dimensional Lie groups,  as a class of 
groups whose well-developed structure theory permits to 
understand the intricacies on the conditions to be imposed on 
the {$\R$-action} on~$G$. Here 
highest weight representations, and the more general class of 
semibounded representations provide natural examples of 
strict ground state representations (\cite{Ne00}). 
\item[\rm(3)] Connected compact groups, as a class of topological groups which is 
rather close to Lie groups. They are projective limits of connected Lie groups 
and we show that this approximation can be aligned with the $\R$-action 
on $G$. This permits us to show that {\it all} unitary representations 
are strict ground state representations, and this provides a 
novel perspective on the classification of 
unitary representations of connected compact Lie groups 
(Subsection~\ref{subsec:6.2}). If $\alpha$ is defined by 
$\alpha_t(g) = \exp(t\bd) g \exp(-t\bd)$ for a regular element 
$\bd \in \g$ (the Lie algebra of $G$), 
then $G^0 = T$ is a maximal torus and the approach to 
the classification in terms of ground state representations 
leads to the Cartan--Weyl Theorem on the classification 
in terms of their highest (lowest) weights (cf.~\cite[p.~209]{Wa72}). 

\item[\rm(4)] Direct limits of compact groups (\cite{Gl05}); as a class of Lie groups 
for which strict ground state representations can be studied systematically 
with the methods developed in this paper. We only briefly discuss 
some concrete examples to give an impression of how this can be done in 
principle.
\end{itemize}

In the representation theory of Lie groups,  ground states 
have classically been studied in the context of highest or lowest 
weight representations, which require a much finer structure 
theoretric context (cf.~\cite{KR87}). 
For this specific class of representations 
(and for the more general class of semibounded representations), 
methods similar to ours have also been 
used in the following contexts: 
\begin{itemize}
\item double extensions of Hilbert--Lie groups (\cite{MN16})  
\item twisted loop groups with values in Hilbert--Lie groups (\cite{MN17})
\item Hermitian Banach--Lie groups of compact type, i.e., 
automorphism groups of Banach hermitian symmetric spaces 
(\cite[\S 8]{Ne12})
\item the particular Schatten class 
groups $\U_1(\cH)$ and $\U_2(\cH)$ (\cite[App.~D]{Ne12}) 
\item for groups of local gauge transformations,  
positive energy and ground state representations have been treated from a 
similar perspective in \cite[\S\S 3, 9]{JN21}. 
\end{itemize}
The specific features appearing in these papers suggest that a 
general theory of ground state representations for general topological groups 
could be a useful tool to classify important classes of unitary representations 
of groups which are not locally compact. \\

We now describe the structure of this paper in some more detail. 
In Section~\ref{sec:2} we develop the basic concepts. 
First we explain how the concept of a 
minimal implementing group from the theory of operator algebraic 
dynamical systems 
translates into our context (Subsection~\ref{sec:minimal}). 
It provides the language to define 
minimal positive energy representations and ground state representations. 
In this context, the Borchers--Arveson Theorem implies that, 
for a ground state representation $(\pi, U,\cH)$ of 
$(G,\alpha)$, the one-parameter group $(U_t)_{t \in \R}$ 
is redundant in the sense that it is completely determined by the 
representation $\pi$ of $G$ and the assumption that the generating 
subspace $\cH^0$ is fixed pointwise by~$U$ 
(see also \cite{Ne14} for a formulation 
of the Borchers--Arveson Theorem in the context of topological groups). 
After discussing some elementary properties of ground state representations, 
we introduce the new concept of a 
strict ground state representation in Subsection~\ref{subsec:2.3}. 
The purpose of this concept is to classify 
ground state representations of $G$ in terms of representations 
of $G^0$ arising on some~$\cH^0$. 

In Section~\ref{sec:3} we 
develop for Lie groups methods to 
identify these representations of $G^0$ in intrinsic terms. 
 We formulate four conditions: 
(L1) $G$ is a Lie group (modelled over a locally convex space), 
(L2) $\alpha$ defines a smooth $\R$-action on $G$, 
(L3) the subgroup~$G^0$ of fixed points is a Lie group, and 
(L4) the Lie algebra $\g$ of $G$ is the direct sum of 
the Lie algebra $\g^0$ of $G^0$ and $\oline{D(\g)}$, where 
$D$ is the infinitesimal generator of the induced $\R$-action on~$\g$. 
Note that conditions (L1-3) are automatic if $G$ is a finite dimensional Lie group, 
but (L4) corresponds to $\ker(D^2) = \ker(D)$. 
If these conditions are satisfied and $p_0 \: \g \to \g^0$ is the projection 
with kernel $\oline{D(\g)}$, then the closed convex cone 
\[ C_\alpha \subeq \g^0, \] 
generated by elements of the form $p_0([Dx,x])$, $x \in \g$, 
turns out to play an important role. The main result in Section~\ref{sec:3} 
is Theorem~\ref{thm:2.18} which asserts that, for every ground 
state representation $(\pi,U,\cH)$, we have 
\[ C_\alpha \subeq C_{\pi^0} := \{ x \in \g^0 \: -i \partial \pi^0(x) \geq 0\},\] 
where $\partial \pi^0(x)$ is the infinitesimal generator of the 
unitary one-parameter group $(\pi^0(\exp tx))_{t \in \R}$. 
This is a necessary condition for a representation 
$(\pi^0,\cH^0)$ of $G^0$ to appear in a ground state representation 
of $G$. Unfortunately it is not sufficient in general 
(Remark~\ref{rem:countex}). 

As a consequence of the Borchers--Arveson Theorem, 
for abelian groups $G$ with a faithful ground state representation, 
the $\R$-action on $G$ is trivial. Therefore the simplest 
non-trivial examples arise from $2$-step nilpotent groups. 
We therefore discuss Heisenberg groups $G = \Heis(V,\sigma)$ 
in Section~\ref{sec:4}, where $\R$ acts on $G$ through a symplectic 
one-parameter group $\beta \: \R \to \Sp(V,\sigma)$. 
In this context condition (L4) turns into the weak splitting condition, 
which implies a tensor factorization of ground state representations 
that  can be derived from old results of M.~Weinless \cite{We69}. 
Actually an example where (L4) is violated led us to an example 
of a non-strict ground state representation (Example~\ref{ex:4.7}). 

Section~\ref{sec:5} introduces the powerful technique of 
holomorphic induction as a means to construct  ground state representations. 
It  deals with unitary representations that can be realized 
in Hilbert spaces of holomorphic sections of complex vector bundles 
whose fibers are Hilbert spaces. Unfortunately, 
holomorphic induction requires rather fine geometric assumptions on the Lie groups. 
But it provides effective tools to determine 
if a given representation can be realized in this setup, 
and then one can typically conclude that it is a 
strict ground state representation. If $G$ is a 
Banach--Lie group and $D \: \g \to \g$ is a bounded derivation for 
which $0$ is isolated in $\Spec(D)$ and the norm on $\g$ is 
$\alpha$-invariant ($D$ is elliptic), then 
Theorem~\ref{thm:6.2}, our main result in Section~\ref{sec:5} 
is one of our key tools to identify strict ground state representations. 
In the following sections it is applied to finite dimensional 
Lie groups, where the necessary requirements are verified more easily.

This is why we first turn to finite dimensional Lie groups in Section~\ref{sec:6}. 
Again, an old result, C.~Moore's Eigenvector Theorem (\cite{Mo80}), 
turns out to be quite helpful. We use it to see that, if an 
irreducible $G$-representation $(\pi,\cH)$ with discrete kernel 
has ground states for an 
$\R$-action corresponding to an inner derivation $D = \ad \bd$, 
then $\bd$ must be an elliptic element, i.e., $\oline{e^{\R \ad \bd}}$ is a 
torus group, or, equivalently, $\ad \bd$ is semisimple with purely 
imaginary spectrum. Our setup applies particularly well 
to compact connected Lie groups. In this context, 
for any $\alpha \: \R \to \Aut(G)$, we show in Theorem~\ref{thm:compcase} 
that every unitary representation $(U,\cH)$ of $G$ is a strict ground 
state representation, and that these correspond precisely to the 
representations $(\pi^0, \cH^0)$ of the connected subgroup $G^0$ 
satisfying $C_\alpha \subeq C_{\pi^0}$. If 
the derivation $D = \ad \bd$ is such that $\bd$ is a regular element, then 
$G^0$ is a maximal torus, and this result reproduces 
the Cartan--Weyl classification of irreducible representations, 
but it also works for any~$\bd$. 
In Section~\ref{sec:8} this result is extended to general 
connected compact topological groups (Theorem~\ref{thm:8.13}), which 
provides a novel global perspective on the classification for these  
groups. We conclude this paper with 
Section~\ref{sec:7} which is devoted to countable 
direct limits of finite dimensional Lie groups. These are always 
locally convex Lie groups by Gl\"ockner's Theorem (\cite{Gl05}). 
For direct limits of compact Lie groups,  
Theorem~\ref{thm:5.3} generalizes the characterization 
of representations $(\pi^0, \cH^0)$ of $G^0$ which extend to 
ground state representations in terms of the positivity 
condition $C_\alpha \subeq C_{\pi^0}$. It thus reduces the corresponding 
classification problem from $G$ to $G^0$. 
We discuss some examples where $G^0$ is abelian 
(a direct limit of tori), so that concrete classification 
results can be obtained with the Bochner Theorem for nuclear groups 
(\cite{Ba91}). 

We include four short appendices: 
In Appendix~\ref{app:arv} we recall Arveson's concept 
of spectral subspaces which is an important tool to formulate 
the splitting conditions in Section~\ref{sec:3}. 
Similarly Appendix~\ref{app:B} recalls the vector-valued 
version of the Gefland--Neimark--Segal (GNS) correspondence 
between operator-valued positive definite functions and unitary representations 
generated by Hilbert subspaces. 
Some facts on bosonic Fock spaces are collected in 
Appendix~\ref{sec:7.1} because they are needed in our discussion of Heisenberg 
groups in Section~\ref{sec:4}, Finally, Appendix~\ref{app:D} 
contains a key observation that we use in Section~\ref{sec:8} for 
the reduction from general compact groups to finite dimensional 
ones and also at some point in Section~\ref{sec:7}.

\subsection*{Notation} 
\begin{itemize}
\item $\cH$ denotes a complex Hilbert space, the scalar product 
is linear in the second argument, 
$B(\cH)$ denotes the algebra of bounded operators on $\cH$,  
and $\U(\cH)$ the unitary group. 
We call a subset $E \subeq \cH$ total if $\lbr E \rbr := \oline{\spann E} = \cH$. \item 
For a set $\cS \subeq B(\cH)$ of bounded operators, its {\it commutant}
is denoted 
\[ \cS' := \{ A \in B(\cH) \:(\forall B \in \cS)\, AB = BA\}.\] 
A $*$-subalgebra $\cM \subeq B(\cH)$ is called a {\it von Neumann algebra} 
if it equals its own bicommutant: $\cM = \cM'' := (\cM')'$ 
(cf.~\cite[\S 2.4]{BR02}). Then its center 
is $\cZ(\cM) = \cM \cap \cM'$.  
\item For a selfadjoint operator $H = H^*$ on $\cH$ we write $H \geq 0$ if 
its spectrum $\Spec(H)$ in contained in $[0,\infty)$. We then say that 
the corresponding unitary one-parameter group 
$(e^{itH})_{t \in \R}$ has {\it positive spectrum}. 
\item For a (continuous) unitary representation $(\pi, \cH)$ of a Lie group 
$G$ with Lie algebra $\g$ and exponential function 
$\exp \: \g \to G$, we write 
$\partial \pi(x)$ for the skew-adjoint infinitesimal generator of 
the unitary one-parameter group 
$\pi(\exp t x)$, so that we have 
$\pi(\exp t x) = e^{t \partial \pi(x)}$ in the sense of measurable functional calculus 
for unbounded normal operators 
(cf.\ Definition~\ref{def:smoothrep}). To $\pi$ we associate two convex cones 
in the Lie algebra: 
\begin{equation}
  \label{eq:wpi-intro} 
W_\pi := \{ x \in \g \: \inf(\Spec(-i \partial \pi(x))) > -\infty\}
\supeq C_\pi := \{ x \in \g \: -i \partial \pi(x) \geq 0\}.
\end{equation}
\item $G^\flat = G \rtimes_\alpha \R$ for a homomorphism $\alpha \: \R \to \Aut(G)$. 
\end{itemize}

\section{Generalities on ground state representations} 
 \label{sec:2} 

In this section we discuss three aspects of 
ground state representations $(\pi, U,\cH)$ for a pair $(G,\alpha)$. 
In Subsection~\ref{sec:minimal} we explain how the concept of a 
minimal implementing group from the theory of operator algebraic 
dynamical systems 
translates into our context. It provides the language to define 
minimal positive energy representations and ground state representations. 
In Subsection~\ref{subsec:2.2} we take a closer look at 
elementary properties of ground state representations. 
The main new concept we introduce is that of a 
strict ground state representation (Subsection~\ref{subsec:2.3}). 
Roughly speaking, strictness means that the representation 
$(\pi,\cH)$ of $G$ decomposes in the same way as the 
representation $(\pi^0,\cH^0)$ of the fixed point group $G^0$ on the 
minimal energy space~$\cH^0$. In particular, strictness implies that 
$(\pi^0, \cH^0)$ determines $(\pi,\cH)$. 
The purpose of this concept is to classify 
ground state representations of $G$ in terms of the representations 
$(\pi^0, \cH^0)$ of $G^0$ extending to ground state representations 
of~$G$. A key problem is to 
identify these representations of $G^0$ in intrinsic terms. 
We shall see in Section~\ref{sec:3} how this problem can be addressed for 
Lie groups.

\subsection{Minimal representations}
\label{sec:minimal}

The following refinement of the Borchers--Arveson Theorem 
can be found in \cite[Thm.~3.7]{JN21}. 
Part~(i) is in  \cite[Thm.~3.2.46]{BR02}. 
We also refer to \cite[Thm.~4.14, Lemma~4.17]{BGN20} for a detailed discussion 
of this circle of ideas. 

\begin{Theorem} \label{thm:BAthm} {\rm(Borchers--Arveson Theorem)} 
Let $\cH$ be a Hilbert space and $\cM \subeq B(\cH)$ be a von Neumann 
algebra. Further, let $(U_t)_{t \in \R}$ be a strongly continuous unitary one-parameter group on~$\cH$ for which $\cM$ 
is invariant under conjugation with the operators $U_t$, so that 
we obtain a one-parameter group $\alpha \: \R \to \Aut(\cM)$ by 
$\alpha_t(M) := \Ad(U_t)M := U_t M U_t^*$ for $M \in \cM$. 
If $U_t = e^{itH}$ with $H \geq 0$, then the following assertions hold: 
\begin{itemize}
\item[\rm(i)] There exists a strongly continuous unitary one-parameter group
$U^0_t = e^{it H_0}$ in $\cM$ with $\Ad(U^0_t) = \alpha_t$ and positive spectrum. 
It is uniquely determined by the requirement that it is minimal in the sense that, for
any other unitary one-parameter group $(V_t)_{t\in \R}$ in $\U(\cH)$, 
such that $\Ad(V_t) = \alpha_t$ for $t \in \R$, 
the unitary one-parameter group $(V_t U^0_{-t})_{t \in \R}$ 
in the commutant $\cM'$ 
has positive spectrum. 
\item[\rm(ii)] If $U^0_T = \1$ for some $T > 0$ and $\cF \subeq \cH$ is an $\cM$-invariant subspace, 
then the subspace $\cF^0 := \cF \cap \ker H_0$ satisfies 
$\lbr \cM \cF_0\rbr = \cF$. 
\item[\rm(iii)] If $T \in \R$ satisfies $\alpha_T = \id_\cM$, then $U^0_T =\1$. 
\end{itemize}
\end{Theorem}

The Borchers--Arveson Theorem applies naturally to 
positive energy representations (\cite[Cor.~3.9]{JN21}), 
where it takes the following form: 

\begin{Corollary} \label{cor:borch}   
Let $(\pi, U,\cH)$ be a positive energy representation  of 
$(G,\alpha)$ and $(U^0_t)_{t \in \R}$ be the minimal 
positive implementing unitary one-parameter group in~$\pi(G)''$ from 
{\rm Theorem~\ref{thm:BAthm}}.
Then $U$ factorizes as 
\[ U_t = U_t^0 W_t \quad \mbox{ for } \quad t \in \R, \] 
where $(W_t)_{t \in \R}$ 
is a unitary one-parameter group with positive spectrum 
in the center $\cZ(\pi^\flat(G)'')$ 
of the von Neumann algebra $\pi^\flat(G^\flat)''$ 
generated by $\pi^\flat(G^\flat)$, where 
$\pi^\flat(g,t) = \pi(g) U_t$. 
\end{Corollary}

\begin{prf} We apply Theorem~\ref{thm:BAthm} to the von Neumann algebra 
$\cM := \pi(G)''$. As 
$W_t := U_{-t}^0 U_t$ commutes with $\pi(G)$, it is contained 
in $\pi(G)'$. Further,  $U_{-t}^0 \in \pi(G)''$ implies 
$W_t \in \pi^\flat(G^\flat)''$. Since 
$W_t$ commutes with $\pi(G)$ and $U_\R$, it is central in $\pi^\flat(G^\flat)''$. 
\end{prf}

\begin{cor} \label{cor:irred} {\rm(\cite[Thm.~2.5]{Ne14})} 
If $(\pi, U,\cH)$ is an irreducible
 positive energy representation  of 
$G^\flat = G \rtimes_\alpha \R$, then its restriction to $G$ is also 
irreducible.
\end{cor}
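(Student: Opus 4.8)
The plan is to reduce irreducibility of $\pi|_G$ to the already-known irreducibility of $\pi^\flat$ by showing that every operator commuting with $\pi(G)$ must automatically commute with the one-parameter group $U$. The starting observation is purely algebraic. Since $\pi^\flat(g,t) = \pi(g)U_t$, testing against the special pairs $(g,0)$ and $(e,t)$ shows that an operator lies in $\pi^\flat(G^\flat)'$ if and only if it commutes with $\pi(G)$ \emph{and} with $U_\R := \{U_t \: t \in \R\}$; that is,
\[ \pi^\flat(G^\flat)' = \pi(G)' \cap U_\R'. \]
By irreducibility of $\pi^\flat$ the left-hand side equals $\C\1$. Hence it suffices to prove the inclusion $\pi(G)' \subeq U_\R'$, for then $\pi(G)' = \pi(G)' \cap U_\R' = \C\1$.

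The key input that makes this work, and the step where the positive energy hypothesis is essential, is the Borchers--Arveson factorization of Corollary~\ref{cor:borch}. It writes $U_t = U^0_t W_t$, where the minimal implementing group $(U^0_t)_{t \in \R}$ lies in $\pi(G)''$ while $(W_t)_{t \in \R}$ lies in the center $\cZ(\pi^\flat(G^\flat)'')$. Now take any $A \in \pi(G)'$. Because $U^0_t \in \pi(G)''$ and elements of the bicommutant commute with elements of the commutant, $A$ commutes with every $U^0_t$. For the central factor, irreducibility gives $\pi^\flat(G^\flat)'' = B(\cH)$, so its center is $B(\cH) \cap \C\1 = \C\1$; thus each $W_t$ is a scalar and commutes with $A$ trivially. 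Consequently $A$ commutes with $U_t = U^0_t W_t$ for all $t$, i.e.\ $A \in U_\R'$, which is exactly the inclusion sought above.

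Combining the two steps yields $A \in \pi(G)' \cap U_\R' = \pi^\flat(G^\flat)' = \C\1$, so $\pi(G)' = \C\1$ and $\pi|_G$ is irreducible. I expect the only genuine content to be the factorization $U_t = U^0_t W_t$ with $U^0_t$ sitting \emph{inside} $\pi(G)''$; everything else is bookkeeping with commutants and an application of Schur's lemma in the form $\cZ(\pi^\flat(G^\flat)'') = \C\1$. This is precisely the point at which the assumption $H \geq 0$ cannot be dropped: for a general, non--positive-energy semidirect product the one-parameter group $U$ need not be recoverable from $\pi(G)$, and the irreducibility of $\pi^\flat$ would then carry no information about $\pi(G)'$.
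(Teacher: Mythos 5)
Your proof is correct and follows essentially the same route as the paper: both rest on the Borchers--Arveson factorization $U_t = U^0_t W_t$ of Corollary~\ref{cor:borch}, use Schur's Lemma to force the central part $W_t$ to be scalar, and then conclude that $\pi(G)'$ coincides with $\pi^\flat(G^\flat)' = \C\1$. The paper phrases the last step as $U_t \in \pi(G)''$ rather than $\pi(G)' \subeq U_\R'$, but this is the same commutant bookkeeping.
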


\begin{prf} By Schur's Lemma, the irreducibility of $\pi^\flat$ 
implies that $\pi^\flat(G^\flat)' = \C \1$. Hence $W_t \in \T \1$ 
implies $U_t \in \pi(G)''$. Therefore 
$\C \1 = \pi^\flat(G)' = \pi(G)'$, so that the irreducibility of 
$\pi$ follows from Schur's Lemma. 
\end{prf}

The concept of a minimal representation that we now define is inspired by the 
situation in Corollary~\ref{cor:borch}. We shall see 
in Proposition~\ref{prop:swallow} that the concept of a 
ground state representation is more restrictive, but 
it is this extra assumption of the existence of ``sufficiently many'' 
ground states that permits us to obtain more 
information to classify representations. 

\begin{defn} (Minimal and ground state 
representations) \label{def:mini} A positive energy 
representation $(\pi,U,\cH)$ of $G^\flat$ with $U_t = e^{itH}$ is called 
\begin{itemize}
\item {\it minimal} if the one-parameter group $U$ 
is minimal with respect to  the von Neumann algebra $\pi(G)''$, 
i.e., $U = U^0$ (see \cite[\S 5]{BGN20} for a discussion of this 
concept in the context of von Neumann algebras). 
\item a {\it ground 
state representation} if $\lbr \pi(G) \cH^0 \rbr = \cH$ holds 
for the {\it minimal energy subspace} 
\[ \cH^0 := \ker H.\]
\end{itemize}

In view of the factorization in Corollary~\ref{cor:borch}, 
we adopt the point of view that we understand positive 
energy representations if we know the minimal ones. For those, 
the extension of the representation $\pi$ of $G$ to $G^\flat$ 
is uniquely determined by the Borchers--Arveson Theorem. As 
the extendability of a unitary 
representation $(\pi,\cH)$ of $G$ to a positive energy representation 
of $G^\flat$ is an intrinsic property, we 
also call $(\pi, \cH)$ a {\it positive energy representation} 
of $(G,\alpha)$ if it extends to a positive energy representation of $G^\flat$. 
Here we keep in mind that the 
minimal extension with $U = U^0$ provides a natural 
extension $\pi^\flat$ to $G^\flat$ with the same commutant, 
hence with the same closed invariant subspaces. 
\end{defn}

With this terminology, Theorem~\ref{thm:BAthm} has the following 
consequence: 
\begin{cor} \label{cor:borch2}   
Let $(\pi, \cH)$ be a positive energy representation 
of $(G,\alpha)$. If $\alpha_T = \id_G$ for some $T > 0$, then $U^0_T = \1$ 
and $(\pi,\cH)$ is a ground state representation. 
\end{cor}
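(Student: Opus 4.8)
The plan is to derive everything from the Borchers--Arveson Theorem (Theorem~\ref{thm:BAthm}) applied to the von Neumann algebra $\cM := \pi(G)''$, working with the minimal positive implementing one-parameter group $U = U^0$, with generator $H = H_0 \geq 0$, which by the convention following Definition~\ref{def:mini} is the natural extension defining $\cH^0 := \ker H_0$.

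First I would translate the group-level hypothesis $\alpha_T = \id_G$ into a statement about $\cM$. The automorphism of $\cM$ induced at time $T$ is $\Ad(U^0_T)$, and the covariance relation \eqref{eq:commrel} gives $\Ad(U^0_T)\pi(g) = \pi(\alpha_T(g)) = \pi(g)$ for all $g \in G$. Hence $U^0_T$ commutes with $\pi(G)$, i.e.\ $U^0_T \in \pi(G)' = \cM'$; on the other hand $U^0_T \in \cM$ by the construction of the minimal group in Theorem~\ref{thm:BAthm}(i). Therefore $U^0_T \in \cZ(\cM) = \cM \cap \cM'$, so that $\Ad(U^0_T) = \id_\cM$, i.e.\ $\alpha_T = \id_\cM$. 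Applying Theorem~\ref{thm:BAthm}(iii) then yields $U^0_T = \1$, which is the first assertion. (Equivalently, one notes directly that $\Ad(U^0_T)$ fixes the generating set $\pi(G)$ of $\cM$ pointwise and is weakly continuous, hence fixes $\cM$ pointwise.)

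For the ground state property I would invoke Theorem~\ref{thm:BAthm}(ii) with the $\cM$-invariant subspace $\cF := \cH$. Since $U^0_T = \1$, it gives $\lbr \cM \cH^0 \rbr = \cH$, because here $\cF^0 = \cH \cap \ker H_0 = \cH^0$. It then remains to replace $\cM = \pi(G)''$ by $\pi(G)$ in this cyclicity statement. This is the one point requiring a little care: a closed subspace is invariant under the $*$-closed set $\pi(G)$ precisely when its orthogonal projection lies in $\pi(G)' = \cM'$, i.e.\ precisely when it is $\cM$-invariant. Consequently the smallest closed $\pi(G)$-invariant subspace containing $\cH^0$ coincides with the smallest closed $\cM$-invariant one; the former is $\lbr \pi(G)\cH^0 \rbr$ and the latter is $\lbr \cM \cH^0 \rbr = \cH$, whence $\lbr \pi(G)\cH^0 \rbr = \cH$ and $(\pi,\cH)$ is a ground state representation.

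I expect the only genuinely non-routine point to be this final identification $\lbr \pi(G)\cH^0 \rbr = \lbr \pi(G)''\cH^0 \rbr$, which rests on the coincidence of closed $\pi(G)$-invariant and $\cM$-invariant subspaces; the passage from $\alpha_T = \id_G$ to $\alpha_T = \id_\cM$ and the two applications of Theorem~\ref{thm:BAthm} are otherwise immediate.
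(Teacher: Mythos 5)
Your proof is correct and follows exactly the route the paper intends: the paper states Corollary~\ref{cor:borch2} as an immediate consequence of Theorem~\ref{thm:BAthm} without writing out details, and your argument is precisely the expected expansion, namely passing from $\alpha_T = \id_G$ to $\alpha_T = \id_\cM$ for $\cM = \pi(G)''$, applying parts (iii) and (ii) of the Borchers--Arveson Theorem with $\cF = \cH$, and then identifying $\lbr \pi(G)\cH^0\rbr$ with $\lbr \cM\cH^0\rbr$ via the coincidence of closed invariant subspaces for the $*$-closed set $\pi(G)$ and for $\cM$. The care you take at that last step (and in noting that $\Ad(U^0_t) = \Ad(U_t)$ on $\cM$) is exactly what the paper leaves implicit.
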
 

We write $\hat G$ for the set of equivalence classes of 
irreducible unitary representations of the topological group 
$G$. Since the problem of parametrizing $\hat G$ 
for an infinite dimensional Lie group $G$ 
is rather intractable (\cite{Ne14c}), the positive energy 
condition with respect to some $\alpha$ provides 
a regularity condition 
that selects a class of representations for which one expects concrete 
classification results (\cite{JN19}, \cite{Ne12, Ne14, Ne14b}). 

\begin{rem}
Every $\R$-action $\alpha$ on $G$ specifies 
a subset $\hat G(\alpha)\subeq \hat G$ of irreducible positive energy 
representations with respect to $\alpha$  (cf.\ Corollary~\ref{cor:irred}), 
and these subsets of $\hat G$ 
are expected to be more tractable for specific $\alpha$'s 
than general irreducible representations whose classification is elusive. 
\end{rem}

\subsection{Ground state representations} 
\label{subsec:2.2}

It is a natural problem to describe 
the positive energy representations $(\pi, \cH)$ of $(G,\alpha)$ in 
terms of simpler data. For ground state representations, 
the necessary information should be  provided by  the representation of $G^0$ 
on~$\cH^0$. 
Here the following observation is an important tool. 

\begin{prop} \label{prop:swallow} 
{\rm(Ground state representations are minimal)} For a 
ground state representation 
$(\pi, U,\cH)$, the following assertions hold: 
\begin{itemize}
\item[\rm(i)] $(\pi, U,\cH)$ is minimal, hence in particular $U_\R \subeq \pi(G)''$.
\item[\rm(ii)] Let $P_0$ denote the orthogonal projection onto $\cH^0$. 
Then the restriction map 
\[ R \: \pi(G)' \to (P_0 \pi(G)'' P_0)' \subeq B(\cH^0), \quad 
A \mapsto P_0 A P_0 \] 
is an isomorphism of von Neumann algebras whose range 
is contained in $\pi^0(G^0)'$.  
\end{itemize}
\end{prop}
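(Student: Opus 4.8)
The plan is to establish (i) first and then deduce (ii) from the resulting fact that the minimal energy projection $P_0$ lies in $\cM := \pi(G)''$. For (i), I would start from the factorization $U_t = U_t^0 W_t$ furnished by Corollary~\ref{cor:borch}, where $U^0$ is the minimal implementing group in $\cM$ with generator $H_0 \geq 0$, and $W_t = e^{itK}$ with $K \geq 0$ lies in the center of $\pi^\flat(G^\flat)''$. Since $W$ is central it commutes with $U^0$, so $H_0$ and $K$ strongly commute and $H$ is the closure of $H_0 + K$. Testing against a ground state $\xi \in \cH^0 = \ker H$ gives $0 = \langle H\xi,\xi\rangle = \|H_0^{1/2}\xi\|^2 + \|K^{1/2}\xi\|^2$, whence $\cH^0 \subeq \ker K$ and $W_t|_{\cH^0} = \1$. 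Because $W_t$ is central it commutes with every $\pi(g)$, so $W_t \pi(g)\xi = \pi(g)\xi$ for $\xi \in \cH^0$; as $\lbr \pi(G)\cH^0\rbr = \cH$ by hypothesis, this forces $W_t = \1$, so $U = U^0$ is minimal and $U_\R \subeq \cM$.

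For (ii), the first point is that, by (i), the generator $H$ of $U = U^0$ is affiliated with $\cM$ (all $U_t^0 = e^{itH}$ lie in the weakly closed algebra $\cM$), so its spectral projection $P_0$ onto $\cH^0 = \ker H$ also lies in $\cM$. Consequently every $A \in \cM' = \pi(G)'$ commutes with $P_0$, preserves $\cH^0$, and $R(A) = P_0 A P_0$ coincides with the restriction $A|_{\cH^0}$. Thus $R$ is exactly the reduction homomorphism $\cM' \to B(\cH^0)$ attached to the projection $P_0 \in \cM$; it is automatically a normal unital $*$-homomorphism of von Neumann algebras, and its image is contained in the commutant $(P_0 \cM P_0)' = (P_0 \pi(G)'' P_0)'$ of the reduced algebra on $\cH^0$.

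Injectivity and surjectivity then come from two standard facts. Injectivity again uses the ground state condition: if $R(A) = A|_{\cH^0} = 0$, then since $A$ commutes with $\pi(G)$ we get $A\pi(g)\xi = \pi(g)A\xi = 0$ for all $g \in G$ and $\xi \in \cH^0$, so $A$ annihilates the dense subspace $\lbr \pi(G)\cH^0\rbr = \cH$ and vanishes. Surjectivity is the commutation theorem for the reduction of a von Neumann algebra by a projection $P_0 \in \cM$: the restriction of $\cM'$ to $P_0\cH$ is precisely $(P_0 \cM P_0)'$. A bijective normal $*$-homomorphism between von Neumann algebras is an isomorphism, giving the claim. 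Finally, to see that the range lands in $\pi^0(G^0)'$, note that for $g \in G^0 = \Fix(\alpha)$ the covariance relation $U_t\pi(g)U_{-t} = \pi(\alpha_t(g)) = \pi(g)$ shows $\pi(g)$ commutes with $U$, hence with $P_0$, so $\pi(g)$ preserves $\cH^0$ and $\pi^0(g) = \pi(g)|_{\cH^0}$; since $A \in \pi(G)'$ commutes with $\pi(g)$, its restriction $R(A) = A|_{\cH^0}$ commutes with $\pi^0(g)$.

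The main obstacle I expect is the bookkeeping in (i): making the decomposition $H = H_0 + K$ rigorous for the unbounded, strongly commuting generators and correctly extracting $\cH^0 \subeq \ker K$, together with verifying $P_0 \in \cM$ via affiliation. Once these are in place, the von Neumann-algebraic content of (ii) is the standard reduction/commutation theorem, and injectivity is a direct consequence of the generating property of the ground states.
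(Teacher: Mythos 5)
Your proof is correct, and it differs from the paper's mainly in being self-contained where the paper defers to external results. The paper's proof of (i) is a one-line citation of \cite[Prop.~5.4]{BGN20} applied to the $C^*$-algebra $\pi(G)''$, and its proof of (ii) observes that the generating property of $\cH^0$ forces the central support of $P_0$ in $\cM := \pi(G)''$ to be $\1$ and then quotes \cite[Lemma~3.14(iv)]{BGN20} for the isomorphism. You replace both citations with arguments built from material already in the paper: for (i) you start from the factorization $U_t = U^0_t W_t$ of Corollary~\ref{cor:borch}, use that $H_0$ and $K$ are non-negative and strongly commuting to conclude $\cH^0 \subeq \ker H_0 \cap \ker K$ via the joint spectral measure, and then combine centrality of $W$ with totality of $\pi(G)\cH^0$ to force $W_t = \1$ --- a clean direct route to minimality. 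For (ii), your injectivity argument (an operator in $\pi(G)'$ vanishing on $\cH^0$ annihilates the dense subspace $\lbr \pi(G)\cH^0\rbr$) is precisely the central-support-equals-$\1$ statement in disguise, and your surjectivity step is the standard commutation theorem $(P_0 \cM P_0)' = \cM' P_0$ (as algebras on $P_0\cH$) for a projection $P_0 \in \cM$; together these constitute exactly the content of the cited Lemma~3.14(iv). So the skeleton is identical --- prove (i), deduce $P_0 \in \cM$, then apply reduction theory --- but your version buys self-containedness within the paper's own toolkit (Corollary~\ref{cor:borch} plus textbook von Neumann algebra facts), at the cost of the unbounded-operator bookkeeping for $H = \oline{H_0 + K}$ that you correctly flag and that the joint spectral calculus of the commuting pair $(H_0,K)$ handles.
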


\begin{prf} (i) follows by applying \cite[Prop.~5.4]{BGN20} to the 
$C^*$-algebra $\pi(G)''$. 

\par\nin (ii) By (i), $U_ t= U^0_t \in \cM := \pi(G)''$, so that 
the orthogonal projection $P_0$ of $\cH$ onto $\cH^0$ is contained in $\cM$. 
That $\cH^0$ is generating under $\pi(G)$ shows that 
the central support of $P_0$ in $\cM$, i.e., 
the minimal central projection $Z$ with 
$Z P_0 = P_0$, is $\1$. 
Therefore \cite[Lemma~3.14(iv)]{BGN20} implies that 
the restriction map $R$ 
is an isomorphism onto the commutant of the von Neumann algebra  
$\cM^0 := P_0 \cM P_0 \supeq \pi^0(G^0)$, hence contained in~$\pi^0(G^0)'$. 
\end{prf}

We recall that a unitary representation $(U,\cH)$ of a group $G$ is called 
{\it factorial} if the von Neumann algebra $U(G)''$ is a factor, i.e., 
its center is trivial: $\cZ(U(G)'') = \C \1$. 

\begin{lem} Let $(\pi, U, \cH)$ be a factorial minimal positive energy 
representation and $U_t = e^{itH}$. 
If~$\cH^0 := \ker H$ is non-zero, then 
$\lbr \pi(G)\cH^0\rbr = \cH$, so that $(\pi,\cH)$ is a ground state representation. 
\end{lem}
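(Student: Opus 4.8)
The plan is to reduce the claim to a statement about the central support of the spectral projection $P_0$ onto $\cH^0 = \ker H$ inside the von Neumann algebra $\cM := \pi(G)''$, and then to exploit factoriality. First I would note that closed $\pi(G)$-invariant subspaces coincide with closed $\cM$-invariant subspaces: since $\pi(G)$ is stable under taking adjoints, the projection onto any closed $\pi(G)$-invariant subspace lies in $\pi(G)' = \cM'$, hence such a subspace is automatically $\cM$-invariant. Consequently
\[ \lbr \pi(G)\cH^0\rbr = \lbr \cM \cH^0\rbr = \lbr \cM P_0 \cH\rbr. \]
The orthogonal projection onto the right-hand side is exactly the central support $Z$ of $P_0$ in $\cM$, i.e.\ the smallest central projection with $Z P_0 = P_0$ — the same description already used in Proposition~\ref{prop:swallow}(ii). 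So it suffices to prove $Z = \1$.

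Next I would use minimality to locate $P_0$ and to interpret factoriality. By Definition~\ref{def:mini} we have $U = U^0$, and Theorem~\ref{thm:BAthm}(i) places $U^0$ inside $\cM = \pi(G)''$; thus $U_\R \subseteq \cM$, the selfadjoint generator $H$ is affiliated with $\cM$, and its spectral projection for the eigenvalue $0$, namely $P_0$, lies in $\cM$. In particular $P_0 \in \cM$, so its central support $Z \in \cZ(\cM)$ is well defined. Moreover, since $U_\R \subseteq \cM$ we have $\pi^\flat(G^\flat)'' = \cM$, so the hypothesis that $(\pi,U,\cH)$ is factorial says precisely that $\cM$ is a factor, i.e.\ $\cZ(\cM) = \C\1$.

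Finally, factoriality forces the central support to be trivial: the only central projections are $0$ and $\1$. As $\cH^0 \neq 0$ gives $P_0 \neq 0$, and $Z \geq P_0$, we cannot have $Z = 0$, whence $Z = \1$ and $\lbr \pi(G)\cH^0\rbr = \cH$, which is exactly the ground state condition. I do not anticipate a genuine obstacle here; the conclusion is essentially a one-line consequence of factoriality once the bookkeeping is in place. The only points demanding a little care are (a) confirming $P_0 \in \cM$, which rests on minimality through Theorem~\ref{thm:BAthm}(i), and (b) the identification of $\lbr \cM \cH^0\rbr$ with the range of the central support of $P_0$, which is the standard description of the $\cM$-cyclic subspace generated by $P_0\cH$.
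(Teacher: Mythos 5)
Your proof is correct and is essentially the same argument as the paper's: both use minimality (via Theorem~\ref{thm:BAthm}(i)) to place $U_\R$, and hence the projection $P_0$, inside $\cM = \pi(G)''$, and then use factoriality to force the central projection onto $\lbr \cM\cH^0\rbr$ to be $\1$. The paper phrases this by observing that $\lbr\pi(G)\cH^0\rbr$ is invariant under both $\cM$ and $\cM'$, hence under $(\cM\cup\cM')'' = B(\cH)$, while you phrase it via the central support of $P_0$; these are the same mechanism in different bookkeeping.
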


\begin{prf} The subspace $\cF := \lbr \pi(G)\cH^0\rbr$ is invariant 
under the von Neumann algebra $\cM := \pi(G)''$ generated by $\pi(G)$ 
because it is $\pi(G)$-invariant. 
Since $\cH^0$ is also invariant under $\cM'$, the same holds for $\cF$. 
Therefore $\cF$ is invariant under $(\cM \cup \cM')'' = B(\cH)$. 
Here we use that factoriality implies that 
$(\cM \cup \cM')' = \cM' \cap \cM'' = \C \1$. 
Invariance under $B(\cH)$ clearly entails $\cF = \cH$. 
\end{prf}

The following lemma is a useful elementary tool to verify the positive 
energy and the ground state condition for direct sums. 
\begin{lem} \label{lem:dirsum}
Let $(\pi, \cH)$ be a unitary representation of $G$ 
which is a direct sum of unitary subrepresentations $(\pi_j,\cH_j)_{j \in J}$. 
Then $(\pi,\cH)$ is a positive energy (ground state) representation for 
$(G,\alpha)$ if and only if all the representations $(\pi_j, \cH_j)_{j \in J}$ 
have this property.

In particular, every subrepresentation of a positive energy (ground state)
 representation inherits this property.  
\end{lem}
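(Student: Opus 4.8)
The plan is to prove both equivalences (positive energy and ground state) by passing to a distinguished implementing one-parameter group, namely the minimal one, and then transporting the generator and its kernel across the orthogonal decomposition $\cH = \bigoplus_j \cH_j$. The decisive structural fact is that each orthogonal projection $P_j$ onto $\cH_j$ lies in $\pi(G)' = \cM'$, where $\cM := \pi(G)''$; consequently $\cH_j$ is invariant not only under $\pi(G)$ but under all of $\cM$, hence under any implementing group that lives inside $\cM$. This is exactly why I would avoid an arbitrary extension $U$: a general implementing one-parameter group need not preserve the summands $\cH_j$, and this is the one genuine obstacle. It is removed by invoking Corollary~\ref{cor:borch} (resp.\ Theorem~\ref{thm:BAthm}), which supplies the minimal positive one-parameter group $U^0 = e^{itH_0} \in \cM$ with $H_0 \geq 0$.

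For the positive energy equivalence I would argue as follows. If $(\pi,\cH)$ is a positive energy representation, take $U^0 \in \cM$ as above; since $P_j \in \cM'$, each $\cH_j$ reduces $U^0$, hence $H_0$, so $H_0|_{\cH_j} \geq 0$ generates an implementing group for $(\pi_j,\cH_j)$, and the covariance relation~\eqref{eq:commrel} restricts to $\cH_j$ because both $\pi(g)$ and $U^0_t$ preserve it. Thus every summand is positive energy. Conversely, given positive energy summands with implementing positive generators $H_j$, I would set $U_t := \bigoplus_j e^{itH_j}$; this is a strongly continuous unitary one-parameter group (apply dominated convergence to $\|U_t v - v\|^2 = \sum_j \|e^{itH_j}v_j - v_j\|^2$), its generator $\bigoplus_j H_j$ is $\geq 0$, and it implements $\alpha$ summand by summand, so $(\pi,U,\cH)$ is a positive energy representation of $G^\flat$.

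For the ground state equivalence the same two constructions apply, and the key simplification is that I never need to compare competing generators: once a generating condition on a kernel is verified, Proposition~\ref{prop:swallow} forces the implementing group to be the minimal one, so it is automatically the correct one. From the whole to the parts, $(\pi,\cH)$ is minimal by Proposition~\ref{prop:swallow}, so $U = U^0 \in \cM$ preserves each $\cH_j$ and $\cH^0 = \ker H_0 = \bigoplus_j (\cH^0 \cap \cH_j)$. Writing $\cH_j^0 := \cH^0 \cap \cH_j$, the inclusions $\cK_j := \lbr \pi_j(G)\cH_j^0\rbr \subeq \cH_j$ give mutually orthogonal subspaces whose closed sum is $\lbr \pi(G)\cH^0\rbr = \cH = \bigoplus_j \cH_j$; applying $P_j$ gives $\cK_j = \cH_j$, so $(\pi_j, U^0|_{\cH_j}, \cH_j)$ is a ground state representation by definition. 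Conversely, if each $(\pi_j,\cH_j)$ is a ground state representation with minimal energy space $\cH_j^0$ and implementing group $e^{itH_j}$, then $U := \bigoplus_j e^{itH_j}$ has $\ker\big(\bigoplus_j H_j\big) = \bigoplus_j \cH_j^0$, and since $\lbr \pi(G)\cH_j^0\rbr = \lbr \pi_j(G)\cH_j^0\rbr = \cH_j$ for every $j$, this kernel generates $\cH$; hence $(\pi,U,\cH)$ is a ground state representation.

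Finally, the ``in particular'' clause is immediate: a closed $\pi(G)$-invariant subspace $\cH_1 \subeq \cH$ has invariant orthogonal complement $\cH_1^\perp$, so $(\pi,\cH)$ is the direct sum of the two subrepresentations on $\cH_1$ and $\cH_1^\perp$, and the equivalence just established (with $J = \{1,2\}$) shows that the subrepresentation on $\cH_1$ inherits the positive energy, resp.\ ground state, property.
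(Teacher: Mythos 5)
Your proposal is correct and follows essentially the same route as the paper: both directions rest on the Borchers--Arveson minimal implementing group $U^0 \subeq \pi(G)''$ (via Theorem~\ref{thm:BAthm} and Proposition~\ref{prop:swallow}), so that the summands $\cH_j$ reduce $U^0$, the forward implications follow by restriction and decomposition of $\ker H_0$, and the converse follows by forming the direct sum of implementing groups. Your added details (strong continuity of the direct-sum one-parameter group, and the explicit projection argument identifying $\lbr \pi_j(G)\cH_j^0\rbr$ with $\cH_j$) only make explicit what the paper leaves implicit.
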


\begin{prf} ``$\Rarrow$'': If $(\pi,\cH)$ is of positive energy, 
then $U^0_t \in \pi(G)''$ for every $t \in \R$ (Theorem~\ref{thm:BAthm}), 
so that the $U^0_t$  preserve all 
the subspaces $\cH_j$. Therefore its restriction to $\cH_j$ 
shows that all representations $(\pi_j,\cH_j)$ are of positive energy 
since the one-parameter groups $(U^0_t\res_{\cH_j})_{t \in \R}$ have positive spectrum. 

If, in addition, $(\pi,U, \cH)$ is a ground state representation, 
then Proposition~\ref{prop:swallow}(i) implies that $U =U^0$ is minimal, 
so that $U_\R \subeq \pi(G)''$. Therefore $\cH^0 = \ker H$ 
is invariant under the commutant $\pi(G)'$, and this implies that 
\[ \cH^0 = \oline{\sum_{j \in J} \cH_j \cap \cH^0} \cong \hat\bigoplus_{j \in J} \cH_j^0.\] 
As $\pi(G)\cH^0$ is total in $\cH$, projection to $\cH_j$ shows that 
$\pi_j(G)\cH_j^0$ is total in $\cH_j$, i.e., $(\pi_j,U_j,\cH_j)$ is a 
ground state representation. 

\nin``$\Larrow$'': If all representations $(\pi_j,\cH_j)$ are of positive energy, 
then $U_t\res_{\cH_j} := U_{j,t}^0$ defines a positive unitary one-parameter group 
with the correct commutation relations, so that $(\pi, \cH)$ is of positive energy. 

If all representations $(\pi_j,\cH_j)$ are ground state representations, 
then $\cH^0 = \ker H_0 = \hat\oplus_{j \in J} \cH_j^0$ satisfies 
$\lbr \pi(G) \cH^0 \rbr = \cH$, 
and therefore $(\pi,\cH)$ is a ground state representation. 
\end{prf}

In the following proposition we take a closer look at the special 
case where $\alpha$ is given by conjugation with a 
one-parameter subgroup $\gamma \: \R \to G$, i.e., by inner 
automorphism. 
Then the spectrum of $\pi \circ \gamma$ determines the positive energy  
properties of $(\pi,\cH)$. 

\begin{prop} {\rm(The inner case)} \label{prop:inner}
Let $\gamma \: \R \to G$ be a continuous one-parameter group and 
suppose that $\alpha$ is defined by 
\[ \alpha_t(g) = \gamma(t) g \gamma(t)^{-1} \quad \mbox{ for } \quad 
t\in \R, g \in G.\] 
We consider a unitary representation  $(\pi, \cH)$ of $G$ and 
write $\pi(\gamma(t)) = e^{itH}$ for $t \in \R, H^* = H$. 
Then the following assertions hold: 
\begin{itemize}
\item[\rm(i)] If $H$ is bounded from below, then there exists  
$c \in \R$ such that  $U_t := e^{it(H + c \1)}$ defines a positive energy 
representation $(\pi, U,\cH)$. 
\item[\rm(ii)] If $(\pi, \cH)$ is factorial, then 
it is of positive energy for $(G,\alpha)$ if and only if 
$H$ is bounded from below. 
\item[\rm(iii)] $(\pi,\cH)$ is a positive energy representation 
for $(G,\alpha)$ if and only if it is 
a direct sum of representations $(\pi_j, \cH_j)_{j \in J}$ 
for which the corresponding generators 
$H_j$ of $\pi_j \circ \gamma$ are bounded from below. 
\end{itemize}
\end{prop}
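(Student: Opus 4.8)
The plan is to exploit the defining feature of the inner case: since $\alpha_t = \Ad(\gamma(t))$, the covariance relation \eqref{eq:commrel} already holds for $V_t := e^{itH} = \pi(\gamma(t))$, because $\pi(\alpha_t(g)) = \pi(\gamma(t))\pi(g)\pi(\gamma(t))^{-1} = e^{itH}\pi(g)e^{-itH}$. Thus $e^{itH}$ itself is an implementer lying in $\pi(G)''$, and every other implementer differs from it by a one-parameter group in the commutant $\pi(G)'$. For (i), if $H$ is bounded below I set $c := -\inf\Spec(H)$, so $H + c\1 \geq 0$. Then $U_t := e^{it(H+c\1)} = e^{itc}e^{itH}$ differs from $e^{itH}$ only by the scalar $e^{itc}$, hence still satisfies \eqref{eq:commrel} and has positive generator, giving a positive energy representation.

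For the three converse directions the central tool is the element
\[ Z_t := e^{itH}(U^0_t)^{-1}, \]
where $U^0_t = e^{itH_0}$ is a positive-spectrum implementer inside $\cM := \pi(G)''$ furnished by Theorem~\ref{thm:BAthm}(i), which applies precisely because the positive energy hypothesis supplies a positive-spectrum implementer. I would check three points. First, $Z$ is a one-parameter group: since $U^0$ implements $\alpha$ and $\gamma$ is abelian, $\alpha_s(\gamma(t)) = \gamma(t)$ forces $U^0_s$ to commute with $e^{itH} = \pi(\gamma(t))$, and the cocycle computation then collapses to $Z_sZ_t = Z_{s+t}$. Second, $Z_t \in \pi(G)'$, because both $e^{itH}$ and $U^0_t$ implement $\alpha_t$, so $Z_t$ implements the identity. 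Third, $Z_t \in \cM$, since $e^{itH} \in \pi(G)$ and $U^0_t \in \cM$. Hence $Z_t \in \cM \cap \cM' = \cZ(\cM)$. Writing $Z_t = e^{itK}$ with $K$ affiliated to $\cZ(\cM)$, the strong commutativity of $K$ with $H_0$ gives $e^{itH} = e^{itK}e^{itH_0} = e^{it(K+H_0)}$, whence $H = H_0 + K$ with $H_0 \geq 0$.

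For (ii), factoriality means $\cZ(\cM) = \C\1$, so $K = s\1$ for a real scalar $s$ and $H = H_0 + s\1 \geq s\1$ is bounded below; the reverse implication is part (i). For (iii), the same identity $H = H_0 + K$ holds with $K$ now only affiliated to $\cZ(\cM)$. I would decompose using the central spectral projections $E_j := E_K([j,j+1))$, $j \in \Z$: these lie in $\cZ(\cM)$, hence commute with $\pi(G)$, so the subspaces $\cH_j := E_j\cH$ are $G$-invariant and $\pi = \hat\bigoplus_{j} \pi_j$. On each $\cH_j$ the central part $K$ is bounded with spectrum in $[j,j+1)$ while $H_0 \geq 0$, so the generator $H_j = H|_{\cH_j}$ of $\pi_j \circ \gamma$ equals $H_0|_{\cH_j} + K|_{\cH_j}$ and is bounded below by $j$. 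The converse direction of (iii) combines part (i) with Lemma~\ref{lem:dirsum}.

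The step I expect to be most delicate is the passage from the group identity $e^{itH} = e^{itK}e^{itH_0}$ to the operator identity $H = H_0 + K$ for the (generally unbounded) generators. This rests on the strong commutativity of $H_0$ and $K$, which I would justify by noting that $K$ is affiliated to the centre $\cZ(\cM)$, so its spectral projections commute with those of $H_0 \in \cM$; then $H_0 + K$ is essentially self-adjoint on a common core and generates $e^{itK}e^{itH_0}$, and uniqueness of the Stone generator forces $H = H_0 + K$. The verification that $Z$ is a genuine one-parameter group in the centre—where the abelianness of $\gamma$ enters through $\alpha_s(\gamma(t)) = \gamma(t)$—is routine but is exactly what makes the scheme close up.
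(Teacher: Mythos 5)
Your proposal is correct and follows essentially the same route as the paper: both arguments hinge on the Borchers--Arveson minimal implementer $U^0_t\in\pi(G)''$ and on the observation that the discrepancy between $\pi(\gamma(t))=e^{itH}$ and $U^0_t$ is a one-parameter group in the center $\cZ(\pi(G)'')$ (your $Z_t$ is the inverse of the paper's $V_t$ in \eqref{eq:2.10}), after which factoriality reduces it to a scalar for (ii) and the spectral decomposition of its generator yields the direct-sum decomposition for (iii). The only stylistic difference is that the paper sidesteps the unbounded-sum issue you flag by invoking the identity $H_n=H^0_n-Z_n$ only on the spectral subspaces where the central generator is bounded, which your decomposition handles equally well.
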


\begin{prf}  (i) is clear. 

\nin (ii) If $(\pi,U,\cH)$ is a positive energy representation, we have 
\begin{equation}
  \label{eq:2.10}
 U_t^0  = \pi(\gamma(t)) V_t \quad \mbox{ with } \quad 
V_t \in \cZ(\pi(G)'') = \pi(G)'' \cap \pi(G)'.
\end{equation}
If $\pi$ is factorial, i.e., $\cZ(\pi(G)'') = \C \1$, 
then $V_t = e^{ict}\1$ for some $c \in \R$. For 
$U^0_t = e^{itH^0}$, this implies that 
$\pi(\gamma(t)) = e^{it(H_0 - c \1)}$ has the generator 
$H_0 - c \1$ which is bounded from below by $- c \1$. 
The converse follows from (i). 

\nin (iii) Let $(\pi, \cH)$ be a positive energy representation 
and $U_t^0 = e^{itH} V_t$ be as in \eqref{eq:2.10}  with 
$V_t \in \cZ(\pi(G)'')$. If $V_t = e^{it Z}$, then all spectral 
subspaces $\cH_n := P^Z([n,n+1))$, $n \in \Z$, of $Z$ are 
$\pi(G)$-invariant, hence lead to subrepresentations 
$(\pi_n, \cH_n)_{n \in \N}$. On $\cH_n$ the restriction $Z_n$ of 
$Z$ has spectrum in $[n,n+1]$, hence is bounded. 
Therefore $H_n = H_n^0 - Z_n$ is bounded from below. 

If, conversely, the representations $(\pi_n, \cH_n)_{n \in \N}$ 
are such that the generators $H_n$ are bounded from below, say 
$H_n \geq c_n \1$, then $U^n_t := e^{it (H_n - c_n\1)}$ defines a covariant 
positive energy representation $(\pi_n, U^n, \cH_n)$. 
Now Lemma~\ref{lem:dirsum} implies that the direct sum also is a positive 
energy representation. 
\end{prf}

\begin{ex} \label{ex:trivial} (a) 
If the $\R$-action on $G$ is trivial, then every unitary 
representation $(\pi, \cH)$ of $G$ is a ground state 
representation with $\cH = \cH^0$ and $U^0_t = \1$ for $t \in \R$. 

\nin (b) If $(\pi, \cH)$ is a positive energy representation, 
then $\pi(Z(G))$ is pointwise fixed by $\Ad(U_t)$. In particular, 
for an abelian group $G$ and a faithful positive energy 
representation $\pi$, we have $\alpha_t = \id_G$ for $t \in \R$ 
(cf.\ \cite{Ne14}). 

Therefore the simplest class of groups with non-trivial 
positive energy representations are two-step nilpotent groups $G$, 
for which Heisenberg groups are the simplest examples. Then 
$\pi((G,G)) \subeq \T \1$ for every irreducible representation, 
which leads to oscillator groups $G^\flat$. 
We discuss this special case in Section~\ref{sec:4}. 
For these groups the existence of semibounded representations 
(cf.~Definition~\ref{def:momset}) and 
the existence of non-trivial ground state representations has been studied 
in  \cite{NZ13} and \cite{Ze14}. 
\end{ex}

\subsection{Strict ground state representations} 
\label{subsec:2.3}

We now identify a class of  ground state representations for 
which the subgroup $G^0$ is ``large'' in a suitable sense. 
This is the class of strict ground state representations, 
which are determined by the representation $(\pi^0, \cH^0)$ of~$G^0$. 
In Example~\ref{ex:4.7} we shall see a ground state representation 
which is not strict. 

\begin{defn} \label{def:strict}
We call a ground state representation 
$(\pi,\cH)$ for $(G,\alpha)$ {\it strict} if 
every operator on $\cH^0$ commuting with $\pi^0(G^0)$ 
extends to an operator on $\cH$ commuting with $\pi(G)$. 
In view of Proposition~\ref{prop:swallow}, this is equivalent to the 
following identity of the von Neumann algebras in $B(\cH^0)$: 
\[ \pi^0(G^0)'' = P_0 \pi(G)'' P_0.\] 
As $\pi(G)$ spans a weakly dense subspace of $\pi(G)''$, 
the von Neumann algebra $P_0 \pi(G)'' P_0 \subeq B(\cH^0)$ is generated by 
$P_0 \pi(G) P_0$ which always contains $\pi^0(G^0)$, hence also 
$\pi^0(G^0)''$. 
\end{defn}

\begin{problem} Suppose that $\alpha$ is periodic. Is every 
ground state representation of $(G,\alpha)$ strict?     
  \end{problem}

  \begin{rem} If the ground state representation 
$(\pi, \cH)$ of $(G,\alpha)$ is irreducible, then 
Schur's Lemma implies $\pi(G)'' = B(\cH)$ which leads to 
$P_0 \pi(G)'' P_0 = P_0 B(\cH) P_0 = B(\cH^0)$. Therefore 
$(\pi, \cH)$ is strict if and only if $(\pi^0, \cH^0)$ is also irreducible, 
i.e., $\pi^0(G^0)'' = B(\cH^0)$. 
      \end{rem}

\begin{rem}  \label{rem:2.14} 
Since von Neumann algebras are generated by their projections, 
a ground state representation is strict if and only if 
the map $\cF \mapsto \cF \cap \cH^0$ defines a bijection 
between the closed $\pi(G)$-invariant subspaces of $\cH$ 
and the the closed $\pi^0(G^0)$-invariant subspaces of~$\cH^0$. 
As this map is injective (cf.\ Proposition~\ref{prop:swallow}(i)), 
the main point is its surjectivity.
\end{rem}

\begin{ex} (An operator algebraic example of a strict ground state 
representation) 
Let $\cM \subeq B(\cH)$ be a von Neumann algebra 
and $G := \U(\cM)$ be its unitary group. 
For a unitary one-parameter group $U_t = e^{itH}$ in $\cM$ we obtain a 
continuous action on $G$ by $\alpha_t(g) = U_t g U_{-t}$ for $t \in \R, g \in \U(\cM)$. 
We assume that $H \geq 0$ and 
$U = U^0$ in the sense of the Borchers--Arveson Theorem 
(Theorem~\ref{thm:BAthm}). Then the identical representation of $G$ on $\cH$ is a 
positive energy representation. 

It is a ground state representation if and only if $\cH^0 = \ker H$ 
satisfies $\lbr \U(\cM) \cH^0 \rbr = \cH$, which is equivalent to 
$\lbr \cM\cH^0 \rbr = \cH$. For the projection $P_0$ onto $\cH^0$, 
which is contained in $\cM$, this is equivalent to its central 
support being equal to $\1$ (\cite[Lemma~3.14]{BGN20}). 

The group $G^0$ is the centralizer of $U_{\R}$ in $G = \U(\cM)$, hence 
contained in the subalgebra 
\[ P_0 \cM P_0 \oplus (\1 - P_0) \cM (\1 - P_0) \] 
and it contains the unitary group $\U(P_0 \cM P_0)$. 
We conclude that $\pi^0(G^0) = \U(P_0 \cM P_0)$, 
and the von Neumann algebra 
generated by this group is $P_0 \cM P_0 = P_0 G'' P_0$. 
Therefore the ground state representation of $(G,\alpha)$ on 
$\cH$ is strict. 
\end{ex}

\begin{prop} \label{prop:strictsubrep}
If $(\pi, \cH)$ is a strict ground state representation, 
then every subrepresentation is also a strict ground state representation. 
\end{prop}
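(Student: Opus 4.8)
The plan is to reduce strictness to the reformulation in Remark~\ref{rem:2.14} and then restrict the bijection supplied by the strictness of the ambient representation.

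First, a subrepresentation is by definition the restriction of $\pi$ to a closed $\pi(G)$-invariant subspace $\cK \subeq \cH$, and Lemma~\ref{lem:dirsum} already guarantees that $(\pi\res_\cK, \cK)$ is again a ground state representation; hence only strictness remains to be checked. Because $U_\R \subeq \pi(G)''$ by Proposition~\ref{prop:swallow}(i), the subspace $\cK$ is invariant under $U_\R$, so the minimal energy space of the subrepresentation is exactly $\cK^0 := \cK \cap \cH^0$ and its $G^0$-representation is $\pi^0\res_{\cK^0}$. Moreover, since $\cK^0$ is $\pi^0(G^0)$-invariant, a closed subspace $W \subeq \cK^0$ is invariant under $\pi^0(G^0)\res_{\cK^0}$ if and only if it is a $\pi^0(G^0)$-invariant subspace of $\cH^0$ contained in $\cK^0$.

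Next I would invoke Remark~\ref{rem:2.14}: strictness of $(\pi,\cH)$ says precisely that $\cF \mapsto \cF \cap \cH^0$ is a bijection between the closed $\pi(G)$-invariant subspaces of $\cH$ and the closed $\pi^0(G^0)$-invariant subspaces of $\cH^0$. The goal is to show that this bijection restricts to one between the closed $\pi(G)$-invariant subspaces contained in $\cK$ and the closed $\pi^0(G^0)$-invariant subspaces contained in $\cK^0$, which, by Remark~\ref{rem:2.14} applied now to the subrepresentation, is exactly strictness of $(\pi\res_\cK,\cK)$. The inclusion $\cE \subeq \cK$ trivially forces $\cE \cap \cH^0 \subeq \cK^0$, so the only real content is the converse: if $\cE$ is a closed $\pi(G)$-invariant subspace with $\cE \cap \cH^0 \subeq \cK^0$, then $\cE \subeq \cK$.

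For this converse I would use that both $\cE$ and $\cK$, being $\pi(G)$-invariant subspaces of a ground state representation, are themselves ground state representations (Lemma~\ref{lem:dirsum}) and are therefore generated by their ground states: $\cE = \lbr \pi(G)(\cE \cap \cH^0)\rbr$ and $\cK = \lbr \pi(G)\cK^0\rbr$. From $\cE \cap \cH^0 \subeq \cK^0$ we then get $\pi(G)(\cE \cap \cH^0) \subeq \pi(G)\cK^0 \subeq \cK$, and passing to closed spans yields $\cE \subeq \cK$. This containment-preserving property, which rests squarely on the generation-by-ground-states statement, is the main point; everything else is a transfer of the ambient bijection of Remark~\ref{rem:2.14}. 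Alternatively, one can argue directly with von Neumann algebras: writing $Q \in \pi(G)'$ for the projection onto $\cK$ and $P_0 \in \pi(G)''$ for the projection onto $\cH^0$, these two commute, the subrepresentation's algebra is the compression $Q\pi(G)''Q$ with minimal energy projection $QP_0$, and compressing the strictness identity $\pi^0(G^0)'' = P_0\pi(G)''P_0$ by the commuting projection $QP_0$ reproduces the strictness identity for $(\pi\res_\cK,\cK)$.
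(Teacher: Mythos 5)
Your proposal is correct, but your main argument takes a genuinely different route from the paper's. The paper proves strictness of the subrepresentation $(\rho,\cF)$ directly at the operator level: given $A \in \rho^0(G^0)'$ acting on $\cF^0$, it extends $A$ by zero to an operator $A' \in \pi^0(G^0)'$ on $\cH^0$, invokes strictness of $(\pi,\cH)$ to conclude that $A'$ commutes with $P_0\pi(G)P_0$, and then compresses by $P_\cF$ to see that $A$ commutes with $Q_0\rho(G)Q_0$, which generates the relevant von Neumann algebra. You instead work with the lattice of invariant subspaces: you use the reformulation of strictness in Remark~\ref{rem:2.14} and reduce everything to the containment statement that a closed $\pi(G)$-invariant subspace $\cE$ with $\cE\cap\cH^0 \subeq \cK^0$ must satisfy $\cE \subeq \cK$, which you correctly derive from the fact that $\cE$, being itself a subrepresentation of a ground state representation, is generated by its own ground states (Lemma~\ref{lem:dirsum}). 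Both arguments are valid and rest on the same foundations (Proposition~\ref{prop:swallow}, Lemma~\ref{lem:dirsum}, and the definition of strictness); the paper's extension-by-zero argument is shorter, while your lattice argument makes the geometric mechanism visible --- ground-state generation pins every invariant subspace to its ground-state part --- at the cost of an extra surjectivity step. Note also that your closing ``alternative'' von Neumann algebra argument, compressing the identity $\pi^0(G^0)'' = P_0\pi(G)''P_0$ by the commuting projection $QP_0$, is essentially the paper's own proof in disguise, since extending an operator by zero on the orthogonal complement of $\cF^0$ is precisely the inverse of that compression.
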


\begin{prf} Let $(\rho,\cF)$ be a subrepresentation of $(\pi,\cH)$ 
and $P_\cF$ denote the orthogonal projection onto~$\cF$.
Then $Q_0 := P_\cF P_0$ is the projection onto $\cF^0$, and 
Lemma~\ref{lem:dirsum} implies that $(\rho,\cF)$ is a ground state representation. 
Let $A \in B(\cF^0)$ commute with $\rho^0(G^0)$. 
Extending $A$ by $0$ on the orthogonal complement of $\cF^0$ in $\cH^0$, 
we obtain an operator $A' \in \pi^0(G^0)'$. 
This operator commutes with $P_0 \pi(G) P_0$ by strictness of $(\pi,\cH)$, 
and therefore $A$ commutes with 
$Q_0 \rho(G) Q_0 = P_\cF P_0 \pi(G) P_0 P_\cF$. 
Hence $(\rho, \cF)$ is also strict. 
\end{prf}

\begin{defn} We say that the pair $(G,\alpha)$ has the {\it unique 
extension property} if two ground state representations 
$(\pi_j, \cH_j)_{j = 1,2}$ for which the $G^0$-representations 
$(\pi_1^0, \cH_1^0)$ and 
$(\pi_2^0, \cH_2^0)$ are equivalent, 
the representations $\pi_1$ and $\pi_2$ are unitarily equivalent, 
that is, the following diagram commutes: 
\[ \mat{ 
(\pi_1^0, \cH_1^0) & \into & (\pi_1, \cH_1) \\
\mapdown{\forall\phi^0}  && \mapdown{\exists\phi} \\ 
(\pi_2^0, \cH_2^0) & \into & (\pi_2, \cH_2)}\] 
\end{defn}

The following lemma is a key to some of our main results below, in 
particular to Theorem~\ref{thm:5.3}. 

\begin{prop} 
\label{prop:strictvsuniqueext} 
{\rm(Strictness and unique extension)} 
A pair $(G,\alpha)$ has the unique extension property if and only if 
every ground state representation of $(G,\alpha)$ is strict. 
If this is the case, then, for ground state representations 
$(\pi_j, \cH_j)_{j =1,2}$ and any unitary $G^0$-equivalence 
$\Phi^0 \: \cH_1^0 \to \cH_2^0$, there exists a unique $G$-equivalence 
$\Phi \: \cH_1 \to \cH_2$ extending~$\Phi^0$.
\end{prop}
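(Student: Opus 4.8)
The plan is to run everything through the von Neumann algebra isomorphism $R \: \pi(G)' \to (P_0 \pi(G)'' P_0)'$, $A \mapsto P_0 A P_0$, provided by Proposition~\ref{prop:swallow}(ii). Passing to commutants in $B(\cH^0)$, I would first note that strictness of a ground state representation $(\pi,\cH)$ is precisely the surjectivity of $R$ onto $\pi^0(G^0)'$: both $\pi^0(G^0)''$ and the corner algebra $P_0\pi(G)''P_0$ are von Neumann algebras, so the defining equality $\pi^0(G^0)'' = P_0\pi(G)''P_0$ of Definition~\ref{def:strict} is equivalent to the equality of their commutants. The injectivity of $R$, valid for every ground state representation, will on the other hand deliver all uniqueness statements. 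I would also record at the outset the elementary fact that any $G$-equivalence between two ground state representations intertwines their minimal positive implementers $U^0 \in \pi(G)''$ (Proposition~\ref{prop:swallow}(i)), hence the generators $H$, and therefore maps the one minimal energy space onto the other; this is what lets me read off the restriction to $\cH^0$ of any intertwiner I construct.

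To show that the unique extension property forces strictness, I would fix a ground state representation $(\pi,\cH)$ and prove that $R$ is onto. Its range is a von Neumann subalgebra of $\pi^0(G^0)'$, and since every element of a von Neumann algebra is a linear combination of its unitaries, it is enough to hit each unitary $u_0 \in \pi^0(G^0)'$. Such a $u_0$ is a $G^0$-self-equivalence of $(\pi^0,\cH^0)$, so applying the unique extension property to the pair $((\pi,\cH),(\pi,\cH))$ with $\Phi^0 = u_0$ yields a unitary $\Phi \in \U(\cH)\cap\pi(G)'$ extending $u_0$. Since $\Phi$ preserves $\cH^0$ and restricts there to $u_0$, we get $R(\Phi) = P_0\Phi P_0 = u_0$, so $u_0$ lies in the range of $R$. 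As $u_0$ was arbitrary, $R$ is surjective and $(\pi,\cH)$ is strict.

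For the converse, which simultaneously yields the existence half of the final assertion, I would assume all ground state representations are strict and, given $(\pi_1,\cH_1)$, $(\pi_2,\cH_2)$ and a unitary $G^0$-equivalence $\Phi^0 \: \cH_1^0 \to \cH_2^0$, pass to $(\pi,\cH) := (\pi_1 \oplus \pi_2, \cH_1 \oplus \cH_2)$. By Lemma~\ref{lem:dirsum} this is a ground state representation with $\cH^0 = \cH_1^0 \oplus \cH_2^0$, and it is strict by hypothesis. Reading $\Phi^0$ as the off-diagonal partial isometry $V_0 \in B(\cH^0)$ from $\cH_1^0$ to $\cH_2^0$, a direct computation using that $\Phi^0$ intertwines gives $V_0 \in \pi^0(G^0)'$ with $V_0^* V_0 = P_{1,0}$ and $V_0 V_0^* = P_{2,0}$, the projections of $\cH^0$ onto $\cH_1^0$ and $\cH_2^0$. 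By strictness $V_0 = R(V)$ for a unique $V \in \pi(G)'$, and since $R$ is a $*$-isomorphism carrying the projections $P_1, P_2 \in \pi(G)'$ onto $P_{1,0}, P_{2,0}$, the element $V$ is a partial isometry with $V^*V = P_1$ and $VV^* = P_2$. Hence $\Phi := V\vert_{\cH_1} \: \cH_1 \to \cH_2$ is a unitary $G$-equivalence, and $R(V) = V_0$ together with the fact that $V$ preserves $\cH^0$ forces $\Phi\vert_{\cH_1^0} = \Phi^0$. This establishes the unique extension property and the existence of the claimed $\Phi$.

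Uniqueness of $\Phi$ then comes for free from injectivity of $R$: if $\Phi$ and $\Psi$ both extend $\Phi^0$, then $\Psi^*\Phi \in \pi_1(G)'$ restricts to $\id$ on $\cH_1^0$, so $R_1(\Psi^*\Phi) = \id = R_1(\id)$ and hence $\Psi^*\Phi = \id$. The step I expect to demand the most care is the transport of the partial-isometry relations through $R^{-1}$, together with the matching $P_i \leftrightarrow P_{i,0}$ and the check that the resulting intertwiner really restricts to the prescribed $\Phi^0$ on $\cH^0$; this is precisely where the identity $R(V)=V_0$ and the fact that intertwiners commute with $H$ enter, and it is the one point where surjectivity of $R$ (i.e.\ strictness), rather than mere injectivity, is indispensable.
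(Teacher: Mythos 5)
Your proposal is correct and follows essentially the same route as the paper: the forward direction extends unitaries of $\pi^0(G^0)'$ via the unique extension property applied to the pair $(\pi,\pi)$, and the converse passes to the direct sum $\pi_1\oplus\pi_2$ (a ground state representation by Lemma~\ref{lem:dirsum}, strict by hypothesis) and extends an intertwiner of the $G^0$-representations through the isomorphism $R$ of Proposition~\ref{prop:swallow}(ii). The only cosmetic difference is in the converse, where you extend the off-diagonal partial isometry $V_0$ and transport the relations $V_0^*V_0=P_{1,0}$, $V_0V_0^*=P_{2,0}$ through $R^{-1}$ (which requires the identification $R(P_i)=P_{i,0}$, valid since $P_0\in\pi(G)''$ commutes with $P_i\in\pi(G)'$), whereas the paper extends the self-adjoint swap unitary $W(\xi,\eta)=(V^*\eta,V\xi)$, whose extension is automatically unitary because $R$ is an injective $*$-homomorphism and maps $\cH_1$ onto $\cH_2$ by totality of $\rho(G)\cH_1^0$ in $\cH_1$.
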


\begin{prf} We first observe that the second statement on the uniqueness of 
$\Phi$ follows from the fact that we must have 
$\Phi(\pi_1(g)\xi) = \pi_2(g) \Phi^0(\xi)$ for 
$g \in G$, $\xi \in \cH_1^0$, and $\pi_1(G)\cH_1^0$ is total in~$\cH_1$.

We now show the first statement. Suppose first that 
$(G,\alpha)$ has the unique extension property 
and that $(\pi, \cH)$ is a ground state representation.  
In view of Proposition~\ref{prop:swallow}(ii), it suffice to show that 
$R$ is surjective. Since the von Neumann algebra 
$\pi^0(G^0)'$ is generated by its unitary 
elements $V$, it suffices to observe that the unique extension property 
implies that any such $V$ extends uniquely to an element $\tilde V \in \pi(G)'$. 
Therefore every ground state representation is strict. 

Now we assume that every ground state representation is strict. 
Let $(\pi_j, \cH_j)_{j = 1,2}$ be two ground state representations 
and $V \: \cH_1^0 \to \cH_2^0$ be a unitary $G^0$-equivalence. Then 
\[ W \: \cH_1^0 \oplus \cH_2^0 \to \cH_1^0 \oplus \cH_2^0, \quad 
W(\xi,\eta) := (V^*\eta, V\xi) \] 
is a unitary element in the commutant of 
$(\pi_1^0 \oplus \pi_2^0)(G^0)$. As  
$\rho := \pi_1 \oplus \pi_2$ is a ground state representation 
by Lemma~\ref{lem:dirsum}, it is strict. 
Hence there exists an element $\tilde W \in \rho(G)'$ extending~$W$. 
As the restriction map $R$ is an injective homomorphism of 
$*$-algebras, $\tilde W$ is unitary. 
Further, $W \cH_1^0 = \cH_2^0$ implies that 
$\tilde W\cH_1= \cH_2$, so that $\tilde V := \tilde W\res_{\cH_1}\: \cH_1 \to \cH_2$ 
is a unitary $G$-equivalence extending $V$. 
\end{prf}

  \begin{rem} If $(\pi, \cH)$ is a ground state representation 
and $P_0$ the orthogonal projection onto~$\cH^0$, then 
\[ \phi(g) := P_0 \pi(g) P_0 \in B(\cH^0) \] 
defines a positive definite $B(\cH^0)$-valued function on $G$ with 
\begin{equation}
  \label{eq:expectval}
\phi(h_1 g h_2) = \pi^0(h_1) \phi(g) \pi^0(h_2) \quad \mbox{ for } \quad 
g \in G, h_1, h_2 \in G^0
\end{equation}
and in particular $\phi\res_{G^0} = \pi^0$. 
The requirement $\lbr \pi(G)\cH^0 \rbr = \cH$ implies that the 
representation $(\pi, \cH)$ is equivalent to the 
GNS representation $(\pi_\phi, \cH_\phi)$ in the reproducing kernel 
Hilbert space $\cH_\phi \subeq C(G,\cH^0)$.  
We refer to Proposition~\ref{prop:gns} in 
Appendix~\ref{app:B} 
for a precise formulation of the vector-valued 
Gelfand--Naimark--Segal (GNS) construction.
From this perspective, the unique extension property asserts that 
the representation $(\pi^0, \cH^0)$ of $G^0$ determines the function 
$\phi$ if $\pi_\phi$ is a ground state representation with 
$\cH_\phi^0 = \cH^0$. 
\end{rem}

\section{Lie theoretic aspects} 
\label{sec:3}

To formulate necessary conditions for a representation 
$(\pi^0, \cH^0)$ to extend to a ground state representation for 
$(G,\alpha)$, it is instructive to take a closer look at the context 
of, possibly infinite dimensional, Lie groups. 

\subsection{Regularity conditions for actions on Lie groups} 

We assume that 
\begin{itemize}
\item[(L1)] $G$ is a Lie group with Lie algebra $\g$, modeled on a locally convex space (cf.\ \cite{Ne06}). 
\item[(L2)] $\alpha$ is smooth, so that $G^\flat$ is a Lie group. 
The  $\R$-action on its Lie algebra $\g = \Lie(G)$ 
is denoted by $\alpha^\g_t := \Lie(\alpha_t) \in \Aut(\g)$ 
and we write $D := \derat0 \alpha^\g_t  \in \der(\g)$ for the infinitesimal generator of 
this one-parameter group. 
\item[(L3)] $G^0$ is a Lie group with Lie algebra $\g^0 = \Fix(\alpha^\g)$. 
\item[(L4)] The subspace $\g_+ := \oline{D(\g)}$ complements $\g^0$ 
in the sense that $\g = \g^0 \oplus \g_+$ as topological vector spaces. 
Then $p_0 \: \g \to \g^0$ is an $\alpha^\g$-invariant continuous 
projection onto the subspace 
of fixed points. 
\end{itemize}
We define the {\it $\alpha$-cone in $\g^0$} by 
\begin{equation}
  \label{eq:cd}
C_\alpha :=  \oline\conv \{ p_0([Dx,x])  \: x \in \g\} \subeq  \g^0 
\end{equation}
for the closed convex cone generated by all elements $p_0([Dx,x])$. 

\begin{rem} \label{rem:3.1} (a) 
 If $\alpha^\g$ is continuous and periodic, i.e., $\alpha^\g_T = \id_\g$ for 
some $T > 0$, and $\g$ is complete, then 
\[ p_0(x) := \frac{1}{T} \int_0^T \alpha^\g_t(x)\, dt \] 
is a projection onto the subspace $\g^0$ of fixed points and (L4) is satisfied. 

\nin (b) If $\g$ is finite dimensional, (L4) 
means that the generalized $0$-eigenspace of $D$ 
coincides with the eigenspace. Clearly, this is the case if 
$D$ is semisimple. 
\end{rem}

\begin{rem} \label{rem:4.7}
(The diagonalizable case) 
Suppose that the one-parameter group $\alpha^\g$ of Lie algebra automorphisms 
can be written as $\alpha^\g_t = e^{t\ad \bd}$ for some $\bd \in \g$ 
for which $\ad \bd$ is diagonalizable on~$\g_\C$ with purely imaginary 
eigenvalues: 
\[ \g_\C = \bigoplus_{\lambda \in \R} \g_\C^\lambda(-i \bd) 
\quad \mbox{ with } \quad 
\g_\C^\lambda(-i\bd) 
= \{ z \in \g_\C \: [\bd,z] = i\lambda z \}.\] 
Defining $(x + iy)^* := -x + iy$ for $x,y \in \g$, we have 
$\g = \{ z \in \g_\C \: z^* = -z\}$ and 
\[ [\bd, \g] = [\bd,\g_\C] \cap \g 
= \Big\{ z = \sum_{\lambda \not=0} z_\lambda \: z^* = - z\Big\}.\] 
For $x = \sum_\lambda x_\lambda \in \g$ we then have 
$-x = x^* = \sum_\lambda x_\lambda^*$ with $x_\lambda^* = - x_{-\lambda}$. 
Write $p_0 \: \g_\C \to \g_\C^0$ for the fixed point projection introduced by (L4). 
It is given by the $0$-component $x_0 := p_0(x)$.  
\begin{equation}
  \label{eq:sumpos}
 p_0([[\bd,x],x])
= \sum_{\lambda, \mu} i p_0(\lambda [x_\lambda, x_\mu])
= \sum_{\lambda} i \lambda [x_\lambda, x_{-\lambda}]
= \sum_{\lambda} -i \lambda [x_\lambda, x_{\lambda}^*]
= 2i \sum_{\lambda > 0}  \lambda [x_\lambda^*, x_{\lambda}],
\end{equation}
where we have used that 
\[ (-\lambda) [x_{-\lambda}^*, x_{-\lambda}] 
=  (-\lambda) [- x_{\lambda}, - x_{\lambda}^*] 
=  \lambda [x_{\lambda}^*, x_{\lambda}].\]
We conclude that 
\begin{equation}
  \label{eq:c-alpha-root}
 C_\alpha = 
\oline\conv \{ i[x_{\lambda}^*, x_{\lambda}] \: \lambda > 0, 
x_\lambda \in \g_\C^\lambda(-i\bd)\}
\end{equation}
(see \eqref{eq:cd}). 

If $\alpha^\g$ is continuous and periodic and $\g$ is complete, 
then the sum of the eigenspaces is a dense subalgebra 
(Remark~\ref{rem:3.1}). 
\end{rem}

\begin{ex} (Twisted loop groups) 
Important examples where $\alpha$ is periodic arise as follows. 
Let $K$ be a Lie group with a complete Lie algebra $\fk$,  
and $\Phi \in \Aut(K)$ be of finite order $\Phi^N = \id_K$. 
We consider the {\it twisted loop group} 
\[ \cL_\Phi(K) := \{ \xi \in C^\infty(\R,K) \:  
(\forall x \in \R)\ \xi(t + 1) = \Phi^{-1}(\xi(t)) \}. \] 
This is a Lie group with Lie algebra 
\[ \cL_\phi(\fk) := \{ \xi \in C^\infty(\R,\fk) \:  
(\forall x \in \R)\ \xi(t + 1) = \phi^{-1}(\xi(t)) \}, \quad \mbox{ where } \quad 
\phi = \Lie(\Phi) \in \Aut(\fk)\] 
is the induced automorphism of the Lie algebra $\fk$ of~$K$.
Then 
\begin{equation}
  \label{eq:alphaong}
 (\alpha_t \xi)(x) := \xi(x + t) 
\end{equation}
defines a smooth action of $\R$ on $G := \cL_\Phi(K)$ with 
$\alpha_N = \id_G$. Therefore (L4) follows from 
Remark~\ref{rem:3.1}(a).
The infinitesimal generator of the automorphism group 
$\alpha_t^\g = \Lie(\alpha_t)$ (acting also by \eqref{eq:alphaong}) is given by 
$D\xi = \xi'$. The subgroup of $\alpha$-fixed points is the subgroup 
\[ G^0 \cong  K^\Phi \] 
of constant elements with values in the subgroup 
$K^\Phi$ of $\Phi$-fixed points in~$K$. If 
\[ \fk_\C^n  = \{ x \in \fk_\C \: \phi^{-1}(x) = e^{\frac{2\pi in}{N}} x\} 
 \quad \mbox{ for } \quad n \in \Z, \]
denotes the $\phi$-eigenspaces in $\fk_\C$, then 
$\fk_\C^n = \fk_\C^{n+N}$. Now 
\[  \cL_\phi(\fk_\C)^n 
= \fk_\C^{n} \otimes e_n,
\quad \mbox{ where } \quad
e_n(t) = e^{\frac{2\pi i nt}{N}}, \]
are the $D$-eigenspaces in $\cL(\fk_\C) \cong \cL(\fk)_\C$ 
corresponding to the eigenvalue 
$\frac{2\pi i n}{N}$. The expansion as Fourier 
series $x = \sum_{n \in \Z} x_n$ with 
$x_n \in \cL_\phi(\fk_\C)^n$ 
converges in $\cL_\phi(\fk_\C)$ by Harish--Chandra's Theorem 
(\cite[Thm.~4.4.21]{Wa72}) and 
\[ \im(D) = \Big\{ x = \sum_{n \in \Z} x_n \in \cL_\phi(\fk) \: x_0 = 0\Big\}.\]

From Remark~\ref{rem:4.7} we know that the cone $C_\alpha \subeq \fk^\phi$ is 
generated by the brackets 
\[ [(y_n \otimes e_n)^*, y_n \otimes e_n]  
= [y_n^* \otimes e_{-n}, y_n \otimes e_n]  
= [y_n^*, y_n] \otimes 1, \qquad y_n \in \fk_\C^n, n > 0.\] 
Therefore 
\[ C_\alpha = \oline\cone\{ i[y_n^*, y_n] \: n > 0, y_n \in \fk_\C^n\}.\] 
From $\fk_\C^{n+N} = \fk_\C^n$ it follows that, for $0 < n \leq N$, 
$z_{2N-n} := y_n^* \in \fk_\C^{2N-n}$ with $2N - n > 0$, and 
\[ [y_n^*,y_n] = [z_{2N-n}, z_{2N-n}^*] = -[z_{2N-n}^*, z_{2N-n}].\] 
Hence the cone $C_\alpha$ is a linear space which coincides 
with $\g^0\cap [\g,\g]$, which is an ideal in~$\g^0$. \\

To create a situation with a non-trivial cone $C_\alpha$, which 
by Theorem~\ref{thm:2.18} below is necessary for the existence of 
ground state representations with trivial 
kernel, one has to pass to a central extension of the loop algebras:
\[  \cL_\phi^\sharp(\fk) :=
\R \oplus_{\sigma} \cL_\phi(\fk) \quad \mbox{ where } \quad 
\sigma(\xi,\eta)
:=  \frac{1}{2\pi}\int_0^1 \kappa(\xi(t),\eta'(t))\, dt, \]
and the bracket is given by 
\[ [(t,\xi),(s,\eta)] = (\sigma(\xi,\eta),[\xi,\eta]).\]
Here $\kappa$ is a positive definite invariant bilinear form 
on $\fk$ which is also $\phi$-invariant. 
Then the elements $i[y_n^*,y_n]$ generating $C_\alpha$ have a non-trivial central component: 
\begin{align*}
 i\sigma(y_n^* \otimes e_{-n}, y_n \otimes e_n)
& = i\kappa(y_n^*, y_n) \frac{1}{2\pi} \int_0^1 e_{-n}(t) e_n'(t)\, dt 
 = i\kappa(y_n^*, y_n) \frac{1}{2\pi} \frac{2\pi i n}{N} 
 = -\kappa(y_n^*, y_n) \frac{n}{N}.
\end{align*}
For $a,b \in \fk$, the complex bilinear extension of $\kappa$ satisfies 
\[ -\kappa((a+ i b)^*, a + i b) 
= -\kappa(-a+ i b, a + i b) 
=  \kappa(a,a) + \kappa(b,b).\] 
Therefore all elements in $C_\alpha$ have a non-negative central 
component and $C_\alpha$ is non-trivial. 

If $\fk$ is abelian, this construction simply leads to a Heisenberg 
algebra, a class of examples discussed in Section~\ref{sec:4} below; 
see in particular \cite{SeG81} for the case where $\alpha$ is periodic. 
For more details on (twisted) loop groups with values in compact Lie groups, 
we refer to \cite{PS86}, \cite{Ne14b}, \cite{MN16, MN17}, \cite{JN21}. 
\end{ex}

\subsection{Momentum sets and positive cones}

\begin{defn} \label{def:smoothrep}
Let $G$ be a Lie group 
and $(\pi, \cH)$ be a unitary representation of $G$. 
An element $\xi \in \cH$ is called a {\it smooth vector} 
if its orbit map 
\[ \pi^\xi \: G \to \cH, \quad g \mapsto \pi(g)\xi \] 
is smooth. The smooth vectors form a $\pi(G)$-invariant 
subspace $\cH^\infty \subeq \cH$, 
and the representation $(\pi,\cH)$ is said to be {\it smooth} 
if $\cH^\infty$ is dense in $\cH$. 
This is always the case if $G$ is finite dimensional, 
but not in general (\cite{BN08}).

On  $\cH^\infty$ the 
derived representation $\dd\pi$ of the Lie algebra $\g = \Lie(G)$ 
is defined by 
\[ \dd\pi(x)v := \derat0 \pi(\exp tx)v.  \] 
For a smooth representation 
the invariance of $\cH^\infty$ under $\pi(G)$ implies that, 
for $x \in \g$, the operator $i \cdot\dd\pi(x)$ on $\cH^\infty$ 
is  essentially selfadjoint (cf.~\cite[Thm.~VIII.10]{RS75}) 
and that its closure,  coincides with the selfadjoint generator 
$i \partial \pi(x)$ of the unitary one-parameter group 
$\pi_x(t) := \pi(\exp tx)$, i.e., 
$\pi_x(t) = e^{t \partial \pi(x)}$ for $t \in \R$. 
\end{defn}

\begin{definition} \label{def:momset}
(a) Let $\bP({\cal H}^\infty) = \{ [v] := 
\C v \: 0 \not= v \in 
{\cal H}^\infty\}$ 
denote the projective space of the subspace ${\cal H}^\infty$ 
of smooth vectors. The map 
\[  \Phi_\pi \: \bP({\cal H}^\infty)\to \g' \quad \hbox{ with } \quad 
\Phi_\pi([v])(x) 
=  \frac{\la v, -i \cdot\dd\pi(x)v \ra}{\la v, v \ra} \] 
is called the {\it momentum map of the unitary representation $\pi$}.  
The operator $i\cdot\dd\pi(x)$ is symmetric so
that the right hand side is real, and since $v$ is a smooth vector, 
it defines a continuous linear functional on $\g$. 
We also observe that we have a natural action of $G$ on 
$\bP(\cH^\infty)$ by $g.[v] := [\pi(g)v]$, and the relation 
$$ \pi(g) \dd\pi(x) \pi(g)^{-1} = \dd\pi(\Ad(g)x) $$
immediately implies that $\Phi_\pi$ is equivariant with respect 
to the coadjoint action of $G$ on the topological dual space $\g'$.

\nin (b) The weak-$*$-closed convex hull 
$I_\pi \subeq \g'$ of the image of $\Phi_\pi$ is called the 
{\it (convex) momentum set of $\pi$}. In view of the equivariance 
of $\Phi_\pi$, it is an $\Ad^*(G)$-invariant weak-$*$-closed 
convex subset of~$\g'$. 

\nin (c) 
The momentum set $I_\pi$ 
provides complete information on the extreme spectral 
values of the selfadjoint operators $i\cdot\partial \pi(x)$: 
\begin{equation}
  \label{eq:momspec}
\sup(\Spec(i\partial \pi(x))) = s_{\pi}(x):= \sup  \la I_\pi,- x \ra 
 \quad \mbox{ for } \quad x \in \g 
\end{equation}
(cf.\ \cite[Lemma 5.6]{Ne08}). 
This relation shows that $s_\pi$ is the {\it support functional} 
of the convex subset $I_\pi \subeq \g'$, which implies that 
it is lower semicontinuous and convex. It is obviously positively homogeneous. 
The {\it semibounded} unitary representations are those for which 
the set $I_\pi$ 
is {\it  semi-equicontinuous} in the sense that its support function 
$s_\pi$ is bounded in a neighborhood of some $x_0 \in \g$.

The closed convex cone 
\begin{equation}
  \label{eq:poscon}
 C_\pi := \{ x \in \g \: -i \partial \pi(x) \geq 0\} 
\ {\buildrel\eqref{eq:momspec}\over =}\ I_\pi^\star := \{ x \in \g \: (\forall \alpha \in I_\pi)\ \alpha(x) \geq 0\} 
\end{equation}
is called the {\it positive cone of $\pi$}.
\end{definition}

\begin{defn} We call a ground state representation $(\pi,\cH)$ 
of the Lie group $G$ {\it smooth} if the subspace 
\[ \cH^{0,\infty} := \cH^0 \cap \cH^\infty \] 
is dense in $\cH^0$. This implies in particular that 
the representation $(\pi^0,\cH^0)$ of $G^0$ is also smooth.   
\end{defn}

\begin{lem} \label{lem:smoothground} 
If {\rm(L1/2)} hold and $(\pi, \cH)$ is a smooth ground state 
representation of $G$, then 
the extended representation $(\pi^\flat,\cH)$ of $G^\flat$ is smooth. 
\end{lem}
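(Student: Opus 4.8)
The plan is to produce a total set of $\pi^\flat$-smooth vectors in $\cH$, which by definition is exactly what smoothness of $(\pi^\flat,\cH)$ requires. The natural candidate is $\pi(G)\cH^{0,\infty}$, and the decisive observation is that a ground state vector which is smooth for the $G$-representation $\pi$ is automatically smooth for the whole group $G^\flat$. The reason is that on $\cH^0 = \ker H$ the one-parameter group $U$ acts trivially, so the $\R$-direction of $G^\flat$ contributes nothing to the orbit map and no question of joint smoothness in the two directions arises.

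First I would verify that $\cH^{0,\infty}$ is contained in the space $\cH^{\flat,\infty}$ of $\pi^\flat$-smooth vectors. Fix $\xi \in \cH^{0,\infty} = \cH^0 \cap \cH^\infty$. Since $\xi \in \ker H$, functional calculus gives $U_t\xi = e^{itH}\xi = \xi$ for all $t$, so that $\pi^\flat(g,t)\xi = \pi(g)U_t\xi = \pi(g)\xi$. The $\pi^\flat$-orbit map of $\xi$ therefore equals $\pi^\xi \circ \pr_G$, where $\pr_G \: G^\flat \to G$ is the projection of the product manifold underlying $G^\flat = G \rtimes_\alpha \R$ and $\pi^\xi \: g \mapsto \pi(g)\xi$ is the $G$-orbit map, which is smooth precisely because $\xi \in \cH^\infty$. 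As a composite of smooth maps between manifolds modelled on locally convex spaces this is smooth, so $\xi \in \cH^{\flat,\infty}$. Since $\pi^\flat$-smooth vectors form a $\pi^\flat(G^\flat)$-invariant subspace and $G \into G^\flat$, $g \mapsto (g,0)$, it follows that $\pi(G)\cH^{0,\infty} \subeq \cH^{\flat,\infty}$.

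Next I would establish density. By hypothesis $\cH^{0,\infty}$ is dense in $\cH^0$, and continuity of each unitary $\pi(g)$ gives $\pi(g)\cH^0 \subeq \oline{\pi(g)\cH^{0,\infty}}$; taking spans and closures and invoking the ground state condition $\lbr \pi(G)\cH^0\rbr = \cH$ yields $\lbr \pi(G)\cH^{0,\infty}\rbr = \cH$. Combined with the previous step, $\cH^{\flat,\infty}$ contains a total subset and is therefore dense, which is exactly the assertion that $(\pi^\flat,\cH)$ is smooth.

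The argument is short, and the one place to be careful is the reduction of $\pi^\flat$-smoothness to $\pi$-smoothness: it is precisely the triviality of $U$ on $\ker H$ that lets us replace the full orbit map by $\pi^\xi \circ \pr_G$, bypassing any delicate question of joint smoothness in the $G$- and $\R$-directions. I expect this to be the main (and essentially the only) subtlety, and it must be phrased in terms of the Bastiani calculus underlying (L1). Notably, the minimality of $U$ and the Borchers--Arveson machinery of Section~\ref{sec:2} play no role here.
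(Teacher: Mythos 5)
Your proof is correct and follows essentially the same route as the paper's: show $\cH^{0,\infty}\subeq \cH^\infty(G^\flat)$ (which the paper asserts without detail and you justify via $U_t\xi=\xi$ on $\ker H$, so the orbit map factors through the projection $G^\flat\to G$), then use $\pi(G)$-invariance of the space of $G^\flat$-smooth vectors together with the ground state condition to get density. The only difference is that you spell out the first step, which the paper leaves implicit.
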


\begin{prf} The assumptions imply that 
$\cH^{0,\infty}$ is contained in the space $\cH^\infty(G^\flat)$ of 
smooth vectors for~$G^\flat$. As $\pi(G)\cH^{0,\infty}$ is total in $\cH$ 
by the ground state condition 
and $\cH^\infty(G^\flat)$ is $\pi(G)$-invariant, 
this subspace is dense. 
\end{prf}

\begin{defn} \label{def:equicont} 
Let $E$ be a locally convex space and 
let $\alpha:\R\rightarrow \GL(E)$, $t\mapsto \alpha_t$ 
be a group homomorphism. Then $\alpha$ is called 
\begin{itemize}
\item[(a)] \textit{equicontinuous}, if the subset $\{\alpha_t: t\in \R\}\subset \End(E)$ is equicontinuous (cf.~Definition~\ref{equicontdef}).
\item[(b)] \textit{polynomially bounded}, if for every continuous seminorm 
$p$ on $E$, there exists a $0$-neighborhood $U\subeq E$ 
and $N \in \N$ such that 
\[ \sup_{x \in U} \sup_{t \in \R} \frac{p(\alpha_t(x))}{1 + |t|^N} < \infty.\] 
\end{itemize}
\end{defn}

\begin{rem} \label{rem:c-polybound}
If $E$ is finite dimensional, 
then $\alpha$ is polynomially bounded if and only if the spectrum of 
its infinitesimal generator 
$A$ is purely imaginary. 
However, for an infinite dimensional Hilbert space $\cH$,  
there exists is a one-parameter group $\alpha:\R\rightarrow \GL(\cH)$ with $\|\alpha_t\|=e^{|t|}$ whose generator has purely imaginary spectrum, 
cf. \cite[Example 1.2.4]{vN96}.
\end{rem}

From \cite[Prop.~3.2]{NSZ15} we quote the following sufficient 
condition for the density of $\cH^{0,\infty}$ in~$\cH^0$.  

\begin{prop} Suppose that 
  \begin{itemize}
  \item the one-parameter group 
$(\alpha^\g_t)_{t \in \R}$ of Lie algebra automorphisms 
is polynomially bounded, 
  \item  $(\pi, \cH)$ is a smooth 
positive energy representation of $(G,\alpha)$, and
  \item there exists an 
$\eps > 0$ such that $\Spec(-i\partial \pi(\bd)) \cap [0,\eps] = \{0\}$ 
(spectral gap condition), 
  \end{itemize}
then $\cH^{0,\infty}$ is dense in $\cH^0$.  
\end{prop}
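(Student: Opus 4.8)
The plan is to realize the orthogonal projection $P_0$ onto $\cH^0$ as a smoothing operator built from the one-parameter group generated by $\bd$, and to show that this operator maps $\cH^\infty$ into $\cH^{0,\infty}$; density then follows at once. Write $H := -i\partial\pi(\bd)$, so that $u_t := \pi(\exp t\bd) = e^{itH}$ and, by the positive energy assumption, $H \geq 0$ with $\cH^0 = \ker H$. The spectral gap condition says that $0$ is isolated in $\Spec(H)$, with $\Spec(H)\setminus\{0\}\subseteq(\eps,\infty)$. Hence, choosing $f \in C^\infty_c(\R)$ with $\supp f \subseteq (-\eps,\eps)$ and $f(0)=1$, functional calculus gives $f(H) = P_0$. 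Writing $f$ as the Fourier transform of a Schwartz function $\check f$, one obtains the norm-convergent representation
\[ P_0 = f(H) = \int_\R \check f(t)\, u_t\, dt, \]
in which $\check f$ is rapidly decreasing. I expect the choice of $f$ and this identity to be routine once the gap is recorded.

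The heart of the argument is to show that $P_0\xi \in \cH^\infty$ for every $\xi \in \cH^\infty$. Since $\pi$ is a homomorphism, $u_t$ satisfies the covariance relation $u_t \pi(g) u_{-t} = \pi(\alpha_t(g))$, so the orbit map of $P_0\xi$ can be rewritten as
\[ \pi(g) P_0 \xi = \int_\R \check f(t)\, u_t\, \pi(\alpha_{-t}(g))\xi \, dt. \]
The plan is to differentiate under the integral sign. Each derivative in directions $x_1,\dots,x_n \in \g$ at $g = e$ produces the integrand $\check f(t)\, u_t\, \dd\pi(\alpha^\g_{-t}(x_1))\cdots\dd\pi(\alpha^\g_{-t}(x_n))\xi$, using $\alpha_{-t}(\exp sx) = \exp(s\,\alpha^\g_{-t}(x))$. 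Because $\xi$ is smooth, the multilinear map $(y_1,\dots,y_n)\mapsto \dd\pi(y_1)\cdots\dd\pi(y_n)\xi$ is continuous, so its value is bounded by a product $\prod_j q(y_j)$ of continuous seminorms; applying polynomial boundedness of $\alpha^\g$ to $q$ gives $q(\alpha^\g_{-t}(x_j)) \leq C(1+|t|)^N p(x_j)$ for suitable $C, N$ and a continuous seminorm $p$. Since $u_t$ is unitary, the integrand is therefore dominated by $|\check f(t)|\, C^n (1+|t|)^{nN} \prod_j p(x_j)$, which is integrable in $t$ and locally bounded in the $x_j$ precisely because $\check f$ is rapidly decreasing. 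This justifies differentiation under the integral to all orders and shows that the orbit map of $P_0\xi$ is smooth; hence $P_0\xi \in \cH^\infty$, and as $P_0\xi \in \cH^0$ we get $P_0\xi \in \cH^{0,\infty}$.

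Finally, the density is immediate: $\cH^\infty$ is dense in $\cH$ because $(\pi,\cH)$ is smooth, and $P_0$ is a bounded operator with $P_0\cH = \cH^0$, so $P_0\cH^\infty$ is dense in $\cH^0$; since $P_0\cH^\infty \subseteq \cH^{0,\infty}$ this gives density of $\cH^{0,\infty}$ in $\cH^0$. The main obstacle is the uniform control carried out in the second paragraph: justifying differentiation under the integral requires estimating the iterated derivatives of the orbit map uniformly in $t$, and it is exactly here that all three hypotheses are used together --- positivity and the spectral gap to express $P_0$ through the Schwartz multiplier $\check f$, and polynomial boundedness of $\alpha^\g$ to ensure that the polynomial growth of $\dd\pi(\alpha^\g_{-t}(x_j))\xi$ is dominated by the decay of $\check f$. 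Without the gap the naive ergodic average $\frac{1}{T}\int_0^T u_t\, dt$ would still converge to $P_0$ but would lack the Schwartz decay needed to preserve smoothness.
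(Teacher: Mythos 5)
The paper itself contains no proof of this proposition: it is quoted verbatim from \cite[Prop.~3.2]{NSZ15}, so the only possible comparison is with that reference's argument. Your proof is correct and reconstructs essentially that argument: the spectral gap lets you realize $P_0$ as the smoothing operator $f(H)=\int_\R \check f(t)\,U_t\,dt$ with $\check f$ Schwartz, and the covariance relation combined with polynomial boundedness of $\alpha^\g$ yields the integrable domination showing that such operators map $\cH^\infty$ into itself (this invariance property is precisely the main theorem of \cite{NSZ15}), after which density is immediate. The one point a full writeup must handle is that smoothness of the orbit map $g\mapsto \pi(g)P_0\xi$ has to be verified in the Michal--Bastiani sense, i.e.\ with continuity of all iterated differentials jointly in $(g,x_1,\dots,x_n)$ and not only existence of derivatives at $g=e$; your estimates do deliver this, since unitarity of $\pi(\alpha_{-t}(g))$ makes the dominating bound independent of $g$, so dominated convergence gives both existence and joint continuity of the differentials.
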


Let $\bd := (0,1)\in \g^\flat = \g \rtimes_D \R$ be the element implementing 
$D$, so that $Dx = [\bd ,x]$ for $x \in \g$.
In the setting specified above, we formulate 
in the following theorem a necessary 
positivity condition that a representation $(\pi^0, \cH^0)$ 
arising in a ground state representation of $(G,\alpha)$ has to satisfy.

\begin{thm} \label{thm:2.18} 
{\rm($C_\alpha$-positivity Theorem)} Suppose that {\rm(L1-4)} are satisfied. 
If $(\pi, \cH)$ is a smooth ground state representation of the Lie group~$G$, 
then the cone $C_\alpha$ introduced in \eqref{eq:cd} satisfies 
\begin{equation}
  \label{eq:poscond}
C_\alpha \subeq C_{\pi^0} = \{ x \in \g^0 \: -i \partial \pi^0(x) \geq 0\}.
\end{equation}
\end{thm}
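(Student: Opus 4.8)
The plan is to reduce the cone inclusion to a pointwise positivity statement tested against smooth ground states. Since $C_{\pi^0}$ is a closed convex cone and $C_\alpha$ is, by \eqref{eq:cd}, the closed convex cone generated by the elements $p_0([Dx,x])$ with $x \in \g$, it suffices to prove $p_0([Dx,x]) \in C_{\pi^0}$ for each such $x$; that is, that the self-adjoint generator $-i\,\partial\pi^0(p_0([Dx,x]))$ on $\cH^0$ is non-negative. This operator is essentially self-adjoint on the dense subspace $\cH^{0,\infty}$ (a core), so it is enough to verify $\la \xi, -i\,\dd\pi^0(p_0([Dx,x]))\,\xi\ra \geq 0$ for every $\xi \in \cH^{0,\infty}$ and to pass to the closure afterwards.

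Fix such a $\xi$. By Lemma~\ref{lem:smoothground} it is a smooth vector for $\pi^\flat$, so that $\eta := \dd\pi(x)\xi = \dd\pi^\flat(x)\xi$ again lies in $\cH^\infty(G^\flat)$ and all operator products below are defined and stable on that invariant domain. Two facts drive the computation. First, $\xi \in \cH^0 = \ker H$ gives $\dd\pi^\flat(\bd)\xi = iH\xi = 0$. Second, the homomorphism property $\dd\pi^\flat([\bd,x]) = [\dd\pi^\flat(\bd),\dd\pi^\flat(x)]$ together with $[\bd,x] = Dx$, applied to $\xi$ and using the first fact, yields
\[ \dd\pi(Dx)\,\xi = \dd\pi^\flat(\bd)\,\eta = iH\eta. \]

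Next I would discard the component of $[Dx,x]$ in $\g_+$. For any $w \in \g$ the displayed identity gives $\la\xi, -i\,\dd\pi(Dw)\xi\ra = \la H\xi, \dd\pi(w)\xi\ra = 0$; since $z \mapsto \la\xi,-i\,\dd\pi(z)\xi\ra$ is a continuous linear functional on $\g$ ($\xi$ being smooth, cf.\ Definition~\ref{def:momset}), it vanishes on all of $\g_+ = \oline{D(\g)}$. As $p_0$ has kernel $\g_+$ and $\dd\pi^0$ agrees with $\dd\pi$ on $\g^0$ over $\cH^0$, this lets one replace $p_0([Dx,x])$ by $[Dx,x]$:
\[ \la\xi, -i\,\dd\pi^0(p_0([Dx,x]))\,\xi\ra = \la\xi, -i\,\dd\pi([Dx,x])\,\xi\ra. \]
For the right-hand side I would write $-i\,\dd\pi([Dx,x]) = T + T^*$ with $T := -i\,\dd\pi(Dx)\dd\pi(x)$. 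Using that $-i\,\dd\pi(Dx)$ is symmetric and that $-i\,\dd\pi(Dx)\xi = H\eta$ (from the previous display), one finds $\la\xi, T\xi\ra = \la -i\,\dd\pi(Dx)\xi, \eta\ra = \la H\eta,\eta\ra$, whence
\[ \la\xi, -i\,\dd\pi([Dx,x])\,\xi\ra = 2\,\Re\,\la\xi, T\xi\ra = 2\,\la \dd\pi(x)\xi,\, H\,\dd\pi(x)\xi\ra \geq 0, \]
the final inequality being precisely the positive energy hypothesis $H \geq 0$.

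Combining the last two displays gives the desired form positivity on the core $\cH^{0,\infty}$, hence $p_0([Dx,x]) \in C_{\pi^0}$, and taking the generated closed convex cone yields $C_\alpha \subeq C_{\pi^0}$. The commutator and adjoint bookkeeping are routine; the parts that need genuine care are the domain issues in the infinite-dimensional setting. Concretely, Lemma~\ref{lem:smoothground} is what guarantees that $\xi$ and $\eta$ lie in the domain of $H$ and in a common $\dd\pi^\flat(\g^\flat)$-invariant core, so that products such as $\dd\pi(Dx)\dd\pi(x)$ act there, and one must still argue that positivity of the quadratic form on this core propagates to the self-adjoint operator. The single step that truly uses the closure in (L4), rather than merely the algebraic image $D(\g)$, is the extension of the vanishing functional to $\g_+ = \oline{D(\g)}$, which rests on the continuity of the momentum functional.
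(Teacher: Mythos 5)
Your proof is correct and follows essentially the same route as the paper: both arguments test positivity on the core $\cH^{0,\infty}$, kill the $\g_+$-component of $[Dx,x]$ using $H\xi=0$ (equivalently $U_t\xi=\xi$) together with continuity of the momentum functional, and conclude via essential selfadjointness of $i\,\dd\pi^0(x_0)$ on $\cH^{0,\infty}$. The only difference is cosmetic: where the paper obtains $-i\la\xi,\partial\pi([Dx,x])\xi\ra\geq 0$ by differentiating the non-negative function $f(t)=\la\pi(\exp tx)\xi,H\pi(\exp tx)\xi\ra$ twice at its minimum $t=0$, you derive the same inequality in the explicit form $-i\la\xi,\dd\pi([Dx,x])\xi\ra = 2\la\dd\pi(x)\xi,H\,\dd\pi(x)\xi\ra$ by direct commutator algebra, which is exactly what $f''(0)$ evaluates to.
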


\begin{prf} Let $\xi \in \cH^0 \cap \cH^\infty$, $t \in \R$ and $x \in \g$. 
Then $\pi(\exp tx)\xi$ is a smooth vector for $G^\flat$, 
hence contained in the domain of the infinitesimal generator $H$ of~$U$, and we have 
\[ f(t) := \la \pi(\exp tx) \xi, H \pi(\exp tx)\xi \ra \geq 0 
\quad \mbox{ for } \quad t \in \R \] 
because $H\geq 0$. 
As $f(0) = \la \xi, H \xi \ra = 0$, we also have 
\[ f'(0) = \la \partial \pi(x) \xi, H \xi \ra 
+ \la H \xi, \partial \pi(x) \xi \ra = 0\] 
and $f''(0) \geq 0$, and this is what we shall exploit. 
To this end, we rewrite $f$ as 
\begin{align*}
 f(t) 
&= \la \xi, \pi(\exp -tx)  H \pi(\exp tx)\xi \ra 
= \la \xi, \pi(\exp -tx) (-i\partial \pi^\flat(\bd)) \pi(\exp tx)\xi \ra \\
&= -i \la \xi, \partial \pi^\flat(e^{-t \ad x} \bd)\xi \ra.
\end{align*}
This immediately leads to 
\begin{equation}
  \label{eq:pos1}
 0 \leq f''(0) 
= -i \la \xi, \partial \pi([x,[x,\bd]]) \xi \ra
= -i \la \xi, \partial \pi([Dx,x]) \xi \ra.
\end{equation}

Next we observe that, for $y \in \g$, we have 
\begin{align*}
 \la \xi, \partial \pi(Dy) \xi \ra 
&= \frac{d}{dt}\Big|_{t = 0} 
 \la \xi,\partial \pi(e^{t D} y) \xi \ra 
= \frac{d}{dt}\Big|_{t = 0} 
 \la \xi, U_t \partial \pi(y) U_{-t} \xi \ra \\
&= \frac{d}{dt}\Big|_{t = 0} 
 \la U_{-t} \xi,  \partial \pi(y) U_{-t} \xi \ra 
= \frac{d}{dt}\Big|_{t = 0} 
 \la  \xi,  \partial \pi(y)  \xi \ra = 0
\end{align*}
because $U_t \xi = \xi$ for all $t \in \R$. 
Therefore $\la \xi, \partial \pi(z) \xi \ra = 0$ for $z \in \g_+$, and 
thus (L4) entails 
\[  \la \xi, \partial \pi([Dx,x]) \xi \ra
=  \la \xi, \partial \pi(p_0([Dx,x])) \xi \ra
=  \la \xi, \partial \pi^0(p_0([Dx,x])) \xi \ra. \] 
Since $\cH^{0,\infty}$ is dense in $\cH^0$ and invariant under $\pi^0(G^0)$, 
it follows that the operators 
$i\cdot\partial \pi^0(x_0)\res_{\cH^{0,\infty}}$, $x_0 \in \g^0$, are 
essentially selfadjoint with closure equal to $i\cdot\partial \pi^0(x_0)$. 
We therefore obtain 
\[ -i \partial \pi^0(p_0([Dx,x])) \geq 0 \quad \mbox{ for every } \quad 
x \in \g\] 
and thus $C_\alpha \subeq C_{\pi^0}$. 
\end{prf}

\begin{ex} In the context of Remark~\ref{rem:4.7}, 
for the  representation of $G^0$ on $\cH^0$, the condition  
$C_\alpha  \subeq C_{\pi_0}$ in Theorem~\ref{thm:2.18} 
is equivalent to 
\begin{equation}
  \label{eq:brackdrel}
 \partial \pi^0([x_\lambda^*, x_\lambda]) \geq 0 \quad \mbox{ for } \quad 
\lambda > 0, \quad \mbox{ and} \quad 
 [\bd, x_\lambda] = i \lambda x_\lambda, x_\lambda \in \g_\C.
\end{equation}
\end{ex}

In the following we shall encounter various circumstances, where 
\eqref{eq:poscond} is also sufficient for a representation 
$(\pi^0, \cH^0)$ of $G^0$ to extend to a ground state representation 
of~$G$.
In some cases we can derive a positivity property 
similar to \eqref{eq:poscond} from the positive energy condition. 
The idea for the following proposition 
is taken from \cite{JN21}. 

\begin{prop} \label{prop:peineq}
Let $(\pi, U,\cH)$ be a positive energy representation of 
$(G,\alpha)$ for which the extension to $G^\flat$ is smooth. 
If $x \in \g$ satisfies $(\ad x)^2 Dx = 0$, then 
\begin{equation}
  \label{eq:osciposcond}
 -i \partial \pi([Dx,x]) \geq 0.
\end{equation}
\end{prop}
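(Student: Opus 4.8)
The plan is to run the same variational argument as in the proof of Theorem~\ref{thm:2.18}, but to replace the ground-state condition $H\xi = 0$ (which is what forced $f(0)=0$ there) by the algebraic hypothesis $(\ad x)^2 Dx = 0$. Fix a smooth vector $\xi$ for $G^\flat$; these are dense since the extension to $G^\flat$ is smooth, and they are $\pi(G)$-invariant, so $\pi(\exp tx)\xi$ is again smooth and hence lies in the domain of $H$. Writing $H = -i\partial\pi^\flat(\bd)$ and using the covariance relation exactly as in Theorem~\ref{thm:2.18}, I would express the non-negative function
\[ f(t) := \la \pi(\exp tx)\xi, H\,\pi(\exp tx)\xi\ra \geq 0 \]
in the form $f(t) = -i\la\xi, \partial\pi^\flat(e^{-t\ad x}\bd)\xi\ra$.

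The crucial step is to observe that $(\ad x)^2 Dx = 0$ is equivalent to $(\ad x)^3\bd = 0$ (because $Dx = [\bd,x] = -\ad x(\bd)$, so $(\ad x)^2 Dx = -(\ad x)^3\bd$), and that this makes the adjoint orbit $c(t) := \Ad(\exp(-tx))\bd$ an exact quadratic polynomial in $t$:
\[ e^{-t\ad x}\bd = \bd + t\,Dx + \tfrac{t^2}{2}[Dx,x]. \]
Substituting this and using $H = -i\partial\pi^\flat(\bd)$ together with $Dx, [Dx,x]\in\g$ turns $f$ into a genuine real quadratic,
\[ f(t) = \la\xi, H\xi\ra + t\,\la\xi, -i\partial\pi(Dx)\xi\ra + \tfrac{t^2}{2}\,\la\xi, -i\partial\pi([Dx,x])\xi\ra, \]
whose coefficients are real because each operator $-i\partial\pi(\cdot)$ is symmetric on smooth vectors.

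The conclusion is then immediate: a real quadratic that is non-negative on all of $\R$ has non-negative leading coefficient, so $\la\xi, -i\partial\pi([Dx,x])\xi\ra \geq 0$ for every smooth vector $\xi$. Because these vectors form a dense $\pi(G)$-invariant space of smooth vectors, hence a core for the self-adjoint operator $-i\partial\pi([Dx,x])$ (cf.\ Definition~\ref{def:smoothrep}), positivity of the associated quadratic form on this core upgrades to $-i\partial\pi([Dx,x])\geq 0$, which is \eqref{eq:osciposcond}.

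The one point requiring care, and the place I expect to be the main obstacle, is justifying that $c(t)$ is literally a polynomial and not merely that its Taylor coefficients beyond order two vanish, since in the locally convex setting one cannot invoke convergence of the series $e^{-t\ad x}$ uncritically. I would settle this purely by smoothness of the adjoint action: differentiating repeatedly gives $c'(t) = -\Ad(\exp(-tx))(\ad x\,\bd)$, then $-\Ad(\exp(-tx))(\ad x)^2\bd$, and finally $-\Ad(\exp(-tx))(\ad x)^3\bd = 0$; integrating this chain back from the constant value $(\ad x)^2\bd$ reproduces the displayed quadratic. This global truncation is exactly what is needed, since the leading-coefficient argument relies on non-negativity of $f$ on all of $\R$; without it one recovers only $f''(0)\ge 0$, the weaker infinitesimal statement exploited in Theorem~\ref{thm:2.18}.
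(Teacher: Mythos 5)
Your proposal is correct and follows essentially the same route as the paper's own proof: both use the non-negative function $f(t)=\la \pi(\exp tx)\xi, H\,\pi(\exp tx)\xi\ra$ for smooth vectors $\xi$ of $G^\flat$, exploit that $(\ad x)^2Dx=0$ truncates $e^{-t\ad x}\bd$ to the quadratic $\bd+t\,Dx+\tfrac{t^2}{2}[Dx,x]$, and conclude positivity of the leading coefficient, hence of $-i\partial\pi([Dx,x])$ by density. Your two additional justifications --- deriving the exact quadratic form of $c(t)=\Ad(\exp(-tx))\bd$ by repeated differentiation rather than series convergence, and spelling out the core argument that upgrades form-positivity on $\cH^\infty$ to the operator inequality --- are points the paper treats implicitly, and they are both handled correctly.
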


\begin{prf} Let $\pi^\flat(g,t) := \pi(g) e^{itH}$ be the 
extension of $\pi$ to $G^\flat$. 
We proceed as in the proof of Theorem~\ref{thm:2.18} 
for any $\xi \in \cH^\infty$. Our assumption implies that the 
smooth function $f \:  \R \to \R$ is non-negative. Now 
$[[Dx,x],x] = 0$ leads to 
\[ f(t) 
= -i \la \xi, \partial \pi^\flat(e^{-t \ad x} \bd)\xi \ra
= -i \la \xi, H \xi \ra
- i t \la \xi, \partial \pi^\flat(Dx) \xi \ra 
-   \frac{it^2}{2} \la \xi, \partial \pi^\flat([Dx,x]) \xi \ra \geq 0\] 
for $t \in \R$. This implies that 
$-\frac{i}{2} \la \xi, \partial \pi([Dx,x]) \xi \ra \geq 0$, so that 
\eqref{eq:osciposcond} follows. 
\end{prf}

\section{Heisenberg and oscillator groups} 
\label{sec:4} 

In this section we discuss ground state representations of 
Heisenberg groups. Here  an old result by 
M.~Weinless \cite{We69} can be used to obtain crucial 
information on the structure of ground 
state representations.

We consider a locally convex space $V$, endowed with a continuous 
alternating form $\sigma \: V \times V \to \R$. We further 
assume that $\sigma$ is {\it weakly symplectic}, i.e., non-degenerate.
The corresponding {\it Heisenberg 
group} is the central extension 
\begin{equation}
  \label{eq:heisdef}
 G = \Heis(V,\sigma) := \T \times V \quad \mbox{ with } \quad 
(z,v)(z',v') = (zz' e^{\frac{i}{2} \sigma(v,v')}, v + v'). 
\end{equation}
It is a Lie group with Lie algebra $\g = \R \oplus V$, on which the bracket is
\[ [(z,v),(z',v')] = (\sigma(v,v'),0).\]
Any smooth  action $\alpha \: \R \to \Aut(G)$ 
fixing all elements in the central circle $\T \times \{0\}$ 
corresponds to a one-parameter group 
$\beta \: \R \to \Sp(V,\sigma)$ whose infinitesimal 
generator is denoted $D_V \in \sp(V,\sigma)$. 
Then the infinitesimal generator $D$ of $\alpha^\g$ has the form 
\begin{equation}
  \label{eq:ddv}
D(z,v) = (0,D_V v) \quad \mbox{ for } \quad z \in \R, v \in V. 
\end{equation}
This implies in particular that 
\[ G^0 = \T \times \ker(D_V) = \T \times V^\beta, \] 
which also is a Heisenberg group. 
The group 
\[ G^\flat := G \rtimes_\alpha \R \] 
is called the associated {\it oscillator group} 
(cf.\ \cite{NZ13}). 

We define the {\it effective subspace of $V$} as 
\[ V_{\rm eff} 
:= \oline{V_\beta}, \quad 
V_\beta := \Spann \{ \beta_t(v) -v \: t \in \R,v \in V\}.\] 
From the invariance of $\sigma$ under $\beta$, we immediately obtain 
\begin{equation}
  \label{eq:annihilbeta}
V^\beta = \{ v \in V \: (\forall t \in \R)\ \beta_t(x) = x\} 
= V_{\beta}^{\bot_\sigma}  = V_{\rm eff}^{\bot_\sigma}.
\end{equation}
Then $G_{\rm eff} := \T \times V_{\rm eff}$ is a closed $\alpha$-invariant subgroup 
of $G$ and 
\[ G_{\rm eff}^\flat 
= G_{\rm eff} \rtimes_\alpha \R 
\trile  G^\flat = G \rtimes_\alpha \R \] 
is a normal subgroup for which the quotient $G^\flat/G^\flat_{\rm eff}$ is abelian, 
and $G^0$ commutes with $G^\flat_{\rm eff}$.

We consider unitary representations $(\pi,\cH)$ of $G$ 
with $\pi(z,0) = z \1$ for $z \in \T$. 
These representations can also be viewed as 
projective representations of the abelian group $(V,+)$ 
(see \cite{JN19} for generalities on projective Lie group representations). 

For any element $x = (z,v) \in \g$, we have 
\[ [Dx,x] = [(0,D_V v), (z,v)] = (\sigma(D_V v, v),0) \in \fz(\g).\] 
Hence Proposition~\ref{prop:peineq} shows that the 
positive energy condition for $\pi$ implies the 
following positivity condition 
on $D_V \in \sp(V,\sigma)$: 
\begin{equation}
  \label{eq:peosci}
\sigma(D_V v, v) = -i \partial\pi\big( (\sigma(D_V v, v),0)\big)\geq 0 
\quad \mbox{ for } \quad v \in V
\end{equation}
(cf.\ \cite[Ex.~4.26, Prop.~4.27]{BGN20}).


\begin{defn}
  {\rm(Weinless conditions)} 
The triple $(V,\sigma, \beta)$ defines a 
{\it boson single particle space} in the sense of \cite{We69}. 
Weinless defines a {\it positive energy Bose--Einstein field} 
over $(V,\sigma,\beta)$ as a quadruple 
$(\cH,W,\Omega,U)$, consisting of a complex 
Hilbert space $\cH$, a unit vector $\Omega \in \cH$, a 
continuous unitary one-parameter group $(U_t)_{t \in \R}$ on $\cH$, 
and a map $W \: V \to \U(\cH)$, with the following properties: 
\begin{itemize}
\item[\rm(W1)] $W(x) W(y) = e^{i\frac{\sigma(x,y)}{2}} W(x+y)$ for 
$x,y \in V$ (Weyl relations).
\item[\rm(W2)] $W(\beta_t(x)) = U_t W(x) U_{-t}$ for 
$x \in V$, $t \in \R$ ($\beta$-equivariance).
\item[\rm(W3)] $U_t \Omega = \Omega$ for $t\in \R$. 
\item[\rm(W4)] $U_t = e^{itH}$ with $H \geq 0$ ($U$ has positive spectrum).
\item[\rm(W5)] The unitary one-parameter groups $W^x_t := W(tx)$, $x \in V$, 
are strongly continuous (regularity). 
\item[\rm(W6)] $W(V)\Omega$ is total in $\cH$. 
\end{itemize}
\end{defn}

These requirements translate naturally into our context. 
Relation (W1) means that 
\[ \pi(z,v) := z W(v) \]
defines a unitary representation of $\Heis(V,\sigma)$ and 
(W2,3,4,6) imply that $(\pi, U,\cH)$ defines a ground state representation 
with cyclic vector $\Omega$. Condition (W5) is a rather weak 
continuity requirement which is in particular satisfied if $W$ 
is strongly continuous. Therefore every continuous  
positive energy representation $(\pi, U,\cH)$ with a cyclic ground 
state vector $\Omega$ defines by $W(x) := \pi(1,x)$ 
a positive energy Bose--Einstein field over $(V,\sigma,\beta)$. 

The following uniqueness result is a variant of the 
Stone--von Neumann Uniqueness Theorem, which fails in general for 
infinite  dimensional spaces $V$, but the existence of $\beta$ implements 
additional structure that can be used to obtain a similar uniqueness 
for ground state representations if $V_\beta$ is large 
(\cite{BR02},\cite{Ka79, Ka85}).

\begin{thm} \label{thm:weinless} 
{\rm(Weinless' Uniqueness Theorem; \cite[Thm.~4.1]{We69})} 
Let $(\cH,W, \Omega, U)$ be a positive energy Bose--Einstein 
field over $(V,\sigma,\beta)$, i.e., {\rm(W1-6)} are satisfied, and 
\[ V_\beta = \Spann \{ \beta_t(x) - x \: x \in V, t \in \R \}.\] 
Then there exists an, up  to unitary equivalence unique, complex 
Hilbert space $\sH$ and a symplectic linear map 
$j \: (V_\beta,\sigma) \to (\sH, 2 \Im \la \cdot,\cdot \ra)$ such that 
\begin{itemize}
\item[\rm(j1)] $j(V_\beta)$ is dense in $\sH$, and 
\item[\rm(j2)] there exists a unitary one-parameter group 
$U_t^\sH = e^{itB}$ on $\sH$ with 
$j \circ \beta_t = U_t^\sH \circ j$ for $t \in \R$  and $B > 0$. 
\end{itemize}
We further have 
\begin{itemize}
\item[\rm(j3)] $\la \Omega, W(x)\Omega \ra = e^{-\frac{1}{4}\|j(x)\|^2}$ for 
$x \in V_\beta$. 
\item[\rm(j4)] If $\xi \in  \cH^U$ is $U$-fixed and orthogonal to $\Omega$, 
then $W(V_\beta)\Omega \bot \xi$. 
\end{itemize}
\end{thm}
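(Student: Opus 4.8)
The plan is to show that positivity of $H$ forces the ground state to be quasi-free, and then to read off $(\sH, j, B)$ as the resulting one-particle structure. First I would pass to the cyclic subspace $\cH_{\rm eff} := \lbr W(V_\beta)\Omega\rbr$. Since $V_\beta$ is $\beta$-invariant, (W2) and (W3) give $U_t W(x)\Omega = W(\beta_t x)\Omega$ for $x \in V_\beta$, so $\cH_{\rm eff}$ is $U$-invariant and inherits the positive-energy data. By (W5) and Stone's Theorem each $W^x_t = e^{it\phi(x)}$ has a self-adjoint generator $\phi(x)$, and differentiating the Weyl relation (W1) yields the canonical commutation relations; once $\Omega$ is shown to lie in the relevant domains, the two-point function $w(x,y) := \la \phi(x)\Omega, \phi(y)\Omega\ra$ is defined on $V_\beta$ with $\Im w(x,y) = \tfrac12\sigma(x,y)$. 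The key analytic input is that, because $H \ge 0$ and $H\Omega = 0$, each function $t \mapsto \la\Omega, W(x) U_t W(y)\Omega\ra = e^{\frac i2\sigma(x,\beta_t y)} E(x + \beta_t y)$, where $E(x) := \la\Omega, W(x)\Omega\ra$, is the boundary value of a bounded holomorphic function on the upper half plane (its spectral representation has support in $[0,\infty)$).

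The heart of the proof is to deduce from this that the ground state is quasi-free, i.e.\ that $E$ is Gaussian on $V_\beta$. Following Weinless, I would combine the above analyticity with the Weyl cocycle from (W1) and the fact that $V_\beta$ is spanned by the coboundaries $\beta_s z - z$ to show that $\log E$ must be a quadratic form, so that $E(x) = e^{-\frac14 \mu(x,x)}$ with $\mu := \Re w$ the symmetric two-point function. Conceptually, the positive-frequency part singled out by $H \ge 0$ provides the complex structure that turns $\Omega$ into a Fock vacuum annihilated by the corresponding annihilation operators, and quasi-freeness is the analytic expression of this fact. This step is the main obstacle: positivity of $H$, as opposed to mere positivity of the state, is exactly what is needed, and the passage from the infinitesimal two-point data to the exponentiated functional on all of $V_\beta$ is where the work lies.

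Granting quasi-freeness, I would define $\sH$ as the completion of $V_\beta$ for the pre-inner product $\la x, y\ra := \mu(x,y) + \tfrac i2\sigma(x,y)$, positive semidefinite by positivity of the state and definite after dividing out its radical, with $j \: V_\beta \to \sH$ the canonical map; then (j1) holds by construction and (j3) is the Gaussian form, since $\|j(x)\|^2 = \mu(x,x)$. Transporting $\beta$ through $j$ defines $U^\sH_t = e^{itB}$ with $j\circ\beta_t = U^\sH_t\circ j$ and $B \ge 0$ inherited from $H \ge 0$; strict positivity $B > 0$ follows because $j(\beta_t z - z) = (U^\sH_t - \1)j(z)$, so any $B$-kernel vector is $U^\sH$-fixed and hence orthogonal to the dense set $j(V_\beta)$, giving (j2). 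For (j4) I would use the resulting Fock realization (Appendix~\ref{sec:7.1}) of $\cH_{\rm eff}$ over $\sH$ with vacuum $\Omega$ and $U\res_{\cH_{\rm eff}} = \Gamma(e^{itB})$: since $\ker B = 0$ forces $\ker(B\otimes\1\otimes\cdots) = 0$ on every particle sector, one gets $\ker(H\res_{\cH_{\rm eff}}) = \C\Omega$; a $U$-fixed $\xi \perp \Omega$ then has vanishing $\cH_{\rm eff}$-component along the $U$-invariant splitting $\cH = \cH_{\rm eff} \oplus \cH_{\rm eff}^\perp$, whence $\xi \perp \cH_{\rm eff} \supeq W(V_\beta)\Omega$. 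Finally, uniqueness is immediate: (j3) fixes $\|j(x)\|^2 = -4\log E(x)$ and the symplectic condition fixes $2\Im\la j(x), j(y)\ra = \sigma(x,y)$, so polarization determines the entire inner product on $V_\beta$, and by (j1) the assignment $j(x) \mapsto j'(x)$ extends to a unitary intertwining the two one-particle structures.
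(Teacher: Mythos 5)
First, a point of reference: the paper does not prove this statement at all. It is imported verbatim as Theorem~4.1 of Weinless's 1969 paper \cite{We69}, so there is no internal proof to compare your attempt with; what you are attempting is a reproof of Weinless's theorem itself. Measured on its own terms, your outline reproduces the correct architecture (restrict to the cyclic subspace over $V_\beta$, exploit $H \geq 0$ via bounded holomorphic extension of $t \mapsto \la \Omega, W(x)U_tW(y)\Omega\ra$ to the upper half plane, conclude the ground state is quasi-free, read off the one-particle structure), and your uniqueness paragraph is essentially complete: (j3) determines $\|j(x)\|$, real polarization together with $2\Im\la j(x),j(y)\ra = \sigma(x,y)$ determines the full inner product, and the resulting densely defined isometry extends to a complex-linear unitary intertwiner. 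But the existence part has a decisive gap: the assertion that $H \geq 0$ forces $E(x) = \la\Omega, W(x)\Omega\ra$ to be Gaussian on $V_\beta$ \emph{is} the substance of Weinless's theorem, and you do not prove it --- you write ``Following Weinless, I would combine the above analyticity with the Weyl cocycle \ldots'' and acknowledge this as ``the main obstacle.'' Invoking Weinless at exactly this point makes the argument circular as a proof of the statement. The same applies to the domain assertion needed to define $w(x,y) = \la \phi(x)\Omega, \phi(y)\Omega\ra$: that $\Omega$ lies in the domains of the field operators is itself a consequence of the positivity analysis, not a free input.

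There is a second, more localized flaw in your argument for $B > 0$ in (j2). You argue that any $v \in \ker B$ is $U^\sH$-fixed ``and hence orthogonal to the dense set $j(V_\beta)$,'' using $j(\beta_t z - z) = (U^\sH_t - \1)j(z)$. That identity is only meaningful for $z \in V_\beta$, whereas $V_\beta$ is spanned by the coboundaries $\beta_t x - x$ with $x$ ranging over \emph{all} of $V$; for such $x$ the vector $j(x)$ does not exist. What your identity yields is that $\ker B$ is orthogonal to $j\bigl((V_\beta)_\beta\bigr)$, i.e.\ to the part of $j(V_\beta)$ lying in $\oline{\Spann}\bigcup_{t}\im(U^\sH_t - \1) = (\sH^{U^\sH})^\perp$, which is a tautology rather than a proof. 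Closing this gap needs a genuine extra argument: for instance, setting $z_s := \beta_s x - x$ one has $j(z_{t+s}) - j(z_t) - j(z_s) \in \im(U^\sH_t - \1)$, so the fixed-space components of $s \mapsto j(z_s)$ form an additive function, but killing such a function requires a continuity/boundedness input that (W5) alone (continuity only along rays $t \mapsto W(tx)$) does not supply. In Weinless's construction the strict positivity of $B$ is built in from the start rather than deduced afterwards, which is why the statement can assert $B > 0$ at all.
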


The Weinless Theorem has interesting consequences for 
the structure of strongly continuous 
ground state representations. 

\begin{prop} \label{prop:4.3} 
Let  $(\pi, U,\cH)$  be a continuous ground state representation 
of $(\Heis(V,\sigma), \alpha)$ with 
$\pi(z,0) = z \1$ for $z \in \T$ and cylic ground state vector. 
Then the following assertions hold: 
\begin{itemize}
\item[\rm(a)] The linear map $j \: V_\beta \to \sH$ 
extends to a continuous linear map $j \: V_{\rm eff} = \oline{V_\beta} \to \sH.$
\item[\rm(b)] On the subspace $\cK_0 := \oline{W(V_{\rm eff})\Omega}$ 
we have an irreducible representation $\rho_0$ of $G^\flat_{\rm eff}$ 
with a smooth ground state vector $\Omega$ and $(\cK_0)^0 = \C \Omega$. 
\item[\rm(c)] The representation $\rho_1$ of 
$G^\flat_{\rm eff}$ on the $G^\flat_{\rm eff}$-invariant 
subspace $\cK_1 \subeq \cH$ generated by $\cH^0$ 
is equivalent to the representation 
\begin{equation}
  \label{eq:factorize}
 \cK_1 \cong \cH^0 \hat\otimes \cK_0, \quad 
\rho_1(g) = \1 \otimes \rho_0(g) \quad \mbox{ for } \quad g \in G^\flat_{\rm eff}.
\end{equation}
In particular, the commutant of $\rho_1(G^\flat_{\rm eff})$ is 
$\rho_1(G^\flat_{\rm eff})' \cong B(\cH^0) \otimes \1.$
\item[\rm(d)] The subspace $\cK_1$ is also invariant under $G^0$, which acts on it 
by $\rho_1(g) = \pi^0(g) \otimes \1$ for $g \in G^0$. 
\end{itemize}
\end{prop}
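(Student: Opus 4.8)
The plan is to establish the four parts in order, with Weinless' Theorem~\ref{thm:weinless} furnishing the one-particle structure $(\sH,j)$ and Proposition~\ref{prop:swallow} converting generating subspaces into statements about commutants. Write $W(v):=\pi(1,v)$ and let $P_0$ be the orthogonal projection onto $\cH^0=\ker H$. For (a), continuity of $\pi$ makes $v\mapsto\la\Omega,W(v)\Omega\ra$ continuous on $V$, so by (j3) the map $v\mapsto e^{-\frac14\|j(v)\|^2}$ is continuous on $V_\beta$ and equals $1$ at $v=0$; hence $\|j(v)\|\to 0$ as $v\to 0$. As $j$ is linear this is continuity at the origin, so $j$ is continuous, and a continuous linear map into the complete space $\sH$ extends uniquely to $V_{\rm eff}=\oline{V_\beta}$. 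The extension still obeys $\sigma=2\Im\la j(\cdot),j(\cdot)\ra$ and has dense range by (j1).

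For (b), the Weyl relations (W1) together with (W2)--(W3) and the $\beta$-invariance of $V_{\rm eff}$ show that $\cK_0:=\oline{W(V_{\rm eff})\Omega}$ is $G^\flat_{\rm eff}$-invariant, and since $\Omega$ is cyclic and lies in $(\cK_0)^0$, $\rho_0$ is a ground state representation. Because $\cK_0=\oline{W(V_\beta)\Omega}$, condition (j4) yields $(\cK_0)^0=\C\Omega$: a $U$-fixed vector orthogonal to $\Omega$ is orthogonal to $W(V_\beta)\Omega$, hence to all of $\cK_0$. With $(\cK_0)^0$ one-dimensional, Proposition~\ref{prop:swallow}(ii) identifies $\rho_0(G_{\rm eff})'$ with a subalgebra of $B((\cK_0)^0)=\C$, so $\rho_0(G_{\rm eff})'=\C\1$ and $\rho_0$ is irreducible; smoothness of $\Omega$ follows from its realization as the Fock vacuum over $\sH$ (Appendix~\ref{sec:7.1}), an analytic vector.

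The core of (c) is the claim that for each $v\in V_{\rm eff}$ the operator $T_v:=P_0W(v)P_0\in B(\cH^0)$ is the scalar $c(v)\1$ with $c(v)=e^{-\frac14\|j(v)\|^2}$. Given a unit ground state $\eta$, I would apply Weinless' Theorem to the cyclic field $(\oline{W(V)\eta},W,\eta,U)$, which inherits (W1)--(W6), obtaining $\la\eta,W(v)\eta\ra=e^{-\frac14\|j_\eta(v)\|^2}$. The decisive and most delicate step is to see that $\|j_\eta\|=\|j\|$: conditions (j1)--(j2) refer only to $(V_\beta,\sigma,\beta)$ and not to the vacuum, so the uniqueness clause of Theorem~\ref{thm:weinless} yields a unitary $\sH_\eta\to\sH$ intertwining $j_\eta$ with $j$, forcing the two norms to agree (and extending to $V_{\rm eff}$ by (a)). Hence $\la\eta,W(v)\eta\ra=\|\eta\|^2 c(v)$ for all $\eta\in\cH^0$, i.e. $\la\eta,(T_v-c(v)\1)\eta\ra=0$ for every $\eta$, and complex polarization gives $T_v=c(v)\1$. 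I expect this identification of the one-particle structure, uniform over all ground states, to be the main obstacle.

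Granting this, (c) becomes a reproducing-kernel computation: the Weyl relations give $\la W(x)\eta,W(y)\eta'\ra=e^{-\frac{i}{2}\sigma(x,y)}\la\eta,W(y-x)\eta'\ra=\la\eta,\eta'\ra\,\la W(x)\Omega,W(y)\Omega\ra$, so $\eta\otimes W(x)\Omega\mapsto W(x)\eta$ extends to a unitary $\Psi\:\cH^0\hat\otimes\cK_0\to\cK_1$; checking the generators $W(y)$ and $U_t$ shows $\Psi$ intertwines $\1\otimes\rho_0$ with $\rho_1$, and irreducibility of $\rho_0$ then gives $\rho_1(G^\flat_{\rm eff})'\cong B(\cH^0)\otimes\1$ by the commutation theorem for tensor products. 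For (d), \eqref{eq:annihilbeta} gives $V^\beta=V_{\rm eff}^{\bot_\sigma}$, so for $g=(z,w)\in G^0$ and $v\in V_{\rm eff}$ we have $\sigma(w,v)=0$; thus $W(w)$ commutes with $W(v)$ and preserves $\cH^0$, whence $\pi(g)W(v)\eta=W(v)\pi^0(g)\eta$, which under $\Psi$ reads $\pi(g)=\pi^0(g)\otimes\1$ on $\cK_1$.
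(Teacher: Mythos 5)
Your proposal is correct and follows essentially the same route as the paper's proof: (a) continuity of $j$ from (j3); (b) one-dimensionality of $(\cK_0)^0$ from (j4) plus a cyclicity/commutant argument; (c) applying Weinless' theorem, together with its uniqueness clause for the single-particle structure, to the cyclic field generated by an arbitrary ground state vector $\eta$ to obtain $\la \eta, W(x)\eta\ra = e^{-\frac{1}{4}\|j(x)\|^2}$; and (d) the commutation relation coming from \eqref{eq:annihilbeta}. The only cosmetic differences are that you package step (c) as an explicit polarization identity $P_0 W(v) P_0 = e^{-\frac{1}{4}\|j(v)\|^2} P_0$ followed by a concrete tensor-product unitary, where the paper chooses an orthonormal basis of $\cH^0$ and invokes mutual orthogonality plus GNS equivalence of the cyclic subspaces, and that in (b) you derive smoothness of $\Omega$ from its Fock-space realization rather than from smoothness of the positive definite function $\phi(z,x,t)=ze^{-\frac{1}{4}\|j(x)\|^2}$.
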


\begin{prf}
 (a)  From (j3) we derive that $j$ is bounded on a $0$-neighborhood in 
$V_\beta$, hence continuous and therefore extends to a continuous 
linear map on $V_{\rm eff}$. 

\nin (b) Clearly, $\cK_0$ 
is invariant under $G_{\rm eff}^\flat$ with 
cyclic vector $\Omega$. Since the corresponding positive  definite function 
\[ \phi(z,x,t) = \la \Omega, z W(x) U_t \Omega \ra = 
z e^{-\frac{1}{4} \|j(x)\|^2}, \qquad z \in \T, x \in V_{\rm eff}, t \in \R, \] 
is smooth, $\Omega$ is a smooth vector (\cite[Thm.~7.2]{Ne10}). 

Further, (j4) implies that $(\cK_0)^0  = \cK \cap \cH^0 = \C \Omega$ is 
one-dimensional. As $\Omega$ is cyclic, the representation  
of the commutant $\rho_0(G^\flat_{\rm eff})'$ on $(\cK_0)^0 = \C \Omega$ 
is faithful, 
hence the commutant is one-dimensional and thus 
$(\rho_0,\cK_0)$ is irreducible. 

\nin (c) Let $\xi \in \cH^U$ be a unit vector, and consider the representation 
of $G^\flat$ on the subspace $\cH_\xi$ generated by $W(V)\xi$. 
Then Weinless' Theorem applies to the representation 
of $G^\flat$ on $\cH_\xi$, and it follows that, for $x \in V_{\rm eff}$, we also have 
$\la \xi, W(x) \xi \ra = e^{-\frac{1}{4} \|j(x)\|^2}.$
If $(\xi_j)_{j \in J}$ in an orthonormal bases of $\cH^0$, it follows that the 
subspaces $\cK_{\xi_j}$ generated by $W(V_{\rm eff})\xi_j$ are mutually orthogonal, 
and the GNS Theorem implies that the representation on this cyclic subspace 
is equivalent to $(\rho_0,\cK_0)$. This proves (c).

\nin (d) The subgroup $G^0$ commutes with $G^\flat_{\rm eff}$ by 
\eqref{eq:annihilbeta} 
and acts on the subspace $\cH^0$ of ground state vectors by 
the representation $\pi^0$. Hence it also preserves the subspace $\cK_1$. 
By (c), it acts on the left tensor factor, 
which is a multiplicity space for the action of $G^\flat_{\rm eff}$. 
\end{prf}

\begin{thm} \label{thm:4.4} 
 {\rm(Factorization Theorem for ground state representations)} 
Suppose that $(V,\sigma,\beta)$ satisfies the 
\emph{weak splitting condition}
\begin{footnote}
{Condition (WCS) is a weaker version of the splitting condition 
\eqref{eq:splitcondb} that plays a crucial role in the construction 
of ground state representations by holomorphic induction in 
Section~\ref{sec:5}.}
\end{footnote}

  \begin{equation}
    \label{eq:splitcondsymp}
    V = \oline{V^\beta + V_{\rm eff}}.  \tag{WSC}
  \end{equation}
Then every ground state representation $(\pi, U, \cH)$ of 
$G^\flat$ factorizes as a tensor product $\cH = \cH^0 \hat\otimes \cK_0$ 
with 
\[ \pi(g) = \pi^0(g) \otimes \1 \quad \mbox{ for } \quad g \in G^0 
\quad \mbox{ and } \quad 
\pi(g) = \1 \otimes \rho_0(g) \quad \mbox{ for } \quad g \in G_{\rm eff}^\flat, \] 
where the representation $(\rho_0, \cK_0)$ of $G^\flat_{\rm eff}$ is 
irreducible and equivalent to the pullback of the canonical 
Fock representation of $\Heis(V_{\rm eff},\sigma)$ on $\cF_+(\sH)$, defined by 
the positive definite function 
\[ \phi(z,x) = ze^{-\frac{1}{4}\|j(x)\|^2} \] 
{\rm(see Appendix~\ref{sec:7.1} and Remark~\ref{rem:4.5} below)}. 
We further have $\pi^\flat(G^\flat)' = \pi(G)' \cong \pi^0(G^0)'$. 
In particular, $(\pi, U,\cH)$ is strict.
\end{thm}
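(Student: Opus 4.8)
The plan is to establish the tensor product factorization by combining the structural information already extracted in Proposition~\ref{prop:4.3} with the weak splitting condition~\eqref{eq:splitcondsymp}. First I would reduce to the case of a cyclic ground state vector. By Lemma~\ref{lem:dirsum} and Proposition~\ref{prop:swallow}(i), a ground state representation decomposes according to an orthonormal basis $(\xi_j)_{j \in J}$ of $\cH^0$, and each cyclic piece $\cH_j := \lbr \pi(G^\flat)\xi_j \rbr$ carries a ground state representation with cyclic ground state vector $\xi_j$. The key point is that~(WSC) forces these cyclic subspaces to exhaust all of $\cH$: since $V = \oline{V^\beta + V_{\rm eff}}$ and $\pi(\T \times V^\beta) = \pi^0$ preserves $\cH^0$ while $W(V_{\rm eff})$ maps each $\xi_j$ into $\cK_{\xi_j}$, the subspace $\cK_1$ generated by $\cH^0$ under $G^\flat_{\rm eff}$ is already invariant under all of $\pi(G)$, hence equals $\cH$ by the ground state condition.

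Next I would invoke Proposition~\ref{prop:4.3}(c) and~(d) directly. Part~(c) gives the isometric isomorphism $\cK_1 \cong \cH^0 \hat\otimes \cK_0$ under which $G^\flat_{\rm eff}$ acts as $\1 \otimes \rho_0$, with $(\rho_0,\cK_0)$ irreducible; part~(d) gives that $G^0$ acts as $\pi^0 \otimes \1$ on the same space. Having argued $\cK_1 = \cH$ via~(WSC), these two statements combine to yield exactly the claimed factorization $\cH = \cH^0 \hat\otimes \cK_0$ with $\pi(g) = \pi^0(g) \otimes \1$ for $g \in G^0$ and $\pi(g) = \1 \otimes \rho_0(g)$ for $g \in G^\flat_{\rm eff}$. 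The identification of $(\rho_0,\cK_0)$ as the Fock representation of $\Heis(V_{\rm eff},\sigma)$ associated to the positive definite function $\phi(z,x) = z e^{-\frac14\|j(x)\|^2}$ follows from~(j1)--(j3) of Weinless' Theorem together with the GNS uniqueness in Appendix~\ref{sec:7.1}: the map $j \: V_{\rm eff} \to \sH$ of Proposition~\ref{prop:4.3}(a) realizes $\cK_0$ as the cyclic GNS space of this Gaussian function, which is precisely the Fock model.

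Finally, strictness follows immediately from the factored form. From $\pi(g) = \pi^0(g) \otimes \1$ on $G^0$ and $\pi(g) = \1 \otimes \rho_0(g)$ on $G^\flat_{\rm eff}$, together with the irreducibility of $\rho_0$ (so that $\rho_0(G^\flat_{\rm eff})'' = B(\cK_0)$), a standard tensor-commutant computation gives $\pi^\flat(G^\flat)' = \pi(G)' \cong B(\cH^0) \otimes \1 \cong \pi^0(G^0)'$; since $\cH^0$ is the full left factor $\cH^0 \otimes \C\Omega$ and $P_0 \pi(G)'' P_0$ then equals $\pi^0(G^0)''$, Definition~\ref{def:strict} is satisfied. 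I expect the main obstacle to be the first step, namely verifying carefully that~(WSC) really does force $\cK_1 = \cH$: one must check that the closure $\oline{V^\beta + V_{\rm eff}}$ translates into strong-operator density of the relevant Weyl operators, using the strong continuity hypothesis (W5)/continuity of $\pi$ to pass from the dense subspace $V^\beta + V_{\rm eff}$ to its closure at the level of operators. The remaining tensor-product and commutant bookkeeping is routine once this density is in hand.
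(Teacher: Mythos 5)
Your proposal follows essentially the same route as the paper's proof: use (WSC) together with the Weyl relations ($W(v_0+v_1) \in \T\, W(v_1)W(v_0)$ for $v_0 \in V^\beta$, $v_1 \in V_{\rm eff}$), the $G^0$-invariance of $\cH^0$, and strong continuity of $\pi$ to conclude that $\pi(G)\cH^0 \subeq \cK_1$, hence $\cK_1 = \cH$ by the ground state condition; then quote Proposition~\ref{prop:4.3}(c),(d) for the tensor factorization and the identification of $(\rho_0,\cK_0)$. That is exactly the paper's (very compressed) argument, and your extra detail on the Fock/GNS identification matches Remark~\ref{rem:4.5}(a).

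Two points should be corrected, though neither destroys the argument. First, the commutant chain $\pi(G)' \cong B(\cH^0)\otimes\1 \cong \pi^0(G^0)'$ is false as written: $B(\cH^0)\otimes\1$ is the commutant of $\rho_1(G^\flat_{\rm eff})$ alone (Proposition~\ref{prop:4.3}(c)), and equating it with $\pi(G)'$ or with $\pi^0(G^0)'$ would force $\pi^0(G^0) \subeq \C\1$, which fails whenever $V^\beta \neq \{0\}$ and $\pi^0$ is non-scalar (cf.\ Remark~\ref{rem:4.5}(b)). The correct computation intersects the two commutants: $\pi(G)' = \bigl(\pi^0(G^0)'\,\bar\otimes\, B(\cK_0)\bigr) \cap \bigl(B(\cH^0)\otimes\1\bigr) = \pi^0(G^0)'\otimes\1 \cong \pi^0(G^0)'$, which is what the theorem asserts and what your subsequent strictness argument (via $P_0\pi(G)''P_0 = \pi^0(G^0)''$) actually uses. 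Second, the opening "reduction to a cyclic ground state vector" by decomposing along an orthonormal basis of $\cH^0$ does not work as stated: the subspaces $\lbr \pi(G^\flat)\xi_j\rbr$ for orthonormal $\xi_j \in \cH^0$ need not be mutually orthogonal (a genuine orthogonal decomposition into cyclic pieces requires a Zorn-type maximality argument, and its pieces are not indexed by an ONB of $\cH^0$). Fortunately this step is unnecessary: your main argument runs with the whole space $\cH^0$ in place of a single $\Omega$ and yields $\cK_1 = \cH$ directly, which is how the paper proceeds.
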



\begin{prf} Our assumption implies that 
$W(V^\beta + V_\beta) \Omega \subeq \C W(V_\beta) W(V^\beta)\Omega 
\subeq W(V_\beta)\cH^0$ 
is total in $\cH$. Therefore $\cH = \cK_1$, and the assertion follows 
from Proposition~\ref{prop:4.3}. 
\end{prf}

\begin{rem} (a) If $\beta$ is periodic of period $T > 0$ and 
$V$ is complete, then the fixed point 
projection 
\[ p_0 \: V \to  V^\beta, \quad p_0(v) = \frac{1}{T} \int_0^T \beta_t(v)\, dt \] 
satisfies $p_0(V)= V^\beta$ and 
$\ker(p_0) = V_{\rm eff}$ by the Peter--Weyl Theorem (\cite{HM06}). 
We therefore have $V = V^\beta \oplus V_{\rm eff}$ in this case. 

\nin (b) Suppose that $V^\beta + V_{\rm eff}$ is dense in $V$, i.e., that the 
weak splitting condition (WSC) is satisfied. 
As $V^\beta$ and $V_{\rm eff}$ are $\sigma$-orthogonal, it follows that 
$(V^\beta,\sigma)$ is also symplectic. 

For every $t\not=0$, the range of the map 
$V \to V_{\rm eff}, v \mapsto \beta_t(v) -v$ generates the same 
closed subspace as its restriction to $V_{\rm eff}$. We therefore 
have $(V_{\rm eff})_{\rm eff} = V_{\rm eff}$. 

The direct sum $\tilde V := V^\beta \oplus V_{\rm eff}$ carries a 
natural symplectic form and the addition map 
$i \: \tilde V \to V$ is symplectic and $\R$-equivariant with dense range. 
Therefore the adjoint 
\[ j \: V \to \tilde V'\cong (V^\beta)' \oplus V_{\rm eff}', \quad 
j(v) = (\sigma(v,\cdot), \sigma(v,\cdot)) \] 
is injective. 
\end{rem}

\begin{rem} \label{rem:4.5} (a) If $V = \sH^\R$ (the real subspace underlying a complex 
Hilbert space $\sH$) and $\sigma(v,w) = 2 \Im \la v,w \ra$ and 
$\beta_t = e^{itD}$ with $D \geq 0$, then second quantization leads to a 
positive energy representation 
$(\pi,U,\cH)$ on the symmetric Fock space 
\[ \cH := \cF_+(\sH)  = \hat{\bigoplus}_{n \in \N_0} S^n(\sH) \] 
with cyclic unit vector $\Omega \in S^0(\sH)$ 
for which we have 
\[ \la \Omega, \pi(x) \Omega \ra = e^{-\frac{1}{4}\|x\|^2} \quad \mbox{ for } \quad 
x \in V=\sH\] 
(cf.\ Appendix~\ref{sec:7.1}). 
We conclude that the irreducible representation $(\rho_0, \cK_0)$ in 
Theorem~\ref{thm:4.4} is equivalent to the canonical representation 
of $G_{\rm eff}$ on the Fock space $\cF_+(\sH)$. 
Note that 
\[ \cH^0 = \cF_+(\ker D)\]
is one-dimensional if and only if $D > 0$.

\nin (b) Suppose that $V = V_0 \oplus \sH^\R$ is a direct sum of two 
symplectic spaces, where $\sH$ is a complex Hilbert space and 
$\beta_t(v_0 + v_1) = v_0 + e^{itD} v_1$ for $t \in \R$, $v_0 \in V_0, v_1 \in \sH$ 
and $D = D^* \geq 0$ on $\sH$. For any unitary representation 
$(\rho, \cK)$ of $\Heis(V_0,\sigma_0)$ with $\rho(z,0)  = z\1$ for $z \in \T$, 
we obtain on 
\[ \cH := \cK \hat\otimes \cF_+(\sH) \quad \mbox{ by} \quad 
\pi(z,v_0 + v_1) := \pi(z,v_0) \otimes \pi_{\cF_+}(v_0) \] 
a ground state representation of $\Heis(V,\sigma)$, 
where $\cH^0 = \cK \hat\otimes \cF_+(\ker D)$. Therefore one 
cannot expect to draw any finer conclusion on the representation 
of $G^0$ on $\cH^0$. 

\nin (c) As the representation of $G_{\rm eff}^\flat$ on $\cK_0$ is smooth, 
Proposition~\ref{prop:peineq} implies that, for $x \in V_{\rm eff}$, we have 
\begin{equation}
  \label{eq:sigpos}
 \sigma(D_V(x),x) = \la \Omega, -i \partial \pi([Dx,x]) \Omega \ra\geq 0.
\end{equation}
As $j \: V_{\rm eff} \to \sH$ is symplectic, we obtain with 
$U_t^\sH = e^{-it D_\sH}$ for $t \in \R$ the relation 
\begin{equation}
  \label{eq:sigmapos2}
  \sigma(D_V(x),x) 
= 2 \Im \la j(D_V(x)), j(x) \ra 
= 2 \Im \la -i D_{\sH} j(x), j(x) \ra 
= 2 \la D_{\sH} j(x), j(x) \ra. 
\end{equation}
This implies that $D_\sH \geq 0$
\end{rem}

\begin{lem} \label{lem:veff-nondeg} 
If there exists a ground state representation 
$(\pi, U,\cH)$ of $(\Heis(V,\sigma), \alpha)$ with 
$\pi(z,0) = z \1$ for $z \in \T$, then the following assertions hold: 
\begin{itemize}
\item[\rm(a)] The subspace 
$V_{\rm eff} \subeq V$ is symplectic, i.e., the restriction of $\sigma$ 
to $V_{\rm eff}$ is non-degenerate and 
\begin{equation}
  \label{eq:trivintersect}
 V_{\rm eff} \cap V^\beta = \{0\}.
\end{equation}
\item[\rm(b)] The map $j \: V_{\rm eff} \to (\sH, 2 \Im \la \cdot,\cdot \ra)$ 
is injective. 
\item[\rm(c)] If $\Omega$ is an eigenvector for some $x \in V_{\rm eff}$, then 
$x = 0$. 
\item[\rm(d)] If $x \in V$ with $W(x)\Omega \in \cH^0$, then $x \in V^\beta$. 
\end{itemize}
\end{lem}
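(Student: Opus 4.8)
The plan is to reduce all four assertions to part~(c), which is the crux, and to prove (c) by an elementary Weyl-relation argument that uses neither the Weinless structure nor positivity of energy. Throughout I fix a unit ground state vector $\Omega \in \cH^0$ (non-zero since $\lbr \pi(G)\cH^0\rbr = \cH \neq 0$) and write $W(x) := \pi(1,x)$, so that the Weyl relation (W1), $W(x)W(y) = e^{\frac{i}{2}\sigma(x,y)}W(x+y)$, holds and $W(-x) = W(x)^{-1}$. To prove (c), suppose $\Omega$ is an eigenvector of $W(x)$ for some $x \in V_{\rm eff}$, say $W(x)\Omega = \mu\Omega$ with $|\mu| = 1$, and assume $x \neq 0$. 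Since $\sigma$ is weakly symplectic, there is $y \in V$ with $c := \sigma(x,y) \neq 0$, and (W1) gives the conjugation formula $W(x)W(sy)W(x)^{-1} = e^{isc}W(sy)$ for $s \in \R$. Sandwiching this between $\Omega$ and using $W(x)^{-1}\Omega = \bar\mu\Omega$ yields $\la \Omega, W(sy)\Omega\ra = e^{isc}\la\Omega, W(sy)\Omega\ra$, so $(1 - e^{isc})\la\Omega, W(sy)\Omega\ra = 0$. For small $s \neq 0$ we have $e^{isc}\neq 1$, whence $\la\Omega, W(sy)\Omega\ra = 0$; but $s \mapsto \la\Omega, W(sy)\Omega\ra$ is continuous by (W5) and equals $1$ at $s = 0$, a contradiction. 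Hence $x = 0$.

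For (a), recall from \eqref{eq:annihilbeta} that $V^\beta = V_{\rm eff}^{\bot_\sigma}$, so the radical of $\sigma\vert_{V_{\rm eff}}$ is exactly $V_{\rm eff}\cap V^\beta$; thus the non-degeneracy of $\sigma\vert_{V_{\rm eff}}$ and the triviality of this intersection are the same statement, and it suffices to show $V_{\rm eff}\cap V^\beta = \{0\}$. Given $x$ in this intersection, $x \in V^\beta$ makes $W(x)$ commute with $U$ (by (W2)), so $W(x)\Omega \in \cH^0$, while $x \in V_{\rm eff}$ gives $W(x)\Omega \in \cK_0 = \oline{W(V_{\rm eff})\Omega}$; by Proposition~\ref{prop:4.3}(b) we have $(\cK_0)^0 = \C\Omega$, so $W(x)\Omega \in \C\Omega$ and $\Omega$ is an eigenvector of $W(x)$. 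Part (c) then forces $x = 0$. For (b), the symplectic identity $\sigma(x,y) = 2\Im\la j(x), j(y)\ra$, valid on $V_\beta$ by Theorem~\ref{thm:weinless}, extends by continuity (Proposition~\ref{prop:4.3}(a)) to all $x,y \in V_{\rm eff}$; hence $j(x) = 0$ implies $\sigma(x, V_{\rm eff}) = 0$, i.e.\ $x \in V_{\rm eff}\cap V^\beta = \{0\}$ by (a), which is the injectivity of $j$.

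Part (d) follows from (c) as well: if $W(x)\Omega \in \cH^0$ then $U_t$ fixes $W(x)\Omega$, and combining $U_t\Omega = \Omega$ with the covariance $U_t W(x) U_{-t} = W(\beta_t x)$ from (W2) gives $W(\beta_t x)\Omega = W(x)\Omega$. Multiplying on the left by $W(-x) = W(x)^{-1}$ and applying (W1), I find $W(\beta_t x - x)\Omega = e^{\frac{i}{2}\sigma(x,\beta_t x)}\Omega$, so $\Omega$ is an eigenvector of $W(\beta_t x - x)$. Since $\beta_t x - x \in V_\beta \subseteq V_{\rm eff}$, part (c) yields $\beta_t x - x = 0$ for every $t$, that is, $x \in V^\beta$.

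The main obstacle is part (c): the difficulty is to extract a contradiction from the mere existence of a single eigenvector, and the Weyl-commutator computation above is what makes this possible once one pairs $x$ with a $\sigma$-dual vector $y$. After (c) is established, the remaining parts are routine reductions; the only additional care needed is that Proposition~\ref{prop:4.3}(b) is stated for a cyclic ground-state vector, so in (a) one should apply it to the $G^\flat_{\rm eff}$-subrepresentation generated by the chosen $\Omega$ rather than to the whole of $\cH$.
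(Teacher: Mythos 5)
Your proof is correct, but it takes a genuinely different route from the paper's, essentially inverting the logical order of the four parts. The paper proves (a) \emph{first}, via the tensor factorization of Proposition~\ref{prop:4.3}(c),(d): for $x \in V_{\rm eff} \cap V^\beta$ the operator $W(x)$ is simultaneously of the form $\1 \otimes \rho_0(x)$ and $\pi^0(x) \otimes \1$, hence a scalar in $\T\1$, contradicting the injectivity of $\pi$ (simplicity of the Weyl algebra, Remark~\ref{rem:4.5x}); then (b) follows from (a), then (c) is read off from the Weinless formula (j3), $\la \Omega, W(x)\Omega\ra = e^{-\frac{1}{4}\|j(x)\|^2}$, combined with the injectivity of $j$ just established, and finally (d) from (c). You instead prove (c) first and directly, by the classical regularity argument: conjugating $W(sy)$ by $W(x)$ and pairing against the putative eigenvector forces $\la \Omega, W(sy)\Omega\ra = 0$ for all small $s \neq 0$, contradicting (W5) at $s=0$. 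That computation is sound (note $W(x)W(sy)W(x)^{-1} = e^{is\sigma(x,y)}W(sy)$ follows from (W1), and $W(x)^{-1} = W(-x)$ exactly), and it proves strictly more than stated: no nonzero $x \in V$ whatsoever admits $\Omega$ (indeed any unit vector) as an eigenvector of $W(x)$, using only regularity and the non-degeneracy of $\sigma$ on $V$, with no positive-energy or Weinless input. Your derivations of (a) from (c) together with $(\cK_0)^0 = \C\Omega$, of (b) from (a), and of (d) from (c) are correct, and (b), (d) coincide with the paper's arguments. What your route buys: (c) becomes elementary and independent of the Fock-space structure, and (a) no longer needs the tensor decomposition nor the simplicity of the Weyl $C^*$-algebra, only (j4) through Proposition~\ref{prop:4.3}(b). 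What the paper's route buys: once the Weinless machinery is in place, (c) is a one-line consequence of the explicit formula (j3). One small correction to your closing remark: to invoke Proposition~\ref{prop:4.3}(b) you should restrict to the $G$-cyclic subspace $\oline{W(V)\Omega}$ (which is $G^\flat$-invariant because $U_t\Omega = \Omega$), not to the $G^\flat_{\rm eff}$-subrepresentation generated by $\Omega$; Proposition~\ref{prop:4.3} is stated for ground state representations of the full pair $(\Heis(V,\sigma),\alpha)$ with cyclic ground state vector, and $\cK_0$ is then formed inside that space.
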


\begin{prf} (a) In view of \eqref{eq:annihilbeta}, we have to verify 
\eqref{eq:trivintersect}. 
Let $x \in V_{\rm eff} \cap V^\beta$. 
Then 
\[ W(x) = \rho_1(x) = \1 \otimes \rho(x) = \pi^0(x) \otimes \1 \] 
in the terminology of Proposition~\ref{prop:4.3}, 
and thus $W(x) \in \T \1 = \pi(\T \times \{0\})$.  
This contradicts the injectivity of $\pi$ 
(Remark~\ref{rem:4.5x}). 

\nin (b) follows immediately from (a) and the fact that $j$ is symplectic. 

\nin (c) If $W(x) \Omega = \lambda \Omega$ for some $\lambda \in \T$, 
then (j3) implies $\lambda = e^{-\frac{1}{4}\|j(x)\|^2}$, so 
that $j(x) = 0$. This entails $x = 0$ because $j$ is injective by (b). 

\nin (d) For every $t \in \R$, we have 
\[ W(\beta_t(x))\Omega = U_t W(x)\Omega = W(x)\Omega \] 
because $W(x)\Omega \in \cH^0$. Therefore 
$\Omega$ is an eigenvector of $W(x)^{-1} W(\beta_t(x)) \in  W(\beta_t(x)-x)$. 
Now (c) shows that $\beta_t(x) = x$, hence that $x \in V^\beta$. 
\end{prf}

\begin{ex} \label{ex:4.7} 
We present an example where $\sigma$ is 
non-degenerate on $V_{\rm eff}$ but $V_{\rm eff} + V^\beta$ is not dense in $V$. 
We consider the Banach space 
\[ V := C([0,1],\C) \quad \mbox{ with } \quad 
\sigma(f,g) 
:= 2\Im \int_0^1 \oline{f(x)}g(x)\, dx
=  -i \int_0^1 \oline{f(x)}g(x) - \oline{g(x)}f(x) \, dx,\]
endowed with the symplectic $\R$-action, defined by 
\[ (\beta_t f)(x) = e^{-itx} f(x).\] 
Then $V^\beta = \{0\}$, and since $\sigma$ is the imaginary part 
of a hermitian scalar product, it is non-degenerate. 
By \eqref{eq:annihilbeta}, 
the vanishing of $V^\beta$ implies that $\sigma$ is non-degenerate on 
\[ V_{\rm eff} \subeq \{ f \in V \: f(0) = 0\}.\] 
As all functions in $V_{\rm eff}$ vanish in $0$, this subspace is not dense 
in~$V$.

The uniqueness of $j$ implies that $\sH = L^2([0,1])$, where
$j \: V_{\rm eff} \into \sH$ is the canonical inclusion. 
Although this inclusion is injective, there exist 
positive energy representations of $\Heis(V,\sigma)$ 
that are not multiples of Fock space representations 
(cf.\ Remark~\ref{rem:4.5}(a)). 

For any unitary representation $\kappa$ of the additive group 
$(\C,+)$, we obtain by 
$f \mapsto \kappa(f(0))$  a unitary representation of $V$. 
Let $(\rho_0, \cK_0)$ be the cyclic Fock representation of 
$\Heis(V,\sigma)$, specified by the positive definite function 
$\phi(z,f) = ze^{-\frac{1}{4}\|f\|_2^2}$ 
and let $(\kappa, \cE)$ be a cyclic representation of $\C$ with cyclic 
vector $\Omega^0$. Recall that this implies that 
$\cE \cong L^2(\R^2,\mu)$ with a finite positive measure $\mu$, 
$\Omega^0 = 1$ and 
\[ (\kappa(x+iy)F)(a,b) = e^{i (a x + by)} F(a,b).\] 
We define on the tensor product 
$\cH := \cE \otimes \cK_0$ a unitary positive energy representation of 
$\Heis(V,\sigma)^\flat$ by 
\[ \pi(z,f,t) := \kappa(f(0)) \otimes \rho_0(z,f,t).\] 
Its minimal energy subspace is $\cH^0 = \cE \otimes \Omega\cong \cE$, 
which is clearly cyclic. The commutant $\pi(\Heis(V,\sigma))'$ is 
isomorphic to the commutant $\kappa(V)' = \kappa(V)''$; where we use 
that $\kappa$ is cyclic and thus $\kappa(V)''$ maximal abelian. 
As the cyclic vector $\Omega^0 \in \cE$ separates 
$\kappa(V)' \cong \pi(\Heis(V,\sigma))'$, it is also cyclic for 
$\Heis(V,\sigma)$. Therefore $(\pi, \cH)$ defines a positive energy 
Bose--Einstein field on $\cH$. 

From the isomorphism of the commutants
$\kappa(V)' \cong \pi(\Heis(V,\sigma))'$ 
and Schur's Lemma, 
it follows that the representation $\pi$ is irreducible if and only if 
$\mu$ is a point measure, i.e., $\kappa$ is simply a character of the 
group $(\C,+)$. Then $\cH = \cK_0$ and 
$\pi(z,f,t) = \kappa(f(0)) \rho_0(z,f,t)$.

We also note that $\pi^0(G^0) = \T \1$, so that the inclusion 
$\kappa(\C)' \subeq \pi^0(G^0)' = B(\cH^0)$ is proper if and only if 
$\kappa$ is not irreducible. We conclude that 
$\pi$ is strict if and only if $\dim \cE = 1$, i.e., 
$\kappa$ is irreducible. 
\end{ex}

As a consequence of the preceding discussion, we record: 
\begin{prop} There exist pairs $(G,\alpha)$, where $G$ 
is a Banach--Lie group and $\alpha$ is a smooth action, such 
that not all smooth ground state representations are strict.   
\end{prop}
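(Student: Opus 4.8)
The plan is to read off the required example directly from Example~\ref{ex:4.7}; essentially all the work is already done there, and what remains is to pin down the choice that produces a non-strict representation and to verify that this representation is smooth in the required sense. Concretely, I would take $G := \Heis(V,\sigma)$ with the Banach space $V = C([0,1],\C)$, which is a Banach--Lie group, and let $\alpha$ be the $\R$-action induced by the symplectic one-parameter group $\beta_t f(x) = e^{-itx} f(x)$. This $\beta$ is norm-continuous with bounded generator $D_V f(x) = -ix\,f(x)$, so $\beta \: \R \to \Sp(V,\sigma)$ is smooth and hence $\alpha$ is a smooth action on $G$. Following Example~\ref{ex:4.7}, I then fix a cyclic unitary representation $(\kappa,\cE)$ of $(\C,+)$ whose representing measure $\mu$ is \emph{not} a point mass, so that $\dim\cE > 1$, and form $\pi(z,f,t) := \kappa(f(0)) \otimes \rho_0(z,f,t)$ on $\cH = \cE\,\hat\otimes\,\cK_0$, where $(\rho_0,\cK_0)$ is the cyclic Fock representation attached to $\phi(z,f) = z e^{-\frac14\|f\|_2^2}$.

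The second step is to confirm that $(\pi,\cH)$ is a smooth ground state representation. That it is a ground state representation is exactly what Example~\ref{ex:4.7} establishes: it is a positive energy Bose--Einstein field, its minimal energy space is the cyclic subspace $\cH^0 = \cE\otimes\Omega \cong \cE$, and $\lbr \pi(G)\cH^0\rbr = \cH$. For smoothness I must show that $\cH^{0,\infty} = \cH^0 \cap \cH^\infty$ is dense in $\cH^0$. The two ingredients are that $\Omega$ is a smooth vector for the Fock representation $\rho_0$ (cf.\ Proposition~\ref{prop:4.3}(b), where smoothness of $\Omega$ is read off from the smooth positive definite function), and that the $\kappa$-smooth vectors are dense in $\cE$, since $(\C,+)$ is finite dimensional. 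As the evaluation $f \mapsto f(0)$ is continuous linear, for any $\kappa$-smooth $\xi \in \cE$ the orbit map $g \mapsto \pi(g)(\xi\otimes\Omega)$ is a tensor product of two smooth Hilbert-space-valued maps, hence smooth; thus $\xi\otimes\Omega \in \cH^{0,\infty}$, and such vectors are dense in $\cH^0 = \cE\otimes\Omega$. By Lemma~\ref{lem:smoothground} the extension to $G^\flat$ is then automatically smooth as well.

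Finally, I would invoke the strictness dichotomy recorded in Example~\ref{ex:4.7}: using $G^0 = \T$, $\pi^0(G^0) = \T\1$, and the identification of commutants $\pi(\Heis(V,\sigma))' \cong \kappa(V)'$, that example shows $\pi$ is strict if and only if $\kappa$ is irreducible, i.e.\ $\dim\cE = 1$. Since we arranged $\dim\cE > 1$, the representation $(\pi,\cH)$ is a smooth ground state representation that fails to be strict in the sense of Definition~\ref{def:strict}, which proves the proposition. The only genuinely new point beyond citing Example~\ref{ex:4.7} is the smoothness verification, and this is the step I expect to require the most care: one must check that the evaluation-at-$0$ map feeds $\kappa$-smooth vectors into $G$-smooth vectors and that tensoring with the smooth Fock vacuum preserves smoothness, so that the density of $\cH^{0,\infty}$ in $\cH^0$ genuinely holds.
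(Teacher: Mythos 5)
Your proposal is correct and follows essentially the same route as the paper, which records this proposition as an immediate consequence of Example~\ref{ex:4.7}: take $G = \Heis(C([0,1],\C),\sigma)$ with $\beta_t f(x) = e^{-itx}f(x)$ and a cyclic but reducible $\kappa$, so that the resulting ground state representation fails strictness by the example's dichotomy. Your explicit verification that $\cH^{0,\infty}$ is dense in $\cH^0$ (via smoothness of the Fock vacuum, density of $\kappa$-smooth vectors, and continuity of evaluation at $0$) is a welcome addition that the paper leaves implicit, and it is carried out correctly.
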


\begin{rem} One cannot 
replace the density assumption \eqref{eq:splitcondsymp} 
in Theorem~\ref{thm:4.4} by the assumption that 
$V^\beta + V_\beta$ is $\sigma$-weakly dense, i.e., that $\sigma$ 
is non-degenerate on $V^\beta$. 
If $V^\beta = \{0\}$, then $V_\beta$ is $\sigma$-weakly dense, 
and we have this situation in Example~\ref{ex:4.7}. The construction 
of the non-strict ground state representations in this example 
show that the conclusion of Theorem~\ref{thm:4.4} is invalid  in this 
example.
\end{rem}

\begin{rem} (a) 
If $\dim V < \infty$, then $V^\beta = V_{\rm eff}^{\bot_\sigma}$ intersects 
$V_{\rm eff}$ trivial, hence is a linear complement. 

\nin (b) Without the assumption of $V_{\rm eff}$ being symplectic, 
the sum $V_{\rm eff} + V^\beta$ need not be dense in $V$, 
as the following example shows. 
On $V = \R^2$ with $\sp(V,\sigma) \cong \fsl_2(\R)$, we consider 
the nilpotent element 
\[ D = \pmat{0 & 0 \\ 1 & 0} \in\fsl_2(\R) = \sp(V,\sigma)
\quad \mbox{ and  } \quad 
\beta_t = e^{tD} = \pmat{1 & 0 \\ t & 1}.\] 
Then $V^\beta = \R e_2 = V_\beta$ are both one-dimensional, 
hence not symplectic. It follows in particular, 
that any map $j \: V_\beta \to \sH$ into a complex Hilbert space $\sH$ 
with dense range is trivial. Therefore Weinless' Theorem implies 
that $\R e_2$ annihilates all ground state vectors 
(see also Theorem~\ref{thm:4.3}(ii) below). 

\nin (c) From \eqref{eq:annihilbeta} it follows that 
\[ (V^\beta + V_{\rm eff})^{\bot_\sigma}  
= (V^\beta + V_\beta)^{\bot_\sigma}  
= V^\beta \cap (V^\beta)^{\bot_\sigma} \] 
is the radical of the restriction of $\sigma$ to $V^\beta$. 
Therefore the non-degeneracy of $\sigma$ on $V^\beta$ 
is equivalent to $V^\beta + V_\beta$ being $\sigma$-weakly dense in $V$.
\end{rem}

\begin{rem} (Kay's Uniqueness Theorem) 

\nin (a) A {\it single particle structure for 
$(V,\sigma,\beta)$} is a triple $(j,\sH,U^\sH)$, consisting 
of a complex Hilbert space~$\sH$, a real linear map 
$j \: V \to \sH$ and a unitary one-parameter group 
$U_t^\sH= e^{itB}$ with $B > 0$ such that:
\[ \oline{j(V)} = \sH, \quad 
\sigma(v,w) = 2 \Im \la j(v), j(w) \ra \quad \mbox{for } \quad v,w \in V, 
\quad \mbox{ and } \quad 
j \circ \beta_t = U_t^\sH \circ j \quad \mbox{  for } \quad t \in \R.\]
Conditions (j1/2) in Theorem~\ref{thm:weinless} imply that 
$(j, \sH, U^\sH)$ defines a single particle structure 
for $(V_\beta, \sigma,\beta)$. According to 
Kay's Uniqueness Theorem (\cite{Ka79, Ka85}), any 
two single particle structures for 
$(V_\beta,\sigma,\beta)$ are unitarily equivalent. 
Note that single particle structures on $V$ can only exist 
if $V^\beta = \{0\}$ because $j$ is supposed to be injective and $U^\sH$ has no 
non-zero fixed points on $\sH$. 

\nin (b) If we start with a complex Hilbert space 
$\cH$ and $V = \cH^\R$ (the underlying real space) with 
$\sigma = 2 \Im \la \cdot,\cdot \ra$, then 
any unitary one-parameter group $\beta_t = e^{itA}$ on $\cH$ 
defines a boson single particle space 
$(V,\sigma,\beta)$ and $V = \ker A \oplus V_\beta$.

Suppose that $\ker A = 0$ and write 
$A = R |A|$ for the polar decomposition of $A$. Then 
$R$ is a unitary involution and $I := iR$ defines a new complex 
structure on $\cH$; we write $\sH$ for the resulting complex 
Hilbert space. Then the identity map 
$j \: \cH \to \sH$ is symplectic 
and $\beta_t = e^{I t |A|}$, so that $(j,\sH, \beta)$ 
is the unique single particle structure for $(V,\sigma,\beta)$. 

\nin (c) An important special case arises from the 
translation action $(\beta_t f)(z) = f(e^{it}z)$ of $\R$ on 
$\cH = L^2(\T)$. Then 
\[ \sigma(\xi,\eta) 
=  2\int_0^1 \Im(\oline{\xi(t)} \eta(t))\, dt 
=  \frac{1}{i} \int_0^1 \oline{\xi(t)} \eta(t) - \oline{\eta(t)} \xi(t))\, dt 
\]
In this case $(Af)(z) = z f'(z)$ with $\ker A = \C 1$ (the constant 
functions). Therefore $R$ corresponds to the Hilbert transform on 
$1^\bot \subeq L^2(\bS^1)$. 
\end{rem}

\section{Holomorphic induction} 
\label{sec:5}

One can use the technique of holomorphic induction 
to show that certain ground state representations 
are strict and to obtain further regularity properties. 
In \cite[App.~C]{Ne14b}, it is shown that this technique, 
developed in \cite{Ne13} for Banach--Lie groups, also applies 
to Fr\'echet--BCH--Lie groups satisfying the conditions (SC) and (A1-3) below. 

In this section we first describe the geometric environment 
on the Lie group and Lie algebra level needed 
for holomorphic induction (Subsection~\ref{subsec:5.1}) and 
define this concept in Subsection~\ref{subsec:5.2}. 
All this is combined with a homomorphism $\alpha \: \R \to \Aut(G)$ 
in Subsection~\ref{subsec:5.3}. 
If $G$ is a 
Banach--Lie group and $D \in \der(\g)$ is a bounded operator for 
which $0$ is isolated in $\Spec(D)$ and the norm on $\g$ is 
$\alpha$-invariant ($D$ is elliptic), then 
this leads to Theorem~\ref{thm:6.2}, the main result of this section.

\subsection{The geometric setup for holomorphic induction} 
\label{subsec:5.1}

Let $G$ be a Lie group with 
the smooth exponential function 
$\exp \:  \g \to G$. 
If, in addition, $G$ is analytic and
the exponential function is an analytic local diffeomorphism in $0$,
then $G$ is called a {\it BCH--Lie group} (for Baker--Campbell--Hausdorff).
Then the Lie algebra $\g$ is a
{\it BCH--Lie algebra}, i.e., there exists an open
$0$-neighborhood $U \subeq \g$ such that for $x,y \in U$ the Hausdorff series
$$x * y = x + y + \frac{1}{2}[x,y] + \cdots  $$
converges and defines an analytic function
$U \times U \to \g, (x,y) \mapsto x * y$.
The class of BCH--Lie groups contains in particular all Banach--Lie groups
(\cite[Prop.~IV.1.2]{Ne06}, \cite{GN}).

Let $G$ be a connected Fr\'echet--BCH--Lie group $G$ with Lie algebra $\g$. 
We further assume that there 
exists a complex BCH--Lie group $G_\C$ with Lie algebra $\g_\C$ and a natural 
map $\eta \: G \to G_\C$ for which $\Lie(\eta)$ is the inclusion $\g \into \g_\C$. 
Assume that $G^0 \subeq G$ is a Lie subgroup for which $M := G/G^0$ 
carries the structure of a smooth manifold 
with a smooth $G$-action. 
We also assume the existence of closed $\Ad(G^0)$-invariant complex subalgebras 
$\fg_\C^\pm \subeq \g_\C$ for which the complex conjugation \break 
$\sigma_\g(x+ iy) = x -iy$ on $\g_\C$ satisfies 
 $\sigma_\g(\fg_\C^\pm) = \fg_\C^\mp$ and  we have a topological direct 
sum decomposition 
\begin{equation}
  \label{eq:splitcondb}
 \g_\C = \fg_\C^+ \oplus \fg^0_\C \oplus \fg_\C^-\tag{SC}
\end{equation}
(cf.~(L3/4) in Section~\ref{sec:3}).
We put 
\[ \fq := \fg_\C^+ \rtimes \fg^0_\C \quad \mbox{ and } \quad 
\fp := \g \cap (\fg_\C^+ \oplus \fg_\C^-),\]
so that $\g = \g^0 \oplus \fp$ is a topological direct sum. 
We assume that there exist open symmetric convex $0$-neighborhoods 
\[ U_{\g_\C} \subeq \g_\C, \quad 
U_\fp \subeq \fp \cap U_{\g_\C}, \quad U_{\g^0} \subeq \g^0 \cap U_{\g_\C}, \quad 
U_{\fg_\C^\pm} \subeq\fg_\C^\pm  \cap U_{\g_\C}, \quad 
\mbox{ and  }\quad U_\fq \subeq \fq \cap U_{\g_\C} \] 
 such that the 
BCH-product is defined and holomorphic on 
$U_{\g_\C} \times U_{\g_\C}$, and the following maps are analytic diffeomorphisms onto an open subset: 
\begin{description}
\item[\rm(A1)] $U_{\fp} \times U_{\g^0} \to \g, (x,y) \mapsto x * y$. 
\item[\rm(A2)] $U_{\fp} \times U_\fq \to \g_\C, (x,y) \mapsto x * y$. 
\item[\rm(A3)] $U_{\g_\C^-} \times U_\fq \to \g_\C, (x,y) \mapsto x * y$. 
\end{description}

\begin{exs} \label{ex:c.4a} 
\nin (a) If $\dim G < \infty$ and $D$ is elliptic, 
then all these assumptions are satisfied 
for the positive/negative spectral subspaces $\g_\C^\pm$ of the derivation 
$i\cdot D$ of~$\g_\C$.  

\nin (b) Let $G$ be  a simply connected Banach--Lie group for which $\g_\C$ also is the 
Lie algebra of a Banach--Lie group and $M = G/G^0$ is a Banach homogeneous space. 
If the subalgebras $\g_\C^\pm \subeq \g_\C$ 
satisfy the splitting condition \eqref{eq:splitcondb}, 
then (A1-3) follow directly from the Inverse Function Theorem. This is the context of \cite{Ne13}.
\end{exs}

\begin{rem} \label{rem:c.4b} (From Banach to Fr\'echet) \\ 
Let $G_B$ be a Banach--Lie group with Lie algebra $\g_B$, $G_B^0 \subeq G_B$ and 
$M_B = G_B/G^0_B$ be as above (cf.\ Example~\ref{ex:c.4a}(b)). 
We assume that the splitting condition 
\eqref{eq:splitcondb} is satisfied. 
In addition, let $\beta \: \R \to \Aut(G_B)$ be a one-parameter group of automorphisms defining a continuous 
$\R$-action on $G_B$ and assume that the subalgebras 
$\g_{B,\C}^\pm$, $\fq_B$ and $\g^0$ are $\beta$-invariant. Then the subgroup 
\[ G := \{ g \in G_B \: \beta^g \: \R \to G_B,t \mapsto \beta_t(g)\ \mbox{ is smooth} \} \] 
of $G_B$ carries the structure of a Fr\'echet--BCH--Lie group with 
Lie algebra 
\[ \g := \{ x \in \g_B \: \R \to \g_B, t \mapsto \Lie(\beta_t)x \ \mbox{ is smooth} \}, \] 
the Fr\'echet space of smooth vectors for the continuous $\R$-action on the Banach--Lie algebra $\g_B$. 
Likewise $G^0 := G \cap G_B^0$ is a  Lie subgroup of $G$ for which 
$M := G/G^0$ is a smooth manifold consisting of the elements of $M_B = G_B/G^0_B$ with smooth 
orbit maps with respect to the one-parameter group of diffeomorphisms induced by 
$\beta$ via $\beta^M_t(gG_B^0) = \beta_t(g)G_B^0$. 

Since the automorphisms $\Lie(\beta_t)$ of $\g$ resp., $\g_\C$ are compatible with the 
BCH multiplication, it is easy to see with 
\cite[Lemma~C.5]{Ne14b} 
that conditions (A1-3) are inherited by the closed Fr\'echet subalgebras 
\[ \fg^0 = \fg^0_B \cap \g, \quad \g_\C^\pm = (\g_\C^\pm)_B \cap \g_\C \quad \mbox{ and } \quad 
\fq = \fq_B \cap \g_\C.\] 
\end{rem}

\subsection{Holomorphically induced representations} 
\label{subsec:5.2}

Condition (A1) implies the existence of a smooth manifold structure on $M = G/G^0$ 
for which $G$ acts 
analytically. 
Condition (A2) implies the existence of a complex manifold structure on $M$ which is $G$-invariant 
and for which the complex structure on the tangent space $T_{e G^0}(M) 
\cong \g/\g^0$ of 
$M$ in the base point $eG^0$ is determined by the identification with 
$\g_\C/\fq$. Finally, 
(A3) makes the proof of \cite[Thm.~2.6]{Ne13} work, so that we can associate to every 
bounded unitary representation $(\rho,\cV)$ of $G^0$ 
a holomorphic Hilbert bundle $\bV := G \times_{G^0} \cV$ over the complex  $G$-manifold $M$ 
by defining $\beta \: \fq \to \gl(\cV)$ by $\beta(\g_\C^+) = \{0\}$ and $\beta\res_{\g^0} = \dd\rho$.  

\begin{defn}  \label{def:d.1} 
We write $\Gamma(\bV)$ for the space of holomorphic sections 
of the holomorphic Hilbert bundle $\bV \to M = G/G^0$ on which the group $G$ acts by 
holomorphic bundle automorphisms. 
A unitary representation $(\pi, \cH)$ of $G$ is said to be 
{\it holomorphically induced from $(\rho,\cV)$} 
if there exists a $G$-equivariant linear injection 
$\Psi \: \cH \to \Gamma(\bV)$ such that the 
adjoint of the evaluation map $\ev_{e G^0} \: \cH \to \cV = \V_{e G^0}$ 
defines an isometric embedding $\ev_{e G^0}^* \: \cV \into \cH$. 
If a unitary representation $(\pi, \cH)$ holomorphically induced 
from $(\rho,\cV)$ exists, then it is uniquely determined 
(\cite[Def.~3.10]{Ne13}) and we call $(\rho,\cV)$ {\it (holomorphically)  
inducible}. 

The concept of holomorphic 
inducibility involves a choice of sign. 
Replacing $\g_\C^+$ by $\g_\C^-$ changes the complex structure on $G/G^0$ 
and thus leads to a different class of holomorphically inducible 
representations of $G^0$. 
\end{defn}

The following two theorems contain key information on 
holomorphically induced representations. The first one 
describes properties of holomorphically induced representations 
and the second one provides a sufficient criterion for a representation 
to be holomorphically induced.

\begin{thm} \label{thm:c.1} {\rm(\cite[Thm.~C.2]{Ne14b})} 
Assume {\rm(A1-3)}. If the unitary representation $(\pi, \cH)$ of $G$ is holomorphically 
induced from the bounded $G^0$-representation $(\rho,\cV)$, then the 
following assertions hold: 
\begin{description}
\item[\rm(i)] $\cV \subeq \cH^\omega$ consists of analytic vectors, i.e., 
their orbit maps $G \to \cH$ are real-analytic. 
\item[\rm(ii)] $R \: \pi(G)' \to \rho(G^0)', A \mapsto A\res_\cV$ is an isomorphism of 
von Neumann algebras. 
\item[\rm(iii)] $\dd\pi(\g_\C^-) \cV = \{0\}$. 
\end{description}
\end{thm}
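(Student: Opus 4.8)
The plan is to extract all three assertions from the defining data of holomorphic induction. Write $o := e G^0$ and identify $\cV$ with its isometric image $\ev_o^*\cV \subeq \cH$, where $\ev_o \: \cH \to \cV$ is the evaluation at $o$ provided by $\Psi\:\cH\to\Gamma(\bV)$ and $\ev_o^*$ its isometric adjoint. Two structural facts drive everything: $\ev_o$ is $G^0$-equivariant, and holomorphy of sections infinitesimalises to $\ev_o\circ\dd\pi(q) = \beta(q)\circ\ev_o$ for $q\in\fq$, where $\beta\res_{\g^0}=\dd\rho$ and $\beta(\fg_\C^+)=\{0\}$. I would prove (iii) first. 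Fix $z\in\fg_\C^-$, $\xi\in\cV$ and a smooth vector $w\in\cH^\infty$. Complex-linear extension of the skew-symmetry $\dd\pi(x)^*=-\dd\pi(x)$ ($x\in\g$) gives $\dd\pi(z)^*=-\dd\pi(\sigma_\g(z))$ with $\sigma_\g(\fg_\C^-)=\fg_\C^+\subeq\fq$, so
\[ \la \dd\pi(z)\ev_o^*\xi, w\ra = -\la \ev_o^*\xi, \dd\pi(\sigma_\g(z))w\ra = -\la \xi, \beta(\sigma_\g(z))\ev_o(w)\ra = 0 \]
since $\beta(\fg_\C^+)=\{0\}$. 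As $\cH^\infty$ is dense, $\dd\pi(z)\ev_o^*\xi=0$, which is (iii).

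For (i) I would show that the orbit map $\pi^\xi\: g\mapsto\pi(g)\ev_o^*\xi$ of each $\xi\in\cV$ is real-analytic. Boundedness of $\rho$ makes $\cV$ consist of analytic vectors for $G^0$, so it suffices to extend $\pi^\xi$ holomorphically across the $\fp$-directions. Using the chart (A1) I would write $g=\exp(y)\exp(x_0)$ with $y\in U_\fp$, $x_0\in U_{\g^0}$; the $\exp(x_0)$-factor contributes the analytic dependence through $\rho$, while (A2)/(A3) identify $U_\fp$ with a holomorphic coordinate patch around $o$ in $M=G/G^0$ on which the section $\Psi(\ev_o^*\xi)$ is holomorphic. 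Combining these two analyticities exhibits $\pi^\xi$ as real-analytic near $e$, and $G$-equivariance spreads this over all of $G$; hence $\cV\subeq\cH^\omega$.

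It remains to treat (ii). That $R$ is an injective $*$-homomorphism into $\rho(G^0)'$ is the soft part: $G^0$-equivariance of $\ev_o$ yields $A\res_\cV\in\rho(G^0)'$ as soon as $A\cV\subeq\cV$, and injectivity holds because $\cV$ is cyclic (so $A\res_\cV=0$ forces $A\pi(g)\ev_o^*\xi=\pi(g)A\ev_o^*\xi=0$, whence $A=0$). The two real points are invariance $A\cV\subeq\cV$ and surjectivity. For invariance I would upgrade (iii) to the characterization $\cV=\{v\in\cH^\omega\:\dd\pi(\fg_\C^-)v=0\}$, valid in the present inducible situation: ``$\subeq$'' is (i) together with (iii), and ``$\supeq$'' is an identity-theorem argument, since a holomorphic section annihilated by all antiholomorphic derivatives is determined by its value at $o$ and hence lies in $\ev_o^*\cV$. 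Any $A\in\pi(G)'$ commutes with each $\dd\pi(z)$ and preserves $\cH^\omega$, so it preserves this set and therefore $\cV$; as $\pi(G)'$ is $*$-closed, $\cV$ is reducing and $R$ is well defined. For surjectivity, given $B\in\rho(G^0)'$ I would note that $B$ commutes with all of $\beta(\fq)$ (because $\beta(\fg_\C^+)=\{0\}$ and $\beta\res_{\g^0}=\dd\rho$), hence defines a $G$-invariant holomorphic endomorphism of $\bV=G\times_{G^0}\cV$ and thus an operator $\tilde B$ on $\Gamma(\bV)$ commuting with $G$ and restricting to $B$ on the fibre. The hard part, which I expect to be the main obstacle, is to show that $\tilde B$ preserves the Hilbert subspace $\cH\subeq\Gamma(\bV)$ and is bounded there; this forces one through the vector-valued reproducing kernel description of $\cH$ (Appendix~\ref{app:B}), checking that membership of $B$ in $\rho(G^0)'$ makes the induced multiplier compatible with the $B(\cV)$-valued kernel $Q$ determined by $\rho$ and the holomorphic structure, which simultaneously yields invariance of $\cH$ and the norm bound. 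Then $A:=\tilde B\res_\cH\in\pi(G)'$ satisfies $R(A)=B$, and $R$ is a bijective $*$-homomorphism of von Neumann algebras, hence an isomorphism.
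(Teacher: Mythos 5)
The benchmark here is awkward: the paper's own ``proof'' of Theorem~\ref{thm:c.1} is pure citation ((i) to \cite[Lemma~3.5]{Ne13}, (ii) to \cite[Thm.~3.12]{Ne13}, (iii) to Equation~(1) there), so your blind attempt has to actually carry the weight those citations carry. Two of your three pieces are on the right track but have defects. For (iii), your computation is the standard one and your ``structural fact'' $\ev_o\circ\dd\pi(q)=\beta(q)\circ\ev_o$ is precisely Equation~(1) of \cite{Ne13}; however, there is an ordering problem: the manipulation $\la \dd\pi(z)\ev_o^*\xi,w\ra=-\la \ev_o^*\xi,\dd\pi(\sigma_\g(z))w\ra$ is only legitimate once $\ev_o^*\xi$ is known to lie in the domain of $\dd\pi(z)$, i.e.\ once $\cV\subeq\cH^\infty$ is available, so (iii) cannot be proved ``first'' --- it presupposes (the smoothness part of) (i). For (i) itself, your sketch conflates two different statements: holomorphy of the section $\Psi(\ev_o^*\xi)$ is a statement about its pointwise values in the fibers, whereas (i) asserts analyticity of the $\cH$-valued orbit map $g\mapsto\pi(g)\ev_o^*\xi$; ``combining the two analyticities'' does not bridge this, and the actual argument of \cite[Lemma~3.5]{Ne13} goes through analyticity of the reproducing kernel, i.e.\ of inner products $\la \pi(g)\ev_o^*\xi,\pi(h)\ev_o^*\eta\ra$, before upgrading to strong analyticity.

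The genuine gap is the surjectivity of $R$ in (ii), which you explicitly leave open (``the hard part\dots is to show that $\tilde B$ preserves the Hilbert subspace''). That is not a technical remainder; it is the entire content of \cite[Thm.~3.12]{Ne13}, and it is what the paper's strictness results (Proposition~\ref{prop:c.3s}, Theorem~\ref{thm:6.2}) rest on. One concrete way to close it: show first that the $B(\cV)$-valued positive definite function $\phi(g):=\ev_o\pi(g)\ev_o^*$ takes values in $\rho(G^0)''$. Indeed, by (i) $\phi$ is analytic; by the Poincar\'e--Birkhoff--Witt decomposition $U(\g_\C)=U(\fq)\,U(\g_\C^-)$, by (iii) and by the covariance relation, every derivative $\ev_o\,\dd\pi(D)\,\ev_o^*$, $D\in U(\g_\C)$, lies in $\dd\rho(U(\g^0_\C))\subeq \rho(G^0)''$; analyticity and connectedness of $G$ then give $\phi(G)\subeq\rho(G^0)''$. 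Consequently, for $B\in\rho(G^0)'$ with $0\leq B\leq \1$, the function $g\mapsto \phi(g)B=B^{1/2}\phi(g)B^{1/2}$ is positive definite and dominated by $\phi$, so the GNS correspondence for operator-valued positive definite functions (Proposition~\ref{prop:gns}, in its dominated-kernel form) produces $A\in\pi(G)'$ with $0\leq A\leq\1$ and $\ev_o A\,\ev_o^*=B$; combined with your invariance argument (which is fine, modulo noting that commutant operators preserve $\cH^\omega$ and commute with the derived representation), this yields $A\res_\cV=B$, and general $B$ follows by linearity. Without an argument of this kind, (ii) --- the assertion that makes ground state representations \emph{strict} --- remains unproved.
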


\begin{prf} (i) follows from \cite[Lemma~3.5]{Ne13} 
and (ii) from \cite[Thm.~3.12]{Ne13}. 
Further (iii) follows from Equation (1) in the discussion 
preceding Theorem~3.12 in \cite{Ne13}. 
\end{prf}

\begin{thm} {\rm(\cite[Thm.~3.17]{Ne13})} \label{thm:c.3}
Suppose that $(U,\cH)$ is a unitary representation of $G$ and 
$\cV \subeq \cH$ is a $G^0$-invariant closed subspace such that 
\begin{description}
\item[\rm(HI1)] The representation $(\rho,\cV)$ of $G^0$ on $\cV$ is bounded. 
\item[\rm(HI2)] $\cV \cap (\cH^\infty)^{\g_\C^-}$ is dense in $\cV$. 
\item[\rm(HI3)] $\lbr \pi(G)\cV\rbr = \cH$. 
\end{description}
Then $(\pi, \cH)$ is holomorphically induced from $(\rho,\cV)$. 
\end{thm}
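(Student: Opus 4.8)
The plan is to write down the realization map explicitly, verify the formal properties, and reduce everything to the holomorphicity of the resulting sections, which is the only analytically substantial point.

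First I would define, for $v \in \cH$, the section $\Psi(v) \: G \to \cV$ by $\Psi(v)(g) := P_\cV\,\pi(g)^{-1}v$, where $P_\cV \in B(\cH)$ is the orthogonal projection onto $\cV$ (this is the evaluation $\ev_{eG^0}$ read off in the associated-bundle picture for $\bV = G \times_{G^0}\cV$). Since $\cV$ is $G^0$-invariant, $P_\cV$ commutes with $\pi(h)$ for $h \in G^0$, so $\Psi(v)(gh) = P_\cV \pi(h)^{-1}\pi(g)^{-1} v = \rho(h)^{-1}\Psi(v)(g)$, which is the equivariance making $\Psi(v)$ a continuous section. Moreover $\Psi(\pi(g_0)v)(g) = P_\cV \pi(g^{-1}g_0)v = \Psi(v)(g_0^{-1}g)$, so $\Psi$ is $G$-equivariant, and $\Psi(v)(e) = P_\cV v$ shows $\ev_{eG^0}\circ\Psi = P_\cV$, whose adjoint is the inclusion $\cV \hookrightarrow \cH$, manifestly isometric; thus the isometry requirement in Definition~\ref{def:d.1} is immediate. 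Injectivity of $\Psi$ is exactly (HI3): if $\Psi(v)=0$ then $v \perp \pi(g)\cV$ for all $g$, hence $v \perp \lbr \pi(G)\cV\rbr = \cH$ and $v=0$.

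The crux is to show $\Psi(v) \in \Gamma(\bV)$, i.e.\ that these sections are holomorphic. I would first treat $v$ in the subspace $\cV_0 := \cV \cap (\cH^\infty)^{\g_\C^-}$, dense in $\cV$ by (HI2). For such $v$ the vector is smooth and annihilated by $\dd\pi(\g_\C^-)$, and $\rho$ is bounded by (HI1); by the analytic-vector estimates of \cite{Ne13} (the input behind Theorem~\ref{thm:c.1}(i)) the orbit map $g \mapsto \pi(g)^{-1}v$ extends to a holomorphic $\cV$-valued map on a neighbourhood of $e$ in $G_\C$. Reading $\Psi(v)$ in the holomorphic chart of $M = G/G^0$ supplied by (A3) (points near the base point are $\exp\zeta\cdot Q$, $\zeta \in U_{\g_\C^-}$, where $Q$ is the subgroup with Lie algebra $\fq$ and $\beta$ satisfies $\beta|_{\g_\C^+}=0$, $\beta|_{\g^0}=\dd\rho$), the trivialized section becomes a holomorphic function of $\zeta$, and its Cauchy--Riemann equation at the base point is precisely the vanishing $\dd\pi(\g_\C^-)v = 0$ recorded in Theorem~\ref{thm:c.1}(iii). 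This is where (HI1), (SC) and (A1--3) genuinely enter. For $v = \pi(g_0)w$ with $w \in \cV_0$, equivariance gives $\Psi(v) = g_0\cdot\Psi(w)$, holomorphic because $G$ acts on $\bV$ by holomorphic bundle automorphisms; hence $\Psi$ lands in $\Gamma(\bV)$ on $\spann \pi(G)\cV_0$, which is dense by (HI2) and (HI3). For general $v$, choosing $v_n \to v$ in this subspace, the bound $\|\Psi(v_n)(g) - \Psi(v)(g)\| \le \|v_n - v\|$ holds for every $g$, so the sections converge uniformly on $G$, hence locally uniformly on $M$ in each trivialization; as holomorphic sections are closed under locally uniform limits, $\Psi(v)$ is holomorphic. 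Combined with the first step, this exhibits $(\pi,\cH)$ as holomorphically induced from $(\rho,\cV)$.

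I expect the main obstacle to be the holomorphicity of $\Psi(v)$ for $v \in \cV_0$, that is, upgrading ``smooth and annihilated by $\dd\pi(\g_\C^-)$'' to ``the orbit map extends holomorphically to $G_\C$ and matches the holomorphic structure of $\bV$.'' Everything else --- equivariance, the section property, the isometric embedding, injectivity from (HI3), and the density-plus-closedness extension --- is formal, whereas this analytic-continuation step is exactly where boundedness of $\rho$ and the geometric splitting hypotheses (SC), (A1--3) are indispensable, and it is the delicate heart of the argument carried out in \cite{Ne13}.
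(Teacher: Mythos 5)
Your formal skeleton is correct and essentially forced: the realization $\Psi(v)(g) := P_\cV\,\pi(g)^{-1}v$, its two equivariance properties, injectivity from (HI3), the identity $\ev_{eG^0}\circ\Psi = P_\cV$ with isometric adjoint, and the closing density-plus-locally-uniform-limits step are all fine. (For the record, the paper contains no proof of this statement to compare against: it is imported verbatim from \cite[Thm.~3.17]{Ne13}, so your attempt can only be measured against what a correct argument needs.)

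The holomorphy step, which you rightly identify as the crux, has a genuine gap, in two related places. First, your justification is circular: Theorem~\ref{thm:c.1}(i), and the result of \cite{Ne13} behind it, asserts that $\cV$ consists of analytic vectors \emph{under the hypothesis that $(\pi,\cH)$ is already holomorphically induced}; it is a consequence of the statement being proved, not an input available from (HI1)--(HI3). Nothing in ``smooth, $\dd\pi(\g_\C^-)v = 0$, $\rho$ bounded'' yields, by citation, a holomorphic extension of the orbit map to a $G_\C$-neighbourhood of $e$; the missing ingredient is control of the iterates $\dd\pi(y_1)\cdots\dd\pi(y_n)v$ for $y_i \in \g_\C^+$, for which no bounds are assumed. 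Second, even granting holomorphy of $\Psi(v)$ near the base point, your argument does not give it on all of $M$: at a point $g_0G^0$ the relevant vector is $\pi(g_0)^{-1}v$, which is no longer annihilated by $\dd\pi(\g_\C^-)$, so the base-point computation does not transport; and your step for vectors of the form $\pi(g_0)w$ presupposes that $\Psi(w)$ is \emph{globally} holomorphic before translating it, so nothing closes the loop. The repair is to use (HI2) through the \emph{left} argument of the sections rather than through $v$ itself: for $y \in \g_\C^+$, $u \in \cV_0 := \cV \cap (\cH^\infty)^{\g_\C^-}$ and arbitrary $w \in \cH^\infty$ one has $\la u, \dd\pi(y)w\ra = \la \dd\pi(y^*)u, w\ra = 0$, because $y^* \in \g_\C^-$ annihilates $u$; density of $\cV_0$ in $\cV$ then gives $P_\cV\,\dd\pi(y)w = 0$ for \emph{all} smooth $w$. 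Since in the left-invariant framing the antiholomorphic tangent directions of $M$ are exactly the $\g_\C^+$-directions, and the bundle twist is invisible there because $\beta(\g_\C^+) = \{0\}$, this says that every section $\Psi(v)$ with $v \in \cH^\infty$ satisfies the Cauchy--Riemann equations at \emph{every} point; combined with smoothness of the section, this is holomorphy in this setting, and your limit argument then finishes the proof. As written, however, the decisive analytic step is not merely deferred to \cite{Ne13} but replaced by an unavailable (circular) appeal.
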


\subsection{The setting with $\alpha$} 
\label{subsec:5.3}

Now let $\alpha \: \R \to \Aut(G)$ define a smooth $\R$-action on~$G$ 
for which the action on the Lie algebra $\g$ is polynomially bounded 
and write $D \in \der(\g)$ for the infinitesimal generator of~$\alpha$ 
(see (L1/2) in Section~\ref{sec:3}). We assume, 
in addition to (A1-3) in Subsection~\ref{subsec:5.1},  that 
$\alpha$ and the subspace $\g_+$ are linked by the requirement that 
\begin{equation}
  \label{eq:gcpluscond}
 \g_\C^+ = \oline{\bigcup_{\delta > 0} \g_\C([\delta,\infty))},
\end{equation}
where $\g_\C([\delta,\infty))$ is the Arveson spectral subspace 
for the one-parameter group $(\alpha^\g_t)_{t \in \R}$ on $\g_\C$ 
(cf.\ Appendix~\ref{app:arv}). 
Applying Proposition~\ref{prop:spec-add} to the Lie bracket 
$\g_\C \times \g_\C \to \g_\C$, we see that 
$\g_\C^+$ is a closed complex subalgebra. 
For $f \in \cS(\R)$, $\alpha^\g(f) := \int_\R f(t) \alpha^\g_t\, dt$ 
and $z \in \g_\C$, 
the relations $\oline{\alpha^\g(f)z} = \alpha^\g(\oline f)\oline z$ and 
the relation $\hat{\oline f}(\xi) = \oline{\hat f(-\xi)}$ for the 
Fourier transform 
$\hat f(\xi) = \int_\R e^{i\xi x} f(x)\, dx$ imply that 
\begin{equation}
  \label{eq:gc-}
 \g_\C^- := \sigma_\g(\g_\C^+) =
 \oline{\bigcup_{\delta > 0} \g_\C((-\infty, -\delta])},
\end{equation}
where $\sigma_\g(x+iy) = x-iy$ is complex conjugation on $\g_\C$.

\begin{ex}
\label{ex:c.4ab} (a) Suppose that $G$ is a Banach--Lie group 
and consider an element $\bd \in \g$ for which 
the one-parameter group $e^{\R \ad \bd} \subeq \Aut(\g)$ is bounded, 
i.e., preserves an equivalent norm. We call such elements, resp., 
the corresponding derivation $D = \ad \bd$  
{\it elliptic}. Then 
\[ G^0 = Z_G(\exp \R \bd) = Z_G(\bd) 
= \{  g \in G \: \Ad(g)\bd = \bd \} \] 
is a closed subgroup of $G$, not necessarily connected, 
 with Lie algebra $\g^0 = \fz_\g(\bd) = \ker(\ad \bd)$. 
Since $\g$ contains arbitrarily small $e^{\R \ad \bd}$-invariant 
$0$-neighborhoods $U$, there exists such an open $0$-neighborhood 
with $\exp_G(U) \cap G^0 = \exp_G(U \cap \g^0).$ 
Therefore $G^0$ is a Lie subgroup of $G$, i.e., a Banach--Lie group 
for which the inclusion $G^0 \into G$ is a topological embedding. 

Our assumption implies that $\alpha^\g_t := e^{t\ad \bd}$ defines an 
equicontinuous one-parameter group of automorphisms of the 
complex Banach--Lie algebra $\g_\C$. For 
$\delta > 0$, we consider the Arveson spectral subspace 
\[ \g_\C^\delta := \g_\C([\delta,\infty[). \] 
By Lemma~\ref{lem:a.17}, the splitting condition 
\begin{equation}
  \label{eq:deltasplit}
  \g_\C = \fg_\C^\delta \oplus \fg^0_\C \oplus \sigma_\g(\fg_\C^\delta) 
\end{equation}
is satisfied for some 
$\delta > 0$ if and only if $0$ is isolated in $\Spec(\ad \bd)$. 

Since $\Ad(G^0)$ commutes with $e^{\R \ad \bd}$, 
the closed subalgebras $\g_\C^\pm \subeq \g_\C$ are invariant 
under $\Ad(G^0)$ and $e^{\R \ad \bd}$. 
Now $\fp := \g \cap (\g_\C^+ \oplus \g_\C^-)$ is a closed complement for 
$\fg^0$ in $\g$, so that $M := G/G^0$ carries the structure of a 
Banach homogeneous space and 
$\fq := \fg^0_\C + \g_\C^+ \cong \g_\C^+ \rtimes \fg^0_\C$ 
defines a $G$-invariant complex manifold 
structure on $M$. 

\nin (b) Let $\g$ be a real Hilbert--Lie algebra, i.e., 
$\g$ is a Lie algebra and a real Hilbert space, the Lie bracket 
is continuous and the operators $\ad x$, $x \in \g$, are skw-symmetric. 
Then one can use
 spectral measures to see that $\g_\C^+$ is the 
spectral subspace corresponding to the open interval 
$(0,\infty)$ (cf.\ Lemma~\ref{lem:c.1}), so that the splitting condition 
\begin{equation}
  \label{eq:stricta}
 \g_\C = \fg_\C^+ \oplus \fg^0_\C \oplus \fg_\C^-
\end{equation}
is satisfied. In particular, 
$0$ need not be isolated in the spectrum 
of $\ad \bd$ (\cite[Prop.~5.4]{BRT07}). 
\end{ex}

The following results are of key importance for the following. 
It contains the main consequences of Arveson's spectral theory  
for the $\R$-actions on $\g_\C$ and $\cH^\infty$.

\begin{prop} \label{prop:c.3s}{\rm(A strictness criterion)} 
Suppose that \eqref{eq:stricta} holds, and that 
$(\pi, \cH)$ is a smooth ground state representation 
of $(G,\alpha)$, i.e., $\cH^{0,\infty}$ is dense in $\cH^0$, and that 
the representation $(\pi^0,\cH^0)$ of $G^0$ is bounded. 
Then $(\pi, \cH)$ is holomorphically induced from $(\pi^0, \cH^0)$ and 
in particular strict. 
\end{prop}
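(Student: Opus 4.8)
The plan is to realize $(\pi,\cH)$ as holomorphically induced from $\cV := \cH^0$ by checking the three hypotheses of the sufficiency criterion, Theorem~\ref{thm:c.3}, and then to read off strictness from the structural Theorem~\ref{thm:c.1}. First I would record that $\cH^0$ is a closed $G^0$-invariant subspace: since $(\pi,U,\cH)$ is a ground state representation it is minimal by Proposition~\ref{prop:swallow}(i), so $U_\R \subeq \pi(G)''$ and hence the spectral projection $P_0$ onto $\cH^0 = \ker H$ lies in $\pi(G)''$; in particular $G^0$ preserves $\cH^0$, giving the representation $\pi^0$. With this in hand, (HI1) is precisely the standing hypothesis that $(\pi^0,\cH^0)$ is bounded, and (HI3), namely $\lbr \pi(G)\cH^0\rbr = \cH$, is the ground state condition itself.

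The substance of the argument is condition (HI2): I must show that $\cH^0 \cap (\cH^\infty)^{\g_\C^-}$ is dense in $\cH^0$. Since $\cH^{0,\infty} = \cH^0 \cap \cH^\infty$ is dense in $\cH^0$ by the smoothness assumption, it suffices to prove the pointwise annihilation $\dd\pi(\g_\C^-)\cH^{0,\infty} = \{0\}$. Fix $\xi \in \cH^{0,\infty}$; by Lemma~\ref{lem:smoothground} it is a smooth vector for $G^\flat$, and differentiating the covariance relation \eqref{eq:commrel} in the $G$-direction gives $U_t\,\dd\pi(x)\,U_{-t} = \dd\pi(\alpha^\g_t(x))$ on smooth vectors. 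As $U_t\xi = \xi$, the complex-linear map $\Phi \: \g_\C \to \cH,\ x \mapsto \dd\pi(x)\xi$ is continuous and $\R$-equivariant, i.e. $U_t \circ \Phi = \Phi \circ \alpha^\g_t$.

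Now I would invoke Arveson's spectral calculus (Appendix~\ref{app:arv}): a continuous equivariant intertwiner carries the spectral subspace $\g_\C(S)$ into the spectral subspace $\cH(S)$ of $U$ for every closed $S \subeq \R$. Concretely, for $x \in \g_\C((-\infty,-\delta])$ one may choose $f \in \cS(\R)$ with $\hat f \equiv 1$ on the (compact) $\alpha^\g$-spectral support of $x$ and $\supp \hat f \subeq (-\infty,0)$, so that $\alpha^\g(f)x = x$; the integrated form $\hat f(H)\Phi(x) = \Phi(\alpha^\g(f)x) = \Phi(x)$, together with $H \geq 0$, forces $\hat f(H) = 0$ and hence $\Phi(x) = 0$. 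Taking the union over $\delta > 0$ and closing up gives $\Phi(\g_\C^-) = \{0\}$ via \eqref{eq:gc-}, which is the desired annihilation. With (HI1-3) in place, Theorem~\ref{thm:c.3} yields that $(\pi,\cH)$ is holomorphically induced from $(\pi^0,\cH^0)$.

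Finally, strictness follows formally: Theorem~\ref{thm:c.1}(ii) asserts that the restriction map $R \: \pi(G)' \to \pi^0(G^0)',\ A \mapsto A\res_{\cH^0}$, is an isomorphism of von Neumann algebras, and in view of Proposition~\ref{prop:swallow}(ii) the surjectivity of $R$ is exactly the identity $\pi^0(G^0)'' = P_0 \pi(G)'' P_0$ required in Definition~\ref{def:strict}. I expect the spectral step to be the only genuine obstacle: one must pin down the sign convention so that the $U$-spectrum equals $\Spec(H) \subeq [0,\infty)$, making $\cH((-\infty,-\delta])$ trivial, and justify pulling $\Phi$ through the operator-valued integral, which is where the continuity of $x \mapsto \dd\pi(x)\xi$ on smooth vectors and the boundedness of the $\alpha^\g$-spectral support are used (for a general Fréchet model one falls back on the abstract intertwiner statement rather than the explicit $f$).
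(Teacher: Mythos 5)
Your proof is correct and follows essentially the same route as the paper: verify (HI1)--(HI3) for $\cV := \cH^0$, invoke Theorem~\ref{thm:c.3} to get holomorphic induction, and deduce strictness from Theorem~\ref{thm:c.1}(ii). The only difference is that where the paper simply cites the Spectral Translation Formula (Theorem~\ref{thm:specrel}) to obtain $\dd\pi(\g_\C^-)\cH^{0,\infty} = \{0\}$, you reprove the needed special case ($F = \{0\}$) directly via the equivariant map $x \mapsto \dd\pi(x)\xi$ and Arveson spectral subspaces --- which is sound, provided one uses, as you yourself note, the abstract intertwiner statement (spectral supports in $\g_\C((-\infty,-\delta])$ need not be compact, so the explicit choice of $f$ with $\hat f \equiv 1$ on the support is not available in general).
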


\begin{prf} First, 
Theorem~\ref{thm:specrel} implies that 
$\dd\pi(\g_\C^-)\cH^{0,\infty} = \{0\}$. 
Applying Theorem~\ref{thm:c.3} to $\cV := \cH^0$, we see that 
$(\pi, \cH)$ is holomorphically induced from $(\pi^0, \cH^0)$, 
and Theorem~\ref{thm:c.1}(ii) implies strictness. 
\end{prf}

In the Banach case we can formulate more concrete sufficient 
conditions for strictness:

\begin{thm} \label{thm:6.2}
Suppose that $G$ is Banach and $\bd \in \g$ is elliptic with 
$0$ isolated in $\Spec(\ad \bd)$ and 
$\g_\C^- = \g_\C(]-\infty, -\delta])$ for some $\delta > 0$. 
Then the following assertions hold for any smooth 
representation $(\pi, \cH)$ for which $-i\partial \pi(\bd)$ is bounded 
from below. 
\begin{itemize}
\item[\rm(a)] The $G^0$-invariant subspace 
$\cV := \oline{(\cH^\infty)^{\g_\C^-}}$ satisfies 
$\cH = \lbr \pi(G)\cV)\rbr$. 
\item[\rm(b)] If the $G^0$-representation $\rho(h) := \pi(h)\res_\cV$ 
on $\cV$ is bounded, then $(\pi, \cH)$ is holomorphically induced from 
the representation $\rho$ of $G^0$ on $\cV$, 
$\pi$ is semibounded, and $\bd \in W_\pi^0$ 
(see \eqref{eq:wpi-intro}). 
In particular, $\cV$ consists of analytic vectors. 
\item[\rm(c)] In addition to {\rm(b)}, suppose that 
$-i\partial \rho(\bd) \geq m\1$ for $m \in \R$.
Then $-i\partial \pi(\bd) \geq m\1$ and the 
associated minimal positive energy representation of 
$(G,\alpha)$ for $\alpha_t(g) = \exp(t\bd) g \exp(-t\bd)$ 
is strict with $\cH^0 = \cV$. 
\end{itemize}
\end{thm}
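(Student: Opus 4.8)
The plan is to verify, for the subspace $\cV := \oline{(\cH^\infty)^{\g_\C^-}}$, the hypotheses of the holomorphic induction criterion Theorem~\ref{thm:c.3}, and then to read off strictness and the spectral bounds from the structure theory in Theorem~\ref{thm:c.1} together with the Arveson spectral calculus of Appendix~\ref{app:arv}. Throughout I write $B := -i\partial \pi(\bd)$ for the selfadjoint generator, so that $U_t := \pi(\exp t\bd) = e^{itB}$ implements $\alpha$ and $B$ is bounded from below by hypothesis. The basic mechanism I would record first is that, applying Proposition~\ref{prop:spec-add} to the action map $\g_\C \times \cH^\infty \to \cH^\infty$, $(z,\xi) \mapsto \dd\pi(z)\xi$, the operators $\dd\pi(z)$ with $z \in \g_\C^-$ lower the $B$-spectrum by at least $\delta$.

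For part (a) I would argue by contradiction. If $\cK := \lbr \pi(G)\cV\rbr \neq \cH$, then $\cK^\perp$ is a nonzero $G$-invariant (hence $U$-invariant) subspace on which $B$ is still bounded from below. The lowest Arveson spectral subspace of the subrepresentation carried by $\cK^\perp$ contains nonzero smooth vectors, and these cannot be lowered, so they are annihilated by $\g_\C^-$ and therefore lie in $\cV \cap \cK^\perp = \{0\}$, a contradiction; hence $\cK = \cH$. I expect this ``spectral bottom'' step to be the main technical obstacle, since the infimum of $\Spec(B)$ need not be an eigenvalue; this is exactly what the Arveson machinery (Theorem~\ref{thm:specrel}) is designed to supply.

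For part (b), with (a) in hand I would simply check the three hypotheses of Theorem~\ref{thm:c.3} for the closed $G^0$-invariant subspace $\cV$: (HI1) is the assumed boundedness of $\rho = \pi\res_\cV$; (HI3) is the content of (a); and (HI2) holds tautologically because $(\cH^\infty)^{\g_\C^-}$ is by definition dense in $\cV$ and consists of such vectors. Theorem~\ref{thm:c.3} then yields that $(\pi,\cH)$ is holomorphically induced from $(\rho,\cV)$, and Theorem~\ref{thm:c.1}(i) gives $\cV \subeq \cH^\omega$. For semiboundedness and $\bd \in W_\pi^0$ I would exploit the spectral gap: since $\rho$ is bounded and $\g_\C^\pm$ are the spectral subspaces for $\pm[\delta,\infty)$ with $\delta > 0$, the operators $-i\partial\pi(x)$ remain bounded from below for all $x$ in a neighborhood of $\bd$, so $\bd$ lies in the interior $W_\pi^0$ of $W_\pi$ (see \eqref{eq:wpi-intro}), which by definition makes $\pi$ semibounded.

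For part (c), the bound $-i\partial\pi(\bd) \geq m\1$ comes from rerunning the spectral argument of (a): the bottom of $\Spec(B)$ is approximated by smooth vectors that cannot be lowered, hence lie in $\cV$, where $B = -i\partial\rho(\bd) \geq m$; therefore $\inf\Spec(B) \geq m$ and $B \geq m\1$. Next I would invoke the Borchers--Arveson Theorem (Theorem~\ref{thm:BAthm}, Corollary~\ref{cor:borch}) with $\cM = \pi(G)''$: as $\alpha$ is inner and $B$ is bounded below, the minimal positive implementing group has generator $H_0 = B - Z$ with $Z \geq 0$ central, and $\cH^0 = \ker H_0$. It then remains to identify $\cH^0 = \cV$. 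The inclusion $\cH^0 \subeq \cV$ follows from the positive-energy computation of Theorem~\ref{thm:2.18}, namely that smooth ground states are annihilated by $\g_\C^-$; for the reverse inclusion I would use that $R \: \pi(G)' \to \rho(G^0)'$ is an isomorphism (Theorem~\ref{thm:c.1}(ii)) to reduce to the factorial case, in which $Z$ is scalar and $\cV$ coincides with the bottom $B$-eigenspace, so that $\cV \subeq \ker H_0$, and then to assemble over the central decomposition to get $\cV = \cH^0$ and $\pi^0 = \rho$ in general. This non-factorial bookkeeping, matching the central floor $Z$ with $B\res_\cV$, is the delicate part of (c). Finally, $(\pi,\cH)$ is a ground state representation by (a), since now $\lbr \pi(G)\cH^0\rbr = \cH$, and strictness is precisely the surjectivity of $R$ onto $\pi^0(G^0)'$ recorded in Theorem~\ref{thm:c.1}(ii), which upgrades the injection of Proposition~\ref{prop:swallow}(ii) to an isomorphism; equivalently one may apply Proposition~\ref{prop:c.3s} directly.
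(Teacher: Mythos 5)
Your treatment of (a), of the holomorphic\--induction part of (b) (checking (HI1\--3) and invoking Theorem~\ref{thm:c.3}), and of the lower bound $-i\partial\pi(\bd) \geq m\1$ in (c) is essentially sound and matches the mechanism the paper relies on; note that the paper simply cites \cite[Thms.~4.7, 4.14]{Ne13} for (a) and (b), and for the bound in (c) it argues directly --- $\cV$ consists of analytic vectors, $\dd\pi(U(\g))\cV = \dd\pi(U(\g_\C^+))\cV$ by Poincar\'e--Birkhoff--Witt, and $\dd\pi(U(\g_\C^+))\cV \subeq \cH^\infty([m,\infty))$ by the Spectral Translation Formula, so this space is dense --- rather than by your contradiction at the bottom of the spectrum; both routes work. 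There are, however, two genuine gaps. The smaller one is in (b): the assertions that $\pi$ is semibounded and $\bd \in W_\pi^0$ are not proved by your remark that ``$-i\partial\pi(x)$ remains bounded from below for $x$ near $\bd$''; that statement \emph{is} the nontrivial content of \cite[Thm.~4.14]{Ne13}, whose proof needs the holomorphic realization and momentum-set estimates, and your spectral-gap heuristic does not substitute for it.

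The serious gap is in (c). Your route to $\cH^0 = \cV$ and strictness --- Borchers--Arveson, reduction to the factorial case, then ``assemble over the central decomposition'' --- is exactly the step that does not go through in this generality: no separability is assumed, so no direct-integral decomposition into factorial pieces is available, and even granting one, you would still have to show that $\cV$, the smooth vectors, and the central operator $Z$ disintegrate compatibly, which is the whole difficulty you defer. The paper avoids any decomposition by a concrete construction you are missing: since $\bd$ is central in $\g^0$, the one-parameter group $\rho(\exp t\bd)$ lies in $\rho(G^0)'$, so the von Neumann algebra isomorphism $R \: \pi(G)' \to \rho(G^0)'$ of Theorem~\ref{thm:c.1}(ii) produces a unitary one-parameter group $(U_t)_{t\in\R}$ in $\pi(G)'$ with $U_t\res_{\cV} = \rho(\exp t\bd)$; then $W_t := \pi(\exp t\bd)U_t^{-1}$ implements $\alpha_t$ and fixes $\cV$ pointwise, and rerunning the PBW/spectral-translation argument for $G \rtimes_\alpha \R$ shows that $W$ has positive spectrum with fixed space exactly $\cV$. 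Hence $(\pi,W,\cH)$ is a ground state representation, so it is minimal by Proposition~\ref{prop:swallow}, and strict by Theorem~\ref{thm:c.1}(ii). Without this commutant-transfer argument (or a substitute valid for non-factorial, non-separable $\pi$), your part (c) is incomplete. A minor point in the same part: the inclusion $\cH^0 \subeq \cV$ rests on the spectral-translation mechanism (Theorem~\ref{thm:specrel}, as used in Proposition~\ref{prop:c.3s}), not on Theorem~\ref{thm:2.18}, which only yields the cone inclusion $C_\alpha \subeq C_{\pi^0}$.
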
 

\begin{prf} (a) and (b) follow from \cite[Thms.~4.7, 4.14]{Ne13}. 
It remains to prove (c). Let $P$ denote the spectral measure of 
$-i\partial \pi(\bd)$. Our assumption implies that 
$\cV \subeq P([m,\infty))$. As $\cV$ consists of analytic vectors, 
$\dd\pi(U(\g))\cV$ is dense in $\cH$. Since $\cV$ is annihilated 
by $\g_\C^-$, we have 
$\dd\pi(U(\g))\cV = \dd\pi(U(\g_\C^+))\cV$ by the 
Poincar\'e--Birkhoff--Witt Theorem. 
Finally, we observe that 
the Spectral Translation Formula (Theorem~\ref{thm:spectrans}) implies that 
\begin{equation}
  \label{eq:nulldelta}
\dd\pi(U(\g_\C^+))\cV \subeq \cV + \cH^\infty([\delta, \infty)) 
\subeq \cH^\infty([m,\infty)). 
\end{equation}
We conclude that $\cH^\infty([m,\infty))$ is dense in $\cH$, i.e., 
that $-i\partial\pi(\bd) \geq m\1$. 

Since $\bd$ is central in $\g^0$ and $\pi$ is 
holomorphically induced from $\pi^0$, there exists a 
unitary one-parameter group $(U_t)_{t \in \R}$ in the commutant 
$\pi(G)'$ with $U_t\res_{\cV} = \pi^0(\exp t \bd)$ 
(Theorem~\ref{thm:c.1}). 
As the restriction to $\cV$ is an isomorphism 
of von Neumann algebras, $U_t = e^{itB}$, where $B \geq 0$ is bounded. 
Now $W_t := \pi(\exp t \bd) U_t^{-1}$ defines a unitary one-parameter group 
implementing the same automorphisms as $\exp(t\bd)$ for which 
$\cV$ consists of fixed points. Now the same argument as in (a), applied 
to the extended Lie algebra $\g \rtimes \R \bd$ and the representation 
of $G \rtimes_\alpha \R$ by $\tilde\pi(g,t) = \pi(g) W_t$ 
implies that $(W_t)_{t \in\R}$ has positive spectrum 
and fixed point space $\cH^W = \cV$ 
(see \eqref{eq:nulldelta}). 
As $\pi(G)\cV$ is total in $\cH$, it follows that 
$(\pi, W,\cH)$ is a ground state representation, 
hence in particular minimal by Proposition~\ref{prop:swallow}. 
Theorem~\ref{thm:c.1} further implies that it is strict. 
\end{prf}

The preceding theorem does not assert that $(\pi,\cH)$ is a 
ground state representation, but we have the following corollary. 
It provides a sufficient condition for a bounded 
representation to be ground state. It applies in particular to finite 
dimensional unitary representations of compact Lie groups. 
By the strong boundedness assumptions, it follows immediately from 
Theorem~\ref{thm:6.2}.

\begin{cor} \label{cor:6.2} 
If $G$ is Banach and 
$\bd \in \g$ is elliptic with $0$ isolated in $\Spec(\ad \bd)$, 
then every bounded representation of $G$  
is a strict ground state representation 
of $(G,\alpha)$ for $\alpha_t(g) = \exp(t\bd) g \exp(-t\bd)$, 
where $\cH^0 = \cH^{\g_\C^-}$. 
\end{cor}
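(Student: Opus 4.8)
The plan is to verify that boundedness of $\pi$ hands us, for free, every hypothesis of Theorem~\ref{thm:6.2}, and then to read the assertion off from part~(c). Since the introductory remark already flags that the corollary "follows immediately," the work is entirely in checking hypotheses; there is no new estimate to perform.

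First I would set up the geometric input. As $\bd$ is elliptic, $\ad\bd$ has purely imaginary spectrum, and since $0$ is assumed isolated in $\Spec(\ad\bd)$, Example~\ref{ex:c.4ab}(a) (via Lemma~\ref{lem:a.17}) produces a genuine spectral gap around $0$: the splitting $\g_\C = \g_\C^+ \oplus \g_\C^0 \oplus \g_\C^-$ holds with $\g_\C^- = \g_\C(]-\infty,-\delta])$ for some $\delta > 0$. This is exactly the running geometric assumption of Theorem~\ref{thm:6.2}.

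Next I would exploit boundedness to supply the analytic and spectral input. A bounded unitary representation $(\pi,\cH)$ of the Banach--Lie group $G$ is norm-continuous, its derived representation $\dd\pi \: \g \to B(\cH)$ is a bounded homomorphism, and $\pi(\exp x) = e^{\dd\pi(x)}$; hence every vector is analytic and $\cH^\infty = \cH$. In particular $-i\partial\pi(\bd) = -i\,\dd\pi(\bd)$ is a bounded selfadjoint operator, so it is bounded from below, as Theorem~\ref{thm:6.2} requires. Moreover each $\dd\pi(z)$, $z \in \g_\C^-$, is bounded, so $\cV := \oline{(\cH^\infty)^{\g_\C^-}} = \cH^{\g_\C^-}$ is already closed and coincides with the subspace named in the statement; and Theorem~\ref{thm:6.2}(a) gives $\lbr \pi(G)\cV \rbr = \cH$. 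The restriction $\rho(h) := \pi(h)\res_\cV$ is the restriction to $G^0$ of a bounded representation, hence bounded ($\|\rho(h)\| \le \sup_{g}\|\pi(g)\| < \infty$), so part~(b) applies and $(\pi,\cH)$ is holomorphically induced from $(\rho,\cV)$. Since $\rho$ is bounded, $-i\partial\rho(\bd)$ is a bounded selfadjoint operator and thus $-i\partial\rho(\bd) \geq m\1$ for $m := \inf\Spec(-i\partial\rho(\bd)) \in \R$; part~(c) then yields that the minimal positive energy representation of $(G,\alpha)$ for $\alpha_t(g) = \exp(t\bd)g\exp(-t\bd)$ is strict with $\cH^0 = \cV = \cH^{\g_\C^-}$, which is precisely Corollary~\ref{cor:6.2}.

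The only points demanding any care are the two translations just used: turning "$0$ isolated in $\Spec(\ad\bd)$" into the concrete description $\g_\C^- = \g_\C(]-\infty,-\delta])$ needed by Theorem~\ref{thm:6.2}, and confirming that boundedness of $\pi$ simultaneously delivers smoothness (so that $\cV = \cH^{\g_\C^-}$), lower boundedness of $-i\partial\pi(\bd)$, and boundedness of the cut-down $G^0$-representation $\rho$. Neither is a genuine obstacle, so the proof is short once these are recorded.
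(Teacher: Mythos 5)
Your proposal is correct and follows exactly the route the paper intends: the paper's own ``proof'' is simply the remark that the corollary follows immediately from Theorem~\ref{thm:6.2} under the strong boundedness assumptions, and your argument supplies precisely those verifications (the spectral-gap splitting via Lemma~\ref{lem:a.17}, $\cH^\infty = \cH$ and closedness of $\cH^{\g_\C^-}$, lower boundedness of $-i\partial\pi(\bd)$, and boundedness of $\rho$), then reads the conclusion off parts (a)--(c).
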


The following theorem shows that, assuming that $(\pi, \cH)$ is semibounded 
with $\bd \in W_\pi^0$ permits us to get rid of the quite implicit assumption 
that the $G^0$-representation on $\cV$ is bounded. It is an important 
generalization of Corollary~\ref{cor:6.2} to semibounded representations. 

\begin{thm} \label{thm:6.2b} {\rm(\cite[Thm.~4.12]{Ne13})} 
Let $(\pi, \cH)$ be a semibounded 
unitary representation of the Banach--Lie group $G$ and let 
$\bd \in W_\pi^0$ be an elliptic element for which $0$ 
is isolated in $\Spec(\ad \bd)$ and 
$\g_\C^- = \g_\C(]-\infty, -\delta])$ for some $\delta > 0$.  
We write $P \: \fB(\R) \to B(\cH)$ for the spectral measure of the 
unitary one-parameter group $\pi_\bd(t) := \pi(\exp(t\bd))$. 
Then the following assertions hold: 
\begin{itemize}
\item[\rm(i)] The representation $\pi\res_{G^0}$ of $G^0$ is semibounded and, 
for each bounded measurable subset $B \subeq \R$, the 
$G^0$-representation on $P(B)\cH$ is bounded. 
\item[\rm(ii)] The representation $(\pi, \cH)$ is a direct sum of 
representations $(\pi_j, \cH_j)_{j \in J}$ for which there exist $G^0$-invariant 
subspaces $\cD_j \subeq (\cH_j^\infty)^{\g_\C^-}$ for 
which the $G^0$-representation $\rho_j$ on $\cV_j := \oline{\cD_j}$ is bounded and 
$\lbr \pi_j(G)\cV_j\rbr = \cH_j$. 
Then the representations $(\pi_j, \cH_j)$ are holomorphically 
induced from $(\rho_j, \cV)$. 
\item[\rm(iii)] If $(\pi, \cH)$ is irreducible and 
$m := \inf\Spec(-i\partial\pi(\bd))$, then $P(\{m\})\cH 
= \oline{(\cH^\infty)^{\g_\C^-}}$ and 
$(\pi, \cH)$ is holomorphically induced 
from the bounded $G^0$-representation $\rho$ on this space. 
\end{itemize}
\end{thm}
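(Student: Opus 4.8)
The backbone of the argument is the Spectral Translation Formula (Theorem~\ref{thm:spectrans}): $\dd\pi(\g_\C(I))$ shifts the spectral subspaces of $-i\partial\pi(\bd)$ by the set $I$, so that $\g_\C^- = \g_\C((-\infty,-\delta])$ lowers the $\bd$-spectrum by at least $\delta$. Since $\pi$ is semibounded it is smooth, and $\bd \in W_\pi^0$ makes $-i\partial\pi(\bd)$ bounded from below; hence Theorem~\ref{thm:6.2}(a) applies and yields $\lbr \pi(G)\cV\rbr = \cH$ for $\cV := \oline{(\cH^\infty)^{\g_\C^-}}$. The plan is to reduce (ii) and (iii) to the holomorphic-induction criterion Theorem~\ref{thm:c.3}, whose hypotheses (HI1--3) are boundedness of the $G^0$-piece, density of the $\g_\C^-$-annihilated smooth vectors, and $G$-cyclicity. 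The only genuine work is to produce, inside $\cV$, $G^0$-invariant subspaces on which $G^0$ acts \emph{boundedly} while remaining $G$-cyclic, and for this part (i) is the decisive input.

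For (i), note first that $\bd$ is central in $\g^0 = \ker(\ad\bd)$, so $\pi(G^0)$ commutes with $\pi_\bd$ and hence with the spectral measure $P$; in particular every $P(B)\cH$ is $G^0$-invariant. Semiboundedness of $\pi^0 := \pi\res_{G^0}$ is immediate, since $\partial\pi^0(x) = \partial\pi(x)$ for $x \in \g^0$ gives $W_{\pi^0} = W_\pi \cap \g^0$, and $\bd \in W_\pi^0$ is an interior point of this intersection relative to $\g^0$. For the boundedness of $\rho_B := \pi^0\res_{P(B)\cH}$ with $B$ bounded, the point is that centrality of $\bd$ together with the compactness of $\Spec(-i\partial\rho_B(\bd)) \subeq \oline B$ upgrades the one-sided bound of semiboundedness to a two-sided one: by \eqref{eq:momspec} the values $\{\alpha(\bd) : \alpha \in I_{\rho_B}\}$ are bounded, and semiboundedness gives $\alpha(\bd+y)$ bounded below for $y$ in a $0$-neighborhood of $\g^0$; subtracting the bounded term $\alpha(\bd)$ and replacing $y$ by $-y$ shows $I_{\rho_B}$ is equicontinuous near $0$, hence bounded, i.e.\ $\rho_B$ is bounded.

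For (ii) I would decompose $\cH$ by peeling off spectral bands of width $\delta$ from the bottom. Set $m_0 := \inf\Spec(-i\partial\pi(\bd))$, finite because $\bd \in W_\pi$, and $\cW_0 := P([m_0, m_0+\delta))\cH$. Any smooth vector $v \in \cW_0$ is annihilated by $\g_\C^-$, since $\dd\pi(\g_\C^-)v$ would have $\bd$-spectrum below $m_0$; thus $\cW_0 = \oline{\cD_0}$ with $\cD_0 := \cW_0 \cap \cH^\infty \subeq (\cH^\infty)^{\g_\C^-}$, the $G^0$-representation on $\cW_0$ is bounded by (i), and $\cH_0 := \lbr \pi(G)\cW_0 \rbr$ is holomorphically induced from it by Theorem~\ref{thm:c.3}. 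Passing to the $G$-invariant complement $\cH_0^\perp$, which is again $\pi_\bd$-invariant, and repeating yields a mutually orthogonal family $(\pi_j, \cH_j)$ with bottom bands $\cW_j$. The main obstacle of the theorem lies in the bookkeeping of this recursion: one must argue, by a maximality (Zorn) argument over orthogonal families of such bands, that the residual complement $\cN := \big(\hat\bigoplus_j \cH_j\big)^\perp$ is zero. This is where Theorem~\ref{thm:6.2}(a) re-enters: if $\cN \neq \{0\}$ it has its own finite bottom, and its bottom $\delta$-band is a nonzero $\g_\C^-$-annihilated, $G^0$-bounded subspace that could still be peeled, contradicting maximality.

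Part (iii) is the single-band case sharpened by irreducibility. Writing $m := \inf\Spec(-i\partial\pi(\bd))$, the inclusion $\oline{(\cH^\infty)^{\g_\C^-}} \supeq P(\{m\})\cH$ is the lowering argument of (ii). Conversely, a $\g_\C^-$-annihilated smooth vector at any level $\lambda > m$ would, via $\dd\pi(U(\g)) = \dd\pi(U(\g_\C^+))\dd\pi(U(\g_\C^0))$ and the Spectral Translation Formula, generate a proper closed $G$-invariant subspace with $\bd$-spectrum bounded below by $\lambda$, contradicting irreducibility; hence there are no annihilated vectors above $m$, which forces $P((m, m+\delta))\cH = \{0\}$ and thus $P(\{m\})\cH = P([m,m+\delta))\cH = \oline{(\cH^\infty)^{\g_\C^-}}$, an isolated eigenspace. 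On it the $\bd$-spectrum is $\{m\}$, so by (i) the $G^0$-representation $\rho$ is bounded, and it is $G$-cyclic by Theorem~\ref{thm:6.2}(a); Theorem~\ref{thm:c.3} then exhibits $(\pi,\cH)$ as holomorphically induced from $\rho$, with the commutants matched by Theorem~\ref{thm:c.1}(ii).
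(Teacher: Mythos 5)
First, a caveat about the comparison itself: the paper does not prove Theorem~\ref{thm:6.2b} at all --- it is imported verbatim from \cite[Thm.~4.12]{Ne13} --- so your proposal can only be judged on its own merits and against that source, not against an in-paper argument.

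Your part (i) is correct (the ``one-sided plus compact spectrum gives two-sided'' upgrade is exactly the right mechanism), and the peeling-plus-Zorn strategy for (ii) is a reasonable route to the stated decomposition. However, the pivotal lowering claim is justified incorrectly as written: for $v \in P([m_0,m_0+\delta))\cH \cap \cH^\infty$ the Spectral Translation Formula (Theorem~\ref{thm:spectrans}) only gives $\dd\pi(z)v \in \cH^\infty\big(\oline{(-\infty,-\delta] + [m_0,m_0+\delta]}\big) = \cH^\infty((-\infty,m_0])$, because the Arveson spectrum of $v$ is a \emph{closed} set that may touch $m_0+\delta$. So $\dd\pi(z)v$ is not forced ``below $m_0$''; it may a priori be a nonzero vector in the bottom eigenspace $P(\{m_0\})\cH$. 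The conclusion is still true, but it needs an extra step you omit, e.g.: with $u := \dd\pi(z)v \in \cH^\infty(\{m_0\})$ and $z^* \in \g_\C^+$, one has $\dd\pi(z^*)u \in \cH^\infty([m_0+\delta,\infty)) \perp v$, whence $\|u\|^2 = \la u, \dd\pi(z)v\ra = \la \dd\pi(z^*)u, v \ra = 0$. This is a fixable gap, but it is a gap, and both (ii) and (iii) lean on it.

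The genuine problem is in (iii). You argue that a $\g_\C^-$-annihilated \emph{smooth} vector at a level $\lambda > m$ ``generates a proper closed $G$-invariant subspace with $\bd$-spectrum bounded below by $\lambda$'' via PBW and spectral translation. But PBW plus the translation formula only controls $\dd\pi(U(\g))v$; to conclude anything about the closed $G$-invariant subspace $\lbr \pi(G)v \rbr$ one must expand $\pi(\exp x)v$ as a power series, and that requires $v$ to be \emph{analytic}, not merely smooth. For a merely smooth vector, $\oline{\dd\pi(U(\g))v}$ need not be $G$-invariant, so no contradiction with irreducibility is obtained, and the claimed equality $P(\{m\})\cH = \oline{(\cH^\infty)^{\g_\C^-}}$ is left unproved. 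The way around this (and, in effect, the route of \cite{Ne13}) is to run the holomorphic-induction machinery first: from the single-summand case of (ii), $\pi$ is holomorphically induced from the bounded irreducible $G^0$-representation $(\rho,\cV)$; since $\bd$ is central in $\g^0$, Schur gives $-i\partial\rho(\bd) = c\1$; Theorem~\ref{thm:c.1}(i),(iii) supplies that $\cV$ consists of \emph{analytic} vectors annihilated by $\g_\C^-$, and then the argument of Theorem~\ref{thm:6.2}(c) (the analogue of \eqref{eq:nulldelta}) identifies $c = m$ and $\cV = P(\{m\})\cH$; finally, for $v \in (\cH^\infty)^{\g_\C^-}$ the component $v_1 := v - P(\{m\})v$ is again smooth and annihilated, and the analyticity of $\cV$ lets one show $\la \pi(G)\cV, v_1\ra = \{0\}$, so $v_1 \perp \lbr\pi(G)\cV\rbr = \cH$, i.e.\ $v_1 = 0$. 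In short: the analytic vectors produced by holomorphic induction are not a cosmetic convenience here; they are the ingredient your smooth-vector argument is missing.
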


\begin{cor} In the context of {\rm~Theorem~\ref{thm:6.2b}}, 
$(\pi, \cH)$ is a ground state representation and the 
direct summands $(\pi_j,\cH_j)_{j \in J}$ are strict ground state 
representations. 
\end{cor}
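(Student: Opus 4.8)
The plan is to reduce everything to the individual summands $(\pi_j,\cH_j)$ furnished by Theorem~\ref{thm:6.2b}(ii) and to apply Theorem~\ref{thm:6.2}(c) to each of them. For the first assertion, that $(\pi,\cH)$ is a ground state representation, I would invoke Lemma~\ref{lem:dirsum}: since the $\R$-action $\alpha_t(g)=\exp(t\bd)g\exp(-t\bd)$ is the same on every summand, it suffices to show that each $(\pi_j,\cH_j)$ is a ground state representation, and then the direct sum inherits this property automatically.

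So fix $j$. By Theorem~\ref{thm:6.2b}(ii) the representation $(\pi_j,\cH_j)$ is holomorphically induced from the \emph{bounded} $G^0$-representation $(\rho_j,\cV_j)$, where $\cV_j=\oline{\cD_j}$ with $\cD_j\subeq(\cH_j^\infty)^{\g_\C^-}$ and $\lbr\pi_j(G)\cV_j\rbr=\cH_j$. First I would record the properties of $\cV_j$ needed to run the argument of Theorem~\ref{thm:6.2}(c): by Theorem~\ref{thm:c.1}(i) the space $\cV_j$ consists of analytic vectors, by Theorem~\ref{thm:c.1}(iii) it is annihilated by $\g_\C^-$, and it is $\pi_j(G)$-generating by hypothesis. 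Moreover, since $\rho_j$ is bounded, $-i\partial\rho_j(\bd)$ is a bounded self-adjoint operator, so with $m_j:=\inf\Spec(-i\partial\rho_j(\bd))$ we have $-i\partial\rho_j(\bd)\ge m_j\1$; as $\exp(t\bd)\in G^0$ preserves $\cV_j$ and acts there through $\rho_j$, the subspace $\cV_j$ reduces $-i\partial\pi_j(\bd)$ and $\cV_j\subeq P_j([m_j,\infty))$ for its spectral measure $P_j$. These are exactly the hypotheses (b)+(c) of Theorem~\ref{thm:6.2}, with $\cV_j$ in the role of $\cV$.

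Applying Theorem~\ref{thm:6.2}(c) then yields $-i\partial\pi_j(\bd)\ge m_j\1$, and the minimal positive energy extension of $(\pi_j,\cH_j)$ for the given $\alpha$ is a ground state representation with $\cH_j^0=\cV_j$; in particular $\pi_j^0=\rho_j$. Strictness is then immediate: by Theorem~\ref{thm:c.1}(ii) the restriction map $R_j\:\pi_j(G)'\to\rho_j(G^0)'=\pi_j^0(G^0)'$ is an isomorphism of von Neumann algebras, hence surjective, which is precisely the condition of Definition~\ref{def:strict} read through Proposition~\ref{prop:swallow}. Thus each $(\pi_j,\cH_j)$ is a strict ground state representation, and Lemma~\ref{lem:dirsum} upgrades this to the statement that $(\pi,\cH)$ itself is a ground state representation.

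The main obstacle I anticipate is the identification $\cH_j^0=\cV_j$, i.e.\ that the inducing subspace coincides with the full minimal-energy space of the one-parameter group $W^{(j)}_t=\pi_j(\exp t\bd)(U^{(j)}_t)^{-1}$ constructed in Theorem~\ref{thm:6.2}(c), where $U^{(j)}$ is the image under $R_j^{-1}$ of the central group $\rho_j(\exp t\bd)$. One inclusion, $\cV_j\subeq\ker H_j$, is built into the construction of $W^{(j)}$, whose values fix $\cV_j$; the reverse inclusion $\ker H_j\subeq\oline{(\cH_j^\infty)^{\g_\C^-}}$ holds because minimal-energy vectors for $\bd$ are annihilated by the lowering algebra $\g_\C^-$. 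Closing the gap thus reduces to checking $\cV_j=\oline{(\cH_j^\infty)^{\g_\C^-}}$, the normalization under which Theorem~\ref{thm:6.2}(c) is phrased; this follows from Theorem~\ref{thm:c.1}(iii) together with the construction of the summands in Theorem~\ref{thm:6.2b}(ii), compare the irreducible case Theorem~\ref{thm:6.2b}(iii) where $P(\{m\})\cH=\oline{(\cH^\infty)^{\g_\C^-}}$ is the inducing space. Once this identification is in place, the remainder is a routine transcription of the proof of Theorem~\ref{thm:6.2}(c) to each summand.
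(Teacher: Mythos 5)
Your proposal is correct and takes essentially the same route as the paper: the paper's entire proof consists of applying Theorem~\ref{thm:6.2}(c) to each summand $(\pi_j,\cH_j)$ supplied by Theorem~\ref{thm:6.2b}(ii) to conclude these are strict ground state representations, and then invoking Lemma~\ref{lem:dirsum} to see that the direct sum $(\pi,\cH)$ is a ground state representation. The only caveat concerns your ``main obstacle'': the identification $\cV_j = \oline{(\cH_j^\infty)^{\g_\C^-}}$ is not actually needed, and your appeal to Theorem~\ref{thm:c.1}(iii) only yields the inclusion of $\cV_j$ into that space, not the reverse; the cleaner observation is that the proof of Theorem~\ref{thm:6.2}(c) uses only the properties you already listed (analyticity via Theorem~\ref{thm:c.1}(i), annihilation by $\g_\C^-$ via Theorem~\ref{thm:c.1}(iii), generation $\lbr \pi_j(G)\cV_j\rbr = \cH_j$, boundedness of $\rho_j$, and holomorphic induction), all of which hold for $\cV_j$, so that argument applies verbatim with $\cV_j$ in place of $\cV$ and yields $\cH_j^0 = \cV_j$ and strictness via Theorem~\ref{thm:c.1}(ii).
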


\begin{prf} First we use Theorem~\ref{thm:6.2}(c)
to see that all 
representations $(\pi_j,\cH_j)_{j \in J}$ are strict ground state 
representations. Now Lemma~\ref{lem:dirsum} shows that their 
direct sum $(\pi,\cH)$ is also a ground state representation.  
\end{prf}

\begin{prob}
  Are direct sums of strict ground state representations always strict?
\end{prob}

\section{Finite dimensional groups} 
\label{sec:6}

In this section we assume that $G$ is finite dimensional, so that 
$G^\flat$ is finite dimensional as well. Replacing $G$ by $G^\flat$, 
it suffices to consider the inner case, i.e., for some fixed 
$\bd \in \g$, we are interested in unitary representations 
$(\pi, \cH)$ for which $-i \partial \pi(\bd)$ is bounded from below 
(cf.~Proposition~\ref{prop:inner}). 
Let $I_\pi \subeq \g^*$ denote the momentum set of $\pi$ 
(Definition~\ref{def:momset}). 

\subsection{Generalities} 

From \eqref{eq:wpi-intro}  we recall the that 
\begin{equation}
  \label{eq:wpi} 
W_\pi = \{ x \in \g \: -i \partial \pi(x) \ \mbox{ bounded below } \} 
= \{ x \in \g \: \inf I_\pi(x) > -\infty\} 
\end{equation}
is a convex cone in $\g$, invariant under $\Ad(G)$. 
It contains the positive cone of $\pi$: 
\begin{equation}
  \label{eq:cpi}
C_\pi = \{ x \in \g \: -i \partial \pi(x) \geq 0  \}.
\end{equation}
  
We assume that $\pi$ has discrete kernel, i.e., that the positive cone 
$C_\pi$ of $\pi$ is pointed. The linear subspace 
\[ \g_\pi := W_\pi - W_\pi \trile \g \]  is an ideal 
because $W_\pi$ is an $\Ad(G)$-invariant convex cone,  
and the restriction of $\pi$ to the corresponding normal subgroup 
$G_\pi \trile G$ is semibounded. Semibounded representations 
of finite dimensional groups 
have been studied in detail and classified in \cite{Ne00}. As $\bd \in W_\pi 
\subeq \g_\pi$, 
we may further assume that $\g = \g_\pi$ and restrict our discussion to 
semibounded representations. With \cite[Thm.~XI.6.14]{Ne00} on the 
existence of a direct integral decompositions, many assertions 
can be reduced to the case of irreducible representations. 
This leaves us with the situation where: 
\begin{itemize}
\item[\rm(F1)] $\ker(\pi)$ is discrete, so that 
the cone $C_\pi$ is pointed. 
\item[\rm(F2)] $\bd \in W_\pi$ and $\pi$ is semibounded. 
\item[\rm(F3)] $\pi$ is irreducible.
\end{itemize}

\begin{rem} From $\bd \in W_\pi$ it follows that $\Spec(\ad \bd) \subeq i \R$ 
by \cite[Prop.~VII.3.4]{Ne00} because 
discreteness of the kernel is equivalent to $I_\pi$ spanning~$\g^*$.
As a consequence of the Jordan decomposition, 
this implies that the one-parameter group 
$e^{\R \ad \bd} \subeq \Aut(\g)$ is polynomially bounded 
because its semisimple component is bounded and its unipotent component  is 
polynomially bounded (Remark~\ref{rem:c-polybound}).
%
\end{rem}

\begin{defn} \label{def:4.3} 
A maximal abelian subspace $\ft \subeq \g$ is called a 
{\it compactly embedded Cartan subalgebra} if the closure of 
$e^{\ad \ft} \subeq \Aut(\g)$ is compact.
Let $\ft \subeq \g$ be a compactly embedded Cartan subalgebra 
and $\g_\C = \ft_\C \oplus \bigoplus_{\alpha \in \Delta} \g_\C^\alpha$ the corresponding 
root decomposition, where 
\[ \g_\C^\alpha 
= \{ y \in \g_\C \: (\forall x \in \ft)\ [x,y] = \alpha(x) y \} 
\quad \mbox{ and } \quad 
\Delta = \{ \alpha \in i \ft^* \: \g_\C^\alpha \not=\{0\}\}.\] 

The elements of $\Delta$ are called {\it roots}. 
We call a root $\alpha \in \Delta$ 
  \begin{itemize}
  \item {\it compact} if there exists an $x_\alpha \in \g_\C^\alpha$ with 
$\alpha([x_\alpha, x_\alpha^*]) > 0$ and write 
$\Delta_k\subeq \Delta$ for the set of compact roots. 
  \item {\it non-compact} if 
there exists a non-zero $x_\alpha \in \g_\C^\alpha$ with 
$\alpha([x_\alpha, x_\alpha^*]) \leq 0$ and write 
$\Delta_p\subeq \Delta$ for the set of non-compact roots. 
  \end{itemize}
Then $\dim \g_\C^\alpha = 1$ for $\alpha \in \Delta_k$ 
and there exists a unique element 
$\alpha^\vee \in [\g_\C^\alpha, \g_\C^{-\alpha}]$ 
with $\alpha(\alpha^\vee) = 2$. 
The reflections $r_\alpha \: \ft \to \ft, r_\alpha(x) = x - \alpha(x)  \alpha^\vee$ 
for $\alpha \in \Delta_k$ generate the {\it Weyl group} $\cW$. 
\end{defn}

\begin{lem} \label{lem:jac-mor}  
Let $x$ be an element of the semisimple real Lie algebra 
$\g$ and $x = x_s + x_n$ be its Jordan decomposition, where $x_s$ is semisimple 
and~$x_n$ is nilpotent. Then the adjoint orbit of $x$ contains 
all elements of the form $x_s + t x_n$, $t > 0$. 
\end{lem}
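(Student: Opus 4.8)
The plan is to exhibit a single one-parameter subgroup of $\Ad(G)$ that fixes $x_s$ and rescales $x_n$. Concretely, I want an element $h\in\g$ with $[h,x_s]=0$ and $[h,x_n]=2x_n$; then $e^{s\,\ad h}x_s=x_s$ and $e^{s\,\ad h}x_n=e^{2s}x_n$, so that
\[ \Ad(\exp(sh))\,x = e^{s\,\ad h}x = x_s + e^{2s}x_n \quad\text{for all } s\in\R. \]
Since $e^{2s}$ runs through all of $(0,\infty)$ as $s$ runs through $\R$, every element $x_s+tx_n$ with $t>0$ then lies on the adjoint orbit of $x$. If $x_n=0$ there is nothing to prove, so I may assume $x_n\neq0$.

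First I would produce such an $h$ from an $\fsl_2$-triple through $x_n$ living inside the centralizer of $x_s$. Because $x_s$ is semisimple and $[x_s,x_n]=0$, the nilpotent element $x_n$ lies in $\fz:=\fz_\g(x_s)$, which is reductive in $\g$ (the centralizer of a semisimple element), so that the abstract Jordan decomposition in $\fz$ agrees with the one in $\g$; in particular $x_n$ is nilpotent as an element of $\fz$. The center $Z(\fz)$ is toral: it contains $x_s$ and consists of semisimple elements. Writing $x_n=z+w$ with $z\in Z(\fz)$ and $w\in[\fz,\fz]$, the two summands commute, $z$ is semisimple, $w$ is nilpotent, so by uniqueness of the Jordan decomposition of $x_n$ (whose semisimple part is $0$) I conclude $z=0$. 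Hence $x_n=w$ is a nonzero nilpotent element of the semisimple Lie algebra $[\fz,\fz]$.

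Now the Jacobson--Morozov theorem, applied inside $[\fz,\fz]$, yields an $\fsl_2$-triple $(x_n,h,f)$ with $h,f\in[\fz,\fz]$ and $[h,x_n]=2x_n$. Crucially $h\in[\fz,\fz]\subseteq\fz=\fz_\g(x_s)$, so automatically $[h,x_s]=0$, which is exactly what the computation in the first paragraph requires, and the proof is complete. I expect the main obstacle to be precisely this commutation $[h,x_s]=0$: a bare application of Jacobson--Morozov to $x_n$ in $\g$ would produce some $h$ with $[h,x_n]=2x_n$ but give no control over its bracket with $x_s$. Passing to the reductive centralizer $\fz_\g(x_s)$ and invoking Jacobson--Morozov there is what forces the rescaling one-parameter group to fix $x_s$. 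The remaining ingredients---reductivity of $\fz_\g(x_s)$, compatibility of the Jordan decompositions under the inclusion $\fz\hookrightarrow\g$, and semisimplicity of $Z(\fz)$---are standard structure theory that I would cite rather than reprove.
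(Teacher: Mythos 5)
Your proof is correct, but it takes a genuinely different route from the paper's. The paper first reduces to simple $\g$, forms the Jacobson--Morozov parabolic $\fq = \fl \ltimes \fu$ attached to $x_n$, observes $x_s \in \ker(\ad x_n) \subseteq \fq$, and then conjugates $x_s$ into the Levi factor $\fl$ by an inner automorphism of $\fq$ (invoking the algebraic-group fact that reductive subgroups can be conjugated into the Levi factor); since $\fl = \ker(\ad h)$ for the Jacobson--Morozov element $h$, this forces $[h,x_s]=0$. You instead fix $x_s$ from the start: you pass to the reductive centralizer $\fz := \fz_\g(x_s)$, show $x_n$ lies in its semisimple part $[\fz,\fz]$, and apply Jacobson--Morozov there, so that $h \in \fz$ commutes with $x_s$ by construction. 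Both proofs then finish with the identical one-parameter group computation $e^{s\,\ad h}x = x_s + e^{2s}x_n$. Your route buys simplicity: no reduction to simple ideals, no conjugation step (which in the paper tacitly also moves $x_n$ and relies on the conjugation-invariance of the orbit statement to be harmless), and no appeal to the theorem on conjugating reductive subgroups of algebraic groups cited in the paper's footnote --- only the standard facts that centralizers of semisimple elements are reductive and that Jordan decompositions are compatible with such inclusions. One small repair: in your argument that $z=0$ you assert that $w$ is nilpotent before this is known; the clean version Jordan-decomposes $w = w_s + w_n$ inside $[\fz,\fz]$, notes that $(z+w_s) + w_n$ is then a Jordan decomposition of $x_n$ in $\g$ (by compatibility, since $z$ is semisimple and commutes with $w_s$ and $w_n$), and concludes from uniqueness that $z + w_s = 0$, whence $z = -w_s \in Z(\fz) \cap [\fz,\fz] = \{0\}$. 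This falls under the standard structure theory you flag for citation, so it is a presentational fix rather than a gap.
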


\begin{prf} Since the Jordan decomposition and the adjoint orbit 
of $x$ adapts to the decomposition of $\g$ into simple ideals, 
we may w.l.o.g.\ assume that $\g$ is simple. 

Let $\fq = \fl \ltimes \fu\subeq \g$ denote the Jacobson--Morozov 
parabolic associated to the nilpotent element~$x_n$ (\cite{HNO94}). Then 
$x_s \in \ker(\ad x_n) \subeq \fq$ implies that $x_s \in \fq$. 
As $x_s$ is semisimple, it is conjugate under the group 
of inner automorphisms of 
$\fq$ to an element of $\fl$.\begin{footnote}{Every algebraic subgroup 
$G \subeq \GL(V)$, $V$ a finite dimensional real vector space,
 is a semidirect product 
$G \cong U \rtimes L$, where $U$ is unipotent and $L$ is reductive. 
Moreover, for every reductive  subgroup $L_1 \subeq G$ there exists an 
element $g \in G$ with $gL_1 g^{-1} \subeq L$ 
(\cite[Thm.~VIII.4.3]{Ho81}).}
\end{footnote}
By the Jacobson--Morozov Theorem, $\fl$ contains a semisimple element $h$ with 
$[h,x_n] = 2 x_n$ and $h \in [x_n,\g]$. In terms of this element, we have 
$\fq = \sum_{n \geq 0} \g_n(\ad h)$ and $\fl = \ker(\ad h)$. 
We conclude that $[h,x_s] =0$, so that 
$e^{t\ad h} x = x_s + e^{2t} x_n$ for $t \in \R$.  
\end{prf}

We now come to the main result of this section. 
For its proof we shall use 
the  following theorem (\cite[Thm.~1.1]{Mo80}), which is a formidable 
tool to exclude that certain Lie algebra elements have ground states. 

\begin{thm} {\rm(Moore's Eigenvector Theorem)} 
Let $G$ be a connected finite dimensional Lie group with Lie algebra 
$\g$ and $x \in \g$. Further, let $\fn_x \trile \g$ be the 
smallest ideal of $\g$ such that the induced operator 
$\ad_{\g/\fn_x} x$ on the quotient Lie algebra $\g/\fn_x$ 
is elliptic, i.e., semisimple with purely imaginary spectrum. 
Suppose that $(\pi, \cH)$ is a continuous unitary representation of $G$ and 
$v\in \cH$ an eigenvector for the one-parameter group  $\pi(\exp \R x)$. 
Then 
\begin{itemize}
\item[\rm(a)] $v$ is fixed by the normal subgroup 
$N_x := \la \exp \fn_x \ra \trile G$, and 
\item[\rm(b)] the restriction of 
$i\partial \pi(x)$ to the orthogonal complement of the space 
$\cH^{N_x}$ of $N_x$-fixed vectors has absolutely continuous spectrum. 
\end{itemize}
\end{thm}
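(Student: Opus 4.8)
The plan is to derive (a) from (b) and to treat (b) by reducing the action of $x$ on $\fn_x$ to its elementary ``building blocks.'' First I note that, since $\fn_x \trile \g$ is an ideal, $N_x$ is a connected \emph{normal} subgroup, so $\exp(\R x)$ normalizes $N_x$ and hence $\pi(\exp tx)$ preserves both $\cH^{N_x}$ and its orthogonal complement $\cH_0 := (\cH^{N_x})^\perp$. Granting (b), an eigenvector $v$ of $\pi(\exp \R x)$ splits as $v = v_0 + v_1$ with $v_0 \in \cH^{N_x}$ and $v_1 \in \cH_0$, and each summand is again an eigenvector of $\pi(\exp \R x)$ for the same eigenvalue. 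But $v_1$ lies in the absolutely continuous part of $A := i\partial \pi(x)$, which carries no eigenvectors, so $v_1 = 0$ and $v = v_0 \in \cH^{N_x}$; this is exactly (a). Thus the whole content is concentrated in the spectral statement (b).

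For orientation, the ``hyperbolic'' directions can be handled by the classical Mautner phenomenon. If (after complexifying) $y$ satisfies $\ad(x) y = \lambda y$ with $\Re \lambda \ne 0$, then $e^{-t\ad x} y \to 0$ as $t \to +\infty$ or $t \to -\infty$, so $\exp(-tx)\exp(sy)\exp(tx) = \exp(s\, e^{-t\ad x} y) \to e$. Writing $\pi(\exp tx)v = e^{i\mu t} v$ and $w := \pi(\exp sy)v$, the convergence $\pi(\exp(-tx)\exp(sy)\exp(tx))v \to v$ rearranges to $\|w - v\| = \lim_t \|\pi(\exp -tx)w - e^{-i\mu t} v\| = 0$, where the limit vanishes because the conjugates tend to $e$; hence $v$ is fixed by $\exp(\R y)$. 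This already yields (a) on the part of $\fn_x$ coming from eigenvalues with nonzero real part. The genuinely harder directions are the \emph{nilpotent} ones, where $e^{-t\ad x} y$ grows polynomially rather than contracting, so this argument breaks down and one is forced into the spectral analysis of (b).

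To prove (b) I would decompose $\ad x$ by its Jordan decomposition and organize $\fn_x$ by a filtration of $\ad x$-invariant ideals whose successive quotients are elementary. At each layer one exhibits, on $\cH_0$, a commutation relation with $A$ of one of two model types. In the hyperbolic case there is $y$ with $\exp(tx)\exp(sy)\exp(-tx) = \exp(e^{ct}s\, y)$, $c \ne 0$, so that $A$ and $B := i\partial \pi(y)$ satisfy the relation defining the affine group $\Aff(\R) \cong \R \rtimes \R$; on the subspace where $B \ne 0$ this forces the spectrum of $A$ to be Lebesgue, i.e.\ absolutely continuous. In the nilpotent case one finds $y, z$ with $\exp(tx)\exp(sy)\exp(-tx) = \exp(sy)\exp(stz)$ and $z$ central in the relevant subalgebra, so that $A$, $B := i\partial \pi(y)$ and $C := i\partial \pi(z)$ obey a Heisenberg commutation relation with $C$ central; on the subspace where $C$ is injective the Stone--von Neumann theorem identifies the representation with a multiple of the Schr\"odinger representation, for which $A$ again has purely absolutely continuous spectrum. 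In both models the complementary subspace ($B = 0$, resp.\ $C = 0$) is exactly the locus fixed by the next layer of $N_x$.

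The main obstacle is the assembly of these local models into a global statement on $\cH_0$: one must descend along the filtration of $\fn_x$ and verify that the ``fixed'' subspaces produced at each stage exhaust precisely $\cH^{N_x}$, so that on its complement $\cH_0$ some nontrivial building block always acts and imposes absolute continuity. This requires checking that the smallest ideal $\fn_x$ rendering $\ad_{\g/\fn_x} x$ elliptic is generated by the hyperbolic and nilpotent eigendirections of $\ad x$, and that absolute continuity of $A$ is stable under the relevant direct integrals and extensions (the generators $A$ coming from different layers are compatible because the layers are $\ad x$-invariant, hence $A$-reducing). Once this bookkeeping is in place, (b) follows, and with it (a) by the reduction above.
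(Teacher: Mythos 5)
First, a point of reference: the paper does not prove this theorem at all --- it is quoted from Moore \cite[Thm.~1.1]{Mo80} and used as a black box --- so there is no in-paper argument to compare against. Your outline in fact reproduces the strategy of Moore's original paper: reduce (a) to (b) via invariance of $\cH^{N_x}$ under $\pi(\exp\R x)$ (this reduction is correct and complete as you state it), treat hyperbolic directions through the Mautner contraction argument and the $ax+b$ group, treat nilpotent directions through Heisenberg commutation relations and Stone--von Neumann, and assemble along an $\ad x$-stable filtration of $\fn_x$.

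That said, as a proof your proposal has a genuine gap, and it sits exactly where you label the remaining work ``bookkeeping.'' The model relations you invoke --- $\exp(tx)\exp(sy)\exp(-tx)=\exp(e^{ct}s\,y)$, resp.\ $\exp(tx)\exp(sy)\exp(-tx)=\exp(sy)\exp(st\,z)$ with $z$ central --- require $y$ to be an exact real eigenvector of $\ad x$, resp.\ $[x,y]$ to be exactly central with $[x,[x,y]]=0$. For a general $\fn_x$ neither holds: one only has $[x,y]=cy+(\hbox{terms in a smaller ideal})$, and $[x,y]$ is central only modulo lower layers of the filtration; moreover, eigenvalues $\lambda\notin\R$ of $\ad x$ yield no one-parameter subgroup of $G$ at all, so your two models do not even cover the hyperbolic part (one needs the semidirect product $\R\ltimes\R^2$ with a spiral action as a third building block). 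Consequently the affine and Heisenberg subgroups exist only in subquotients, and their unitary representations are available only on the subspaces fixed by the lower layers. The argument must therefore be a genuine induction, say on the length of an $\ad x$-stable chain of ideals: at stage $i$ the exact model relations hold on the fixed space $\cH^{N_{i-1}}$ of the previous layer and give absolute continuity of $i\partial\pi(x)$ on $\cH^{N_{i-1}}\ominus\cH^{N_i}$; then one sums these mutually orthogonal, $\pi(\exp\R x)$-invariant pieces (using that the absolutely continuous subspace is closed) to cover $(\cH^{N_x})^\perp$. One must also actually prove the structural input you assert in passing, namely that $\fn_x$ is the ideal generated by the nilpotent image and the non-imaginary spectral directions of $\ad x$, which needs the Jordan decomposition of a derivation and the observation that its components are again derivations. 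None of this is routine bookkeeping --- it is the body of Moore's proof --- and without it statement (b), and hence (a), is not established.
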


\begin{cor} \label{cor:moore-eigenvector} 
Let $G$ be a connected finite dimensional Lie group. 
Suppose that $(\pi, \cH)$ is an irreducible 
unitary representation of $G$ with discrete kernel and 
that $\bd \in \g$ is such that $\partial \pi(\bd)$ has an eigenvector in $\cH$. 
Then $\ad(\bd)$ is elliptic. 
\end{cor}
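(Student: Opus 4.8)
The plan is to apply Moore's Eigenvector Theorem directly to the element $x = \bd$ and to extract ellipticity of $\ad\bd$ from its conclusion. Recall that the theorem attaches to $\bd$ the smallest ideal $\fn_\bd \trile \g$ for which the induced operator $\ad_{\g/\fn_\bd}\bd$ is elliptic; by this very definition, $\ad\bd$ itself is elliptic precisely when $\fn_\bd = \{0\}$. Thus the whole problem reduces to proving that irreducibility together with discreteness of $\ker\pi$ forces $\fn_\bd = \{0\}$.

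First I would put the hypothesis into the form required by the theorem. Since $\partial\pi(\bd)$ is skew-adjoint, an eigenvector $v \neq 0$ has purely imaginary eigenvalue $i\lambda$ with $\lambda \in \R$, and then $\pi(\exp t\bd)v = e^{it\lambda}v$, so $v$ is an eigenvector for the one-parameter group $\pi(\exp\R\bd)$. Part~(a) of Moore's Eigenvector Theorem then yields that $v$ is fixed by the normal subgroup $N_\bd = \la\exp\fn_\bd\ra \trile G$, and in particular $\cH^{N_\bd} \neq \{0\}$.

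Next I would bring in irreducibility. Because $N_\bd$ is normal, its fixed-point space $\cH^{N_\bd}$ is a closed $G$-invariant subspace: for $g \in G$ and $n \in N_\bd$ we have $\pi(n)\pi(g) = \pi(g)\pi(g^{-1}ng)$ with $g^{-1}ng \in N_\bd$, so $\pi(g)$ preserves $\cH^{N_\bd}$. As $\pi$ is irreducible and $\cH^{N_\bd} \neq \{0\}$, we obtain $\cH^{N_\bd} = \cH$, that is, $N_\bd \subeq \ker\pi$. Finally I would exploit discreteness of the kernel: $N_\bd = \la\exp\fn_\bd\ra$ is the connected analytic subgroup integrating the ideal $\fn_\bd$, so being connected and contained in the discrete (hence $0$-dimensional) group $\ker\pi$ it must be trivial, whence $\fn_\bd = \{0\}$ and $\ad\bd$ is elliptic.

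The content here is essentially bookkeeping once Moore's theorem is granted, so I do not expect an analytic obstacle. The one point demanding care is this last implication: one must be certain that $N_\bd$ really is the connected integral subgroup with Lie algebra $\fn_\bd$, so that the passage from ``$v$ fixed by $N_\bd$'' to ``$\fn_\bd = \{0\}$'' is legitimate; this is where the discreteness hypothesis on $\ker\pi$ (equivalently, pointedness of $C_\pi$ in condition (F1)) is used in an essential way.
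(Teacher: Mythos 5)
Your proposal is correct and follows essentially the same route as the paper's own proof: apply Moore's Eigenvector Theorem to get that $N_\bd$ fixes the eigenvector, use normality of $N_\bd$ plus irreducibility to conclude $\cH^{N_\bd} = \cH$, and then use discreteness of $\ker\pi$ together with connectedness of $N_\bd = \la \exp \fn_\bd\ra$ to force $\fn_\bd = \{0\}$, i.e.\ ellipticity of $\ad \bd$. The only difference is that you spell out details the paper leaves implicit (the passage from an eigenvector of $\partial\pi(\bd)$ to an eigenvector of the one-parameter group, and the connectedness argument at the end), which is harmless.
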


\begin{prf} 
Suppose that $v$ is an eigenvector of $\partial \pi(\bd)$. 
Then Moore's Eigenvector Theorem implies that the normal subgroup 
$N_\bd \trile G$ fixes $v$. As $N_\bd$ is normal, the subspace 
$\cH^{N_\bd}$ of $N_\bd$-fixed vectors is a $G$-subrepresentation, 
hence coincides with $\cH$ by irreducibility. As $\ker(\pi)$ is discrete, 
$\fn_\bd = \{0\}$, and this means that $\ad \bd$ is elliptic. 
\end{prf}

\begin{thm} \label{thm:4.3} 
Let $(\pi, \cH)$ be an irreducible semibounded representation 
with discrete kernel of the finite dimensional connected Lie group $G$ 
and let $\bd \in W_\pi$. Then the following assertions hold: 
\begin{itemize}
\item[\rm(i)] There exists a compactly embedded Cartan subalgebra  $\ft \subeq \g$ 
and a $\cW$-invariant positive system $\Delta_p^+$ of non-compact roots 
such that 
\[ \oline{W_\pi} \cap \ft = C_{\rm max} := (i\Delta_p^+)^\star 
:= \{ x \in \ft \: (\forall \alpha \in \Delta_p^+)\, 
i\alpha(x) \geq 0\} \] 
and $W_{\rm max} := \oline{\Ad(G)C_{\rm max}}$ coincides with $\oline{W_\pi}$.
\item[\rm(ii)] $\bd$ has smooth ground states if and only if 
$\ad(\bd)$ is elliptic. 
\item[\rm(iii)] If $\g$ is simple hermitian, then $W_\pi = W_{\rm max}$, 
and in particular $W_\pi$ is closed. 
\end{itemize}
\end{thm}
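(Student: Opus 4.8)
The plan is to deduce all three parts from the structure theory of the invariant momentum cone, combined with the two extrinsic tools already recorded: Moore's Eigenvector Theorem (through Corollary~\ref{cor:moore-eigenvector}) and the Jordan--decomposition Lemma~\ref{lem:jac-mor}. Throughout I use the reduction to the inner case (Proposition~\ref{prop:inner}) and the standing hypotheses (F1)--(F3), so that $W_\pi$ is an $\Ad(G)$-invariant convex cone with non-empty interior (semiboundedness) whose positive cone $C_\pi$ is pointed (discreteness of $\ker\pi$).

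For (i) I would invoke the classification of semibounded irreducible representations with pointed positive cone from \cite{Ne00}: such a $\pi$ is a highest weight representation, which in particular produces a compactly embedded Cartan subalgebra $\ft\subeq\g$ together with an adapted positive system. The intersection $\oline{W_\pi}\cap\ft$ is then a closed $\cW$-invariant convex cone, and the highest weight description identifies it with the maximal cone $C_{\rm max}=(i\Delta_p^+)^\star$ attached to the induced $\cW$-invariant positive system $\Delta_p^+$ of non-compact roots (\cite[Ch.~VII, VIII]{Ne00}). Finally the reconstruction theorem for invariant cones, $\oline{W_\pi}=\oline{\Ad(G)\big(\oline{W_\pi}\cap\ft\big)}$, gives $W_{\rm max}=\oline{\Ad(G)C_{\rm max}}=\oline{W_\pi}$. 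Beyond citation, the only point to verify is that $\bd\in W_\pi$ forces $\Spec(\ad\bd)\subeq i\R$, so that $W_\pi$ is genuinely generating and the cited structure theory applies.

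For (ii) the forward implication is immediate: a smooth ground state vector is an eigenvector of $\partial\pi(\bd)$, so Corollary~\ref{cor:moore-eigenvector} forces $\ad\bd$ to be elliptic. For the converse, ellipticity of $\ad\bd$ means its (finite) spectrum is purely imaginary, so $0$ is isolated in $\Spec(\ad\bd)$ and $\bd$ lies in a compactly embedded Cartan subalgebra, which we may take to be the $\ft$ from (i). The extreme $\ft$-weight vector on which $-i\partial\pi(\bd)$ attains the infimum of its spectrum (finite since $\bd\in W_\pi$, and attained at an extreme weight since $\bd\in C_{\rm max}$) is analytic, hence a smooth ground state. Equivalently, when $\bd$ is interior one may quote Theorem~\ref{thm:6.2b}(iii), which identifies the minimal energy space with the nonzero space $\oline{(\cH^\infty)^{\g_\C^-}}$ of smooth vectors.

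Part (iii) is the genuine obstacle. Since $\oline{W_\pi}=W_{\rm max}$ by (i), it suffices to prove that $W_\pi$ is closed, i.e. that every $x\in W_{\rm max}$ satisfies $s_\pi(x)<\infty$. I would first treat the elliptic elements: for $y\in C_{\rm max}\subeq\ft$ the weights of the highest weight representation are one extreme weight plus non-negative integral combinations of positive roots, and the hermitian structure $\g_\C=\fp^+\oplus\g^0_\C\oplus\fp^-$ (with $\fp^\pm$ abelian, so that the descent proceeds with bounded compact-root multiplicities and extends to infinity only along the non-compact directions $\Delta_p^+$, on which $i\alpha(y)\geq 0$) bounds $-i\nu(y)$ below uniformly over all weights $\nu$; hence $C_{\rm max}\subeq W_\pi$, and by $\Ad(G)$-invariance $\Ad(G)C_{\rm max}\subeq W_\pi$. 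To reach the remaining boundary points I would use Lemma~\ref{lem:jac-mor}: for $x=x_s+x_n\in W_{\rm max}$ the whole family $x_s+tx_n$, $t>0$, lies in one adjoint orbit, on which the invariant functional $s_\pi$ is constant, while $x_s$ is semisimple with purely imaginary spectrum, hence an elliptic element of $W_{\rm max}$ already shown to lie in $W_\pi$. Combining the orbit-invariance and lower semicontinuity of $s_\pi$ with a convexity estimate controlling the abelian nilpotent direction $x_n\in\fp^+$ then yields $s_\pi(x)<\infty$. The hard part is exactly this last estimate on the nilpotent boundary: one must exploit that in the hermitian case the nilpotent cone directions sit inside the abelian algebra $\fp^+$, so that $-i\partial\pi(x_n)$ acts on a lowest weight representation as a semibounded operator, and making this bound uniform is where the hermitian hypothesis is indispensable; I would ultimately lean on the closedness of the momentum cone for holomorphic highest weight representations established in \cite[Ch.~X]{Ne00}.
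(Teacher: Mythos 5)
Your parts (i) and (ii) follow the paper's own route essentially verbatim: (i) rests on \cite[Thm.~X.4.1]{Ne00} together with the ellipticity of $W_\pi^0$ (\cite[Prop.~VII.3.4(c)]{Ne00}) to get $\oline{W_\pi} = \oline{\Ad(G)C_{\rm max}} = W_{\rm max}$, and (ii) combines Corollary~\ref{cor:moore-eigenvector} for necessity with conjugating the elliptic element $\bd$ into $C_{\rm max}$ and exhibiting a lowest weight vector as a smooth ground state, which is exactly the paper's argument.

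Part (iii), however, contains a genuine gap at the nilpotent direction. You correctly reduce to showing $W_{\rm max} \subeq W_\pi$, handle the elliptic elements via conjugation into $C_{\rm max}$, and split $x = x_s + x_n$ using Lemma~\ref{lem:jac-mor}; but the tools you then invoke cannot close the argument. Orbit invariance gives $s_\pi(x_s + t x_n) = s_\pi(x)$ for all $t>0$, and lower semicontinuity of $s_\pi$ then yields only $s_\pi(x_s) \leq s_\pi(x)$ --- an estimate in the wrong direction. Likewise, convexity of $s_\pi$ produces upper bounds only at convex combinations of points where $s_\pi$ is already known to be finite, and since every point $x_s + t x_n$, $t>0$, lies on the same adjoint orbit as $x$, no nontrivial such combination is available: for instance $x = \frac{1}{2}x_s + \frac{1}{2}(x_s + 2x_n)$ gives $s_\pi(x) \leq \frac{1}{2}s_\pi(x_s) + \frac{1}{2}s_\pi(x)$, which is vacuous when $s_\pi(x) = \infty$. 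Moreover, your claim that the nilpotent directions ``sit inside the abelian algebra $\fp^+$'' is not correct as stated: $x_n$ is a real element of $\g$, whereas $\fp^+ \subeq \g_\C$; the link between convex-type nilpotents and $\fp^\pm$ passes through a Cayley transform and yields no uniform bound by itself. What actually closes the argument in the paper is a positivity statement for the nilpotent part: by \cite[Thm.~III.9]{HNO94}, every nilpotent element of $W_{\rm max}$ already lies in the minimal cone $W_{\rm min}$, and $W_{\rm min} \subeq C_\pi$ by \cite[Thm.~X.4.1]{Ne00}, so $-i\partial\pi(x_n) \geq 0$. Then $x = x_s + x_n \in W_\pi + C_\pi \subeq W_\pi$ simply because the convex cone $W_\pi$ is closed under addition and contains $C_\pi$. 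Without the input $x_n \in W_{\rm min} \subeq C_\pi$, your final appeal to ``the closedness of the momentum cone for holomorphic highest weight representations'' is essentially a citation of the statement you are trying to prove.
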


\begin{prf} (i) From \cite[Thm~X.4.1]{Ne00} and its proof we know that the 
set of extreme points $\Ext(I_\pi)$ is a coadjoint orbit 
$\cO_{\lambda}$ for some $\lambda \in \ft^*$, that $I_\pi = \conv(\cO_\lambda)$ 
and that there exists a $\cW$-invariant positive system $\Delta_p^+$ such that 
\[ W_\pi \cap \ft = C_{\rm max}. \] 
As $W_\pi^0$ is an elliptic cone, i.e., its interior 
consist of elliptic elements (\cite[Prop.~VII.3.4(c)]{Ne00}), it follows that 
\begin{equation}
  \label{eq:wpiclos}
 \oline{W_\pi} = \oline{\Ad(G)C_{\rm max}} = W_{\rm max}.
\end{equation}

\nin (ii) As in (i), we derive from 
\cite[Prop.~VII.3.4(c)]{Ne00} that 
$W_\pi^0 = \Ad(G) (W_\pi^0 \cap \ft)$ because $W_\pi^0$ is elliptic. 
A ground state vector for $\bd$ exists if and only if 
the minimal spectral value 
\[ m := \inf \Spec(-i\partial \pi(\bd)) \] 
is an eigenvalue. If this is the case, 
then Corollary~\ref{cor:moore-eigenvector} implies that 
$\ad \bd$ is elliptic. 

Now we assume, conversely, that $\bd \in W_\pi$ is elliptic. 
Then $\Ad(G)\bd$ intersects the Cartan subalgebra $\ft$ (\cite[Thm.~VII.1.4(vi)]{Ne00}). As 
$\bd \in W_\pi$ and $W_\pi \cap \ft = C_{\rm max}$, 
it follows that $\bd$ is $\Ad(G)$-conjugate to an element $\bd' \in C_{\rm max}$. 
Therefore the $\ft$-weight decomposition of $\cH_\lambda$ 
implies that the $m$-eigenspace of $-i\partial \pi(\bd')$ 
contains a lowest weight vector (which is smooth), 
so that the minimal eigenspace 
$\cH_\lambda^0$ contains a non-zero smooth vector. 

\nin (iii) Suppose that $\g$ is simple hermitian. 
To show that $W_{\rm max} \subeq W_\pi$, let 
$x \in W_{\rm max}$ and write $x = x_s + x_n$ for its Jordan decomposition. 
Then the adjoint orbit of $x$ contains all elements 
$x_s + t x_n$, $t > 0$ 
(Lemma~\ref{lem:jac-mor}), so that $x_s, x_n \in W_{\rm max}$. 
Since $x_n$ is nilpotent, we even have $x_n \in W_{\rm min}$ 
by \cite[Thm.~III.9]{HNO94}, and since $W_{\rm min} \subeq C_\pi$ 
(\cite[Thm.~X.4.1]{Ne00}), it follows that 
$x_n \in C_\pi$. As $x_s$ is elliptic, its adjoint orbit intersects~
$\ft \cap W_{\rm max} = C_{\rm max} \subeq W_\pi$, as we have seen in (i). 
This implies that every elliptic element in $W_{\rm max}$ is contained in $W_\pi$. 
We thus obtain that 
\[ x = x_s + x_n \in W_\pi + C_\pi \subeq  W_\pi,\] 
and hence that $W_{\rm max} \subeq W_\pi$, which implies equality by~\eqref{eq:wpiclos} above. 
\end{prf}

\begin{ex} In the context of the preceding theorem, we note that, 
in general  $W_\pi \not= W_{\rm max}$ because $W_\pi$ need not be closed. 
Let 
\[ \sigma(x,y) := \sum_{j = 1}^n x_j y_{n+j} - y_j x_{n+j} \] 
be the canonical symplectic form on $\R^{2n}$ and 
$\bd \in \sp_{2n}(\R)$ an element for which the corresponding 
Hamiltonian function $H_\bd(v) = \shalf\sigma(\bd v,v)$ is positive definite. 
For the oscillator representation $(\pi, L^2(\R^n))$ of 
$\g = \heis(\R^{2n},\sigma) \rtimes \R \bd$,  we have 
\[ \oline{W_\pi} = \heis(\R^{2n},\sigma) \oplus \R_+ \bd, \] 
a closed half space with boundary $\heis(\R^{2n},\sigma)$. 
An element of $\heis(\R^{2n},\sigma)$ corresponds to a semibounded operator if and only 
if it is central. Therefore $W_\pi$ is not closed, 
hence different from~$W_{\rm max}$. 
\end{ex} 

\begin{ex} (The case of simple Lie algebras) 
(a) Assume that $\g$ is simple and that 
$\alpha$ is non-trivial. As all derivations of $\g$ are inner, 
we have $\alpha_t(g) = \exp(t\bd) g \exp(-t\bd)$ for some 
non-zero $\bd \in \g$. If $G$ has a non-trivial positive energy representation, 
then $\g$ must be compact or hermitian (\cite[\S\S VII.2/3]{Ne00}). 
\begin{itemize}
\item In the compact case all irreducible 
representations $(\pi, \cH)$ are finite dimensional, 
so that all operators $\partial \pi(x)$ for $x \in \g$ are bounded. 
Hence the positive energy condition is satisfied for every $\bd \in \g$ 
and ground states exist (cf.\ Corollary~\ref{cor:6.2}).  
\item In the hermitian case, there is a closed convex cone 
$W_{\rm max} \subeq \g$, such that there exists a positive energy representation 
if and only if $\bd \in W_{\rm max} \cup - W_{\rm max}$ 
(Theorem~\ref{thm:4.3}(iii)).
If $\bd \in W_{\rm max}$, then 
every irreducible representation $(\pi, \cH)$ for which 
$-i\partial \pi(\bd)$ is bounded from below is semibounded 
because the cone $W_\pi= W_{\rm max}$ has interior points. 
By Theorem~\ref{thm:4.3}(ii), the existence of ground states is equivalent to 
$\bd$ being elliptic. 
\end{itemize}

\nin (b) In a hermitian Lie algebra, there exist two closed convex invariant cones 
$W_{\rm min} \subeq W_{\rm max}$ such that, for every non-trivial closed 
convex invariant cone $W \subeq \g$ we have 
\[ W_{\rm min} \subeq W \subeq W_{\rm max} \quad \mbox{ or } \quad 
 W_{\rm min} \subeq -W \subeq W_{\rm max}.\] 
If $W_{\rm min} = W_{\rm max}$, which is the case for $\g = \sp_{2n}(\R)$, 
this means that $W$ is unique up to sign. We therefore have 
\[ W_{\rm min} \subeq C_\pi \subeq W_\pi \subeq W_{\rm max} \] 
for every positive energy representation, and thus $C_\pi = W_\pi$. 

In general the two cones $C_\pi$ and $W_\pi$ are different. 
Concrete examples are easily found for $\g = \su_{1,2}(\C)$. 
\end{ex}

\begin{rem} For finite dimensional Lie groups $G$ 
the classification of irreducible 
semibounded unitary representations easily boils down to a situation 
where one can apply Theorem~\ref{thm:6.2b}. 
If the semibounded unitary representations of $G$ 
separate the points, then the set $\comp(\g)$ of elliptic elements in 
$\g$ has interior points (\cite[Prop.~VII.3.4(c)]{Ne00}) 
and we may choose $\bd$ as an interior point which, in addition, 
is a regular element of $\g$. 
Then $\ft := \g^0 := \ker(\ad \bd)$ is a 
compactly embedded Cartan subalgebra and 
the corresponding subgroup $T := \exp(\ft) = G^0$ is abelian. 
For a semibounded representation $(\pi, \cH)$ of $G$ with discrete 
kernel we now choose a $\cW$-invariant positive system 
$\Delta_p^+$ of non-compact roots such that $W_\pi \subeq W_{\rm max}$ 
and obtain $\bd \in C_{\rm max}^0$. As $\bd$ is elliptic, 
Theorem~\ref{thm:4.3} now implies that $\pi$ is holomorphically 
induced from $(\pi^0, \cH^0)$. Further, 
this representation is irreducible, and since $G^0 = T$ is abelian, 
Schur's Lemma leads to $\dim \cH^0 = 1$. 
This means that $\pi^0(\exp x) = e^{i\lambda(x)}$ for some 
$\lambda \in \ft^*$, the {\it lowest weight} of the representation~$\pi$ 
with respect to the positive system 
$\Delta^+ := \{ \alpha \in \Delta \: \alpha(-i\bd) > 0\}$. 
In this case $C_\alpha \subeq C_{\pi^0}$ is equivalent to 
\begin{equation}
  \label{eq:poscondcoroot}
 \lambda(\alpha^\vee) \leq 0 \quad \mbox{ for } \quad 
\alpha \in \Delta_k^+ \quad \mbox{ and } \quad 
 \lambda([x_\alpha^*, x_\alpha]) \geq 0 \quad \mbox{ for } \quad 
\alpha \in \Delta_p^+, x_\alpha \in \g_\C^\alpha. 
\tag{PC}\end{equation}
As Remark~\ref{rem:countex} below shows, these conditions 
are in general not sufficient. 
\end{rem}

From \cite[Cor.~14.3.10]{HN12} we recall the following 
fact on the connectedness of the group $G^0$.
\begin{lem}  \label{lem:4.1}
If $G$ is connected and $\bd$ is elliptic, 
then the subgroup $G^0 = Z_G(\bd)$ 
is connected.\end{lem}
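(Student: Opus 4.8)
The plan is to realize $G^0$ as the fixed-point group of a torus acting on $G$ by automorphisms, and then to prove connectedness of such fixed-point groups by reducing to the classical structure theory of compact Lie groups. First I would rewrite the centralizer intrinsically. Since $G$ is connected, the restriction homomorphism $\Aut(G) \to \Aut(\g)$ is an injective closed embedding, and it carries the conjugation $c_{\exp t\bd}(g) := \exp(t\bd)\,g\,\exp(-t\bd)$ to $e^{t\ad\bd}$. Hence
\[ F := \overline{\{c_{\exp t\bd} : t \in \R\}} \subseteq \Aut(G) \]
is mapped isomorphically onto $\overline{e^{\R\ad\bd}} \subseteq \Aut(\g)$. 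Ellipticity of $\bd$ means exactly that the latter is compact, so $F$ is a compact connected abelian group, i.e.\ a torus, acting on $G$ by automorphisms, and $G^0 = Z_G(\bd) = Z_G(\exp\R\bd) = \Fix(F)$. Thus it suffices to prove that the fixed-point group of a torus acting on a connected Lie group by automorphisms is connected.

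Next I would reduce to a compact group. As $F$ is compact, it stabilizes some maximal compact subgroup $K \subseteq G$: the maximal compact subgroups form a single conjugacy class, on which the connected group $F$ acts trivially, so a fixed-point argument for the $F$-action on the (contractible) space of maximal compact subgroups yields an $F$-invariant representative $K$. Performing the Malcev--Iwasawa decomposition $F$-equivariantly produces an $F$-equivariant diffeomorphism $G \cong K \times \R^n$ on which $F$ acts linearly on the second factor, whence
\[ \Fix(F) \cong K^F \times (\R^n)^F. \]
The factor $(\R^n)^F$ is a linear subspace, hence connected, so the problem is reduced to showing that $K^F$ is connected for the compact connected group $K$.

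Finally I would invoke the maximal-torus machinery. Since $F$ is connected, it lies in $\Aut(K)_0 = \Inn(K)$, so it acts through inner automorphisms; lifting the corresponding torus of $\Inn(K) = K/Z(K)$ to a torus $S \subseteq K$ with $\Inn(S) = F$ gives $K^F = Z_K(S)$. The connectedness of the centralizer of a torus in a compact connected Lie group is classical: for $g \in Z_K(S)$ one extends $S$ to a maximal torus of the identity component $Z_K(S)_0$, checks that it is in fact a maximal torus of $K$, uses conjugacy of maximal tori in $Z_K(S)_0$ to arrange (after multiplying $g$ by an element of $Z_K(S)_0$) that $g$ normalizes it, and then uses that the pointwise stabilizer of the subtorus $S$ in the Weyl group is generated by the reflections it contains—each realized inside $Z_K(S)_0$—to conclude $g \in Z_K(S)_0$.

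I expect this last compact-group step, together with the need to make the Iwasawa decomposition genuinely $F$-equivariant, to be the main obstacle: it is precisely here that conjugacy of maximal tori and the reflection description of isotropy subgroups of the Weyl group are indispensable, whereas the passage from $\bd$ to the torus $F$ and the splitting off of the Euclidean factor are formal. (In the finite-dimensional setting at hand all of this is packaged in \cite[Cor.~14.3.10]{HN12}, which is what we invoke.)
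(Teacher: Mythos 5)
The paper offers no argument of its own here: Lemma~\ref{lem:4.1} is quoted directly from \cite[Cor.~14.3.10]{HN12}, which is precisely the reference you invoke in your closing parenthesis, so your proposal agrees with the paper's treatment. The sketch you add on top of that citation---identifying $G^0$ with the fixed-point group of the torus $\oline{e^{\R\ad\bd}}\subeq \Aut(\g)$ acting by automorphisms, reducing via an $F$-invariant maximal compact subgroup and an equivariant Malcev--Iwasawa splitting, and finishing with the classical connectedness of centralizers of tori in compact connected Lie groups---is a sound outline of the standard argument underlying that corollary.
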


\subsection{Application to compact Lie groups} 
\label{subsec:6.2}

Theorem~\ref{thm:compcase} below is the main result of the present subsection. 
It shows that all unitary representations of 
compact connected Lie groups are ground state representations for 
any continuous homomorphism $\alpha \: \R \to \Aut(G)$. The following lemma 
prepares the crucial information for its proof. 

We first recall the root decomposition 
of the compact Lie algebra $\g$ with with respect to a Cartan subalgebra 
$\ft\subeq \g$ (Definition~\ref{def:4.3}) 
containing a fixed element~$\bd$ that we may pick 
in $[\g,\g]$ as $\g = \z(\g) \oplus [\g,\g]$. We have 
\[ \g_\C = \ft_\C \oplus \bigoplus_{\alpha \in \Delta} \g_\C^\alpha, \]
and obtain with 
\[ \g_\C^\pm := \sum_{\pm\alpha(-i \bd) > 0} \g_\C^\alpha 
\quad \mbox{ and } \quad 
\g_\C^0 = \fz_\g(\bd)_\C 
= \ft_\C + \sum_{\alpha(-i \bd) = 0} \g_\C^\alpha = \fz_{\g_\C}(\bd)\] 
the triangular decomposition 
\[ \g_\C = \g_\C^+ \oplus \g_\C^0 \oplus \g_\C^- \] 
of $\g_\C$ with respect to $-i\ad \bd$
 (cf.~\cite[Ch.~IX.5]{Ne00}). 
We choose a positive system $\Delta^+ \subeq \Delta$ in such a way that  
\[ \Delta^+  \supeq \Delta^{++} := \{ \alpha \in \Delta \: 
\alpha(-i\bd) > 0\}.\] 
Then $\Delta^{0+} := \Delta^0 \cap \Delta^+$ is a positive system 
in the root system 
\begin{equation}
  \label{eq:rootplusplus}
 \Delta^0 := \{ \alpha \in \Delta \: \alpha(-i\bd) = 0\}
\quad \mbox{ of }\quad (\fg^0,\ft), \quad \mbox{ and } \quad 
\Delta^+ = \Delta^{0+} \dot\cup \Delta^{++}.
\end{equation}

\begin{prop} \label{prop:likeso3} 
Let $G$ be a connected  Lie group with  compact Lie algebra~$\g$ 
and derivation $D = \ad \bd$ for some $\bd \in [\g,\g]$. 
For an irreducible unitary representation 
$(\pi^0, \cH^0)$ of $G^0 := Z_G(\bd)$, the following conditions are 
equivalent: 
\begin{itemize}
\item[\rm(i)] $(\pi^0, \cH^0)$ is holomorphically inducible. 
\item[\rm(ii)] $\dd\pi^0([z^*,z]_0) \geq 0$ for every $z \in \g_\C^+$, 
i.e., $C_\alpha \subeq C_{\pi^0}$. 
\item[\rm(iii)] $\dd\pi^0(\alpha^\vee) \leq 0$ for every $\alpha \in \Delta^{++}$. 
\item[\rm(iv)] The lowest weight $\lambda$ of $(\pi^0,\cH^0)$ satisfies 
$\lambda(\alpha^\vee) \leq 0$ for every $\alpha \in \Delta^{++}$. 
\item[\rm(v)] The lowest weight $\lambda$ of $(\pi^0,\cH^0)$ is 
$\Delta^+$-antidominant. 
\end{itemize}
\end{prop}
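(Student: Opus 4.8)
The plan is to prove the cyclic chain (i) $\Rightarrow$ (ii) $\Leftrightarrow$ (iii) $\Leftrightarrow$ (iv) $\Leftrightarrow$ (v) $\Rightarrow$ (i). I first record the standing facts. Since $\g$ is compact, $\bd$ is elliptic, so $G^0 = Z_G(\bd)$ is connected by Lemma~\ref{lem:4.1}, and the irreducible unitary representation $(\pi^0,\cH^0)$ is finite dimensional with a lowest weight $\lambda \in i\ft^*$ relative to the positive system $\Delta^{0+}$ of $(\g^0,\ft)$. Throughout I use that $\dd\pi(w)^* = \dd\pi(w^*)$ for the operation $(x+iy)^* = -x+iy$, and that for a root vector $x_\alpha$ the element $[x_\alpha^*,x_\alpha]\in\ft_\C$ is a positive multiple of $-i\alpha^\vee$ (as one checks in $\su_2$, where $i[x_\alpha^*,x_\alpha] = -i\alpha^\vee \in \ft$). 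Combined with Remark~\ref{rem:4.7}, eq.~\eqref{eq:c-alpha-root}, this gives
\[ C_\alpha = \oline\cone\{\, i[x_\alpha^*,x_\alpha] : \alpha \in \Delta^{++},\ x_\alpha \in \g_\C^\alpha\,\} = \oline\cone\{-i\alpha^\vee : \alpha \in \Delta^{++}\}\subeq\ft. \]

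For (ii) $\Leftrightarrow$ (iii) I use this identity to reduce $C_\alpha \subeq C_{\pi^0}$ to the memberships $-i\alpha^\vee \in C_{\pi^0}$ for $\alpha \in \Delta^{++}$; since $\dd\pi^0$ is $\C$-linear, $-i\,\dd\pi^0(-i\alpha^\vee) = -\dd\pi^0(\alpha^\vee)$, so this is exactly $\dd\pi^0(\alpha^\vee) \le 0$ (cf.\ also eq.~\eqref{eq:brackdrel}). For (iii) $\Leftrightarrow$ (iv): the self-adjoint operator $\dd\pi^0(\alpha^\vee)$ is diagonal on the $\ft$-weight vectors of $\pi^0$ with real eigenvalues $\mu(\alpha^\vee)$, so (iii) reads $\mu(\alpha^\vee)\le 0$ for every weight $\mu$, and (iv) is the case $\mu=\lambda$. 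The converse (iv) $\Rightarrow$ (iii) uses that the Weyl group $\cW_0$ of $(\g^0,\ft)$ fixes $\bd$, hence permutes $\Delta^{++}$; writing an arbitrary weight as $\mu \in \oline\conv(\cW_0\lambda)$ and using $w\alpha^\vee = (w\alpha)^\vee$ with $w\alpha \in \Delta^{++}$, the hypothesis forces $\mu(\alpha^\vee)\le 0$. Finally (iv) $\Leftrightarrow$ (v): being a lowest weight, $\lambda$ is automatically $\Delta^{0+}$-antidominant, so $\Delta^+$-antidominance adds precisely the conditions on $\Delta^{++}$, which is (iv), since $\Delta^+ = \Delta^{0+}\,\dot\cup\,\Delta^{++}$.

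For (i) $\Rightarrow$ (ii): if $(\pi,\cH)$ is holomorphically induced from $(\pi^0,\cH^0)$, then Theorem~\ref{thm:c.1} gives $\cH^0 \subeq \cH^\omega$ and $\dd\pi(\g_\C^-)\cH^0 = \{0\}$. For $z \in \g_\C^+$ and $v \in \cH^0$ one computes, using $z^* \in \g_\C^-$ and $\dd\pi(w)^* = \dd\pi(w^*)$, that the $\g_\C^\pm$-components of $[z^*,z]$ contribute $0$ (their $*$-adjoints lie in $\g_\C^\mp$ and kill $v$), whence $\langle v, \dd\pi^0([z^*,z]_0)v\rangle = \langle v,\dd\pi([z^*,z])v\rangle = \|\dd\pi(z)v\|^2 \ge 0$; this is (ii).

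For (v) $\Rightarrow$ (i): since $\lambda$ is integral (it is a weight of the maximal torus $T = \exp\ft$, which is common to $G^0$ and $G$) and $\Delta^+$-antidominant, the theorem of the highest weight produces an irreducible unitary representation $(\pi,\cH)$ of $G$ with lowest weight $\lambda$ relative to $\Delta^+$. Let $v_\lambda$ be a lowest weight vector and $\cV := \dd\pi(U(\g_\C^0))v_\lambda$. Then $v_\lambda$ is annihilated by all negative root spaces, in particular by $\g_\C^- = \sum_{\alpha\in\Delta^{++}}\g_\C^{-\alpha}$, and since $[\g_\C^0,\g_\C^-]\subeq \g_\C^-$ an induction on the degree in $U(\g_\C^0)$ yields $\dd\pi(\g_\C^-)\cV = \{0\}$. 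Moreover $\cV$ is the irreducible $G^0$-module generated by the $\Delta^{0+}$-lowest weight vector $v_\lambda$, hence $G^0$-equivalent to $(\pi^0,\cH^0)$. The hypotheses of Theorem~\ref{thm:c.3} now hold (boundedness is automatic in finite dimensions; $\cV \subeq (\cH^\infty)^{\g_\C^-}$ gives (HI2); irreducibility of $\pi$ gives (HI3)), so $(\pi,\cH)$ is holomorphically induced from $(\pi^0,\cH^0)$, proving (i). The main work lies in this last implication — arranging the global lowest weight representation and identifying $\cV$ with $\pi^0$ — with the $\cW_0$-convexity step in (iv) $\Rightarrow$ (iii) being the other point that requires care.
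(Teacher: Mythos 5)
Your proof follows essentially the same route as the paper's: the forward cycle (i)$\Rightarrow$(ii)$\Rightarrow$(iii)$\Rightarrow$(iv)$\Rightarrow$(v)$\Rightarrow$(i) is carried out with the same ingredients the paper uses --- the computation $\la \xi,\dd\pi([z^*,z])\xi\ra=\|\dd\pi(z)\xi\|^2$ on vectors annihilated by $\g_\C^-$ for (i)$\Rightarrow$(ii); the normalized coroots $[x_\alpha,x_\alpha^*]=\alpha^\vee$ for (ii)$\Rightarrow$(iii); the eigenvalue and antidominance observations for (iii)$\Rightarrow$(iv)$\Rightarrow$(v); and the global lowest weight representation together with Theorem~\ref{thm:c.3} for (v)$\Rightarrow$(i) (your $\cV=\dd\pi(U(\g^0_\C))v_\lambda$ coincides with the paper's minimal eigenspace of $-i\partial\pi(\bd)$, so this is a cosmetic variant). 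This cycle is sound, and it alone proves the proposition.

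There is, however, one false claim among your standing facts, and it invalidates the (logically redundant) backward implication (iii)$\Rightarrow$(ii) as you argue it. You assert $C_\alpha=\oline\cone\{-i\alpha^\vee:\alpha\in\Delta^{++}\}\subeq\ft$; only the inclusion ``$\supeq$'' is correct, and that is all the forward step (ii)$\Rightarrow$(iii) needs. Equation~\eqref{eq:c-alpha-root} lets $x_\lambda$ range over the \emph{whole} eigenspace $\g_\C^\lambda(-i\bd)=\bigoplus_{\alpha(-i\bd)=\lambda}\g_\C^\alpha$, not merely over single root spaces. If $\alpha\neq\beta\in\Delta^{++}$ satisfy $\alpha(-i\bd)=\beta(-i\bd)$ and $\alpha-\beta\in\Delta^0$, then $z=x_\alpha+x_\beta$ yields a generator $i[z^*,z]$ of $C_\alpha$ with nonzero components in $\g_\C^{\pm(\alpha-\beta)}$, hence outside $\ft$. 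Concretely, for $\g=\su_4(\C)$ and $\bd=\tfrac{i}{2}\diag(1,1,-1,-1)$, the generators are $i[z^*,z]=i\,\diag(-BB^*,B^*B)$ for $z=\pmat{0&B\\0&0}$, which lie in $\ft$ only when $BB^*$ is diagonal; so $C_\alpha\not\subeq\ft$ whenever $\g^0$ is nonabelian in this way. Consequently your reduction proves (ii)$\Rightarrow$(iii) but not its converse. This does not affect the proposition, since the forward cycle closes; if you want the converse directly, use instead that $C_\alpha$ and $C_{\pi^0}$ are $\Ad(G^0)$-invariant closed convex cones, so the inclusion can be tested after intersecting with $\ft$ (\cite[Thm.~VII.3.29]{Ne00}, which the paper itself invokes for exactly this purpose), combined with $C_\alpha\cap\ft=\oline\cone\{-i\alpha^\vee:\alpha\in\Delta^{++}\}$ (project $\g^0$ onto $\ft$ along the root spaces of $(\g^0,\ft)$: the cross terms are killed). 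Your Weyl-group convexity argument for (iv)$\Rightarrow$(iii) is correct, but equally redundant.
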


\begin{prf} (i) $\Rarrow$ (ii): This follows from \cite[Lemma~6.11]{Ne12}, 
but we repeat the direct argument. Condition (i) yields a 
realization of $(\pi^0, \cH^0)$ as a subrepresentation of a 
unitary representation $(\pi, \cH)$ of $G$ 
such that $\cH^0$ consists of smooth vectors 
and $\dd\pi(\g_\C^-) \cH^0 = \{0\}$ 
(Theorem~\ref{thm:6.2}). For $z \in \g_\C^+$ we now have
$z^* \in \g_\C^-$, so that we obtain for 
$\xi \in \cH^0$ with 
\[ \la \xi, \dd\pi(z) \xi \ra 
= \la \xi, \dd\pi(z_0) \xi \ra  \] 
the inequality 
\begin{equation}
  \label{eq:normpos}
\la \xi, \dd\pi^0([z^*,z]_0) \xi \ra 
= \la \xi, \dd\pi([z^*,z]_0) \xi \ra 
= \la \xi, \dd\pi([z^*,z]) \xi \ra 
= \la \xi, \dd\pi(z^*) \dd\pi(z) \xi \ra 
=  \|\dd\pi(z) \xi\|^2 \geq 0.
\end{equation}
By \eqref{eq:sumpos} in Remark~\ref{rem:4.7}, 
the cone $C_\alpha \subeq \g^0$ is generated by the elements 
$i[z^*,z]_0$, $z \in \g_\C^+$. Therefore \eqref{eq:normpos} 
implies $C_\alpha \subeq C_{\pi^0}$. 
  
\nin (ii) $\Rarrow$ (iii): For $\alpha \in \Delta^{++}$, we pick 
$x_\alpha \in \g_\C^\alpha$ with $[x_\alpha, x_\alpha^*] =\alpha^\vee$ 
(cf.~Definition~\ref{def:4.3}). 
Then (iii) follows from (ii) and $-\alpha^\vee \in C_\alpha$. 

\nin (iii) $\Rarrow$ (iv): This follows from the fact that 
$\lambda(\alpha^\vee)$ 
is an eigenvalue of $\dd\pi^0(\alpha^\vee)$.  

\nin (iv) $\Rarrow$ (v): By (iv), we only have to observe that 
$\lambda(\alpha^\vee) \leq 0$ for $\alpha \in \Delta^{0,+}$ 
because $\lambda$ is the lowest weight of a unitary representation 
(cf.~\cite[Prop.~IX.1.21]{Ne00} for more details). 

\nin (v) $\Rarrow$ (i): If $\lambda$ is antidominant 
with respect to $\Delta^+$, then 
the corresponding unitary lowest weight representation 
$(\pi_\lambda, \cH_\lambda)$ of $G$ contains $(\pi^0,\cH^0)$ 
as the subrepresentation on the minimal eigenspace for 
$-i\partial\pi(\bd)$, hence is holomorphically induced from  $(\pi^0,\cH^0)$ 
(Theorem~\ref{thm:c.3}). That $\pi_\lambda$ exists follows from 
the fact that $\lambda$ actually integrates to a character of~$T \subeq G^0$.
\end{prf}

\begin{rem} The set $\hat T_+ \subeq \hat T$ of chose characters satisfying 
Condition (iv) is invariant under the Weyl group $\cW^0 := \cW(\g^0, \ft)$. Identifying 
the unitary dual $\hat G$ of $G$ with the orbit space $\hat T/\cW$ 
(Cartan--Weyl Theorem, \cite[p.~209]{Wa72}), it follows that 
the ground state representations for $(G,\alpha)$ correspond to the subset 
$\hat T_+/\cW^0$. 
\end{rem}

The above lemma characterizes the holomorphically inducible 
representations of $G^0$. The following proposition switches the perspective 
from $G^0$ to $G$: 

\begin{prop}\label{2ndstep} Let $G$ be a connected 
Lie group with compact Lie algebra $\g$ and 
derivation $D = \ad \bd$ for some $\bd \in \g$.   
Then every irreducible unitary representations  $(\pi,\cH)$ of $G$ 
is holomorphically induced from the irreducible 
representations $(\pi^0, \cH^0)$ of $G^0$ on the minimal 
eigenspace of $-i \partial\pi(\bd)$ and it is a strict ground state 
representation. 
\end{prop}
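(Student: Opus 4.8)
The plan is to deduce everything from Corollary~\ref{cor:6.2}, which already packages the holomorphic induction machinery of Section~\ref{sec:5}. Its hypotheses are that $G$ be Banach, that $\bd$ be elliptic with $0$ isolated in $\Spec(\ad\bd)$, and that the representation be bounded. The first two are immediate here, so the only real point is to show that an \emph{irreducible} unitary representation of $G$ is bounded; I expect this to be the main obstacle, and it is where the compactness of $\g$ enters decisively.

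First I would prove that $(\pi,\cH)$ is finite dimensional. Since $\g$ is compact, it decomposes as $\g = \z(\g) \oplus [\g,\g]$ with $[\g,\g]$ semisimple of compact type. Let $Z, S \subeq G$ be the analytic subgroups with Lie algebras $\z(\g)$ and $[\g,\g]$. Then $Z$ is central in the connected group $G$, while $S$, being the continuous homomorphic image in $G$ of the (compact) simply connected group with Lie algebra $[\g,\g]$, is compact. From $\z(\g) + [\g,\g] = \g$ one gets $G = ZS$. By Schur's Lemma $\pi(z) \in \pi(G)' = \C\1$ for $z \in Z$, so $\pi(Z) \subeq \T\1$; hence $\pi(G)'' = \pi(S)''$ and $\pi(G)' = \pi(S)'$. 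Thus $\pi|_S$ is irreducible, and since $S$ is compact we conclude $\dim \cH < \infty$, so $\pi$ is bounded.

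With boundedness in hand I would invoke Corollary~\ref{cor:6.2}. Compactness of $\g$ makes $\oline{e^{\R\ad\bd}}$ compact, so $\bd$ is elliptic, and finite dimensionality of $\g$ makes $\Spec(\ad\bd)$ finite, so $0$ is isolated in it. Corollary~\ref{cor:6.2} then shows that $\pi$ is a strict ground state representation of $(G,\alpha)$ for $\alpha_t(g) = \exp(t\bd)g\exp(-t\bd)$, with ground state space $\cH^0 = \cH^{\g_\C^-}$, and by Theorem~\ref{thm:6.2}(b) it is holomorphically induced from the $G^0$-representation on $\cV := \cH^0$. Two small identifications remain. Since $\pi$ is factorial, Proposition~\ref{prop:inner}(ii) shows that the minimal implementing one-parameter group differs from $\pi(\exp t\bd)$ only by a scalar one-parameter group, so $H_0$ differs from $-i\partial\pi(\bd)$ by a constant; hence $\cH^0 = \ker H_0$ is precisely the minimal eigenspace of $-i\partial\pi(\bd)$. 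Finally, $G^0 = Z_G(\bd)$ is connected by Lemma~\ref{lem:4.1}, and Theorem~\ref{thm:c.1}(ii) gives $\pi^0(G^0)' \cong \pi(G)' = \C\1$, so $(\pi^0,\cH^0)$ is irreducible. This completes the argument, the bulk of which is carried by the cited results once boundedness is established.
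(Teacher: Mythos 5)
Your proof is correct, and its engine is the same as the paper's: both arguments funnel everything through the holomorphic induction machinery of Section~\ref{sec:5}, with Corollary~\ref{cor:6.2} delivering strictness and the ground state property once boundedness is known. The differences are worth recording. First, the paper's proof opens with ``Since $\cH$ is finite dimensional'' and never justifies it; your decomposition $G = ZS$ (with $Z$ central by $\Ad(\exp x)=e^{\ad x}=\id$ for $x \in \z(\g)$, and $S$ compact as the continuous image of the Weyl-compact simply connected group integrating $[\g,\g]$), followed by Schur's Lemma and Peter--Weyl for the compact group $S$, supplies exactly this missing step, so your write-up is in this respect more complete than the paper's. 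Second, for the identification of the relevant $G^0$-module, the paper works with the minimal eigenspace of $-i\partial\pi(\bd)$ from the outset, proves irreducibility of $\pi^0$ by the PBW factorization $U(\g_\C) = U(\g_\C^+)U(\g_\C^0)U(\g_\C^-)$ (noting Theorem~\ref{thm:c.1}(ii) as an alternative), and then verifies the hypotheses of Theorem~\ref{thm:c.3} directly; you instead go through Theorem~\ref{thm:6.2}(b) with $\cV = \cH^{\g_\C^-}$ and then use Proposition~\ref{prop:inner}(ii) to identify $\cH^{\g_\C^-}=\ker H_0$ with the minimal eigenspace, and Theorem~\ref{thm:c.1}(ii) (the paper's ``alternative'') for irreducibility of $\pi^0$. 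Both routes are sound; yours trades the PBW computation for the factoriality argument, and buys in exchange an explicit proof of the finite dimensionality that the paper takes for granted.
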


\begin{proof} Since $\cH$ is finite dimensional, 
$-i \partial\pi(\bd)$ has an eigenspace $\cH^0$ for a minimal eigenvalue. 
This space carries  an irreducible representation 
of the connected group $G^0$ (Lemma~\ref{lem:4.1}). 
This follows easily from $U(\g_\C) = U(\g_\C^+) U(\g_\C^0) U(\g_\C^-)$ 
which  shows that every $G^0$-invariant subspace of $\cH^0$ 
generates an invariant subspace of $\cH$ under $U(\g_\C^+)$ 
(alternatively, we can use Theorem~\ref{thm:c.1}(ii)).  

As $\pi$ is irreducible, $\cH^0$ generates $\cH$ under the action of $G$. 
That $(\pi, \cH)$ is holomorphically induced from  $(\pi^0,\cH^0)$ 
follows from Theorem~\ref{thm:c.3} and that it is a strict 
ground state representation from Corollary~\ref{cor:6.2}. 
\end{proof}

\begin{rem} The representation $\pi^0$ need not be one-dimensional 
if $\ft \not=\g^0$, i.e., if $\g^0$ is not abelian. 
A simple example is obtained for 
$G = \U_3(\C)$, the identical representation $(\pi,\cH)$ on $\cH = \C^3$, and 
\[ \bd = i \diag(1,0,0).\] 
Then $\cH^0 = \C e_2 + \C e_3$ is $2$-dimensional and 
\[ G^0 \cong \U(\C e_1) \times \U(\C e_2 + \C e_3) 
\cong \T \times \U_2(\C).\] 
\end{rem}

\begin{thm} \label{thm:compcase}
If $G$ is a compact connected  Lie group 
and $\alpha \: \R \to \Aut(G)$ is a continuous homomorphism, 
then the following assertions hold: 
\begin{itemize}
\item[\rm(i)] Every unitary representation of $G$ is a ground state 
representation. 
\item[\rm(ii)] A unitary representation $(\pi^0, \cH^0)$ of $G^0 = \Fix(\alpha)$ 
extends to a ground state representation of~$G$ if and only if it satisfies 
the positivity condition $C_\alpha \subeq C_{\pi^0}$.
\item[\rm(iii)] $(G,\alpha)$ has the unique extension property, 
i.e., every ground state representation of $(G,\alpha)$ is strict. 
\end{itemize}
\end{thm}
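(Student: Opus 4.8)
The plan is to reduce first to an inner action and then dispatch the three parts in turn, relying on the finite-dimensional results of Subsection~\ref{subsec:6.2}. Since $G$ is compact and connected, the identity component of the Lie group $\Aut(G)$ is the group $\Inn(G)$ of inner automorphisms, so the connected image $\alpha(\R)$ lies in $\Inn(G)$ and its infinitesimal generator $D \in \der(\g)$ lies in $\ad(\g)$. Hence $D = \ad\bd$ for some $\bd \in \g$, and since $\ad$ annihilates $\z(\g)$ we may take $\bd \in [\g,\g]$. As $\g$ is compact, $\ad\bd$ is elliptic and, $\g$ being finite-dimensional, $0$ is isolated in $\Spec(\ad\bd)$; thus the hypotheses of Corollary~\ref{cor:6.2}, Proposition~\ref{2ndstep} and Proposition~\ref{prop:likeso3} are all met, and $G^0 = Z_G(\bd)$ is compact and connected (Lemma~\ref{lem:4.1}).

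For (i), I would decompose an arbitrary unitary representation $(\pi,\cH)$ of the compact group $G$ by Peter--Weyl as a Hilbert-space direct sum of finite-dimensional irreducibles. Each irreducible summand is a strict ground state representation by Proposition~\ref{2ndstep} (or Corollary~\ref{cor:6.2}), and Lemma~\ref{lem:dirsum} upgrades this to the full sum, giving (i). For (ii), necessity of $C_\alpha \subeq C_{\pi^0}$ is exactly the $C_\alpha$-positivity Theorem~\ref{thm:2.18} (all representations in sight are smooth, since finite-dimensional vectors are smooth and their algebraic span is dense). For sufficiency I would decompose $(\pi^0, \cH^0)$ into irreducibles $\sigma_k$ of the compact connected group $G^0$; since $C_{\pi^0} = \bigcap_k C_{\sigma_k}$, the hypothesis passes to every summand, which is therefore holomorphically inducible by Proposition~\ref{prop:likeso3}. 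Inducing each $\sigma_k$ to an irreducible ground state representation $\tau_k$ of $G$ with $(\tau_k)^0 \cong \sigma_k$ and forming the direct sum (Lemma~\ref{lem:dirsum}) yields a ground state representation of $G$ whose minimal energy representation is $\bigoplus_k \sigma_k \cong \pi^0$.

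For (iii), by Proposition~\ref{prop:strictvsuniqueext} it suffices to show that every ground state representation is strict, and here lies the main obstacle: strictness is not known to be preserved under arbitrary direct sums (this is an open Problem above), so (iii) does not follow formally from Proposition~\ref{2ndstep} together with Lemma~\ref{lem:dirsum}. The resolution is the rigidity of the correspondence $[\tau] \mapsto [\tau^0]$. Writing the isotypic decomposition $\cH \cong \hat\bigoplus_{[\tau]} \cH_\tau \otimes M_\tau$ with $\pi = \bigoplus_\tau \tau \otimes \1$, one has $\cH^0 \cong \hat\bigoplus_{[\tau]} \cH_\tau^0 \otimes M_\tau$ and $\pi^0 = \bigoplus_\tau \tau^0 \otimes \1$. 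The representations $\tau^0$ are irreducible (Proposition~\ref{2ndstep}), and the map $[\tau] \mapsto [\tau^0]$ is injective because $\tau$ is holomorphically induced from $\tau^0$ and such an induction is unique (Definition~\ref{def:d.1}); equivalently $\tau^0$ has the same lowest weight as $\tau$. Consequently $\bigoplus_\tau \tau^0 \otimes \1$ is already the isotypic decomposition of $\pi^0$, so the multiplicity spaces $M_\tau$ are shared, and the restriction map $R \: \pi(G)' \to \pi^0(G^0)'$ of Proposition~\ref{prop:swallow}(ii) becomes the identity on $\prod_{[\tau]} B(M_\tau)$, hence an isomorphism of von Neumann algebras. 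Thus every ground state representation is strict, proving (iii).
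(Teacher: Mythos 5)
Your proposal is correct and follows essentially the same route as the paper: Peter--Weyl decomposition plus Lemma~\ref{lem:dirsum} and Proposition~\ref{2ndstep} for (i), Theorem~\ref{thm:2.18} and Proposition~\ref{prop:likeso3} for (ii), and for (iii) the isotypic decomposition combined with injectivity of $[\tau]\mapsto[\tau^0]$ (via uniqueness of holomorphic induction) to exclude intertwiners between distinct blocks, followed by strictness on each isotypic block. Your explicit reduction to an inner action $D=\ad \bd$ and your multiplicity-space computation of the commutants in (iii) only make explicit what the paper leaves implicit (respectively delegates to Theorem~\ref{thm:c.1}(ii)).
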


\begin{prf} (i) As every unitary representation of $G$ is a direct 
sum of irreducible ones, Lemma~\ref{lem:dirsum} shows that it suffices 
to assume that $(\pi, \cH)$ is irreducible. 
Then the assertion follows from Proposition~\ref{2ndstep}. 

\nin (ii) The necessity of  $C_\alpha \subeq C_{\pi^0}$ follows from 
Theorem~\ref{thm:2.18}. 
To see that it is also sufficient, write 
$(\pi^0,\cH^0)$ as a direct sum of irreducible representations 
$(\pi_j^0,\cH_j^0)_{j \in J}$. 
By Proposition~\ref{prop:likeso3}, the representations 
$(\pi_j^0,\cH_j^0)$ are holomorphically inducible to unitary 
representations $(\pi_j, \cH_j)$ and we can form their 
direct sum $(\pi, \cH)$, which naturally contains $(\pi^0, \cH^0)$ as a 
subrepresentation. Now the subspace $\cF \subeq \cH$ 
generated by $\pi(G) \cH^0$ carries a ground state representation 
extending~$\pi^0$.

\nin (iii) We have to show that all ground state 
representations $(\pi, \cH)$ of $G$ are strict 
(cf.\ Proposition~\ref{prop:strictvsuniqueext}). 
As $\cH^0$ is $\pi(G)'$-invariant, it decomposes according to the 
decomposition $\pi \cong \oplus_{[\rho]\in \hat  G} \pi_{[\rho]}$ into 
isotypic $G$-representations. We have already seen that the 
passage from $\pi$ to $\pi^0$ defines for irreducible 
representations an injection $\hat G \into \hat{G^0}$, 
whose image has been characterized in Proposition~\ref{prop:likeso3}. 
Hence there are no non-zero $G^0$-intertwining operators between 
different representations $\pi_{[\rho]}^0$. 
This reduces the problem to the case where 
$\pi$ is isotypic, where it follows from the fact that 
$\rho$ is holomorphically induced from $\rho^0$ 
(Proposition~\ref{2ndstep}),  and now strictness follows from 
Theorem~\ref{thm:c.1}(ii). 
\end{prf}

\begin{rem} (Classification schemes) 

\nin (a) We think of the preceding theorem as a classification scheme 
for the irreducible representations of $G$. 
To recover the classical approach, let 
$\bd \in \g$ be a regular element, i.e., 
$\ft := \g^0 := \ker(\ad \bd)$ is abelian. 
Then $T := G^0$ is a maximal torus of $G$, and the preceding 
theorem asserts that the irreducible unitary representations 
of $G$ can be parametrized in terms of those 
irreducible unitary representations of $T$ arising as ground 
state representations for $\alpha_t(g) = \exp(t\bd) g \exp(-t\bd)$. 
Since $T$ is abelian, its irreducible 
representations are characters. So Theorem~\ref{thm:compcase} 
yields an injection 
\[ \hat G \into \hat T \] 
whose range is the subset 
\[ \hat T_\bd := \{ \chi \in \hat T \: -i \cdot\dd \chi \in C_\alpha^\star\}.\]  

As $\bd$ is regular, 
\[ \Delta^{+} := \{ \alpha \in \Delta(\g_\C, \ft_\C) \:  
-i\alpha(\bd) > 0 \} \] 
is a positive system of roots. Proposition~\ref{prop:likeso3}  
then implies that 
\[ \hat T_\bd = \{ \chi\in \hat T \: 
(\forall \alpha \in \Delta^+) \lambda(\alpha^\vee) \leq 0 \} \] 
consists of all antidominant weights. We thus recover 
the Cartan--Weyl Classification of the irreducible $G$-representations  
in terms of lowest weights. 

\nin (b) The key point of the preceding theorem is that it 
does not require $\bd$ to be regular. In any case we obtain an 
injection
\[ \hat G \into \hat{G^0} \] 
and an irreducible representation $\pi^0$ of $G^0$ is contained 
in the range of this map if and only if $C_\alpha \subeq C_{\pi^0}$. 
Note that $C_\alpha \subeq \g^0$ is a closed convex invariant cone in $\g^0$, 
hence determined by the intersection with any Cartan subalgebra 
$\ft \subeq \g^0$ (\cite[Thm.~VII.3.29]{Ne00}). Let $\lambda^0 \in i\ft^*$ be an extremal 
weight of an irreducible representation $\pi^0$ of $G^0$. 
Then all weights of $\pi^0$ are contained in $\conv(\cW^0 \lambda)$. 
Therefore $C_\alpha \subeq C_{\pi^0}$ is equivalent to 
\[ -i\cdot \lambda  \in (C_\alpha \cap \ft)^\star.\] 
Hence the image of $\hat G$ in $\hat{G^0} \subeq \hat T$ consists 
of all characters $\chi$ which are lowest weights of $G^0$-representation 
and, in addition, satisfy 
\[ -i\cdot\dd\chi \in (C_\alpha \cap \ft)^\star.\] 
\end{rem}

\begin{rem}  \label{rem:countex}
The assumptions of compactness and finite dimension in 
Theorem~\ref{thm:compcase}(i),(ii) are fundamental and cannot be removed.  
This is demonstrated by the following examples.
We examine one case, disproving (i), and another one, disproving (ii). 
\begin{enumerate}
\item[(a)] The group $G = \SL_2(\R)$ is a finite dimensional Lie group, hence 
locally compact, but not compact. 
It has irreducible unitary representations $(\pi, \cH)$ 
(the principal series) for which all non-zero elements $\bd\in \g$ 
correspond to unbounded 
hermitian operators $i\partial \pi(\bd)$ which are neither bounded from 
below or above. Therefore (i) of Theorem \ref{thm:compcase} fails for 
$\alpha_t(g) := \exp(t\bd) g \exp(-t\bd)$ and every non-zero $\bd \in \g$. 
The group $G^0 = Z_G(\bd)$ is compact if and only if $\bd$ is elliptic. 
Therefore, even if we require only $G^0$ to be compact, 
instead of the whole group $G$, Theorem \ref{thm:compcase}(i) fails.
\item[(b)] We illustrate another example, where 
Theorem \ref{thm:compcase}(ii) fails. 
We consider the group $G:=\SU_{1,2}(\C)$ and $\alpha_t(g) = \exp(t\bd) g \exp(-t\bd)$ for 
$\bd := i \diag(1,-1,-1)$. Then $\alpha$ has a compact group of 
fixed points 
\[ G^0 \cong  {\rm S}(\U_1(\C) \times \U_2(\C))
= \{ (\det(g)^{-1},g) \in \U_1(\C) \times \U_2(\C) \: g \in \U_2(\C)\} 
\cong  \U_2(\C).\] 
The subspace $\ft$ of diagonal matrices in $\g$ is a compactly embedded 
Cartan subalgebra. For a linear functional on $\ft$, represented by 
\[ \lambda = (\lambda_1,\lambda_2, \lambda_3)  \in \Z^3 \] 
and satisfying $\lambda_2 >\lambda_3$, 
the condition for the existence of a corresponding 
unitary highest weight representation is 
\[ \lambda_3 - \lambda_1 \geq 1\] 
(cf.~\cite[Lemma~I.6]{NO98}). 

However, the condition $C_\alpha \subeq C_{\pi^0}$, only implies that 
$\lambda_3 \geq \lambda_1$. Therefore $\lambda = (0,1,0)$ 
defines an irreducible representation of $G^0$ on $\cH^0 = \C^2$ by 
$\pi^0(g_1, g_2) = g_2$, for which $C_\alpha \subeq C_{\pi^0}$,  
but $\pi^0$ does not extend to a ground state representation. 
Therefore the conclusion of Theorem~\ref{thm:compcase}(ii) fails. 
Similar arguments apply to all hermitian Lie algebras of real rank 
$r \geq 2$. 
\end{enumerate}
\end{rem}

\section{Compact non-Lie groups} 
\label{sec:8} 

In this short subsection we show that the results on compact 
Lie groups in Section~\ref{subsec:6.2} can be extended 
to general compact groups. 


Let $G$ be a compact group. 
Then the group $\Aut(G)$ is a topological group with respect to the 
initial topology defined by the map $\Aut(G) \to C(G,G)_{\rm co}^2, 
\phi \mapsto (\phi, \phi^{-1})$, where $C(G,G)_{co}$ denotes the 
set $C(G,G)$, endowed with the 
compact open topology. The continuity of the evaluation map 
$\ev \:  C(G,G) \times G \to G$ implies that the action of $\Aut(G)$ on $G$ 
is continuous. 

A homomorphism 
$\alpha \: \R \to \Aut(G)$ is continuous if and only if 
it is continuous as a map into $C(G,G)_{co}$, which in turn is equivalent to the 
continuity if the corresponding action map 
\[ \alpha^\wedge \: \R \times G \to G, \quad (t,g) \mapsto \alpha_t(g).\]
Here we use the Exponential Law for locally compact spaces, asserting 
that the map 
\begin{equation}
  \label{eq:expolc}
 C(X, C(Y,Z)_{co})_{co} \to C(X \times Y, Z)_{co}, \quad 
f \mapsto f^\wedge, \qquad f^\wedge(x,y) := f(x)(y) 
\end{equation}
is a homeomorphism (\cite[Prop.~A.5.17]{GN}). 

The Lie algebra $\g$ of $G$ can be identified as a topological space with 
\[ \fL(G) := \Hom(\R,G) \subeq C(\R,G)_{co}.\]

\begin{lem} \label{lem:comp-redux} 
For a compact connected group $G$, the following assertions hold:
  \begin{itemize}
  \item[\rm(i)] The adjoint action 
$\Ad \: \Aut(G) \times \fL(G) \to \fL(G), 
\Ad(\phi,\gamma) := \Ad_\phi\gamma := \phi  \circ \gamma$ is continuous. 
  \item[\rm(ii)] Every continuous $\R$-action $\alpha$ on $G$ 
defines a continuous action on $\fL(G)$ by the automorphisms 
$\Ad^{\alpha_t}\gamma := \alpha_t \circ \gamma$. 
Moreover, there exists a filter basis $(Q_j)_{j \in J}$ of $\alpha$-invariant 
closed subgroups such that $G/Q_j$ is a compact Lie group and 
$G \cong \prolim G/Q_j$.  
  \end{itemize}
\end{lem}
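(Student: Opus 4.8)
The plan is to prove (i) by the Exponential Law together with continuity of evaluation maps, to read off the first assertion of (ii) immediately, and to reduce the projective-limit assertion of (ii) to the single point that \emph{every} closed normal subgroup with Lie quotient is automatically $\alpha$-invariant.

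For (i), I would show that $F \: \Aut(G) \times \fL(G) \to C(\R,G)_{\rm co}$, $F(\phi,\gamma) := \phi \circ \gamma$, is continuous; since $\phi \circ \gamma \in \Hom(\R,G) = \fL(G)$ and $\fL(G)$ carries the subspace topology from $C(\R,G)_{\rm co}$, this yields continuity of $\Ad$. By the Exponential Law \eqref{eq:expolc} (applied with $Y = \R$ locally compact), continuity of $F$ is equivalent to continuity of $F^\wedge \: \Aut(G) \times \fL(G) \times \R \to G$, $(\phi,\gamma,t) \mapsto \phi(\gamma(t))$. I would factor $F^\wedge$ as $(\phi,\gamma,t) \mapsto (\phi,\gamma(t)) \mapsto \phi(\gamma(t))$: the inner evaluation $C(\R,G)_{\rm co} \times \R \to G$ is continuous because $\R$ is locally compact, while the outer evaluation $\Aut(G)\times G \to G$ is continuous by the continuity of $\ev \: C(G,G)_{\rm co}\times G \to G$ composed with the (by definition continuous) inclusion $\Aut(G)\to C(G,G)_{\rm co}$. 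Composing gives the claim. The first part of (ii) is then immediate: as $\alpha \: \R \to \Aut(G)$ is continuous, the map $\R \times \fL(G) \to \fL(G)$, $(t,\gamma)\mapsto \alpha_t\circ\gamma$, is the composite of $(t,\gamma)\mapsto(\alpha_t,\gamma)$ with $\Ad$, hence continuous by (i), and it is an $\R$-action because $\alpha$ is a homomorphism.

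For the projective-limit statement I would start from the structure theory of compact groups: the closed normal subgroups $N \trile G$ with $G/N$ a compact Lie group form a filter basis $\mathcal N$ (closed under finite intersections, since $G/(N_1\cap N_2)$ embeds as a closed subgroup of the Lie group $G/N_1 \times G/N_2$), with $\bigcap \mathcal N = \{e\}$ and $G \cong \prolim_{N\in\mathcal N} G/N$ (Peter--Weyl; cf.\ Appendix~\ref{app:D}). The crucial step is to show that each $N \in \mathcal N$ is already $\alpha$-invariant, so that $(Q_j)_{j\in J} := \mathcal N$ works. Here I would use that the compact Lie group $G/N$ has \emph{no small subgroups}: there is an open neighbourhood $V$ of the identity in $G/N$ containing no nontrivial subgroup. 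Setting $W := \pi_N^{-1}(V)$ for the quotient map $\pi_N \: G \to G/N$, any subgroup of $G$ contained in $W$ must lie in $N$. Since $N$ is compact (being closed in $G$) and $\alpha^\wedge \: \R \times G \to G$ is continuous with $\alpha_0 = \id_G$, the set $(\alpha^\wedge)^{-1}(W)$ is open and contains $\{0\}\times N$, so the tube lemma yields $\eps > 0$ with $\alpha_t(N) \subseteq W$ for $|t| < \eps$; as $\alpha_t(N)$ is a subgroup, $\alpha_t(N) \subseteq N$, and the same argument for $\alpha_{-t}$ gives $\alpha_t(N) = N$ for $|t| < \eps$. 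Because $\alpha$ is a homomorphism and $\R = \bigcup_n (-n\eps, n\eps)$, this forces $\alpha_t(N) = N$ for all $t \in \R$.

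The main obstacle is exactly this invariance step, and the whole difficulty is dissolved by the no-small-subgroups property of $G/N$ combined with the tube lemma (which is where compactness of $N$ and joint continuity of the action enter). Once that is in place, $\mathcal N$ itself is a filter basis of $\alpha$-invariant closed normal subgroups with compact Lie quotients and trivial intersection, so $G \cong \prolim_{j} G/Q_j$ as required.
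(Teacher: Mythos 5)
Your proof of (i) and of the first assertion of (ii) is essentially the paper's own argument (Exponential Law plus continuity of the evaluation maps), but your treatment of the projective-limit statement takes a genuinely different and more elementary route, and it is correct. The paper works at the Lie algebra level: it writes $\g \cong \fL(G)$ as a projective limit of finite dimensional Lie algebras $\g_j$, notes that the dual $\g'\cong \indlim \g_j'$ carries the finest locally convex topology, and invokes Proposition~\ref{prop:onepar-dirlim} to see that the dual one-parameter group is locally finite; each $\g_j'$ then generates a finite dimensional invariant subspace $W_j$, the annihilators $W_j^\bot$ are $\alpha$-invariant ideals of finite codimension, and one sets $Q_j := \oline{\exp(W_j^\bot)}$. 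You instead show that \emph{every} closed normal subgroup $N$ with $G/N$ a compact Lie group is automatically invariant under any continuous $\R$-action, using only that Lie groups have no small subgroups, the tube lemma applied to the jointly continuous action map $\alpha^\wedge$ over the compact set $N$, and the subgroup property of $\{t : \alpha_t(N)=N\}$; the standard filter basis from Peter--Weyl then works unchanged. Your argument is self-contained and avoids the pro-Lie algebra machinery and Appendix~\ref{app:D} entirely (your citation of that appendix for the pro-Lie structure of $G$ is misplaced, but harmless); in spirit it is closer to the paper's own ``alternative'' remark via Theorem~\ref{structureofaut}, which also concludes that all normal subgroups are $\alpha$-invariant, though you get this directly without the structure theory of $\Aut(G)$. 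What the paper's route buys in exchange is an explicit Lie-algebraic construction of the invariant ideals, reusing exactly the tools (Proposition~\ref{prop:onepar-dirlim}) that the paper needs again in Section~\ref{sec:7}.
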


\begin{prf} (i) By the Exponential Law, the continuity of this map 
is equivalent to the continuity of the map 
\[ \Ad^\wedge \: \Aut(G) \times \fL(G) \times \R \to G, \quad 
\Ad^\wedge(\phi,\gamma,t)  := \phi(\gamma(t)).\]
The continuity of this map follows from the continuity of the action of 
$\Aut(G)$ on $G$ and the continuity of the evaluation map 
$\fL(G) \times \R \to G$. 

\nin (ii) follows immediately from (i). 
As a topological Lie algebra, $\g$ is a projective limit of 
finite dimensional Lie algebras $(\g_j)_{j \in J}$ (\cite[Thm.~2.20, 
Lemma~3.20]{HM07}) 
and, accordingly, 
its topological dual $\g' \cong \indlim \g_j'$ is a directed union 
of the finite dimensional subspaces~$\g_j'$. 

We have seen above that any continuous one-parameter group 
$\alpha \: \R \to \Aut(G)$ defines a continuous action 
on $\g \cong \fL(G)$. On its topological dual, we thus obtain a 
one-parameter group $\beta \:  \R \to \GL(\g')$ with continuous 
orbit maps with respect to the weak-$*$ topology. 
As the weak-$*$ topology on $\g'$ is the finest locally 
convex topology (\cite[Thm.~A2.8]{HM07}), 
Proposition~\ref{prop:onepar-dirlim} 
shows that every $\g_j' \subeq \g'$ is contained in a 
finite dimensional $\beta$-invariant subspace
$W_j = \Spann \{ \beta_t(\g_j') \: t \in \R\}$. 
Let $q_j \: \g \to \g_j$ denote the quotient maps. Then 
$\g_j' \cong \ker(q_j)^\bot \subeq \g'$ and thus 
\[ W_j^\bot 
= \bigcap_{t \in \R} \beta_t (\g_j')^\bot 
= \bigcap_{t \in \R} \Ad_{\alpha_t}(\ker q_j) \subeq \ker q_j \] 
is an ideal of finite codimension. 
We further have $\bigcap_j W_j^\bot = \{0\}$. 
For the closed normal subgroups 
$Q_j := \oline{\exp (W_j^\bot)} \trile G$, the quotient 
$G/Q_j$ is a finite dimensional compact Lie group and 
$G \cong \prolim G/Q_j$. 
\end{prf}

By the preceding lemma, we may write a compact connected
Lie group $G$ as $\prolim G/Q_j$, 
where the normal subgroups are $\alpha$-invariant 
und each $G_j = G/Q_j$ is a compact connected Lie group that inherits 
an $\R$-action $\alpha_j$ from $\alpha$. 

Alternatively, this can be derived from the structure theory 
for the topological group $\Aut(G)$ developed in \cite[p.~264]{HM06}.
For a compact group $G$, 
the group $\Aut(G)_0 \cong G'/Z(G')$ is a compact group 
with Lie algebra $\g'$. Hence every continuous 
one-parameter group $\alpha \: \R\to \Aut(G)$ 
is obtained by the conjugation action of a one-parameter group 
$\gamma \: \R \to G'$. In particular, all normal subgroups are 
$\alpha$-invariant. All this follows from the following structure 
theorem: 

\begin{thm}{\rm(\cite[Cor.~9.87]{HM06})}\label{structureofaut}	Let $G$ be a compact connected group  with maximal pro-torus~$T$ and write 
$\Inn(G) \subeq \Aut(G)$ for the subgroup of inner automorphisms. 
Then there is a totally disconnected closed subgroup $D$ of $\Aut(G)$ contained in the normalizer 
\[ N_{\Aut(G)}(T)=\{\alpha \in  \Aut(G) \:  \ \alpha(T)=T\} \] 
of $T$ in  $\Aut(G)$ such that $$\Aut(G)= \Inn(G)  \cdot  D,  \ \ \Inn(G)  
\cap D=\{\id_G\}, \ \ \Aut(G)= \Inn(G)  \rtimes  \mathrm{Out}(G),  \ \  \mathrm{Out}(G)  \simeq  D.  $$
In particular, $\mathrm{Out}(G)$ is totally disconnected, $\Inn(G)\simeq G/Z(G)$ is compact connected semisimple center-free and isomorphic to $G'/Z(G')$.
\end{thm}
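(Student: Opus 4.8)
The plan is to reduce the statement to the structure theory of compact connected groups and their Lie algebras, and then to promote the classical splitting $\Aut(G) = \Inn(G) \rtimes \mathrm{Out}(G)$ for compact \emph{Lie} groups to the projective limit. First I would recall that the topological Lie algebra $\g = \fL(G)$ of a compact connected group splits as $\g = \z(\g) \oplus [\g,\g]$, where the commutator algebra $[\g,\g]$ is a (possibly infinite) product of simple compact Lie algebras, and that the quotient $G/Z(G)$ is compact, connected, center-free and coincides with $G'/Z(G')$. This already yields the last sentence of the theorem once one identifies $\Inn(G) \cong G/Z(G)$ via $gZ(G) \mapsto c_g$, where $c_g(x) = gxg^{-1}$: the kernel of $g \mapsto c_g$ is exactly $Z(G)$, and the image is a closed subgroup of $\Aut(G)$ since $G/Z(G)$ is compact.

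The next step is to show that $\Aut(G)_0 = \Inn(G)$, which is what forces $\mathrm{Out}(G) := \Aut(G)/\Inn(G)$ to be totally disconnected. For this I would use that every continuous automorphism of $G$ induces one of $\g$ preserving the splitting $\z(\g) \oplus [\g,\g]$ and permuting the simple factors; an automorphism lying in the identity component must fix each factor and, because the outer automorphism group of a simple compact Lie algebra is finite (the group of Dynkin-diagram symmetries), must act on $[\g,\g]$ by an inner automorphism. The abelian part contributes nothing connected, since the relevant automorphisms of the central pro-torus and of the component group are themselves totally disconnected. Combining these observations realizes $\Aut(G)_0$ by inner automorphisms, giving $\Aut(G)_0 = \Inn(G)$ and hence $\mathrm{Out}(G)$ totally disconnected.

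For the splitting itself I would fix a maximal pro-torus $T \subeq G$. Conjugacy of maximal pro-tori gives $\Aut(G) = \Inn(G) \cdot N_{\Aut(G)}(T)$, since any $\phi \in \Aut(G)$ sends $T$ to another maximal pro-torus, which a suitable inner automorphism returns to $T$. Passing to the projective description $G \cong \prolim G/Q_j$ from Lemma~\ref{lem:comp-redux}, on each compact Lie quotient one has the diagram-automorphism complement to $\Inn$, namely the automorphisms preserving $T$ together with a fixed positive system (equivalently, a base of the root system); taking $D$ to be the closed subgroup of $N_{\Aut(G)}(T)$ preserving $T$ and a compatible base at the pro-level produces a totally disconnected candidate complement. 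Finally I would verify $\Inn(G) \cap D = \{\id_G\}$: an inner automorphism $c_g \in D$ forces $g \in N_G(T)$ to induce a Weyl-group element fixing the chosen dominant chamber, whence $g \in T \cdot Z(G)$ and $c_g = \id_G$. Together with the surjectivity of $\Inn(G) \cdot D$ onto $\Aut(G)$, this gives the semidirect decomposition and $\mathrm{Out}(G) \simeq D$.

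The main obstacle I anticipate is the passage from the finite-dimensional Lie picture to the projective limit. Choosing the base of positive roots compatibly across the inverse system $(G/Q_j)$, so that the diagram-automorphism complements fit together into a single closed, totally disconnected subgroup $D$ of $\Aut(G)$, requires care: one must ensure the maximal pro-torus, its Weyl group, and the positive systems are selected coherently, and that the resulting $D$ is closed in the initial topology on $\Aut(G)$ described before Lemma~\ref{lem:comp-redux}. Controlling the (possibly infinite) permutation action on the simple factors and its interaction with the central part is where the bookkeeping is heaviest; this is precisely the content packaged in \cite[Cor.~9.87]{HM06}, which I would cite for the technical execution.
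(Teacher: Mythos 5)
First, a point of comparison: the paper does not prove this statement at all. Theorem~\ref{structureofaut} is quoted verbatim as a known result from \cite[Cor.~9.87]{HM06}, so there is no internal proof to match. Your proposal ultimately defers the hard part (coherent choices along the projective system) to the very same citation, which makes it circular as a self-contained argument: what you call "the technical execution packaged in \cite[Cor.~9.87]{HM06}" is not a technical lemma feeding into the theorem, it \emph{is} the theorem.

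Second, the one step you do carry out in detail contains a genuine error, namely the verification that $\Inn(G)\cap D=\{\id_G\}$. You take $D$ to consist of automorphisms preserving the maximal pro-torus $T$ together with a (compatible) base of the root system. But for every $t\in T$, the inner automorphism $c_t$ restricts to the identity on $T$, hence acts trivially on $\ft^*$ and fixes every root and every base pointwise (it preserves each root space $\g_\C^\alpha$, acting on it by a scalar). Thus $c_t\in D$ for all $t\in T$, whereas $c_t=\id_G$ only for $t\in Z(G)$. If your intersection claim were correct, it would force $T\subeq Z(G)$, and since $G$ is covered by the conjugates of $T$, this would make $G$ abelian; for non-abelian $G$ your $D$ therefore contains the positive-dimensional connected group $\{c_t : t \in T\}\cong T/(T\cap Z(G))$, so it is neither a complement to $\Inn(G)$ nor totally disconnected. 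The specific inference "a Weyl-group element fixing the chamber forces $g\in T\cdot Z(G)$, whence $c_g=\id_G$" fails at the last step: $g\in T$ does not give $c_g=\id_G$. The classical repair in the Lie case is to rigidify further: $D$ must preserve a \emph{pinning} (a choice of root vectors $x_\alpha$ for the simple roots), not merely the pair $(T,B)$; automorphisms preserving a pinning do meet $\Inn(G)$ trivially and realize the diagram automorphisms. Making such pinnings coherent along the inverse system $(G/Q_j)_{j\in J}$, while controlling the central pro-torus and the permutation of possibly infinitely many simple factors, is exactly where the proof in \cite[Cor.~9.87]{HM06} does its work — so this gap cannot be closed by the citation without the circularity noted above.
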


\begin{thm} \label{thm:8.13} Let $G$ be a connected compact group and 
$\alpha \: \R \to \Aut(G)$ be a  continuous one-parameter group. 
Then every continuous unitary representation 
of $G$ is a strict ground state representation for $(G,\alpha)$. 
\end{thm}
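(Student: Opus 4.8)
The plan is to reduce the statement to the case of compact connected Lie groups, where it is already contained in Theorem~\ref{thm:compcase}, by exploiting the projective limit structure furnished by Lemma~\ref{lem:comp-redux}. First I would record that, since $\R$ is connected, continuity of $\alpha$ forces $\alpha(\R) \subeq \Aut(G)_0 = \Inn(G)$ by Theorem~\ref{structureofaut}; hence $\alpha$ is implemented by conjugation with a continuous one-parameter group $\gamma \: \R \to G$ (lifting through the central quotient $G \to G/Z(G)$ at the level of $\fL(G)$), and every closed normal subgroup of $G$ is automatically $\alpha$-invariant. In particular $G^0 = \Fix(\alpha) = Z_G(\overline{\gamma(\R)})$ is the centralizer of a pro-torus, hence connected. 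Writing $G \cong \prolim G_j$ with $G_j := G/Q_j$ compact connected Lie groups, $\alpha$-invariant $Q_j$, and induced actions $\alpha_j$, and denoting by $q_j \: G \to G_j$ the quotient maps, each $(G_j,\alpha_j)$ falls under Theorem~\ref{thm:compcase}.

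Next I would treat an arbitrary continuous unitary representation $(\pi, \cH)$. Decomposing $\cH$ into finite dimensional irreducibles by Peter--Weyl, each irreducible constituent $(\pi_i,\cH_i)$ has a closed normal kernel with compact Lie quotient, hence---because $G \cong \prolim G_j$ and the target is a Lie group---factors through some $G_j$ as a representation $\bar\pi_i$. Since $\pi(\gamma(t)) = \bar\pi_i(\gamma_j(t))$ for $\gamma_j := q_j \circ \gamma$, the operators implementing $\alpha$ and $\alpha_j$ coincide, so the minimal energy subspace and the generating condition for $(\pi_i, \cH_i)$ over $(G,\alpha)$ agree with those of $(\bar\pi_i, \cH_i)$ over $(G_j,\alpha_j)$. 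Theorem~\ref{thm:compcase}(i) then makes each $\bar\pi_i$ a ground state representation, and Lemma~\ref{lem:dirsum} assembles these into the ground state property for $(\pi,\cH)$.

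For strictness I would invoke the bridge between the fixed point groups on $G$ and on the $G_j$: the key observation of Appendix~\ref{app:D} is that taking $\alpha$-fixed points commutes with the quotient maps, i.e.\ $q_j(G^0) = G_j^0$. This identifies $\pi^0(G^0)$ with $\bar\pi^0(G_j^0)$ on every constituent factoring through $G_j$, so the $G^0$-invariant structure of $\cH^0$ matches the $G_j^0$-invariant structure on the same space. I would then run the isotypic-component argument of Theorem~\ref{thm:compcase}(iii): decomposing $\pi$ into isotypic blocks, the passage $\pi \mapsto \pi^0$ yields injections $\hat{G_j} \into \hat{G_j^0}$ which, choosing for any pair of constituents a common $G_j$, globalize to an injection $\hat G \into \hat{G^0}$; hence there are no $G^0$-intertwiners between distinct isotypic components, and strictness reduces to the isotypic (thus essentially irreducible) case. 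There it is supplied by Theorem~\ref{thm:compcase}(iii) on the relevant $G_j$ together with the chain $P_0 \pi(G)'' P_0 = P_0 \bar\pi(G_j)'' P_0 = \bar\pi^0(G_j^0)'' = \pi^0(G^0)''$.

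The main obstacle I expect is the fixed point compatibility $q_j(G^0) = G_j^0$, namely that every $\alpha_j$-fixed point of $G_j$ lifts to an $\alpha$-fixed point of $G$. The inclusion $\subeq$ is formal, but surjectivity is a genuine structural fact about centralizers of pro-tori in compact connected groups and their behaviour under $\alpha$-invariant quotients; this is exactly where I would lean on Appendix~\ref{app:D}. A secondary point requiring care is the cofinality of $(Q_j)$ among closed normal subgroups with Lie quotient, which guarantees that each irreducible constituent, and each pair of them, really does factor through one of the $G_j$; and the verification that strictness---which need not survive arbitrary direct sums---is preserved here precisely because distinct isotypic blocks are $G^0$-orthogonal.
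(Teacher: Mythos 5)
Your route is the paper's own: Lemma~\ref{lem:comp-redux} gives $G \cong \prolim G_j$ with $\alpha$-invariant kernels; every continuous unitary representation decomposes into constituents factoring through the Lie quotients $G_j$ (the paper quotes \cite[Thm.~12.2]{Ne10} where you use Peter--Weyl plus cofinality of the $Q_j$ --- these are interchangeable for compact groups, and your cofinality worry is resolved by the standard filter-basis/compactness argument: a compact Lie quotient admits an identity neighborhood without nontrivial subgroups, so some $Q_j$ lies in the kernel); the ground state property then follows from Theorem~\ref{thm:compcase}(i) and Lemma~\ref{lem:dirsum}; and strictness is obtained by globalizing the injection $\hat G \into \hat{G^0}$ and rerunning the isotypic argument of Theorem~\ref{thm:compcase}(iii). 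Up to packaging this is the paper's proof, and you deserve credit for making explicit the one point the paper passes over silently, namely that one needs $q_j(G^0) = G_j^0$ (or at least density of $q_j(G^0)$ in $G_j^0$) in order to turn $G^0$-intertwiners into $G_j^0$-intertwiners.

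The gap is your justification of exactly that point. You lean on ``the key observation of Appendix~\ref{app:D}'', but Appendix~\ref{app:D} does not contain it: it consists of Proposition~\ref{prop:onepar-dirlim}, a statement about one-parameter groups acting on a vector space with the finest locally convex topology (orbits lie in finite-dimensional subspaces, the generator is locally finite). In this paper that proposition enters only through the proof of Lemma~\ref{lem:comp-redux}, applied to the induced action on the dual space $\g'$, to manufacture the $\alpha$-invariant cofinal subgroups $Q_j$ in the first place; it says nothing about fixed-point groups commuting with quotient maps. So, as written, your strictness step rests on a reference that does not supply the needed statement. The statement is nevertheless true, and the ingredients are already in your first paragraph: by the discussion around Theorem~\ref{structureofaut}, $\alpha$ is conjugation by a one-parameter group $\gamma \: \R \to G'$, so $G^0$ and $G_j^0$ are the centralizers of the pro-tori $S := \oline{\gamma(\R)}$ and $q_j(S)$. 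Averaging the adjoint action over the compact group $S$ shows that taking $S$-fixed points commutes with the surjections $\Lie(q_j) \: \g \to \g_j$, i.e.\ $\Lie(q_j)(\g^0) = \g_j^0$; moreover $G_j^0 = Z_{G_j}(\bd_j)$ is connected by Lemma~\ref{lem:4.1} (the generator $\bd_j$ is elliptic in the compact Lie algebra $\g_j$), hence generated by $\exp_{G_j}(\g_j^0) = q_j(\exp_G(\g^0)) \subeq q_j(G^0)$, while $q_j(G^0) \subeq G_j^0$ is the formal inclusion. Inserting this short lemma in place of the appeal to Appendix~\ref{app:D} closes the gap and makes your argument complete.
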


\begin{prf} 
We have $G \cong \prolim G_j$ for compact connected Lie groups~$G_j$ 
and quotient maps $q_j \: G \to G_j$. 
By Lemma~\ref{lem:comp-redux}, we may 
assume that the kernels of the quotient maps 
$q_j$ are $\alpha$-invariant, so that 
we obtain $\R$-actions $\alpha^j$ 
on the connected compact Lie groups~$G_j$. 

By \cite[Thm.~12.2]{Ne10}, every continuous unitary representation 
$(\pi, \cH)$ of $G$ is a direct sum of subrepresentations 
$(\pi_k, \cH_k)$ with $\ker q_{j_k} \subeq \ker \pi_k$ for some~$j_k$. 
Lemma~\ref{lem:dirsum} now shows that 
a unitary representation $(\pi,\cH)$ of $G$ is a ground state 
representation if and only if this holds for all 
representations of Lie quotient groups $G_j$, 
and for these the assertion follows from Theorem~\ref{thm:compcase}. 

Since any finite dimensional representation of $G$ factors 
through a representation of a Lie quotient group, 
the same argument as in the proof of  Theorem~\ref{thm:compcase} 
shows that the injectivity of the 
map $\hat G \to \hat{G^0}, [\pi] \mapsto [\pi^0]$ 
for compact connected Lie groups implies the same for 
$G$. Now we can argue as in the proof of 
Theorem~\ref{thm:compcase}(iii) to see that 
$(\pi,\cH)$ is a strict ground state representation.
\end{prf}

\section{Ground state representations of direct limits} 
\label{sec:7} 

After some general observation about direct limits, we discuss in 
this section some examples that show how 
the results on strict ground state representations 
on compact connected Lie groups (Theorem~\ref{thm:compcase}) 
can be extended to direct limits  of such groups. 

\subsection{Ground state representations} 

Here we use the unique extension property of 
compact connected Lie groups $G$ 
to extend Theorem~\ref{thm:compcase} to direct limit groups 
$G = \indlim G_n$. By Gl\"ockner's Theorem 
\cite{Gl05}, countable direct limits of directed systems 
of connected finite dimensional Lie groups always exist in the category 
of locally convex Lie groups, and the so obtained Lie group  
is also the direct limit in the category of topological groups. 
Under additional assumptions on the $G_n$, 
this provides in particular a classification of ground state representations 
in term of the corresponding representations $(\pi^0, \cH^0)$ of the 
subgroup~$G^0$. For abelian $G^0$ this leads in particular to direct integrals 
of lowest weight representations, but many other classes of representations 
appear naturally. \\

\nin {\bf Assumption:} In the following we assume that each subgroup 
$G_n\subeq G$ is $\alpha$-invariant and put 
\[ \alpha_n(t) := \alpha(t)\res_{G_n}\quad \mbox{ for } \quad t \in \R.\]
By Proposition~\ref{prop:onepar-dirlim}, the groups $G_n$ can always 
be chosen such that this is the case.
We also assume that the infinitesimal generators 
$D_n \in \der(\g_n)$ of the corresponding one-parameter subgroups 
of $\Aut(\g_n)$ are semisimple, so that 
$\g_n = D_n(\g_n) \oplus \ker(D_n)$ and (L1)-(L4) in Section~\ref{sec:3} 
are satisfied. In particular, the cone $C_{\alpha_n}$ in \eqref{eq:cd} is defined.

\begin{lem} \label{lem:dirlim-uniqueext}
If all pairs $(G_n, \alpha_n)_{n \in \N}$ have the unique extension 
property for ground state representations, then so does 
$(G,\alpha)$. 
\end{lem}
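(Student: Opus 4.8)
The plan is to verify the unique extension property for $(G,\alpha)$ directly, building the required $G$-equivalence as a coherent limit of $G_n$-equivalences. So let $(\pi_1,\cH_1)$ and $(\pi_2,\cH_2)$ be ground state representations of $(G,\alpha)$ and let $\Phi^0 \: \cH_1^0 \to \cH_2^0$ be a unitary $G^0$-equivalence. Since $G = \bigcup_n G_n$ and hence $G^0 = \bigcup_n G_n^0$ with $G_n^0 = \Fix(\alpha_n) = G_n \cap G^0$, the idea is to filter each $\cH_i$ by the increasing family of closed subspaces $\cH_{i,n} := \lbr \pi_i(G_n)\cH_i^0 \rbr$, whose union is dense in $\cH_i$ by the ground state condition $\lbr \pi_i(G)\cH_i^0\rbr = \cH_i$.

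First I would check that each $\cH_{i,n}$ carries a ground state representation of $(G_n,\alpha_n)$ with unchanged minimal energy space $\cH_i^0$. Writing $U_i = e^{itH_i}$ for the implementing one-parameter group, which lies in $\pi_i(G)''$ by Proposition~\ref{prop:swallow}, one has $U_{i,t}\pi_i(g)U_{i,-t} = \pi_i(\alpha_t(g))$, and $\alpha_t(G_n) = G_n$ by $\alpha$-invariance, so $U_i$ normalises $\pi_i(G_n)$; together with $U_{i,t}\cH_i^0 = \cH_i^0$ this shows $U_i$ preserves $\cH_{i,n}$. As $\cH_i^0 \subeq \cH_{i,n}$, the minimal energy space of the restricted representation is $\cH_{i,n}\cap \ker H_i = \cH_i^0$, and $\lbr \pi_i(G_n)\cH_i^0\rbr = \cH_{i,n}$ is exactly the generating condition of Definition~\ref{def:mini}. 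Moreover $\Phi^0$ is a fortiori a unitary $G_n^0$-equivalence $\cH_1^0 \to \cH_2^0$.

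Next, since $(G_n,\alpha_n)$ has the unique extension property, the second statement of Proposition~\ref{prop:strictvsuniqueext} produces a \emph{unique} unitary $G_n$-equivalence $\Phi_n \: \cH_{1,n} \to \cH_{2,n}$ extending $\Phi^0$. The crucial point is coherence: for $m \geq n$ the map $\Phi_m$ is also a $G_n$-equivalence, and from $\Phi_m(\pi_1(g)\xi) = \pi_2(g)\Phi^0(\xi)$ for $g \in G_n$, $\xi \in \cH_1^0$, one reads off $\Phi_m(\cH_{1,n}) = \cH_{2,n}$; hence $\Phi_m\vert_{\cH_{1,n}}$ is a unitary $G_n$-equivalence extending $\Phi^0$, so by the uniqueness clause it must equal $\Phi_n$. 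The resulting coherent family glues to an isometry on the dense subspace $\bigcup_n \cH_{1,n}$, which extends to a unitary $\Phi \: \cH_1 \to \cH_2$ with dense range; a routine density argument gives $\Phi\pi_1(g) = \pi_2(g)\Phi$ for every $g \in G$, and $\Phi\vert_{\cH_1^0} = \Phi^0$, establishing the unique extension property for $(G,\alpha)$.

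I expect the main obstacle to be the coherence step: without the \emph{uniqueness} half of the unique extension property for each $(G_n,\alpha_n)$, there would be no reason for the locally chosen intertwiners $\Phi_n$ to be restrictions of one another, and the direct-limit gluing would collapse. A secondary point requiring care is the reduction in the second paragraph, namely that restricting a ground state representation of $(G,\alpha)$ to $G_n$ on the generated subspace $\cH_{i,n}$ again yields a ground state representation with the \emph{same} minimal energy space; this rests squarely on the $\alpha$-invariance of $G_n$ and on $U_i \in \pi_i(G)''$. Once these two facts are in place the remainder is a standard inductive-limit assembly.
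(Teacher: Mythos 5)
Your proposal is correct and follows essentially the same route as the paper's proof: filter $\cH_i$ by the subspaces $\lbr \pi_i(G_n)\cH_i^0\rbr$, invoke the unique extension property of each $(G_n,\alpha_n)$ (via Proposition~\ref{prop:strictvsuniqueext}) to obtain unique $G_n$-equivalences extending $\Phi^0$, use uniqueness to force coherence of these maps, and glue them to a unitary $G$-equivalence. Your write-up merely makes explicit two points the paper leaves implicit — that $U_i$ preserves $\cH_{i,n}$ (by $\alpha$-invariance of $G_n$ and $U_i\in\pi_i(G)''$) and that the restricted representation is again a ground state representation with unchanged minimal energy space — which is a faithful elaboration rather than a different argument.
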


\begin{prf} Let $(\pi_j, \cH_j)_{j = 1,2}$ be two ground state 
representations of $G$ and 
$\phi \: \cH_1^0 \to \cH_2^0$ be a unitary $G^0$-equivalence. 
We write $\cH^n_j := \lbr \pi_j(G_n) \cH_j^0\rbr]$ for $j =1,2$. 
As each $(G_n, \alpha_n)$ has the unique extension property 
and the representation of $G_n$ on $\cH_j^n$ is ground state, 
there exist uniquely determined unitary $G_n$-equivalences 
$\Phi_n \: \cH_1^{n} \to \cH_2^{n}$ extending~$\phi$. 
For $n < m$ uniqueness implies that $\Phi_m\res_{\cH^n_1} = \Phi_n$. 
Therefore the $\Phi_n$ combine to a unitary $G$-equivalence 
$\Phi \: \cH_1 \to \cH_2$. 
\end{prf}

\begin{thm} \label{thm:5.3}
Assume that each pair $(G_n, \alpha_n)$  has the unique 
extension property and that the condition 
$C_{\alpha_n} \subeq C_{\pi^0}$ (see \eqref{eq:cd}) 
is sufficient for the extendability 
of a unitary representation $(\pi^0, \cH^0)$ of $G_n^0$ to~$G_n$.
Then every unitary representation 
$(\pi^0, \cH^0)$ of 
$G^0 = \indlim G_n^0$ satisfying $C_\alpha \subeq C_{\pi^0}$ 
extends uniquely to a strict  ground state representation of~$G$.
\end{thm}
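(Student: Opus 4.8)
The plan is to separate the two assertions. Uniqueness comes essentially for free: Lemma~\ref{lem:dirlim-uniqueext} shows that $(G,\alpha)$ inherits the unique extension property from the pairs $(G_n,\alpha_n)$, and by Proposition~\ref{prop:strictvsuniqueext} this is equivalent to every ground state representation of $(G,\alpha)$ being strict. Hence once a ground state extension of $(\pi^0,\cH^0)$ is produced, both its strictness and its uniqueness (up to a $G$-equivalence restricting to $\id_{\cH^0}$) follow at once. The whole work therefore lies in \emph{constructing} the extension.

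First I would descend the positivity hypothesis to each level. Since each $\g_n$ is $\alpha$-invariant with semisimple $D_n = D\res_{\g_n}$, the fixed point projection on $\g_n$ is the restriction of $p_0$, so the generators $p_0([Dx,x])$ of $C_{\alpha_n}$ (with $x \in \g_n$) form a subfamily of those of $C_\alpha$; thus $C_{\alpha_n} \subeq C_\alpha \cap \g_n^0$. As $-i\partial(\pi^0\res_{G_n^0})(x) = -i\partial\pi^0(x)$ for $x \in \g_n^0$, we obtain $C_{\alpha_n} \subeq C_{\pi^0}\cap\g_n^0 = C_{\pi^0\res_{G_n^0}}$. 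By the sufficiency hypothesis, each $(\pi^0\res_{G_n^0},\cH^0)$ extends to a ground state representation $(\pi_n,\cH_n)$ of $(G_n,\alpha_n)$ with minimal energy space $(\pi^0\res_{G_n^0},\cH^0)$, and we record its operator-valued positive definite function $\phi_n(g) = P_0\pi_n(g)P_0$ on $G_n$. By the unique extension property of $(G_n,\alpha_n)$, $\phi_n$ is uniquely determined by $\pi^0\res_{G_n^0}$.

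The heart of the argument is the coherence $\phi_{n+1}\res_{G_n} = \phi_n$. For this I would restrict $\pi_{n+1}$ to $G_n$ and pass to $\cK := \lbr \pi_{n+1}(G_n)\cH^0\rbr$. Because $\cH^0 = \ker H^{(n+1)}$ is pointwise fixed by the implementing group $U^{(n+1)}$ and $G_n$ is $\alpha$-invariant, $\cK$ is $U^{(n+1)}$-invariant, and $(\pi_{n+1}\res_{G_n},U^{(n+1)})\res_\cK$ is a positive energy representation of $(G_n,\alpha_n)$ with $\ker(H^{(n+1)}\res_\cK) = \cH^0$ generating $\cK$; i.e.\ a ground state representation with minimal energy space $\cH^0$. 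This is the delicate point, and the reason it works is Proposition~\ref{prop:swallow}(i): a ground state representation is automatically minimal, so the restricted implementing group is already the minimal one and its kernel does \emph{not} grow when passing from $G_{n+1}$ down to $G_n$. The unique extension property of $(G_n,\alpha_n)$ then forces this representation to be equivalent to $\pi_n$ over $\id_{\cH^0}$, whence $\phi_{n+1}\res_{G_n} = \phi_n$.

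Finally I would assemble. The coherent family defines $\phi \: G \to B(\cH^0)$ with $\phi\res_{G_n} = \phi_n$; it is positive definite (a finitary condition, tested inside some $G_n$), continuous (since $G = \indlim G_n$ as topological groups by Gl\"ockner's Theorem, continuity reduces to the levels), covariant, and $\phi\res_{G^0} = \pi^0$. The vector-valued GNS construction (Proposition~\ref{prop:gns}) yields $(\pi_\phi,\cH_\phi)$ with $\cH^0 \into \cH_\phi$, $\phi(g) = P_0\pi_\phi(g)P_0$, and $\lbr\pi_\phi(G)\cH^0\rbr = \cH_\phi$; on $\cH_\phi^{(n)} := \lbr \pi_\phi(G_n)\cH^0\rbr$ the representation is the GNS representation of $\phi_n$, hence equivalent to $\pi_n$. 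Transporting the implementing groups $U^{(n)}$ to $\tilde U^{(n)}$ on $\cH_\phi^{(n)}$ and repeating the minimality argument (now via uniqueness of the minimal implementing group in Theorem~\ref{thm:BAthm}(i)) gives $\tilde U^{(m)}\res_{\cH_\phi^{(n)}} = \tilde U^{(n)}$ for $n \le m$, so these patch to a unitary one-parameter group on the dense union $\bigcup_n \cH_\phi^{(n)}$ that extends by boundedness to a strongly continuous $U$ on $\cH_\phi$ implementing $\alpha$. The remaining obstacle is to verify $H \geq 0$ and $\ker H = \cH^0$: positivity holds because each spectral projection $P^H((-\infty,-\delta))$ vanishes on the $U$-invariant subspaces $\cH_\phi^{(n)}$ (where the generator is $\tilde H^{(n)} \geq 0$) and these are dense, while $\ker H = \cH^0$ because the $U$-fixed space meets each $\cH_\phi^{(n)}$ exactly in $\cH^0$ and $\bigcup_n \cH_\phi^{(n)}$ is dense. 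Thus $(\pi_\phi,U,\cH_\phi)$ is a ground state representation extending $(\pi^0,\cH^0)$, and its strictness and uniqueness follow from the first paragraph.
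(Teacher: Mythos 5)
Your proof is correct, and its skeleton is the paper's: restrict to the levels and verify $C_{\alpha_n} \subeq C_{\pi^0\res_{G_n^0}}$, extend each level by the sufficiency hypothesis, enforce coherence of the levelwise extensions through the unique extension property of $(G_n,\alpha_n)$, pass to the limit, and deduce strictness and uniqueness from Lemma~\ref{lem:dirlim-uniqueext} together with Proposition~\ref{prop:strictvsuniqueext}. Where you genuinely deviate is the assembly step. The paper turns the coherence into isometric $G_n$-equivariant embeddings $\phi_{mn} \: \cH_n \to \cH_m$ fixing $\cH^0$ pointwise --- these automatically intertwine the minimal implementing one-parameter groups, since ground state representations are minimal by Proposition~\ref{prop:swallow}(i) --- and then simply takes $(\pi,\cH) = \indlim\,(\pi_n,\cH_n)$, so the implementing group and the ground state property come along with the limit. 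You instead encode each $(\pi_n,\cH_n)$ by its expectation function $P_0\pi_n(\cdot)P_0$, glue these into a single $B(\cH^0)$-valued positive definite function on $G$, and recover $(\pi,\cH)$ from the vector-valued GNS construction (Proposition~\ref{prop:gns}); the two constructions are equivalent, since the GNS space of your $\phi$ is exactly the direct limit of the spaces $\cH_n$. Your route buys that positive definiteness and continuity become purely levelwise, finitary checks; it costs the a posteriori reconstruction of the implementing one-parameter group (your Borchers--Arveson patching via Theorem~\ref{thm:BAthm}(i)) and the explicit verification that $H \geq 0$ and $\ker H = \cH^0$, steps which in the paper's formulation are absorbed into ``by construction'' --- so your write-up is in fact more complete on this point. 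Two small remarks: in the coherence step, Proposition~\ref{prop:swallow}(i) is not what keeps the kernel from growing; that is just $\cK \cap \ker H^{(n+1)} = \cK \cap \cH^0 = \cH^0$ because $\cH^0 \subeq \cK$, and minimality is only needed later, in the patching step. And your one-line argument for $\ker H = \cH^0$ should add that the projections $Q_n$ onto $\cH_\phi^{(n)}$ commute with $U$ (these subspaces being $U$-invariant), so that a $U$-fixed vector $\xi$ satisfies $Q_n\xi \in \cH^0$ and $Q_n\xi \to \xi$; density of the union alone does not suffice for a closed subspace.
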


\begin{prf} Each restriction $(\pi_n^0, \cH^0)$ is a unitary 
representation of $G_n^0$ satisfying $C_{\alpha_n} \subeq C_{\pi_n^0}$, 
hence extends uniquely to a unitary representation 
$(\pi_n, \cH_n)$ of $G_n$. 

If $n < m$, then $\pi_n^0 = \pi_m^0\res_{G_n^0}$, so that 
the unique extension property implies that the 
$G_n$-representation on the subspace 
$\lbr \pi_m(G_n) \cH^0\rbr \subeq \cH_m$ is $G_n$-equivalent to 
$(\pi_n, \cH_n)$. We thus obtain a natural isometric 
$G_n$-equivariant inclusion 
\[ \phi_{mn} \: \cH_n \to \cH_m \quad \mbox{ with } \quad 
\phi_{mn}\res_{\cH^0} = \id_{\cH^0}.\] 
Here we identify $\cH^0$ with a subspace of every space~$\cH_n$. 
These embeddings also intertwine the minimal unitary 
one-parameter groups $U_{n,t}^0 := e^{itH_n}$ implementing $\alpha$ 
on $\cH_n$ and satisfying $\cH_n^0 = \cH^0 = \ker H_n$. 

For $n < m < k$, we then have 
$\phi_{km} \circ \phi_{mn} = \phi_{kn}$, so that we obtain a unitary direct limit 
representation $(\pi, \cH) = \indlim (\pi_n, \cH_n)$ 
of the direct limit group $G = \indlim G_n$. Since the restriction 
of this presentation is continuous on every $G_n$ and 
$G$ carries the direct limit topology, $\pi$ is continuous. 
By construction, it is a ground state representation of 
$(G,\alpha)$. Lemma~\ref{lem:dirlim-uniqueext} implies that 
$(G,\alpha)$ has the unique extension property, 
so that the strictness of $(\pi, \cH)$ follows  from 
Proposition~\ref{prop:strictvsuniqueext}. 
\end{prf}

\subsection{Some infinite dimensional unitary groups}

We consider the Lie group 
$G := \U_\infty(\C) = \indlim \U_n(\C)$ and 
$\alpha_t \in \Aut(G)$ determined by 
\[ \alpha_t(g) = e^{t \bd} g e^{-t\bd}, \qquad 
\bd = \diag( i \cdot d_n)_{n \in \N}.\]
Then $G^0 \subeq G$ is the subgroup preserving all eigenspaces of 
the diagonal operator $\bd$ on $\C^{(\N)}$. 

We assume that the $(d_n)_{n \in \N}$ are mutually different, so that 
\begin{equation}
  \label{eq:inftorus}
 T := G^0 \cong \T^{(\N)} 
\end{equation}
is the subgroup of diagonal matrices in $G$. 
So $G$ is a direct limit of the compact subgroups $G_n \cong \U_n(\C)$ 
and the abelian group $G^0$ is the direct limit of the tori 
$T_n := T \cap \U_n(\C) \cong \T^n$. Accordingly, 
the character group 
\[ \hat{G^0} 
\cong \prolim \hat{\T^n} 
\cong \prolim \Z^n \cong \Z^\N \] 
carries a natural totally disconnected, metrizable group topology. 
Taking the differential in $e$, we identify the character group $\hat{G^0}$ by 
with a subgroup of the dual space $(\g^0)' \cong \R^\N$. 

The cone $C_\alpha \subeq \g^0$ is easy to determine. 
Let $D := \ad \bd$. Then $C_\alpha$ is generated by the elements 
$-i [x_\lambda^*, x_\lambda]$ for $\lambda > 0$ and 
$-i Dx_\lambda = -i[\bd, x_\lambda] = \lambda x_\lambda$ 
(cf.~Remark~\ref{rem:4.7}). 
All matrices $E_{nm} \in \gl_\infty(\C)$ are eigenvectors of $D$ with 
\[ -i D E_{nm} = (d_n - d_m) E_{nm}.\] 
For $d_n > d_m$ we thus obtain the generator 
\[ -i [E_{nm}^*, E_{nm}]  =  -i [E_{mn}, E_{nm}]  = -i(E_{mm} - E_{nn}),\] so that 
\[ C_\alpha = i \cone\{ E_{nn} - E_{mm} \: d_n > d_m\}.\]
Hence $C_\alpha \subeq C_{\pi^0}$ is equivalent to 
\begin{equation}
  \label{eq:seccond}
\partial \pi^0(E_{nn} - E_{mm}) \geq 0 \quad \mbox{ for } \quad d_n > d_m.
\end{equation}

Since $G^0$ is abelian, all irreducible representations are one-dimensional. 
A character $\chi_\lambda \in \hat{G^0}$ 
with $\chi_\lambda(\exp x) = e^{2\pi \lambda(x)}$, $\lambda \in \R^\N$, 
satisfies the positivity condition \eqref{eq:seccond} if and only if 
\begin{equation}
  \label{eq:thirdcond}
\lambda_n - \lambda_m  \geq 0 \quad \mbox{ for } \quad d_n > d_m.
\end{equation}
We write $\hat T(\bd) \subeq \hat T\cong \Z^\N$ 
for the closed subgroup of all characters 
satisfying this condition. 

By Theorems~\ref{thm:5.3} and~\ref{thm:2.18},  
a representations $(\pi^0, \cH^0)$ of $G^0$ is extendable to a 
ground state representations of $G$ if and only if 
$C_\alpha  \subeq C_{\pi^0}$.   
In view of \cite[Prop.~7.9]{Ba91}, the abelian group $T = G^0$ is nuclear 
because it is Hausdorff and a countable direct limit of 
compact abelian groups (which are nuclear). 
Therefore the Bochner Theorem for nuclear groups 
(\cite[Ch.~4]{Ba91} implies that 
its unitary representations are given by Borel spectral measures on 
the  character group $\hat T$, endowed with the topology of 
pointwise convergence, which is the product Borel structure on $\hat T 
\cong \Z^\N$. Therefore $\pi^0$ extends to a ground state representation 
if and only if its spectral measure is supported by the closed subset 
$\hat T(\bd)$. In particular, 
general ground state representations for $(G,\alpha)$ 
are direct integrals of unitary highest weight representations 
$(\pi_\lambda, \cH_\lambda)$ with $\lambda$ satisfying  \eqref{eq:thirdcond}. 

\begin{rem}
The classification results for unitary highest weight 
representations in \cite{MN16} also fit into this context. 
In that paper one finds a description of all pairs $(\lambda,\bd)$, 
where $\lambda \in i\ft^*\cong \R^\N$ and $D = \ad \bd$, for which 
the unitary highest weight representation $L(\lambda)$ of 
$\gl_\infty(\C)$ with highest weight $\lambda$ carries a positive energy 
representations for $(G,\alpha)$. This amounts to the condition that 
\[ \inf \la \cW\lambda - \lambda, -i\bd \ra  > -\infty \] 
holds for the corresponding Weyl group~$\cW$ 
(the group of all finite permutations in the present example). 
This condition is equivalent to the unitary highest weight representation 
$(\pi_\lambda, \cH_\lambda)$ to be a ground state representation for 
$(G,\alpha)$, where the minimal eigenvalue of 
$-i\bd$ is $\lambda(i\bd)$. 
\end{rem}

\begin{rem} The are also weight representations of $G = \U_\infty(\C)$ 
which have no extremal weight, but which are ground state representations. 
We refer to \cite{DP99} and \cite{DMP00} for classification results for 
weight modules $V$ of $\su_\infty(\C)_\C \cong \fsl_\infty(\C)$. 
As we have seen in \cite[Ex.~V.9]{Ne04}, some weight representations 
define bounded representations with no extremal weights. In particular, there 
are weight modules $V$ whose weight set $\cP_V$ consists of all 
functionals of the form 
\[ -i \alpha = \chi_N - \chi_M, \quad \mbox{ with } \quad 
N, M \subeq \N, \ N \cap M = \eset, \ |N| = |M| < \infty.\] 
As $\cP_V = - \cP_V$, the operator defined by $-i\bd \in \R^\N$ on $V$ 
is semibounded if and only if it is bounded, and this happens if and only if 
(up to an additive constant) $\bd$ has 
finite support. Write 
$\bd = \bd_+ - \bd_-$, where $\bd_\pm$ are non-negative with finite 
support. Then 
\[-i\alpha(\bd) 
= \chi_N(\bd) - \chi_M(\bd) 
= \chi_N(\bd_+) - \chi_N(\bd_-) 
- \chi_M(\bd_+) + \chi_M(\bd_-)  \] 
is minimal if 
$\supp(\bd_+) \subeq M$ and 
$\supp(\bd_-) \subeq N$, which leads to the minimal value 
\[-i\alpha(\bd) 
=  - \chi_N(\bd_-) - \chi_M(\bd_+)  
= - \sum_{n \in \N}  |d_n|.\] 
In particular, the representation on $V$ is a 
ground state representations for $G^0 = T$. 
\end{rem}

\begin{rem} The situation does not change significantly 
if we replace the group $\U_\infty(\C)$ 
by some Banach completion, such as 
$\U_1(\cH)$ (completion in the trace norm), or the group 
$\U_2(\cH)$ (completion in the Hilbert--Schmidt norm). 
The continuous unitary representations of these groups 
are simply those continuous unitary representations of $G$ 
which extend to these completions, so that we are 
dealing with less representations for the larger groups. 
\end{rem}

\begin{ex} Similar techniques apply to the direct limit 
$G = \indlim G_n$ of the 
groups $G_n := \U_{2^n}(\C)$ with their natural embedding, given by 
the connecting maps $g \mapsto \pmat{g & 0 \\ 0 & g}$. 
Then the abelian group $T := \indlim T_n$ can be identified with a 
subgroup of the mapping group 
$C(\{0,1\}^\N,\T)$ of $\T$-valued functions on the Cantor set 
$\{ 0,1\}^\N$. Here 
$T_n$ corresponds to the subgroup $C(\{0,1\}^n,\T) \cong \T^{2^n}$. 
Let $f \: \{0,1\}^\N \to \R$ be an injective function, such as 
\[ f((a_n)) := \sum_{n = 1}^\infty a_n 3^{-n}.\] 
This function can be used to define an automorphism group 
$(\alpha_t)_{t \in\R}$ of $G$ with $G^0 = T$. 

The embedding of $G$ into the $C^*$-algebra 
$\otimes_{n \in \N} M_2(\C)$ leads to bounded representations of $G$, 
and all these are positive energy representations. Many of them 
have no ground states. 
\end{ex}

\begin{ex}
The restricted direct product groups 
\[ G = \U_2(\C)^{(\N)} = \{ (g_n)_{n \in \N} \in \U_2(\C)^\N\: 
|\{ n \in \N \: g_n \not=\1\}| < \infty \} \] 
are direct limits of the 
compact groups $G_n = \U_2(\C)^n$ are also natural examples. 
Up to equivalence,  any one-parameter automorphism group $\alpha$ of $G$ 
is acting by $\alpha_t((g_n)) = (\exp(t\bd_n) g_n \exp(-t\bd_n))$, 
where $\bd_n = i \diag(x_n, -x_n)\in \su_2(\C)$ are diagonal matrices 
($x_n \in \R$). If all $\bd_n$ are non-zero, then 
\[ T := G^0 \cong (\T^2)^{(\N)}\] 
is a nuclear abelian group (the subgroup of diagonal matrices). 
Its irreducible unitary representations are given by its character 
group 
\[ \hat T \cong (\Z^2)^\N \] 
of sequences of pairs of integers $((k_n,m_n))_{n \in \N}$. 
If all $x_n$ are positive, we find that 
\[ \hat T(\alpha) = \{ ((k_n,m_n))_{n \in \N} \: 
(\forall n \in \N)\ k_n \geq m_n\}.\] 
Let $(\pi_n, \cH_n)$ denote the irreducible representation 
of $\U_2(\C)$ with highest weight $(k_n,m_n)$ 
and ${v_n\in\cH_n}$ be a unit vector of lowest weight, which is a 
ground state vector for $\bd_n$. Then the infinite tensor product 
\[ \bigotimes_{n \in \N} (\cH_n, v_n) \] 
carries an irreducible unitary representaton of $G$ which 
a corresponding ground state representation. 
In the same way as for the group $U_\infty(\C)$, 
we see that the ground state representations of~$G$ correspond 
to those representations of $T$ whose spectral measure is supported 
on the closed subset~$\hat T(\alpha)$. 
\end{ex}

\appendix 

\section{Arveson spectral theory} 
\label{app:arv} 

In this appendix we collect the concepts relating to 
spectral subspaces for the action of a one-parameter group 
on a complete locally convex space. We follow \cite{Ar74} 
with some  generalizations. We use these concepts for 
one-paramter groups of automorphisms of infinite dimensional 
Lie algebras. 

\subsection{Arveson spectral subspaces} 

\begin{defn}\label{equicontdef}
Let $E$ and $F$ be a locally convex spaces. We denote by $\Hom(E,F)$ the space of continuous linear maps from $E$ to $F$ and write $\End(E) := \Hom(E,E)$. 
A subset $Y \subset \Hom(E,F)$ is called \textit{equicontinuous} if for every open $0$-neighborhood $U$ in $F$ there exists a $0$-neighborhood $W$ in $E$ such 
that $T(W) \subset U$ holds for every $T\in Y$.
\end{defn}

\begin{defn} \label{def:spectralsubspaces} 
(cf.\ \cite{NSZ15}, \cite{Ne13}) 
Let $V$ be a complete complex locally convex space and let $\alpha:\R\rightarrow \GL(V)$, $t\mapsto \alpha_t$ be a strongly continuous representation.

\nin (a) Assume that $\alpha$ is polynomially bounded 
(Definition \ref{def:equicont}), i.e., 
for every continuous seminorm $p$, there exists a $0$-neighborhood $U\subeq V$ 
and $N \in \N$ such that 
\[ \sup_{x \in U} \sup_{t \in \R} \frac{p(\alpha_t(x))}{1 + |t|^N} < \infty.\]
We define
\begin{equation}\label{intAfS}
\alpha_f(v) := \int_\R f(t) \alpha_t(v)\, dt 
\quad \mbox{ for }\quad v \in V, f \in \cS(\R).
\end{equation}
Then $\alpha_f\in \End(V)$ and this yields a representation of the 
convolution algebra $(\cS(\R),*)$ on $V$. We define the \emph{spectrum} of an element $v\in V$
by 
\begin{equation*}
\Spec_\alpha(v;\cS) := \{ y \in \R \: 
(\forall f \in \cS(\R))\, \alpha_f v = 0 \Rarrow \hat f(y) = 0\}
\end{equation*}
which is the hull of the annihilator ideal of $v$.
Here we use the following version of the Fourier transform: 
\begin{equation}
  \label{eq:ft}
\hat f(y) := \int_\R e^{ixy} f(x)\, dx. 
\end{equation}

For a {\bf closed} subset $E \subeq \R$, we now define 
the corresponding {\it Arveson spectral subspace} 
\[  V(E;\cS) := \{ v \in V\: \Spec_\alpha(v;\cS) \subeq E \}.\] 
We define the {\it spectrum of $(\alpha, V)$} by 
\[ \Spec_\alpha(V) := \{  y \in \R \: 
(\forall f \in \cS(\R))\, \alpha_f = 0 \Rarrow \hat f(y) = 0\}.\]
We also put 
\begin{equation}
  \label{eq:vplus}
 V^+ = \oline{\bigcup_{\delta > 0} V([\delta,\infty);\cS)} 
\quad \mbox{ and }\quad 
 V^- = \oline{\bigcup_{\delta > 0} V((-\infty, -\delta];\cS)}. 
\end{equation}
We say that the {\it splitting condition} is satisfied if 
these subspace and the subspace $V^0 := V(\{0\})$ of fixed points 
(cf.\ Lemma~\ref{lem:a.10} below) satisfy 
\begin{equation}
  \label{eq:trideco}
V = V^+ \oplus V^0 \oplus V^-. \tag{SC}
\end{equation}

\nin (b) If $\alpha$ is equicontinuous, then \eqref{intAfS} exists for all $f \in L^1(\R)$ and we can define $\Spec_{\alpha}(v)$ and $V(E)$ as above with by $\cS(\R)$ replaced by $L^1(\R)$, see \cite[Def.~A.5(b)]{Ne13}. 
This was Arveson's original context. 
\end{defn}

\begin{ex} \label{ex:a.3} 
Suppose that $(\pi,\cH)$ is a continuous unitary 
representation of a finite dimensional Lie group 
$G$ and $\bd \in \g$ is such that $\Spec(\ad \bd) \subeq i \R$. 
We claim that, on the Fr\'echet space $\cH^\infty$ of smooth vectors, 
the representation of $\R$, defined by the unitary  
one-parameter group  $\pi_\bd(t) := \pi(\exp t \bd)$ is 
polynomially bounded (cf.\ \cite{NSZ15}). The topology on $\cH^\infty$ 
is defined by the seminorms 
\[ p_D(v) := \|\dd\pi(D)v\|, \qquad D \in \cU(\g).\] 
Therefore 
\[ p_D(\pi_\bd(t)v) 
= \| \dd\pi(D) \pi(\exp t\bd) v\|= \| \dd\pi(e^{-t \ad \bd} D) v\|,\] 
and this expression has polynomial  estimates because 
$D \in \cU(\g)$ is contained in a finite dimensional 
$\ad \bd$--invariant subspace $F$ on which the one-parameter 
group $e^{i\ad \bd}$ is of polynomial growth. 

As a consequence, the operators 
$\pi_\bd(f) = \int_\R f(t)\, \pi_\bd(t)\, dt, f \in \cS(\R),$ 
leave the subspace $\cH^\infty$ invariant und 
the spectral subspaces $\cH^\infty(E;\cS)$ are defined 
for every closed subset $E \subeq \R$ in the sense 
of Definition~\ref{def:spectralsubspaces}. 
\end{ex}

\begin{lem}
  \label{lem:a.17} {\rm(\cite[Lemma~A.16]{Ne13})} 
If $V$ is a Banach space and $D := \alpha'(0)$ is a bounded operator, i.e., 
$\alpha \: \R \to \Aut(V)$ is norm continuous, then there exists a 
$\delta > 0$ such that the splitting 
condition \eqref{eq:trideco}  is satisfied with 
\[ V^+ = V([\delta, \infty)) \quad \mbox{ and } \quad 
V^- = V((-\infty, -\delta]) \] 
if and only if $0$ is isolated in the spectrum of~$D$. 
\end{lem}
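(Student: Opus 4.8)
The plan is to identify the Arveson spectral subspaces of the bounded generator $D$ with the Riesz spectral subspaces coming from the holomorphic functional calculus, and then to read off the splitting condition \eqref{eq:trideco} directly from the position of $0$ in $\Spec(D)$. Throughout I work in the equicontinuous setting of Definition~\ref{def:spectralsubspaces}(b) (Arveson's original context, in which $V(E;\cS)$ is formed with $L^1$ test functions). The first step is to record that $\Spec(D)$ is a compact subset of $i\R$: since $D$ is bounded, $\Spec(D)$ is compact, and for $\lambda \in \Spec(D)$ the spectral mapping theorem gives $e^{t\lambda} \in \Spec(\alpha_t)$, so that $e^{t\Re\lambda} = |e^{t\lambda}| \le \sup_{s}\|\alpha_s\| < \infty$ for all $t \in \R$, forcing $\Re\lambda = 0$. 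I set $\Lambda := -i\,\Spec(D) \subeq \R$, a compact set with $0 \in \Lambda$ iff $0 \in \Spec(D)$.

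The key technical step, on which everything rests, is the correspondence
\[ V(E;\cS) = P\big(\Spec(D) \cap iE\big)\,V \]
valid for every closed $E \subeq \R$ such that $\Spec(D) \cap iE$ is relatively clopen in $\Spec(D)$, where $P(\cdot)$ denotes the associated Riesz projection. I would derive this from the functional calculus identity $\alpha_f = \Phi_f(D)$ with $\Phi_f(z) := \int_\R f(t)e^{tz}\,dt$, together with the observation $\Phi_f(is) = \hat f(s)$, so that the operator $\alpha_f$ is governed by the values of $\hat f$ on $\Lambda$; specializing gives $\Spec_\alpha(V) = \Lambda$. I expect this correspondence to be the main obstacle, both because $\Phi_f$ is entire and so does not localize naively along the axis $i\R$, and because one must control the closures appearing in the definition \eqref{eq:vplus}. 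The point that removes the difficulty is that equicontinuity forces $D$ to be of scalar (semisimple) type — a genuine Jordan block at $i\mu$ would make $e^{tD}$ grow polynomially, hence unbounded — and this is exactly what makes $V(\{0\}) = V^0$ coincide with the fixed point space (Lemma~\ref{lem:a.10}) and matches the Arveson subspaces with ranges of Riesz projections.

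Granting the correspondence, both implications are short. Suppose first that $0$ is isolated in $\Spec(D)$, and choose $\delta > 0$ with $\Spec(D) \cap i(-\delta,\delta) \subeq \{0\}$. Then $\sigma_+ := \Spec(D) \cap i[\delta,\infty)$, $\sigma_0 := \Spec(D) \cap \{0\}$ and $\sigma_- := \Spec(D) \cap i(-\infty,-\delta]$ are relatively clopen and partition $\Spec(D)$, so the Riesz projections $P_+, P_0, P_-$ are bounded, commute, satisfy $P_+ + P_0 + P_- = \1$, and yield a topological direct sum $V = P_+V \oplus P_0V \oplus P_-V$. By the correspondence these summands are $V([\delta,\infty);\cS)$, $V(\{0\}) = V^0$ and $V((-\infty,-\delta];\cS)$; moreover, since $\Spec(D) \cap i(0,\delta) = \eset$, the subspace $V([\eps,\infty);\cS)$ is independent of $\eps \in (0,\delta]$, so the increasing union in \eqref{eq:vplus} already stabilizes and $V^+ = V([\delta,\infty);\cS)$, and likewise $V^- = V((-\infty,-\delta];\cS)$. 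Thus \eqref{eq:trideco} holds with the asserted $\delta$. Conversely, assume \eqref{eq:trideco} holds with $V^{\pm} = V([\pm\delta,\pm\infty);\cS)$. Writing any $v \in V$ as $v^+ + v^0 + v^-$, the three summands have $\Spec_\alpha(v^+) \subeq [\delta,\infty)$, $\Spec_\alpha(v^0) \subeq \{0\}$ and $\Spec_\alpha(v^-) \subeq (-\infty,-\delta]$ by definition of the spectral subspaces, whence $\Spec_\alpha(V) \subeq [\delta,\infty) \cup \{0\} \cup (-\infty,-\delta]$. Since $\Spec_\alpha(V) = \Lambda = -i\,\Spec(D)$, this forces $\Spec(D) \cap i(-\delta,\delta) \subeq \{0\}$, i.e. $0$ is isolated in $\Spec(D)$, completing the proof.
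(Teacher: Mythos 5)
Your reduction of the lemma to the correspondence ``Arveson spectral subspaces $=$ Riesz spectral subspaces for relatively clopen pieces of $\Spec(D)$, together with $\Spec_\alpha(V) = -i\,\Spec(D)$'' is the right skeleton (the paper itself gives no proof, citing \cite[Lemma~A.16]{Ne13}, and any proof must pass through something like this), and granting that correspondence both implications do follow exactly as you say. But the correspondence is the entire mathematical content of the lemma, you yourself flag it as ``the main obstacle'', and the one sentence you offer to discharge it is not a proof and is in fact false. The claim that equicontinuity forces $D$ to be of scalar (semisimple) type is contradicted by the example placed immediately after the lemma in the paper: $V = C([-1,1],\C)$ with $(\alpha_t h)(x) = e^{itx}h(x)$ is an isometric (hence equicontinuous), norm-continuous group whose generator is multiplication by $ix$; the only projections commuting with this operator are $0$ and $\1$ (any such projection is multiplication by a continuous idempotent function), so it admits no scalar-type spectral decomposition whatsoever, and indeed in that example $V^+ \oplus V^0 \oplus V^-$ is a proper subspace of $V$. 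The ``no Jordan block'' observation that boundedness of the group does give you concerns only generalized eigenvectors and says nothing about continuous spectrum, which is exactly where the difficulty lives. Relatedly, your functional-calculus identity $\alpha_f = \Phi_f(D)$ does not typecheck: for general $f \in L^1(\R)$ the integral $\Phi_f(z) = \int_\R f(t)e^{tz}\,dt$ converges only for $\Re z = 0$, so $\Phi_f$ is not entire and the holomorphic calculus cannot be applied to it directly; one must mediate by Gaussian-damped or compactly supported approximants.

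Concretely, two ingredients are missing, and both directions of the lemma rest on them. (i) For the central piece: when $0$ is an isolated point of $\Spec(D)$, the Riesz subspace $V_0$ attached to the clopen piece $\{0\}$ carries a bounded group whose generator $D|_{V_0}$ is quasinilpotent, and to conclude $D|_{V_0} = 0$ --- equivalently $V_0 = \Fix(\alpha) = V(\{0\})$, matching Lemma~\ref{lem:a.10} --- one needs the Gelfand--Hille theorem: a doubly power-bounded operator with spectrum $\{1\}$ equals $\1$. Your Jordan-block heuristic is a finite-dimensional shadow of this theorem, not a substitute for it. (ii) For the outer pieces, and for your converse direction (where you invoke $\Spec_\alpha(V) = -i\,\Spec(D)$ outright), one needs both inclusions between the Arveson spectrum of the group restricted to an invariant subspace and ($-i$ times) the operator spectrum of the restricted generator. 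These are proved either from the resolvent estimate $\|(z-D)^{-1}\| \le M/|\Re z|$ valid for generators of bounded groups, combined with contour/approximation arguments, or from the Wiener--Tauberian-type fact that $\alpha_f = 0$ whenever $\hat f$ vanishes on a neighborhood of $\Spec_\alpha(V)$. None of this appears in your text, so the proposal has a genuine gap at precisely the step you identified as the crux.
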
 

\begin{ex} It is easy to see examples where the splitting condition 
(SC) fails. A very typical one is the Banach space 
\[ V := C([-1,1],\C) \quad \mbox{ with } \quad 
(\alpha_t h)(x) = e^{itx}h(x).\] 
Then $\alpha_f(h)(x) = \hat f(x)h(x)$ shows that 
\[ \Spec_\alpha(h) = \supp(h) \quad \mbox{ for } \quad h \in V.\] 
This leads to 
\[ V^+ = \{ h \in V \: h\res_{[-1,0]}= 0\}, \quad 
  V^- = \{ h \in V \: h\res_{[0,1]} = 0\} \quad 
\mbox{ and } \quad 
V^0 = \{0\}.\] 
In particular all functions in $V^+ + V^0 + V^-$ vanish in $0$, 
so that this is a proper subspace of $V$. 
\end{ex}

\begin{lem} \label{lem:a.10} {\rm(\cite[p.~226]{Ar74})} For 
$\lambda \in \R$, we have 
\[ V(\{\lambda\}) = V_\lambda(\alpha) := 
\{ v \in V \: (\forall t \in \R)\, \alpha_t(v) = e^{it\lambda}v \}.\] 
\end{lem}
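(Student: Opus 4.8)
The plan is to prove the two inclusions separately: $V_\lambda(\alpha) \subeq V(\{\lambda\})$ is an explicit computation, while the reverse inclusion is where the spectral‑theoretic work lies.

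For the inclusion $V_\lambda(\alpha) \subeq V(\{\lambda\})$ I would argue directly. If $\alpha_t v = e^{it\lambda} v$ for all $t$, then for every $f \in \cS(\R)$
\[ \alpha_f v = \int_\R f(t)\alpha_t v\,dt = \Big(\int_\R f(t) e^{it\lambda}\,dt\Big) v = \hat f(\lambda)\,v, \]
so for $v \neq 0$ we have $\alpha_f v = 0 \Leftrightarrow \hat f(\lambda) = 0$. Hence $\lambda \in \Spec_\alpha(v;\cS)$ trivially, while for $y \neq \lambda$ the bijectivity of the Fourier transform on $\cS(\R)$ lets me choose $f$ with $\hat f(\lambda) = 0$ and $\hat f(y) \neq 0$, so $y \notin \Spec_\alpha(v;\cS)$. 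Thus $\Spec_\alpha(v;\cS) = \{\lambda\}$ and $v \in V(\{\lambda\})$; the case $v = 0$ is trivial since $\Spec_\alpha(0;\cS) = \eset$.

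For the reverse inclusion, let $v \in V(\{\lambda\})$. A Tauberian argument shows that $\Spec_\alpha(v;\cS) = \eset$ forces $v = 0$ (the annihilator ideal $I_v := \{ f : \alpha_f v = 0\}$ would be contained in no maximal modular ideal, hence dense, so $\alpha_{f_n}v = 0$ for a mollifier $(f_n)$, giving $v=0$); so I may assume $\Spec_\alpha(v;\cS) = \{\lambda\}$. The key device is the translation identity $\alpha_t\alpha_f = \alpha_{L_t f}$ with $(L_t f)(s) = f(s-t)$ and $\widehat{L_t f}(y) = e^{ity}\hat f(y)$. For $f \in \cS(\R)$ put $u := \alpha_f v$ and
\[ h_t := L_t f - e^{it\lambda} f, \qquad \hat h_t(y) = (e^{ity} - e^{it\lambda})\hat f(y), \]
so that $\hat h_t(\lambda) = 0$ and $\alpha_t u - e^{it\lambda}u = \alpha_{h_t} v$. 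If I can conclude $\alpha_{h_t}v = 0$, then $u = \alpha_f v \in V_\lambda(\alpha)$ for every $f$. Taking a mollifier $(f_n)$ concentrating at $0$ gives $\alpha_{f_n}v \to v$ by strong continuity (the polynomial bound controlling the tails), and since $V_\lambda(\alpha) = \bigcap_t \ker(\alpha_t - e^{it\lambda}\id)$ is closed, the limit $v$ lies in $V_\lambda(\alpha)$.

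The main obstacle is therefore the implication $\hat h_t(\lambda) = 0 \Rarrow \alpha_{h_t} v = 0$ for $v$ with $\Spec_\alpha(v;\cS) \subeq \{\lambda\}$: this is exactly the assertion that the single point $\{\lambda\}$ is a set of spectral synthesis, applied to the closed ideal $I_v$ whose hull is $\Spec_\alpha(v;\cS)$. I would first record the easier localization lemma — if $\hat f$ vanishes on a \emph{neighborhood} of $\Spec_\alpha(v;\cS)$ then $\alpha_f v = 0$ — which follows from a local‑membership/compactness argument (each point off the hull carries an annihilating $g$ with nonzero transform there, cover the compact support of $\hat f$, and invoke Wiener's theorem to get $f \in I_v$). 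The genuine difficulty is upgrading this from vanishing on a neighborhood to vanishing \emph{at the point}, i.e. the classical fact that points are synthesis sets for $L^1(\R)$ (equivalently $\cS(\R)$); this is the content of the cited result \cite[p.~226]{Ar74}, which I would invoke rather than reprove. The equicontinuous case is identical with $\cS(\R)$ replaced by $L^1(\R)$ throughout.
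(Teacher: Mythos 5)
Your proof is correct, but note that the paper contains no argument of its own for this lemma: it is quoted directly from Arveson \cite[p.~226]{Ar74}, so the relevant comparison is with Arveson's proof, and your reconstruction is essentially that argument — the easy inclusion via the computation $\alpha_f v = \hat f(\lambda)v$, and the hard inclusion via the translation identity $\alpha_t\alpha_f = \alpha_{L_t f}$, reduction to spectral synthesis of one-point sets applied to the closed annihilator ideal $I_v$, and a mollifier/closedness argument to pass from the elements $\alpha_f v$ back to $v$. Two remarks. First, your independent Tauberian treatment of the case $\Spec_\alpha(v;\cS)=\eset$ is a nice touch: the paper's Remark~\ref{rem:a.6}(c) deduces $V(\eset)=\{0\}$ from the lemma itself, so your route avoids that (harmless) circularity. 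Second, the parenthetical ``equivalently $\cS(\R)$'' is the only point that deserves more care: synthesis of points is a theorem about the Banach algebra $L^1(\R)$, and to run your argument in the polynomially bounded setting of Definition~\ref{def:spectralsubspaces}(a) one must check that the Ditkin approximants $h\ast k_n\to h$ can be chosen in $\cS(\R)$ with convergence in the weighted norms $\int_\R |f(t)|(1+|t|)^N\,dt$ that control the continuity of $f\mapsto \alpha_f v$; this is true (compare the arguments in \cite{NSZ15}), but it is not a formal equivalence. Since the lemma as cited is Arveson's equicontinuous/$L^1$ statement, this gloss does not affect the correctness of what you were asked to prove.
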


\begin{rem} \label{rem:a.6}
(a) In \cite[p.~225]{Ar74} it is shown that (in the equicontinuous case) 
\[ V(E) = \{ v \in V \: (\forall f \in L^1(G))\ 
\supp(\hat f) \cap E = \eset \Rarrow  \alpha_f(v) = 0\}, \] 
which implies in particular that $V(E)$ is a closed subspace, which is 
clearly $\alpha$-invariant. 
Note that the condition $\supp(\hat f) \cap \oline E = \eset$ 
means that $\hat f$ vanishes on a neighborhood of~$\oline E$. 

\nin (b) If $(E_j)_{j \in J}$ is a family of closed subsets of $\R$, 
then $V\big(\bigcap_{j\in J} E_j\big) = \bigcap_{j \in J} V(E_j)$ 
follows immediately from the definition. 

\nin (c) Lemma~\ref{lem:a.10} implies in particular that 
\[ V(\eset) = \{0\} \] 
because this subspace is contained in every eigenspace $V(\{\lambda\})$,
$\lambda \in \R$. 
\end{rem}

The following proposition is an important 
technical tool. 

\begin{prop}\label{prop:spec-add} {\rm(\cite[Prop.~A.14]{Ne13})} 
Assume that $(\alpha_j, V_j)$, $j =1,2,3$ are 
continuous equicontinuous representations of $\R$
 on the complete locally convex complex spaces $V_j$ and that 
$\beta \: V_1 \times V_2 \to V_3$ 
is a continuous equivariant bilinear map. 
Then we have for closed subsets $E_1, E_2 \subeq \R$ the relation 
\[ \beta(V_1(E_1) \times V_2(E_2)) \subeq V_3(\oline{E_1 + E_2}). \] 
\end{prop}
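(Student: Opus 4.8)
The plan is to reduce the statement to a single inequality between spectra and then to a vanishing statement for smeared vectors. Fix $v_1\in V_1$ and $v_2\in V_2$ and write $S_j:=\Spec_{\alpha_j}(v_j)$. By Definition~\ref{def:spectralsubspaces} the hypotheses $v_j\in V_j(E_j)$ mean exactly $S_j\subeq E_j$, and $V_3(\cdot)$ is monotone in its closed indexing set, so it suffices to prove
\[ \Spec_{\alpha_3}\big(\beta(v_1,v_2)\big)\subeq \oline{S_1+S_2}; \]
this gives $\beta(v_1,v_2)\in V_3(\oline{E_1+E_2})$ at once. Setting $w:=\beta(v_1,v_2)$ and invoking the characterization in Remark~\ref{rem:a.6}(a), the task further reduces to showing that $(\alpha_3)_f w=0$ for every $f\in L^1(\R)$ with $\supp(\hat f)\cap\oline{S_1+S_2}=\eset$.

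The basic building blocks are the bi-smeared vectors. For $g,h\in L^1(\R)$ put
\[ B(g,h):=\beta\big((\alpha_1)_g v_1,(\alpha_2)_h v_2\big)=\iint_{\R^2} g(s)h(r)\,\beta\big(\alpha_1(s)v_1,\alpha_2(r)v_2\big)\,ds\,dr, \]
where the existence of the $V_3$-valued integral and the displayed equality follow from continuity and bilinearity of $\beta$ together with equicontinuity of the $\alpha_j$ and completeness of $V_3$. The key elementary fact is that $B(g,h)=0$ whenever $\supp(\hat g)\cap S_1=\eset$ or $\supp(\hat h)\cap S_2=\eset$, since then $(\alpha_1)_g v_1=0$ or $(\alpha_2)_h v_2=0$ by the definition of $S_j$.

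The heart of the argument is to organize these blocks geometrically. The $V_3$-valued function $\Psi(s,r):=\beta(\alpha_1(s)v_1,\alpha_2(r)v_2)$ on $\R^2$ is bounded, continuous, and carries a natural translation action of $\R^2$; the previous paragraph shows that its $\R^2$-spectrum is contained in $S_1\times S_2$, first for product kernels $F=g\otimes h$ and then for a general $F\in L^1(\R^2)$ with $\supp(\hat F)\cap(S_1\times S_2)=\eset$ by approximating $F$ in $L^1(\R^2)$ by finite sums of product kernels whose individual Fourier supports avoid $S_1\times S_2$ (using a partition of unity subordinate to the open cover of $\R^2\setminus(S_1\times S_2)$ by product-type open sets). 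The equivariance $\beta(\alpha_1(t)v_1,\alpha_2(t)v_2)=\alpha_3(t)w$ says precisely that the orbit map $t\mapsto\alpha_3(t)w$ is the restriction of $\Psi$ to the diagonal $t\mapsto(t,t)$. Pulling back the $\R^2$-spectrum along this diagonal embedding, whose dual map is $(\xi,\eta)\mapsto\xi+\eta$, yields $\Spec_{\alpha_3}(w)\subeq\oline{S_1+S_2}$: an $f$ with $\supp(\hat f)$ disjoint from $\oline{S_1+S_2}$ corresponds to a diagonal kernel whose associated two-dimensional Fourier support avoids $S_1\times S_2$, so $(\alpha_3)_f w$ is realized as $\Psi$ smeared against such a kernel and therefore vanishes.

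I expect the main obstacle to be analytic rather than conceptual. The skeleton (product spectrum lands in $S_1\times S_2$, the diagonal restriction adds frequencies) is clean, but the work lies in making the passage between the one-dimensional diagonal smearing $(\alpha_3)_f w$ and the two-dimensional smearings $B(g,h)$ rigorous in the locally convex setting: one must justify a vector-valued Fubini theorem, the convergence of the $V_3$-valued integrals (where completeness of $V_3$ and equicontinuity of all three one-parameter groups are essential), and the approximation of a kernel with Fourier support off $S_1\times S_2$ by finite sums of product kernels retaining that support property. A more computational alternative, following the direct estimate in \cite[Prop.~A.14]{Ne13}, bypasses the explicit two-variable picture but faces the same analytic core; once these points are settled, combining them with the vanishing of $B(g,h)$ closes the argument.
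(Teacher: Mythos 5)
First, a point of reference: the paper itself gives no proof of this proposition --- it is quoted from \cite[Prop.~A.14]{Ne13} --- so your proposal has to be measured against the standard argument behind that citation, and your skeleton is indeed that standard argument: reduce via Remark~\ref{rem:a.6}(a) to showing $(\alpha_3)_f w = 0$ whenever $\supp(\hat f)\cap\oline{S_1+S_2}=\eset$, prove a two-variable vanishing statement for $\Psi(s,r)=\beta(\alpha_1(s)v_1,\alpha_2(r)v_2)$, and restrict to the diagonal. Two intermediate claims are stated too casually, though both are repairable. The implication $\supp(\hat g)\cap S_1=\eset\Rightarrow(\alpha_1)_g v_1=0$ is not ``the definition of $S_1$''; it is exactly the Tauberian content of Remark~\ref{rem:a.6}(a). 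And a partition of unity subordinate to product-type open sets does not by itself produce ``finite sums of product kernels'': the pieces it yields have Fourier support in harmless rectangles but are not themselves products, so you need in addition a Fej\'er-type reduction to compactly supported $\hat F$ and then, for a piece with $\supp(\hat F_i)\subeq U\times W$, $\oline U\cap S_1=\eset$, a cutoff $u\in L^1(\R)$ with $\hat u=1$ on $U$ and $\supp(\hat u)\cap S_1=\eset$, so that $F_i=(u\otimes\delta_0)*F_i$ can be approximated in $L^1(\R^2)$ by sums $\sum_m (u*g_m)\otimes h_m$ retaining the support property.

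The genuine gap is your final inference. The ``diagonal kernel'' attached to $f$ is the image measure $\mu_f$ of $f(t)\,dt$ under $t\mapsto(t,t)$; it is a singular measure on $\R^2$, not an element of $L^1(\R^2)$, so the vanishing statement you established (for $L^1(\R^2)$-kernels) does not apply to it, and ``therefore vanishes'' is unjustified as written. The missing idea is a regularization: put $F_\eps:=\mu_f*(\rho_\eps\otimes\rho_\eps)$ for an approximate identity $\rho_\eps$. Then $F_\eps\in L^1(\R^2)$, its Fourier transform $\hat F_\eps(\xi,\eta)=\hat f(\xi+\eta)\,\hat\rho_\eps(\xi)\hat\rho_\eps(\eta)$ still vanishes on a neighborhood of $S_1\times S_2$ (addition is continuous and $\hat f$ vanishes on a neighborhood of $\oline{S_1+S_2}$), and $\iint F_\eps\Psi\to\int_\R f(t)\Psi(t,t)\,dt=(\alpha_3)_f w$, because $\Psi$ is bounded and uniformly continuous with respect to every continuous seminorm of $V_3$ --- this is precisely where equicontinuity of all three actions and completeness of $V_3$ enter. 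With that inserted, your argument closes. You should also note the shortcut that makes all kernel approximation unnecessary: first reduce to compact $E_1,E_2$ by Fej\'er smearing (using $(\alpha_j)_{e_n}v_j\to v_j$, $\Spec_{\alpha_j}((\alpha_j)_{e_n}v_j)\subeq E_j\cap\supp(\hat e_n)$, joint continuity of $\beta$ and closedness of $V_3(\oline{E_1+E_2})$); then $\dist(\supp(\hat f),E_1+E_2)\geq 3\delta>0$, so there are Schwartz functions $\chi,\psi$ with $\hat\chi=1$ near $E_1$ and $\supp(\hat\chi)$ in the $\delta$-neighborhood of $E_1$, similarly for $\psi$, which forces $(\alpha_1)_\chi v_1=v_1$ and $(\alpha_2)_\psi v_2=v_2$ (apply Remark~\ref{rem:a.6}(a) to $\chi*e_n-e_n$ and let $n\to\infty$); finally the exact identity $(\alpha_3)_f\beta((\alpha_1)_\chi v_1,(\alpha_2)_\psi v_2)=\iint K\Psi$ with $\hat K(\xi,\eta)=\hat f(\xi+\eta)\hat\chi(\xi)\hat\psi(\eta)\equiv 0$ gives $K=0$, hence $(\alpha_3)_f\beta(v_1,v_2)=0$, in one stroke and with no approximation of two-dimensional kernels at all.
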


\begin{ex} \label{ex:liealg} Let $\g$ be a complete locally convex Lie algebra 
and let $x \in \g$ be such that $\ad x$ generates a continuous 
equicontinuous one-parameter 
group $\alpha \: \R \to \Aut(\g)$ of automorphisms, i.e., 
$\alpha$ is strongly differentiable with $\alpha'(0) = \ad x$. 
Applying Definition~\ref{def:spectralsubspaces}  to the $\R$-representation 
on~$\g$ defined by $\alpha$, 
we obtain for each closed subset $E \subeq \R$ a spectral 
subspace $\g_\C(E)$. 
\end{ex}

\begin{lem} \label{lem:c.1} {\rm(\cite[Lemma~4.3]{Ne13})} 
Let $U_t := e^{itA}$ be a strongly continuous 
unitary one-parameter group with infinitesimal generator 
$A = A^*$. Then the following assertions hold: 
\begin{itemize}
\item[\rm(i)]  For each $f \in L^1(\R)$, we have 
$U(f) = \hat f(A)$. 
\item[\rm(ii)]  Let $P \: \fB(\R) \to B(\cH)$ be the unique 
spectral measure with $A = P(\id_\R)$. 
Then, for every closed subset $E \subeq \R$, the 
range space $P(E)\cH$ coincides with the Arveson spectral subspace 
$\cH(E)$. 
\end{itemize}
\end{lem}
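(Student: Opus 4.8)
The plan is to establish~(i) by a Fubini argument against matrix coefficients and then to derive~(ii) from~(i) together with the characterization of $\cH(E)$ recorded in Remark~\ref{rem:a.6}(a). For~(i), I would fix $v,w \in \cH$ and consider the complex Borel measure $\mu_{w,v}(B) := \la w, P(B)v\ra$ on $\R$, whose total variation is at most $\|w\|\,\|v\|$ by the spectral theorem for $A$. Stone's theorem gives $\la w, U_t v\ra = \int_\R e^{it\lambda}\, d\mu_{w,v}(\lambda)$, whence
\[ \la w, U(f) v\ra = \int_\R f(t) \int_\R e^{it\lambda}\, d\mu_{w,v}(\lambda)\, dt. \]
Because $\int_\R \int_\R |f(t)|\, d|\mu_{w,v}|(\lambda)\, dt = \|f\|_1\,|\mu_{w,v}|(\R) < \infty$, Fubini's theorem applies, and interchanging the integrals, together with the convention~\eqref{eq:ft}, yields $\la w, U(f)v\ra = \int_\R \hat f(\lambda)\, d\mu_{w,v}(\lambda) = \la w, \hat f(A)v\ra$. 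As $v,w$ were arbitrary, $U(f) = \hat f(A)$.

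For~(ii), recall that $U$ is unitary, hence equicontinuous, so Remark~\ref{rem:a.6}(a) gives
\[ \cH(E) = \{ v \in \cH \: (\forall f \in L^1(\R))\ \supp(\hat f) \cap E = \eset \ \Rarrow\ U(f)v = 0\}. \]
The inclusion $P(E)\cH \subeq \cH(E)$ is immediate from~(i): if $v = P(E)v$ and $\supp(\hat f) \cap E = \eset$, then $\hat f$ vanishes on $E$, so $U(f)v = \hat f(A)v = \hat f(A)P(E)v = 0$, because $\hat f(A)P(E) = \int_E \hat f\, dP = 0$.

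For the reverse inclusion, let $v \in \cH(E)$ and set $O := \R \setminus E$. Given a compact set $K \subeq O$, I would pick $g \in C_c^\infty(O)$ with $g \equiv 1$ on $K$ and put $f := \cF^{-1}g \in \cS(\R) \subeq L^1(\R)$, so that $\hat f = g$ and $\supp(\hat f) \cap E = \eset$. Then $\hat f(A)v = U(f)v = 0$ by hypothesis, and multiplying by $P(K)$ gives $P(K)v = P(K)\hat f(A)v = 0$, since $P(K)\hat f(A) = \int_K g\, dP = P(K)$ as $g \equiv 1$ on $K$. The finite positive measure $B \mapsto \|P(B)v\|^2$ is inner regular, so $\|P(O)v\|^2 = 0$, the supremum of $\|P(K)v\|^2$ over compact $K \subeq O$; thus $v = P(E)v \in P(E)\cH$, completing the argument.

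The Fubini justification in~(i) and the projection-valued-measure identities in~(ii) are routine. The one step needing a little care is the construction of the separating test function in the reverse inclusion: I must realize a prescribed smooth bump $g$ (equal to $1$ on $K$ and supported away from $E$) as the Fourier transform of an $L^1$-function, which is supplied by Fourier inversion on Schwartz space, and then invoke inner regularity of the scalar spectral measures $\|P(\cdot)v\|^2$. This is the main, though entirely standard, obstacle.
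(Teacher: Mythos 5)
Your proof is correct. Note that the paper offers no proof of this lemma at all --- it is quoted verbatim from \cite[Lemma~4.3]{Ne13} --- and your argument is exactly the standard one underlying that citation: part~(i) by Fubini against the matrix coefficients $\la w, P(\cdot)v\ra$ (whose total variation is bounded by $\|w\|\,\|v\|$), and part~(ii) by combining (i) with the characterization of $\cH(E)$ in Remark~\ref{rem:a.6}(a), smooth bump functions realized as Fourier transforms of Schwartz functions, and inner regularity of the finite measures $\|P(\cdot)v\|^2$.
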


\subsection{Applications to unitary representations}


\begin{prop} \label{prop:c.3} {\rm(\cite[Prop.~4.4]{Ne13})} 
Let $(\pi, \cH)$ be a smooth unitary representation 
of the Banach--Lie group $G$, $\bd \in \g$ be elliptic, 
and $P \: \fB(\R) \to \cL(\cH)$ be the spectral measure 
of the unitary one-parameter group 
$\pi_\bd(t) := \pi(\exp_G t\bd)$. 
Then the following assertions hold: 
\begin{itemize}
\item[\rm(i)] $\cH^\infty$ carries a Fr\'echet structure for which 
$\pi_\bd$ defines a continuous equicontinuous action of 
$\R$ on $\cH^\infty$. In particular, $\cH^\infty$ is invariant under 
$\pi_\bd(f)$ for every $f \in L^1(\R)$. 
\item[\rm(ii)] For every closed subset $E \subeq \R$, we have 
$\cH^\infty(E) = (P(E)\cH) \cap \cH^\infty$ for the corresponding 
spectral subspace. 
\item[\rm(iii)]  For every open subset $E \subeq \R$, 
$(P(E) \cH) \cap \cH^\infty$ is dense in $P(E)\cH^\infty$. 
More precisely, there exists a sequence $(f_n)_{n \in \N}$ 
in $L^1(\R)$ for which $\pi _\bd(f_n) \to P(E)$ in the 
strong operator topology and $\supp(\hat f_n) \subeq E$, so that 
$\pi_\bd(f_n)v \in \cH^\infty \cap P(E)\cH^\infty$ for every $v \in \cH^\infty$. 
\end{itemize} 
\end{prop}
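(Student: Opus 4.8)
The plan is to establish the three assertions in order, using the elliptic hypothesis on $\bd$ to control the $\R$-action on the smooth vectors and then importing the spectral-measure identifications of Lemma~\ref{lem:c.1}. Throughout I write $A := -i\partial\pi(\bd)$, so that $A = A^*$ and $\pi_\bd(t) = e^{itA}$.

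For (i), I would first equip $\cH^\infty$ with the Fr\'echet topology generated by the countable family of seminorms
\[ p_n(v) := \sup\{ \|\dd\pi(x_1)\cdots\dd\pi(x_k) v\| \: k \le n,\ \|x_j\| \le 1\}, \qquad n \in \N, \]
recalling that $\cH^\infty$ is complete in this topology for a Banach--Lie group (cf.~\cite{Ne06}). The key computation is the intertwining identity $\dd\pi(x)\pi_\bd(t) = \pi_\bd(t)\dd\pi(e^{-t\ad\bd}x)$, which iterates to
\[ \dd\pi(x_1)\cdots\dd\pi(x_k)\pi_\bd(t)v = \pi_\bd(t)\,\dd\pi(e^{-t\ad\bd}x_1)\cdots\dd\pi(e^{-t\ad\bd}x_k)v. \]
Since $\pi_\bd(t)$ is unitary and, by ellipticity of $\bd$, there is an equivalent norm on $\g$ with $\|e^{-t\ad\bd}\| \le C$ uniformly in $t$ (for some $C \ge 1$), taking norms yields $p_n(\pi_\bd(t)v) \le C^n p_n(v)$ for all $t \in \R$. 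This is precisely the equicontinuity of $\{\pi_\bd(t)\}_{t\in\R}$ on $\cH^\infty$ (Definition~\ref{def:equicont}); strong continuity of $t \mapsto \pi_\bd(t)v$ into $\cH^\infty$ then follows from the smoothness of the orbit maps together with this uniform bound. With an equicontinuous strongly continuous $\R$-action on the complete space $\cH^\infty$, the integral $\pi_\bd(f)v = \int_\R f(t)\pi_\bd(t)v\,dt$ converges in $\cH^\infty$ for every $f \in L^1(\R)$, whence $\cH^\infty$ is $\pi_\bd(f)$-invariant.

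For (ii), the point is that the continuous inclusion $\cH^\infty \hookrightarrow \cH$ intertwines the two integrated actions: for $v \in \cH^\infty$ and $f \in \cS(\R)$ the element $\pi_\bd(f)v$ computed in $\cH^\infty$ maps to the corresponding element computed in $\cH$. Hence the annihilator ideals $\{f \in \cS(\R) \: \pi_\bd(f)v = 0\}$ agree whether evaluated in $\cH^\infty$ or in $\cH$, so $\Spec_{\pi_\bd}(v;\cS)$ is the same closed set in both spaces. I would then identify this Schwartz-class Arveson spectrum with the one computed from $L^1(\R)$ --- a standard comparison shows the $\cS$- and $L^1$-spectral subspaces coincide for closed $E$, since $\cS(\R)$ is dense in $L^1(\R)$ and $f \mapsto \pi_\bd(f)v$ is $L^1$-continuous --- and invoke Lemma~\ref{lem:c.1}(ii), which gives $\cH(E) = P(E)\cH$. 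Intersecting with $\cH^\infty$ yields $\cH^\infty(E) = (P(E)\cH)\cap\cH^\infty$.

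For (iii), given an open set $E$ I would choose $g_n \in C_c^\infty(E)$ with $0 \le g_n \le 1$ and $g_n \to \1_E$ pointwise, and set $f_n := \check g_n \in \cS(\R) \subseteq L^1(\R)$, so that $\hat f_n = g_n$ and $\supp(\hat f_n) \subseteq E$. By Lemma~\ref{lem:c.1}(i), $\pi_\bd(f_n) = \hat f_n(A) = g_n(A)$; since $\supp g_n$ is a compact subset of $E$, the range of $g_n(A)$ lies in $P(E)\cH$, and by part (i), $g_n(A)v \in \cH^\infty$ for $v \in \cH^\infty$, so $g_n(A)v \in (P(E)\cH)\cap\cH^\infty$. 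Finally, writing $P_v(\cdot) := \la v, P(\cdot)v\ra$, dominated convergence in the spectral integral gives $\|g_n(A)v - P(E)v\|^2 = \int_\R |g_n - \1_E|^2\, dP_v \to 0$, i.e.\ $\pi_\bd(f_n) \to P(E)$ in the strong operator topology; thus $g_n(A)v \to P(E)v$ in $\cH$, establishing the asserted density. I expect the main obstacle to be part (i): verifying completeness of $\cH^\infty$ and, above all, genuine equicontinuity of the action --- this is exactly where ellipticity of $\bd$ is indispensable, since without the uniform bound $\|e^{-t\ad\bd}\| \le C$ one obtains at best polynomial boundedness (as in Example~\ref{ex:a.3}) rather than equicontinuity; a secondary subtlety lies in (ii), in matching the Schwartz-class spectral subspaces on $\cH^\infty$ with the $L^1$-class subspaces on $\cH$ to which Lemma~\ref{lem:c.1} applies.
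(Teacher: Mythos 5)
The paper offers no proof of this proposition at all: it is imported verbatim from \cite[Prop.~4.4]{Ne13}, so the only thing to compare your argument with is the expected proof in that reference, which your proposal essentially reconstructs. Your parts (ii) and (iii) are correct and take the natural route: the continuous inclusion $\cH^\infty \into \cH$ commutes with the weak integrals defining $\pi_\bd(f)$, so the annihilator ideals and hence the Arveson spectra of a smooth vector agree whether computed in $\cH^\infty$ or in $\cH$, and Lemma~\ref{lem:c.1}(ii) then converts $\cH(E)$ into $P(E)\cH$; for (iii), the multipliers $\pi_\bd(f_n) = g_n(A)$ with $g_n \in C_c^\infty(E)$, $0 \le g_n \le 1$, $g_n \to \1_E$ pointwise, do exactly what is required, by dominated convergence against the finite measures $\la v, P(\cdot)v\ra$. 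Your identification of ellipticity as the source of the bound $p_n(\pi_\bd(t)v) \le C^n p_n(v)$, hence of equicontinuity, is also exactly the right point.

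The one step stated more loosely than it should be is the continuity claim in (i). After the intertwining identity, the difference $\dd\pi(x_1)\cdots\dd\pi(x_k)\bigl(\pi_\bd(t)v - v\bigr)$ splits into the term $\pi_\bd(t)\bigl(\dd\pi(e^{-t\ad\bd}x_1)\cdots\dd\pi(e^{-t\ad\bd}x_k) - \dd\pi(x_1)\cdots\dd\pi(x_k)\bigr)v$, which your uniform bound does control (since $\ad\bd$ is a bounded operator, $e^{-t\ad\bd} \to \id$ in operator norm, and the $k$-linear maps are bounded), and the term $(\pi_\bd(t)-\1)\,\dd\pi(x_1)\cdots\dd\pi(x_k)v$, which must tend to $0$ \emph{uniformly} over the unit balls $\|x_j\| \le 1$. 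Strong convergence $\pi_\bd(t) \to \1$ on $\cH$ gives uniformity only on compact sets, whereas the relevant set is merely bounded, so continuity of $t \mapsto \pi_\bd(t)v$ in the Fr\'echet topology does not follow from the uniform bound together with smoothness of the orbit map into $\cH$ alone. This is repaired by the standard result that for a smooth unitary representation of a Banach--Lie group the action of $G$ on $\cH^\infty$, with precisely this Fr\'echet structure, is continuous (indeed smooth), see \cite{Ne10}; with that fact cited, your argument for (i), and everything downstream of it, is complete.
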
  


\begin{thm}\label{thm:specrel} {\rm(Spectral translation formula; 
\cite[Thm.~3.1]{NSZ15})}  \label{thm:spectrans}
Assume that $\g$ is a complete locally convex Lie algebra, $\alpha:\R\to\Aut(G)$
defines a continuous action of $\R$ on $G$, and that the induced action 
on $\g_\C$ is also continuous. 
Let $\pi^\flat(g,t) = \pi(g) U_t$ be a continuous unitary representation 
of $G\rtimes_\alpha\R$ on $\cH$ and let 
$\cH^\infty$ be the space of smooth vectors 
with respect to $\pi$. 
\begin{itemize}
\item[{\upshape (i)}] If $\alpha$ is equicontinuous, then 
\[ \dd\pi(\g_\C(E)) \cH^\infty(F) \subeq \cH^\infty(\oline{E + F})
\quad 
\mbox{ for } \quad E, F \subeq \R\ \mbox{ closed}.\] 
\item[{\upshape (ii)}] If $\alpha$ is polynomially bounded, 
then 
\[ \dd\pi(\g_\C(E;\cS)) \cH^\infty(F) \subeq \cH^\infty(\oline{E + F})
\quad 
\mbox{ for } \quad E, F \subeq \R \ \mbox{ closed}.\] 
\end{itemize}
\end{thm}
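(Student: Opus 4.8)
The plan is to deduce both statements from the bilinear spectral-addition principle of Proposition~\ref{prop:spec-add}, applied to the map furnished by the derived representation. The starting point is the covariance relation $U_t \pi(g) U_{-t} = \pi(\alpha_t(g))$ built into $\pi^\flat$; differentiating it at $g = \exp(sx)$ yields, on smooth vectors, the equivariance identity
$$ U_t\, \dd\pi(x)\, U_{-t} = \dd\pi(\alpha^\g_t(x)) \qquad (x \in \g_\C,\ t \in \R), $$
where $\dd\pi$ is extended complex-linearly to $\g_\C$. Since $\dd\pi(x)$ preserves $\cH^\infty$, I would introduce the bilinear map
$$ \beta \: \g_\C \times \cH^\infty \to \cH^\infty, \qquad \beta(x,v) := \dd\pi(x)v, $$
and check, using the identity above, that it is equivariant for the diagonal $\R$-action $t\cdot(x,v) = (\alpha^\g_t(x), U_t v)$ on the source and the $U$-action on the target: indeed $\beta(\alpha^\g_t(x), U_t v) = U_t\,\dd\pi(x)\,U_{-t}\,U_t v = U_t \beta(x,v)$. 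Once $\beta$ is recognized as a continuous equivariant bilinear map intertwining the relevant $\R$-actions, the inclusion $\beta(\g_\C(E)\times \cH^\infty(F)) \subeq \cH^\infty(\oline{E+F})$ is exactly the assertion to be proved.

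The second ingredient is the regularity and continuity of the actions in the topology of $\cH^\infty$, which is defined by the seminorms $p_D(v) = \|\dd\pi(D)v\|$, $D \in \cU(\g)$, as in Example~\ref{ex:a.3}. The key computation is that, writing $\alpha^\g_t$ also for its extension to an automorphism of $\cU(\g)$,
$$ p_D(U_t v) = \|\dd\pi(D)U_t v\| = \|U_t\, \dd\pi(\alpha^\g_{-t}(D))v\| = p_{\alpha^\g_{-t}(D)}(v), $$
so that the growth of the $U$-action on $\cH^\infty$ is governed entirely by the growth of $\alpha^\g$ on $\cU(\g)$; likewise $p_D(\dd\pi(x)v) = p_{Dx}(v)$ gives continuity of $\beta$ in $v$, with joint continuity following from continuity of the $\cU(\g)$-action. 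In the setting of (i), where $\alpha$ is equicontinuous, the induced action on each filtration level of $\cU(\g)$ is equicontinuous, being built from tensor powers of an equicontinuous action; hence both $\alpha^\g$ on $\g_\C$ and $U$ on $\cH^\infty$ are equicontinuous, and Proposition~\ref{prop:spec-add} applies verbatim to $\beta$, giving (i).

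The main obstacle is part (ii): Proposition~\ref{prop:spec-add} is stated only for equicontinuous representations, whereas under mere polynomial boundedness the smearing operators $\alpha_f$ exist only for Schwartz functions $f \in \cS(\R)$, and the spectral subspaces are the $\cS$-versions $\g_\C(E;\cS)$, $\cH^\infty(F;\cS)$. I would therefore reprove the spectral-addition estimate directly in the polynomially bounded category. Concretely, for $x \in \g_\C(E;\cS)$, $v \in \cH^\infty(F;\cS)$ and $g \in \cS(\R)$ with $\supp(\hat g)\cap \oline{E+F} = \eset$, I would show that
$$ \alpha_g\big(\beta(x,v)\big) = \int_\R g(t)\,U_t\,\dd\pi(x)v\,dt = \int_\R g(t)\,\dd\pi(\alpha^\g_t(x))\,U_t v\,dt $$
vanishes, by examining the $\cH^\infty$-valued function $t \mapsto U_t\beta(x,v) = \beta(\alpha^\g_t(x), U_t v)$: its Fourier support is contained in $\oline{E+F}$ because those of $t\mapsto \alpha^\g_t(x)$ and $t\mapsto U_t v$ lie in $E$ and $F$, and the bilinearity of $\beta$ adds spectra. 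This transfers the vanishing from the annihilator ideals of $x$ and $v$ to that of $\beta(x,v)$, exactly as in the $L^1$-argument underlying Proposition~\ref{prop:spec-add}. The delicate points are the convergence of the vector-valued integrals in the locally convex topology of $\cH^\infty$ and the stability of the polynomial-growth estimates under $\beta$ — that the product of two polynomially bounded actions remains polynomially bounded, so that all $\cS$-smearings are defined and the Fourier-analytic localization of support is justified. As these are precisely the convolution-algebra facts assembled in \cite{NSZ15}, I would quote the requisite lemmas there and carry out the support-localization step in detail.
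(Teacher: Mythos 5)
First, a point of comparison: the paper does not prove Theorem~\ref{thm:specrel} at all --- it is imported verbatim from \cite[Thm.~3.1]{NSZ15}, as the theorem header indicates. So your proposal necessarily does more than the paper; the question is whether it stands on its own. Part (i) essentially does: recognizing $\beta(x,v) := \dd\pi(x)v$ as a continuous bilinear map intertwining the diagonal action $(\alpha^\g_t, U_t)$ with $U_t$ (via the differentiated covariance relation), and then invoking Proposition~\ref{prop:spec-add}, is the right reduction, and your identity $p_D(U_tv) = p_{\alpha^\g_{-t}(D)}(v)$ is the correct mechanism for transferring equicontinuity from $\g$ to $\cH^\infty$. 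You do, however, leave two nontrivial prerequisites unverified: that $\cH^\infty$ is \emph{complete} (both Proposition~\ref{prop:spec-add} and the very definition of Arveson spectral subspaces require completeness of the underlying space), and that the $n$-linear maps $\g^n \times \cH^\infty \to \cH$ are \emph{jointly} continuous (needed both for equicontinuity of $U$ on $\cH^\infty$ and for continuity of $\beta$). These are theorems of \cite{Ne10}, not formal consequences of the hypotheses, and should be cited explicitly.

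The genuine gap is in part (ii), which is the case the paper actually uses (e.g.\ in the proof of Theorem~\ref{thm:6.2}). There your argument comes down to the sentence that the Fourier support of $t \mapsto \beta(\alpha^\g_t x, U_t v)$ lies in $\oline{E+F}$ because ``bilinearity of $\beta$ adds spectra'' --- but spectral addition in the merely polynomially bounded, Schwartz-smeared category \emph{is} the statement to be proven. Proposition~\ref{prop:spec-add} is only available for equicontinuous actions, and you cannot transplant its ``underlying $L^1$-argument'', because for a polynomially bounded action the operators $\alpha_f$ do not even exist for general $f \in L^1(\R)$. Concretely, two steps are missing: (1) the $\cS$-analogue of Remark~\ref{rem:a.6}(a), i.e.\ that $v \in V(E;\cS)$ forces $\alpha_f v = 0$ for every $f \in \cS(\R)$ with $\hat f$ vanishing on a neighborhood of $E$ --- this Tauberian-type fact is what makes ``support localization'' meaningful, and its classical proof lives in the ideal theory of $(L^1,*)$, so it must be re-derived for $(\cS,*)$; and (2) the actual two-variable argument that the diagonal restriction of $(s,t) \mapsto \beta(\alpha^\g_s x, U_t v)$ has spectrum in $\oline{E+F}$, with all vector-valued integrals converging in the $\cH^\infty$-topology under only polynomial bounds. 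Deferring both to ``the requisite lemmas'' of \cite{NSZ15} is circular in this setting, since the theorem being proved is \cite[Thm.~3.1]{NSZ15}: those lemmas plus your outline \emph{are} the proof, and they are exactly the part you have not supplied.
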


\section{Positive definite kernels} 
\label{app:B}

Let $X$ be a set and $\cE$ a Hilbert space. 
A Hilbert subspace $\cH \subeq \cE^X$ of the linear space of 
$\cE$-valued functions on $X$ is said to have continuous 
point evaluations if the linear maps 
\[ K_x \: \cH \to \cE, \quad f \mapsto f(x) \] 
are continuous. Then the function 
\[ K \: X \times X \to B(\cE), \quad K(x,y) := K_x K_y^* \] 
is called its {\it reproducing kernel}. 
As the kernel $K$ determines the subspace $\cH\subeq \cE^X$ 
and its scalar product uniquely, we  write 
$\cH_K\subeq \cE^X$ for the Hilbert space determined by $K$ 
and $\cH_K^0 \subeq \cH_K$ for the subspace 
spanned by the functions $K_y^* v$, $y \in X$, $v \in \cE$. 
A kernel function $K \: X \times X \to B(\cE)$ is the reproducing kernel 
of some Hilbert space if and only if it is {\it positive definite} in the sense 
that, for any finite collection $(x_1, v_1), \ldots, (x_n,v_n) \in 
X \times \cE$, the matrix 
$(\la v_j, K(x_j,x_k) v_k)_{1 \leq j,k\leq n}$ is positive semidefinite 
(cf.~\cite[Ch.~1]{Ne00}). 

If $X = G$ is a group and the kernel $K \: G \times G \to \cE$ is invariant 
under right translations, then it is of the form 
$K(g,h) = \phi(gh^{-1})$ for a function 
$\phi \: G \to B(\cE)$. Such functions are called 
{\it positive definite} if the kernel $K$ is positive definite.

The following proposition generalizes the well-known GNS construction 
to operator-valued functions on groups (cf.~\cite{NO18}). 

\begin{prop}{\rm(GNS-construction for groups)} \label{prop:gns} 
Let $\cE$ be a Hilbert space. 
  \begin{enumerate}
  \item[\rm(a)] Let $\phi \: G \to B(\cE)$ be a 
positive definite function with $\phi(e) = \1_\cE$. Then 
$(U^\phi_g f)(h) := f(hg)$ defines a unitary representation of $G$ on the 
reproducing kernel Hilbert space $\cH_\phi := \cH_K \subeq \cE^G$ with 
kernel $K(g,h) = \phi(gh^{-1})$ and the range of the isometric inclusion 
$K_e^* \: \cE \to \cH$ is a $G$-cyclic subspace, i.e., $U_G^\phi K_e^*\cE$ 
spans a dense subspace of $\cH$. We then have 
\begin{equation}
  \label{eq:gns-a}
\phi(g) = K_e U^\phi_g K_e^* 
\quad \mbox{ for } \quad g \in G. 
\end{equation}
  \item[\rm(b)]
If, conversely, $(U, \cH)$ is a unitary representation of $G$ and 
$j \:  \cE \to \cH$ is an isometric inclusion,  then 
\[ \phi \: G \to B(\cE), \quad \phi(g) := j^*  U_g j \] 
is a $B(\cE)$-valued positive definite function. 
If, in addition, $j(\cE)$ is cyclic, then 
$(U, \cH)$ is unitarily equivalent to $(U^\phi, \cH_\phi)$. 
  \end{enumerate}
\end{prop}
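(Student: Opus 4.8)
The plan is to run the operator-valued GNS machinery, using throughout the fact (recorded just before the statement) that a $B(\cE)$-valued kernel is a reproducing kernel precisely when it is positive definite. For part~(a), the hypothesis that $\phi$ is positive definite says exactly that $K(g,h) := \phi(gh^{-1})$ is a positive definite kernel, so the reproducing kernel Hilbert space $\cH_\phi = \cH_K \subeq \cE^G$ exists, the functions $K_g^* v$ ($g \in G$, $v \in \cE$) span a dense subspace $\cH_K^0$, and the reproducing identity $\la f, K_g^* v\ra = \la f(g), v\ra$ holds. The decisive structural fact is the right-invariance $K(hg,h'g) = \phi(hg(h'g)^{-1}) = \phi(hh'^{-1}) = K(h,h')$, which is exactly what is needed to turn right translation into a unitary.

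First I would define, for fixed $g$, a map on the dense span $\cH_K^0$ by $K_x^* v \mapsto K_{xg^{-1}}^* v$ and check that it preserves inner products: $\la K_{xg^{-1}}^* v, K_{yg^{-1}}^* w\ra = \la K(yg^{-1},xg^{-1})v,w\ra = \la K(y,x)v,w\ra = \la K_x^* v, K_y^* w\ra$ by right-invariance. Hence it extends to an isometry $W_g$, whose range contains every $K_y^* v$ and so is a unitary. Identifying $W_g$ with $U^\phi_g$ is then a one-line reproducing-kernel computation: $\la W_g f, K_h^* v\ra = \la f, W_g^{-1} K_h^* v\ra = \la f, K_{hg}^* v\ra = \la f(hg), v\ra$, so $(W_g f)(h) = f(hg) = (U^\phi_g f)(h)$ (which also shows $U^\phi_g f$ indeed lies in $\cH_\phi$). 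The homomorphism property is immediate from the defining formula, and the remaining claims are short direct calculations using $\phi(e) = \1_\cE$: isometry of $K_e^*$ from $\la K_e^* v, K_e^* w\ra = \la \phi(e)v,w\ra = \la v,w\ra$; cyclicity from $U^\phi_g K_e^* v = K_{g^{-1}}^* v$, so that the $G$-orbit of $K_e^*\cE$ already contains the spanning set; and \eqref{eq:gns-a} from $K_e U^\phi_g K_e^* v = K(e,g^{-1})v = \phi(g)v$.

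For part~(b), positive definiteness of $\phi(g) = j^* U_g j$ follows by writing $K(x,y) = \phi(xy^{-1}) = j^* U_x U_y^* j = A_x^* A_y$ with $A_x := U_x^* j$, whence $\sum_{k,l}\la v_k, K(x_k,x_l)v_l\ra = \big\|\sum_l A_{x_l} v_l\big\|^2 \geq 0$. I would also record the identity $\phi(g)^* = j^* U_g^* j = \phi(g^{-1})$, which is the bookkeeping relation used below. To establish the equivalence when $j(\cE)$ is cyclic, I would define $T \: \cH \to \cE^G$ by $(T\xi)(g) := j^* U_g \xi$ and compute on the dense family $\xi = U_h j v$ that $T(U_h j v) = K_{h^{-1}}^* v$, so that $T$ carries a dense set onto the dense spanning set of $\cH_\phi$. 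That $T$ preserves inner products on this family reduces, via $\phi(g)^* = \phi(g^{-1})$, to $\la U_h j v, U_{h'} j w\ra_\cH = \la \phi(h'^{-1}h)v,w\ra = \la K_{h^{-1}}^* v, K_{h'^{-1}}^* w\ra_{\cH_\phi}$; hence $T$ extends to a unitary with dense range, and the intertwining $T U_h = U^\phi_h T$ is the direct check $(TU_h\xi)(g) = j^* U_{gh}\xi = (U^\phi_h T\xi)(g)$.

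I expect the only genuine friction to be the bookkeeping of left/right conventions: the direction of translation in $U^\phi$, the placement of inverses in $K(g,h) = \phi(gh^{-1})$, and the interplay of $U_g^* = U_{g^{-1}}$ with $\phi(g)^* = \phi(g^{-1})$. Pinning these down consistently is precisely what makes the two isometry computations close; once that is settled, every remaining step is a short reproducing-kernel manipulation.
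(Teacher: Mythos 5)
Your proof is correct, and it is essentially the same argument as in the literature the paper points to: the paper itself states Proposition~\ref{prop:gns} without proof, citing \cite{NO18}, and your construction — right-invariance of the kernel $K(g,h)=\phi(gh^{-1})$, unitarity of right translation on the dense span of the $K_x^*v$, the reproducing-kernel identification of $W_g$ with $U^\phi_g$, and in (b) the intertwiner $(T\xi)(g)=j^*U_g\xi$ checked isometric on the cyclic span via $\phi(g)^*=\phi(g^{-1})$ — is exactly that standard GNS argument, with the left/right and inverse conventions handled consistently. The only steps left implicit (well-definedness of $W_g$ on linear combinations, and that the continuous extension of $T$ still satisfies the pointwise formula, via continuity of the evaluations $K_g$) are routine and follow from the inner-product preservation you establish.
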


\section{Bosonic Fock space} 
\label{sec:7.1}

We start with the construction of the von Neumann algebras on the bosonic Fock space. 
For $v_1, \ldots, v_n \in \cH$, we define 
\[  v_1 \cdots v_n := v_1 \vee \cdots \vee v_n := 
\frac{1}{\sqrt{n!}} \sum_{\sigma \in S_n} v_{\sigma(1)} \otimes \cdots \otimes v_{\sigma(n)} \] 
and $v^n := v^{\vee n}$, so that 
\begin{eqnarray} 
  \label{eq:symprod}
\la v_1 \vee \cdots \vee v_n, w_1 \vee \cdots \vee w_n \ra 
&=&   \sum_{\sigma \in S_n} \la v_{\sigma(1)}, w_1 \ra 
\cdots \la v_{\sigma(n)}, w_m \ra. 
\end{eqnarray} 
For every $v \in \cH$, the series 
$\Exp(v) := \sum_{n = 0}^\infty \frac{1}{n!} v^n$ 
defines an element in $\cF_+(\cH)$ and the scalar product of two 
such elements is given by 
\[ \la \Exp(v), \Exp(w) \ra 
= \sum_{n = 0}^\infty \frac{n!}{(n!)^2} \la v, w\ra^n = e^{\la v, w \ra}.\] 
These elements span a dense subspace of $\cF_+(\cH)$, and therefore 
we have for each $x \in \cH$ a unitary operator on $\cF_+(\cH)$ determined by the 
relation 
\begin{equation}
  \label{eq:Ux-ops}
 U_x \Exp(v) = e^{ -\la x, v\ra - \frac{\|x\|^2}{2}} \Exp(v+x) \quad 
\mbox{ for } \quad x,v \in \cH.   
\end{equation}
A direct calculation then shows that 
\begin{equation}
  \label{eq:comm-rel-U}
U_x U_y = e^{-i\Im \la x, y \ra} U_{x+y} \quad \mbox{ for  } \quad 
x, y \in \cH.
\end{equation}
To obtain a unitary representation, we have to replace the 
additive group of $\cH$ by the {\it Heisenberg group}
\[ \Heis(\cH) := \T\times \cH \quad \mbox{ with } \quad 
(z,v)(z',v') := (zz' e^{-i\Im \la v,v' \ra}, v + v').  \] 
For this group, we obtain with \eqref{eq:comm-rel-U} a unitary representation 
\[ U \: \Heis(\cH) \to \U(\cF_+(\cH)) \quad \mbox{ by } \quad U_{(z,v)} := z U_v.\] 
In this physics literature, all this is expressed in terms of the 
so-called {\it Weyl operators} 
\[ W(v) := U_{iv/\sqrt{2}}, \qquad v \in \cH \] 
satisfying the {\it Weyl relations} 
\begin{equation}
  \label{eq:weyl}
  W(v) W(w) = e^{-i \Im \la v,w \ra/2} W(v+w), \qquad v,w \in \cH.
\end{equation}

We also note that the {\it vacuum vector} $\Omega := \Exp(0) \in \cF_+(\cH)$ 
satisfies 
\[ \la \Omega, U_x \Omega \ra 
= \la \Omega, e^{- \frac{\|x\|^2}{2}} \Exp(x)\ra 
= e^{- \frac{\|x\|^2}{2}}.\] 

\begin{rem} \label{rem:4.5x}
 If $(V,\sigma)$ is a symplectic vector space, 
then the corresponding 
{\it Weyl algebra} $C^*(V,\sigma)$ is the universal 
$C^*$-algebra with unitary generators $(W(v))_{v \in V}$, and the relations 
\[ W(v_1)W(v_2) = e^{i\frac{\sigma(v_1, v_2)}{2}} W(v_1 + v_2)\] 
(\cite[Thm.~5.2.8]{BR96}). 

The representations of this $C^*$-algebra are in one-to-one correspondence 
with the unitary representations $(\pi, \cH)$ of $\Heis(V,\sigma)$ satisfying 
$\pi(z,0) = z\1$ for $z \in \T$. An injective representation 
of $\Heis(V,\sigma)$ is obtained on $\ell^2(V)$ by 
\[ W(x) \delta_y = e^{-\frac{i}{2} \sigma(x,y)}\delta_y, \quad x,y  \in V, \quad 
\mbox{ where } \quad \delta_y(z) = \delta_{yz}\] 
is the canonical orthonormal basis in $\ell^2(V)$. 
As the $C^*$-algebra $C^*(V,\sigma)$ is simple 
by \cite[Thm.~5.2.8]{BR96}, all its representations are injective, 
and therefore the corresponding representations $\pi$ of $\Heis(V,\sigma)$ 
are injective as well.
\end{rem}

\section{Spaces with the 
finest locally convex topology} 
\label{app:D} 

Let $V$ be a countably dimensional real vector space, carrying the 
finest locally convex topology. This is the locally convex topology 
for which all seminorms $p \: V \to \R_+$ are continuous. 
So a net $(x_j)_{j \in J}$ converges in $V$ to $x$ if and only if, 
for every seminorm $p$ on $V$, we have $p(x_j -x) \to 0$. 
From any basis of $V$, we obtain an increasing sequence $(V_n)_{n \in \N}$ 
of finite dimensional linear subspaces for which $V = \bigcup_n V_n$, 
and the topology on $V$ is the direct limit topology with respect to the 
subspaces~$V_n$ (\cite[Ex.~B.13.3]{GN}), 
i.e., a subset $O \subeq V$ is open if and only if 
$O \cap V_n$ is an open subset of $V_n$ for every $n \in \N$. 
We refer to the survey paper \cite{GGH10} for a discussion of more general 
final topologies on topological groups. 

\begin{prop} \label{prop:onepar-dirlim} 
Let $V$ be a real vector space, endowed with the finest 
locally convex topology, i.e., all seminorms on $V$ are continuous. 
If $\alpha \: \R \to \GL(V)$ is a homomorphism defining an  
action of $\R$ on $V$ with continuous orbit maps, then 
\begin{itemize}
\item[\rm(i)] all $\alpha$-orbits lie in finite dimensional subspaces, and  
\item[\rm(ii)] there exists a locally finite endomorphism $D$ such that 
$\alpha_t = e^{tD}$ for all $t\in \R$. 
  \begin{footnote}{An endomorphism $D \in \End(V)$ is called {\it locally finite} 
if each $v \in V$ is contained in a finite dimensional $D$-invariant subspace. 
Then $e^{D}v = \sum_{k = 0}^\infty \frac{1}{k!} D^k v$ is defined 
and $(e^{tD})_{t \in \R}$ defines a one-parameter group of $\GL(V)$.}    
  \end{footnote}
\end{itemize}
\end{prop}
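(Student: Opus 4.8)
The plan is to prove (i) first and then obtain (ii) by restricting $\alpha$ to the finite-dimensional invariant subspaces constructed in (i). Two facts would be used throughout: in the finest locally convex topology every seminorm on $V$ is continuous, and consequently every bounded — hence every compact — subset of $V$ is contained in a finite-dimensional subspace. The latter I would prove by contradiction: if a bounded set spanned an infinite-dimensional subspace one could extract a linearly independent sequence $(b_n)$ in it and define a seminorm $p$ with $p(b_n)=n$, which is continuous yet unbounded on the set. Since each orbit map $c_v\:\R\to V$, $c_v(t):=\alpha_t(v)$, is continuous, it follows that for every bounded interval $I$ the subspace $A(I):=\Spann(c_v(I))$ is finite-dimensional.

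The observation driving (i) is that the group law gives $\alpha_s c_v(\tau)=c_v(\tau+s)$, hence $\alpha_s(A(I))=A(I+s)$; as $\alpha_s$ is a linear bijection, $\dim A(I)$ depends only on the length $|I|$, and I write $\rho(\ell)$ for this common value. The function $\rho$ is non-decreasing and integer-valued, so it is constant, say equal to $d_0$, on some interval $(0,\ell_0]$. The heart of the argument is to show that the $d_0$-dimensional space $B_a:=A([a,a+\ell_0/2])$ is independent of $a$. I would use the elementary fact that two subspaces of dimension $d_0$ whose sum again has dimension $d_0$ must coincide: applying it to the interval $[a,a'+\ell_0/2]$ of length $\le\ell_0$ (which has span of dimension $d_0$ and contains both $B_a$ and $B_{a'}$) yields $B_a=B_{a'}$ for $|a-a'|\le\ell_0/2$, and iterating gives a single subspace $F$ with $B_a=F$ for all $a$. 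Since $c_v(t)\in B_t=F$ for every $t$, the whole orbit, and thus the invariant subspace $W_v:=\Spann(c_v(\R))$, lies in the finite-dimensional space $F$.

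For (ii) I would restrict $\alpha$ to each finite-dimensional invariant subspace $W_v$. There the subspace topology is the usual one and the orbit maps are continuous, so $(\alpha_t|_{W_v})_{t\in\R}$ is a strongly continuous one-parameter group on a finite-dimensional space; the classical smoothing argument (convolving with an approximate identity supported on an interval where the orbit stays finite-dimensional) shows it equals $e^{tD_v}$ for a unique $D_v\in\End(W_v)$. I then set $Dv:=\frac{d}{dt}\big|_{t=0}\alpha_t(v)=D_v v$. This is a well-defined map $V\to V$; linearity follows by computing the derivative inside $W_v+W_w$, each $v$ lies in the $D$-invariant finite-dimensional subspace $W_v$ so $D$ is locally finite, and $\alpha_t v=e^{tD_v}v=e^{tD}v$ for all $v$.

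The main obstacle I anticipate is precisely the passage from ``compact pieces of the orbit are finite-dimensional'' to ``the whole orbit lies in one finite-dimensional subspace'': a $\sigma$-compact, even connected, continuous image in the finest locally convex topology can easily be infinite-dimensional, so continuity alone is useless and the one-parameter group structure must be exploited — exactly through the translation invariance of $\rho$ and the local-constancy argument for $B_a$. Everything else (the seminorm construction bounding bounded sets and the finite-dimensional one-parameter group fact) is routine.
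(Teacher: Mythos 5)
Your proof is correct, and it reaches the result by a genuinely different route than the paper. The common ground is the opening fact that compact (indeed bounded) subsets of $V$ lie in finite-dimensional subspaces; the paper cites this from Hofmann--Morris, while you prove it with the unbounded-seminorm construction. From there the paper proceeds analytically: it mollifies, forming $\alpha(f)v = \int_\R f(t)\alpha_t(v)\,dt \in W$ for test functions $f$ supported in $[-\eps,\eps]$, extracts a minimal subspace $W_{\eps_0}$ spanned by such vectors, uses an approximate identity to conclude $v \in W_{\eps_0}$, hence that every vector is a smooth vector; it then defines $Dw = \frac{d}{dt}\big|_{t=0}\alpha_t w$ on all of $V$, notes that $U := \Spann\{D^k v : k \in \N_0\}$ is finite dimensional and $D$-invariant, and finishes with the ODE computation $\frac{d}{dt}\big(\alpha_{-t}e^{tD_U}v\big) = 0$, obtaining (i) and (ii) in one stroke. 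Your argument for (i) is instead purely algebraic: the equivariance $\alpha_s(A(I)) = A(I+s)$ makes $\dim A(I)$ a function $\rho(|I|)$ of the length alone, a monotone integer-valued function is constant near $0$, and your overlapping-interval argument forces all the spaces $B_a$ to coincide with a single finite-dimensional $F$ containing the whole orbit; (ii) then follows from the classical theory of continuous one-parameter groups on finite-dimensional spaces, applied on the invariant subspaces $W_v$ and $W_v + W_w$. What your route buys is elementarity: no vector-valued integration, no smooth vectors, no ODE uniqueness. What the paper's route buys is that the smoothing argument produces the generator $D$ and the smoothness of all orbit maps as a byproduct, and it is the pattern of argument that extends beyond this setting. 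One point you should make explicit when writing up (ii): a finite-dimensional subspace of a Hausdorff locally convex space is closed and carries its unique vector topology, so continuity of the orbit maps into $V$ really does give continuity of $t \mapsto \alpha_t\res_{W_v}$ as a curve in $\GL(W_v)$, which is what your appeal to the classical finite-dimensional result requires.
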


\begin{prf} Let $ v\in V$. Then 
$\alpha_{[-1,1]} v \subeq V$ is a compact subset, 
hence contained in a finite dimensional subspace $W$ 
(\cite[Prop.~7.25(iv)]{HM06}). 
For $f \in C^\infty_c(\R)$ with $\supp(f) \subeq [-1,1]$ this implies that 
\[ \alpha(f)v := 
\int_\R f(t) \alpha_t(v)\, dt  = \int_{-1}^1 f(t) \alpha_t(v)\, dt \in W.\] 
For $0 < \eps \leq 1$, let $W_\eps \subeq W$ denote the subspace generated by  
$\alpha(f) v$ for $\supp(f) \subeq [-\eps,\eps]$. 
Then $W_\eps \subeq W_{\eps'}$ for $\eps < \eps'$, and by the finiteness 
of $\dim W$, there exists an $\eps_0 \in (0,1]$ for which 
$W_{\eps_0}$ is minimal. Then $W_\eps = W_{\eps_0}$ for $0 < \eps \leq \eps_0$. 
As $\alpha(\delta_n) v \to v$  
for any sequence $(\delta_n)_{n \in \N}$ in $C^\infty_c(\R,[0,\infty))$ 
with $\supp(\delta_n) \subeq [-\frac{1}{n}, \frac{1}{n}]$ and 
$\int_\R \delta_n = 1$, it follows that 
$v \in W_{\eps_0}$. 
Since $W_{\eps_0}$ consists of smooth vectors for $\alpha$, this implies 
in particular that $v\in V^\infty$. 

We conclude that all orbit maps in $V$ are smooth. 
Therefore $Dw := \frac{d}{dt}\big|_{t = 0} \alpha_t w$ defines an  
element of $\End(V)$. From $\alpha_t(v) \subeq W$ for $|t| \leq 1$, 
it follows that 
$D^k v \in W$ for $k \in \N.$ Therefore 
$U := \Spann\{ D^k v \: k \in \N_0\}$ 
is a finite dimensional $D$-invariant subspace of~$W$. 
Let $D_U := D\res_U$. Then 
\[ \frac{d}{dt} \big(\alpha_{-t} e^{t D_U} v \big)
=  \alpha_{-t} (-D + D_U) e^{t D_U} v =0,\] 
so that $\alpha_t v = e^{t D_U} v$. This shows that $U$ 
is $\alpha$-invariant and that $\alpha_t = e^{tD}$ holds in the 
sense of exponentials of locally finite operators. 
\end{prf}


\begin{thebibliography}{aaaaaaaa}

\bibitem[Ar74]{Ar74} Arveson, W., {\it On groups of automorphisms of operator 
algebras}, J.~Funct. Anal. {\bf 15} (1974), 217--243 

\bibitem[Ba91]{Ba91} Banaszczyk, W.,  ``Nuclear Groups,'' Lecture Notes in Mathematics  1466, Springer, Berlin, 1991 

\bibitem[BGN20]{BGN20} Belti\c{t}\u{a}, D., Grundling, H., and K.-H. Neeb, {\it State space geometry
and covariant representations for singular actions on $C^*$-algebras}, Diss. Math. {\bf 549} (2020), 1--94; 
arXiv:math.OA:1708.01028

\bibitem[BN08]{BN08}  Belti\c t\u a, D., and K.-H. Neeb, 
{\it A non-smooth continuous unitary representation of a Banach--Lie group,} 
{\it J. Lie Theory} {\bf 18} (2008), 933-936 

\bibitem[BRT07]{BRT07}
Belti\c{t}\u{a}, D., T.~S.~Ratiu, and A.~B.~Tumpach, 
{\it The restricted Grassmannian, Banach Lie-Poisson spaces, 
and coadjoint orbits}, J. Funct. Anal. \textbf{247} (1) (2007), 138--168


\bibitem[BR02]{BR02} Bratteli, O.,  and D.W. Robinson, 
``Operator Algebras and Quantum Statistical Mechanics I,'' 2nd ed., Texts and Monographs in Physics, Springer, Berlin, 2002

\bibitem[BR96]{BR96}
Bratteli, O., and D.~W.~Robinson, ``Operator Algebras and Quantum Statistical Mechanics II,'' 2nd ed.,
Texts and Monographs in Physics, Springer,  Berlin, 1996



\bibitem[DP99]{DP99} Dimitrov, I., and I.~Penkov, {\it Weight modules  of direct limit Lie algebras}, Int. Math. Res. Not. {\bf 5} (1999), 223--249

\bibitem[DMP00]{DMP00} Dimitrov, I., O. Mathieu and I. Penkov, {\it On 
the structure of weight modules}, Trans. Amer. Math. Soc. {\bf 352} (6) (2000), 2857--2869, Erratum in Trans. Amer. Math. Soc. {\bf 356} (8) (2004), 3403--3404


\bibitem[Gl05]{Gl05} Gl\"ockner, H., 
{\it Fundamentals of direct limit Lie theory}, 
Compos. Math. {\bf 141} (2005), 1551--1577

\bibitem[GGH10]{GGH10} Gl\"ockner, H., R.~Gramlich, and T.~Hartnick, 
{\it Final Group Topologies, Kac-Moody Groups and Pontryagin Duality}, Israel J. Math. {\bf 177} (2010), 49--101

\bibitem[GN]{GN} Gl\"ockner, H., and K.-H. Neeb, ``Infinite dimensional Lie groups, Vol. I, Basic Theory and Main Examples,'' book in preparation, 2021 


\bibitem[HN12]{HN12} Hilgert, J., and K.-H. Neeb, 
``Structure and Geometry of Lie Groups,'' Springer, Berlin, 2012

\bibitem[HNO94]{HNO94} Hilgert, J., K.-H. Neeb, and B. \O{}rsted, {\it The geometry of nilpotent orbits of convex type in hermitian Lie algebras}, J. Lie Theory {\bf 4} (2) (1994), 185--235 

\bibitem[Ho81]{Ho81}  Hochschild, G. P., 
``Basic Theory of Algebraic Groups and Lie
Algebras,'' Graduate Texts in Mathematics 75, Springer, Heidelberg, 1981

\bibitem[HM06]{HM06} Hofmann, K.H.,  and  S.A. Morris, 
``The Structure of Compact Groups,''  de Gruyter, Berlin, 2013. 

\bibitem[HM07]{HM07} Hofmann, K.H., and  S.A. Morris, ``The Lie Theory of Connected Pro-Lie Groups,''  Tracts in Mathematics {\bf 2}, European Mathematical Society, Z\"urich, 2007


\bibitem[JN19]{JN19} Janssens, B., and K.-H. Neeb, {\it Projective unitary representations of infinite-dimensional Lie groups},  Kyoto J. Math. {\bf 59} (2) (2019), 293--341; arXiv:math.RT.1501.00939

\bibitem[JN21]{JN21} Janssens, B., and K.-H. Neeb, {\it 
Positive energy representations of gauge groups I: Localization}, 
in preparation, 2021 

\bibitem[KR87]{KR87} Kac, V. G., and A. K. Raina, ``Highest weight representations of  infinite dimensional Lie algebras,'' Advanced Series in Math. Physics, World Scientific, Singapore, 1987

\bibitem[Ka79]{Ka79} Kay, B. S., {\it A uniqueness result in the Segal--Weinless approach to linear Bose fields}, J. Math. Physics {\bf 20} (1979), 1712--1713

\bibitem[Ka85]{Ka85} Kay, B. S., {\it A uniqueness result for 
quasi-free KMS states}, Helv. Phys. Acta {\bf 58} (1985), 1017--1029




\bibitem[MN16]{MN16} Marquis, T., and 
K.-H. Neeb, {\it Positive energy representations 
 for locally finite split Lie algebras}, 
Int. Math. Res. Not. {\bf 21} (2016), 6689--6712; 
arXiv:math.RT.1507.06077

\bibitem[MN17]{MN17} Marquis, T., and K.-H. Neeb, {\it 
Positive energy representations of double extensions of Hilbert loop algebras}; 
J.  Math. Soc. Jpn {\bf 69} (4) (2017), 
1485--1518 


\bibitem[Mo80]{Mo80} Moore, C.~C., {\it The Mautner phenomenon for 
general unitary representations}, Pac. J. Math. {\bf 86} (1) (1980), 155--169


\bibitem[Ne00]{Ne00} Neeb, K.-H., ``Holomorphy and Convexity in Lie Theory,'' Expositions in Mathematics  28, de Gruyter,  Berlin, 2000  

\bibitem[Ne01]{Ne01} Neeb, K.-H., {\it 
Locally finite Lie algebras with unitary highest weight
representations}, Manuscr. Math.  {\bf 104} (3) (2001), 343--358    

\bibitem[Ne04]{Ne04} Neeb, K.-H., {\it Infinite-dimensional Lie groups and their represen\-ta\-tions}, in ``Lie Theory: Lie Algebras and Representations'', Progress in Mathematics 228, Ed. J.~P.~Anker, B. \O{}rsted, Birkh\"auser, Berlin, 2004, pp. 213--328 

\bibitem[Ne06]{Ne06} Neeb, K.-H., {\it Towards a Lie theory of locally convex groups}, Jpn J. Math. {\bf 1} (2) (2006), 291--468

\bibitem[Ne08]{Ne08} Neeb, K.-H., {\it A complex semigroup approach to group 
algebras of infinite dimensional Lie groups},  Semigroup Forum   
{\bf 77} (2008), 5--35

\bibitem[Ne10]{Ne10} Neeb, K.-H., {\it On differentiable 
vectors for representations of infinite dimensional 
Lie groups}, J. Funct. Anal. {\bf 259} (2010), 2814--2855

\bibitem[Ne12]{Ne12} Neeb, K.-H., {\it Semibounded representations 
of hermitian Lie groups},  Travaux Math\'ematiques {\bf 21} (2012), 29--109

\bibitem[Ne13]{Ne13} Neeb, K.-H., {\it Holomorphic realization of unitary 
representations of Banach--Lie groups}, in 
``Lie Groups: Structure, Actions, and Representations--
In Honour of Joseph A. Wolf on the Occasion of his 75th Birthday,''
Huckleberry, A., Penkov, I., Zuckerman, G. (Eds.), 
Progress in Mathematics 306, Birkh\"auser, Berlin, 2013, pp. 185--223

\bibitem[Ne14]{Ne14} Neeb, K.-H., {\it Positive energy representations and continuity 
of projective representations for general 
topological groups}, Glasgow Math. J. {\bf 56} (2014), 295--316

\bibitem[Ne14b]{Ne14b} Neeb, K.-H.,  
{\it Semibounded Unitary Representations of Double Extensions 
of Hilbert--Loop Groups}, Ann. de l'Inst. Fourier, 
{\bf 64} (5) (2014), 1823--1892 

\bibitem[Ne14c]{Ne14c} Neeb, K.-H., 
{\it Unitary representations of Unitary Groups}, 
in ``Lie theory workshops'', Eds. G.~Mason, 
I. Penkov, J. Wolf, Developments in Mathematics   37, 
Springer, Berlin, 2014, pp. 197--243




\bibitem[N\'O18]{NO18} Neeb, K.-H., G. \'Olafsson, 
``Reflection Positivity. A Representation Theoretic Perspective,'' 
Springer Briefs in Mathematical Physics,  {\bf 32}, Springer, Cham, 2018 

\bibitem[NO98]{NO98} Neeb, K.-H.,  and  B.\ \O{}rsted, {\it Unitary highest 
weight representations in Hilbert spaces of holomorphic functions on infinite 
dimensional domains}, J. Funct. Anal. {\bf 156} (1998), 263--300 

\bibitem[NSZ15]{NSZ15} Neeb, K.-H., H. Salmasian, and C. Zellner, 
{\it On an invariance property of the space of smooth vectors}, 
Kyoto J. Math. {\bf 55} (3) (2015), 501--515; arXiv:1401.3072

\bibitem[NZ13]{NZ13} Neeb, K.-H., C. Zellner, {\it Oscillator algebras with 
semi-equicontinuous coadjoint orbits}, 
Diff. Geom. Appl. {\bf 31} (2) (2013), 268--283 

\bibitem[vN96]{vN96} van Neerven, J., ``The Asymptotic Behaviour of Semigroups of Linear Operators,'' Birkh\"auser, Berlin, 1996

\bibitem[PS86]{PS86} Pressley, A., and G. Segal, ``Loop Groups'',
The Clarendon Press Oxford University Press, New York, 1986

\bibitem[RS75]{RS75} Reed, S., and B. Simon, 
`` Methods of Modern Mathematical Physics I: Functional Analysis'',  Academic Press, New York, 1973

\bibitem[Ru73]{Ru73} Rudin, W., ``Functional Analysis,''  McGraw Hill,  New York, 1973

\bibitem[SeG81]{SeG81} Segal, G., {\it Unitary representations of some infinite-dimensional groups}, Comm.\ Math.\ Phys. {\bf 80} (1981), 301--342 

\bibitem[Wa72]{Wa72} Warner, G., ``Harmonic analysis on semisimple Lie groups I,'' Springer, Berlin,  1972 

\bibitem[We69]{We69} Weinless, M., {\it Existence and uniqueness of the vacuum for linear quantized fields}, J. Funct. Anal. {\bf 4} (1969), 350--379

\bibitem[Ze14]{Ze14} Zellner, C., \textit{Semibounded unitary representations of oscillator groups}, PhD Thesis, Friedrich--Alexander University Erlangen--Nuremberg, 2014

\end{thebibliography}
\end{document}